\documentclass[11pt, amssymb,amsfonts]{amsart}

\usepackage{amsrefs}
\makeatletter
\def\l@section{\@tocline{1}{10pt plus0pt}{0pt}{}{\bfseries}}

\def\@tocline#1#2#3#4#5#6#7{\relax
    \ifnum #1>-1
  \ifnum #1>\c@tocdepth 
  \else
    \par \addpenalty\@secpenalty
    \begingroup \hyphenpenalty\@M
    \@ifempty{#4}{%
      \@tempdima\csname r@tocindent\number#1\endcsname\relax
    }{%
      \@tempdima#4\relax
    }%
    \parindent\z@ \leftskip#3\relax \advance\leftskip\@tempdima\relax
    \rightskip\@pnumwidth plus4em \parfillskip-\@pnumwidth
    #5\leavevmode\hskip-\@tempdima #6\nobreak\relax
    \hfil\hbox to\@pnumwidth{\@tocpagenum{#7}}\par
    \nobreak
    \endgroup
  \fi
\fi}

\makeatother

\usepackage{a4wide}
\usepackage[a4paper, margin=2.3cm]{geometry}
\usepackage{amsmath,amssymb,amsthm}
\usepackage{color}
\usepackage{parskip}
\usepackage{graphicx}
\usepackage{hyperref}
\usepackage{parskip}
\usepackage{setspace}
\usepackage{amsmath}
\usepackage{mathtools}
\usepackage{amssymb}
\usepackage{cases}
\usepackage{algorithm}
\usepackage{algorithmic} 
\usepackage{todonotes}
\usepackage{stmaryrd}
\usepackage{booktabs}
\usepackage[nocompress]{cite}
\newcommand{\norm}[1]{ \left|  #1 \right| }
\newcommand{\Norm}[1]{ \left\|  #1 \right\| }

\def\Var{\hbox{\bf Var}}
\def\Cov{\hbox{\bf Cov}}

\def\P{{\hbox{\bf P}}}
\def\E{{\hbox{\bf E}}}

\font \roman = cmr10 at 10 true pt

\def\be#1{ \begin{equation}\label{#1} }

\def\bas{\begin{align*}}
\def\eas{\end{align*}}
\def\bi{\begin{itemize}}
\def\ei{\end{itemize}}

\def\dim{{\hbox{\roman dim}}}

\def\emph#1{{\it #1}}
\def\textbf#1{{\bf #1}}

\def\Span{\hbox{ \rm Span} \,\,\, }

\def\ep{{\epsilon}}

\theoremstyle{plain}
 \theoremstyle{plain}
  \newtheorem{theorem}{Theorem}
  \numberwithin{theorem}{section}

  \newtheorem{lemma}{Lemma}
  
  \numberwithin{notation}{section}
  \numberwithin{lemma}{section}
  \newtheorem{corollary}{Corollary}
   \numberwithin{corollary}{section}

\theoremstyle{remark}
  \newtheorem{remark}{Remark}
  \numberwithin{remark}{section}

\theoremstyle{definition}
  \newtheorem{definition}{Definition}
\numberwithin{definition}{section}

\theoremstyle{definition}
  \newtheorem{summary}{Summary}
  \numberwithin{summary}{section}

\begin{document}

\include{psfig}
\title[Estimating eigenvectors and eigenspaces of covariance matrices]{Estimating eigenvectors and eigenspaces of covariance matrices: Optimal Bounds and Conditions for Consistency}
\pagenumbering{arabic}

\author{Phuc Tran, Van Vu }
\thanks{trandangphuc234@gmail.com - vuhavan001@gmail.com \\  Department of Mathematics,  The University of Hong Kong (HKU)}
\date{}


\begin{abstract} 
Let $X = [ \xi_1, \,\, \xi_2,...\,\, ,\xi_d]^\top$ be a zero-mean random vector of large dimension $d$ ($d \rightarrow \infty$)  with (hidden) covariance matrix $M = (m_{ij})_{1 \leq i, j \leq d},$ where $m_{ij} = m_{ji} = \Cov(\xi_i, \xi_j).$  
Let  $X_1, X_2, \dots, X_n$ be $n$ iid samples of $X$. Consider the sample covariance matrix 
$$\textstyle \tilde{M} := \frac{1}{n} \sum_{i=1}^{n} X_i X_i^\top.$$  

In practice, one frequently uses the eigenvectors and eigenspaces of $\tilde M$ as estimators for those of $M$. A central task is to provide an error analysis for these estimators.

In this paper, we provide an optimal error analysis, obtaining upper and lower bounds of matching order of magnitude, for a wide range of parameters $d$ and $n$, under mild assumptions on $M$.

As corollaries, we obtain new necessary and sufficient conditions for the consistency of the estimators. In these conditions, we only require the number of samples $n$ to depend linearly on the effective rank of $M$, which can be much smaller than the dimension $d$.

\vskip2mm
\textbf{Mathematics Subject Classifications: } Primary 62H25; secondary 62H12, 15A18, 15B52, 62F12.
\vskip3mm
\textbf{Keywords:} Covariance matrix, principal component analysis, effective rank, Davis-Kahan bound, eigenvector perturbation, eigenspace perturbation, contour analysis.

\end{abstract} 

\maketitle
\section{Introduction} \label{sec: intro}
\subsection{The General Problem} \label{subsec: general problem}
Let $X = [ \xi_1, \,\, \xi_2,\,...,\,\, \xi_d]^\top$ be a zero-mean random vector of dimension $d$ with  covariance matrix $M = (m_{ij})_{1 \leq i, j \leq d},$ where $m_{ij} = m_{ji} = \Cov(\xi_i, \xi_j).$  
Let  $X_1, X_2, \dots, X_n$ be $n$ iid samples of $X$ (referred to as the observed data).
We consider the sample covariance matrix
$$\textstyle \tilde{M} := \frac{1}{n} \sum_{i=1}^{n} X_i X_i^\top.$$  

Consider the spectral decomposition of $M$, $ M = \sum_{i=1}^{d} \lambda_i u_i u_i^\top$, in which \( \lambda_1 \geq \lambda_2 \geq \dots \geq \lambda_d  \ge 0\) are the eigenvalues of \( M \), with corresponding orthonormal eigenvectors \( u_i, 1 \leq i \leq d \). Let $H_p :=\Span \{u_1, \dots, u_p \} $ be the eigenspace spanned by the $p$ leading eigenvectors and $\Pi_p$ be the orthogonal projection onto $H_p$.  We write $M = U \Lambda U^\top$, where $U:=[u_1 \, u_2 \, \cdots \, u_d]$ and $\Lambda := \mathrm{Diag}[\lambda_1, \lambda_2, \dots, \lambda_d]$. We define $\tilde{\lambda}_i, \tilde{u}_i, \tilde{U},\tilde{\Lambda}, \tilde{H}_p$ and $\tilde{\Pi}_p$ similarly, with respect to $\tilde{M}$.

It is common practice in statistics and data science to use spectral parameters of the sample covariance matrix $\tilde M$ to estimate those of the (unknown) covariance matrix $M$. A central task here is to determine the accuracy of these estimates.  In particular, one would like to know under which conditions these estimators are consistent (i.e., the error term tends to zero with probability $1-o(1)$); see \cite{JL1, JW2, jung2009pca, shen2016statistics, srivastava2013covariance, RV1, adamczak2010quantitative}. 


In this paper, we consider the estimation of the leading eigenvectors and eigenspaces. Technically speaking, we study the errors
$\|\tilde{u}_p \tilde{u}_p^\top - u_p u_p^\top\|$ and 
$\|\tilde{\Pi}_p - \Pi_p\|$, for small $p$. We will discuss the estimation of the leading eigenvalues and low-rank approximations (PCA) in a subsequent paper.

There is a large literature studying 
$\|\tilde{u}_p \tilde{u}_p^\top - u_p u_p^\top\| $ ($\|\tilde{\Pi}_p - \Pi_p\|$); see the books \cite{pearson1901liii, hotelling1933analysis, jolliffe2011principal, lorenz1956empirical, BYZ1, WW1} and the references therein.  The problem has been completely solved in the classical setting when $d$ is fixed. Among others, it was shown that if $d$ is fixed and $n$ tends to infinity, then the error tends to zero \cite{AndersonPCA2003, dauxois1982asymptotic}. However, in the modern setting (high-dimensional statistics),  where $d$ is large ($d \rightarrow \infty$), and  $n$ can be smaller than $d$, the situation is far from well understood. 

In the high-dimensional setting, one way to attack the problem is to use methods from random matrix theory to obtain limiting results for the error terms; see, for example,  \cite{J1, BY1, BYZ1, FW1, BKYY1}. By the nature of the approach, bounds obtained by this method are asymptotically sharp. However, this approach requires strong assumptions. First, in the setup, one needs to consider an infinite sequence of random vectors $X$ with increasing dimensions.  In this sequence,  the ratio $d/n$ should tend to a constant $\gamma >0$ as $d$ tends to infinity, and the eigenvalues of $M$ must be constants and have a limiting spectral distribution.  Furthermore, the limit is always a positive constant, which means that the estimators are inconsistent;  see, for example, \cite[Theorem 1]{JL1} or \cite[Section 11 - Theorem 11.5]{BYZ1}.

Another approach uses perturbation theory. 
One writes $\tilde M$ as $M +E$, where $E$ is a random matrix with zero mean, and uses results from perturbation theory, such as the Davis-Kahan theorem;  see, for instance, \cite{WW1, JW1, JW2, nadler2008finite, zhu2022high}. This method makes use of modern estimates on the norm of $E$; see \cite{koltchinskii2017concentration, T1, srivastava2013covariance, tikhomirov2018sample}. For a representative result, see \cite[Section 8]{WW1}. Recently, there have been several attempts to refine this approach, either using the contour integral method from numerical analysis \cite{KX1, KL1, JW1, JW2,  mas2003perturbation} or exploiting the fact that the matrix $E$ is random, using tools from high-dimensional probability  \cite{OVK13, OVK22, CCF1, XZ1}.

The perturbation  approach applies in  very general settings and is more applicable in practice, where there is only one matrix $M$
and one pair $d$ and $n$. As a matter of fact, in high-dimensional statistics, it is of considerable interest to make $n$ significantly smaller than $d$.

To the best of our knowledge, all results obtained using perturbation theory provide only {\it upper bounds}, and little has been known about their sharpness.  The main goal of this paper is to provide both upper and lower bounds that match up to a constant factor. 
In the general setting
($d,n,M$  arbitrary), it seems 
very hard to obtain
limiting theorems. Thus, determining the correct order of magnitude  seems to be  the next best thing 
that one may achieve.

{\bf \noindent Problem 1.} 
Determine the order of magnitude of $\|\tilde{u}_p \tilde{u}_p^\top - u_p u_p^\top\| $  and $\|\tilde{\Pi}_p - \Pi_p\|$.

\noindent We will also address the consistency problem:

{\bf \noindent Problem 2.}  Find necessary and sufficient conditions for 
the estimators $\tilde{u}_p$ and  $\tilde{\Pi}_p $ to be consistent.

\vskip2mm

{\bf \noindent Our main results.} 
In this paper, we give an answer to Problem 1, for a wide range of the parameters $d$ and $n$, under mild assumptions on the covariance matrix $M$. In this range, we determine the error $\|\tilde{u}_p \tilde{u}_p^\top - u_p u_p^\top\|$ ($\| \tilde \Pi_p - \Pi_p \|$) up to a constant factor.

Using these results, we obtain answers for Problem 2, again in a wide range of $d, n$ and $M$. We would like to point out that in these new necessary and sufficient conditions,  the lower bound required on $n$, the number of samples, {\it does not} depend on $d$, the dimension. 
Instead, it depends on the effective rank and the signal-to-gap ratio of $M$, both of which can be significantly smaller than $d$.

Our main tool is the combinatorial expansion method, which we developed in a recent sequence of papers \cite{DKTranVu1, tran2025davis, TranVulowrank}.  We start with a contour representation of the error in question. 
This is a well-known technique in perturbation theory and has been used by many authors; see  \cite{Kato1, Higham2008, KX1, KL1, JW1, JW2, mestre2008asymptotic, mas2003perturbation, el2010second, koltchinskii2000random}. In order to analyze this integral, we define two extra expansions, rewriting the integral as a sum of a huge number of terms. 
The critical step is to design combinatorial profiles that group these terms in a suitable manner, so we can estimate them efficiently; see \cite[Section 4 and Section 8]{DKTranVu1}.

By refining the arguments
from \cite{DKTranVu1}, we will be able to write  $\|\tilde u_p \tilde{u}_p^\top -u_p u_p^\top \|$ (and similarly $\| \tilde \Pi_p - \Pi_p \|$) as a sum of a few components.
With a careful analysis, we can identify and compute the dominant component in a number of settings. As a result, we obtain both upper and lower bounds, which are only a constant factor apart. The combinatorial expansion method is of independent interest and has recently found applications in many other problems in statistics and numerical analysis; see \cite{TranVishnoiVu2025, TranVishnoi2025, TranVUeigenvalue, AnyTranVucasualcompletion2026, TranlinhVu}.

Among previous works, our study is most related to a recent study
\cite{JW2}. In particular, in certain settings, 
one can use \cite[Theorem 7]{JW2} to recover our
upper bounds, under a stronger assumption on the number of samples. On the other hand,  \cite{JW2} does not provide lower bounds. \cite[Theorem 8]{JW2} provides a necessary and sufficient condition for consistency in estimating the first eigenvector, given that $\frac{n}{d^2} \rightarrow \infty$.
We will discuss these points in more detail in Section \ref{subsec: existPopu}.

{\bf \noindent Effective rank and signal-to-gap ratio.} Two quantities that play an important role in our analysis are the effective rank $r_{\mathrm{eff}}$ (also referred to as the intrinsic rank) and 
the signal-to-gap ratio  $\frac{\lambda_p}{\delta_{(p)}}$, where $\delta_{(p)} $ is the distance from $\lambda_p$ to the closest eigenvalue.

The notion of effective rank (also known as intrinsic rank) was introduced in \cite{Vershynin2012} and later studied systematically in the context of covariance estimation in \cite{BuneaXiao2015}. Since then, it has become a fundamental concept in high-dimensional statistics. Among its many applications, effective rank plays a key role in the formulation of matrix Bernstein inequalities; see, for example, \cite{RV2, koltchinskii2017concentration, rudelson2013hanson, martinsson2020randomized, T1, Ver1book}.
For matrix $A$ with singular values $\sigma_1 \ge \dots \ge \sigma_m \ge 0$, we set 
$$r_{\mathrm{eff}} (A) : = \frac{\sum_{i=1}^m \sigma_i }{\sigma_1}. $$
As  $M$ is positive semi-definite, its singular values and eigenvalues are the same, so 
\begin{equation} \label{defrstable} r_{\mathrm{eff}} (M) : = \frac{\sum_{i=1}^d \lambda_i }{\lambda_1}=\frac{\mathrm{Trace}\, M }{\lambda_1}. \end{equation}

Our study reveals that the relation between these two parameters and the magnitude of $n$ governs the order of magnitude of $\left\| \tilde{u}_p \tilde{u}_p^\top - u_p u_p^\top \right\|$ 
and $\|\tilde \Pi_p- \Pi_p\|$.  Our method is effective even if the signal-to-gap ratio $\frac{\lambda_p}{\delta_{(p)}}$ tends to infinity with $d$. In fact, this ratio can be as large as a polynomial in $d$. This is an important difference compared to many previous works, where it was assumed that this ratio is bounded (e.g., \cite{FW1, BYZ1, JP1, J1}). This means that we can obtain a good estimator for an eigenvector even when its corresponding eigenvalue is very close to the nearest one.

{\bf \noindent Models for the random vector $X$.} A standard model for $X$ is $X= M^{1/2} Y$, where $Y$ is a random vector such that 
$\E YY^T = I_d $. 
It is easy to check that $\E XX^T = M$. The generality of the model is determined by the assumptions on $M$ and $Y$, and we will consider many models with increasing generalities.  In particular, we will allow $M$ to have an arbitrary spectrum where all eigenvalues can depend on 
the dimension $d$, and $Y$ to have dependent entries. 

\vskip2mm 

\noindent {\bf Structure of the rest of the paper.}
In the next subsection (Subsection \ref{section:simplest}), we present our results in the simplest setting, 
so the reader can easily observe their nature. Stronger and more general results will be stated in the rest of Section \ref{sec: intro} and Sections 
\ref{section: population}, \ref{subsec: eigenspaces}, \ref{sec: consistency}. Sections \ref{section: population} and \ref{subsec: eigenspaces} focus on the order of magnitude of the error terms (Problem 1), and Section \ref{sec: consistency} focuses on the consistency problem (Problem 2). Section \ref{subsec: existPopu} contains a discussion concerning previous related results, further extensions of our results,  and an open question. 

The proofs start in Section \ref{sec: proof}, where we discuss our main strategy 
and set up the main inequalities. In Section \ref{subsec: proofupperM1/2}, we prove the upper bounds, and in  Section \ref{subsec: prooflowerM1/2Line1}, we prove the lower bound. In order to maintain the flow of the paper, we put the proof of a few technical lemmas into appendices.

{\bf \noindent Notation.} We use the asymptotic notations $o, O, \omega, \Theta, \Omega$ under the assumption that $d \rightarrow \infty$. We use $C,C',C'',\ldots$ to denote universal constants, and
$C_0, C_1, C_2,\ldots$ to denote constants that may depend on other bounded parameters. For a matrix $A$, $\| A \|$ denotes its spectral (operator) norm.

\subsection{Treatment of the leading eigenvector when \texorpdfstring{$M$}{M} has a block form} \label{section:simplest}

We first consider the case $p=1$, i.e., the quantity 
$\| \tilde u_1 \tilde u_1^\top -u_1 u_1^\top \| $. 

In this subsection, we assume that 
\(
X=M^{1/2}Y,
\)
where $M$ is a $d\times d$ positive semi-definite matrix of the form
\[
M=
\begin{pmatrix}
\Lambda_r & 0\\
0 & V_d
\end{pmatrix},
\] and $Y$ is a random vector with sub-Gaussian coordinates. 

This model is one of the most popular models for spiked covariance matrices, whose study was initiated in \cite{J1} and has generated considerable interest; see 
\cite{BY2, BYZ1, JW1, JW2, BS1,shen2013surprising, P1, J1} and the references therein. 
Here, $\Lambda_r$ is an $r\times r$ matrix whose eigenvalues are $\lambda_1, \dots, \lambda_r$, the largest $r$ eigenvalues  of $M$.
We call these eigenvalues the spikes of $M$. $V_d$ is a $(d-r)\times(d-r)$ matrix whose eigenvalues are $\lambda_{r+1}, \dots, \lambda_n$, which form the non-spiked part of the spectrum;  see, e.g., \cite[Chapter 11]{BYZ1} for more details. 
We do not make any assumption about the eigenvalues. They can grow with $d$, and their distribution is arbitrary. 

\begin{definition}[Sub-Gaussian random variable]
    A random variable $\xi$ is called \textit{sub-Gaussian} with $\|\xi\|_{\psi_2}^2 \leq K$ if $\E \exp \big(\frac{\xi^2}{K} \big) \leq 2$. In particular, we have $\E \xi^4 \leq 4K^2.$
\end{definition}

We write $Y= (y_1, \dots, y_d)$,  and assume that $y_i$ are independent (not necessarily iid) sub-Gaussian random variables with mean 0 and variance 1, satisfying $\max_{1 \leq i \leq d} \|y_i\|_{\psi_2}^2 \leq K$ for some $K \geq 1$.  We assume that the parameters $K$ and $r$ are bounded, i.e., \begin{equation} \label{cond1} K,r =O(1) . \end{equation}

To distinguish between the spiked and non-spiked eigenvalues, we assume that 
\begin{equation} \label{cond2} \lambda_{r+1} \le \frac{1}{2} \lambda_1 .  \end{equation}
The constant $\frac{1}{2}$ is chosen for convenience; 
it can be replaced by any constant $0 < c < 1$. We also assume that  
\begin{equation} \label{cond3} r_{\mathrm{eff}} + \frac{\lambda_1}{\delta_1} \ge C_0, \end{equation} where $C_0 $ is a sufficiently large  constant depending on $K$ and $r$, to be chosen.

We will consider three ranges of the effective rank, depending on the relation between $r_{\mathrm{eff}} $ and $\frac{\lambda_1}{\delta_1}$.  We show that under assumptions \eqref{cond1}, \eqref{cond2}, \eqref{cond3} (with a proper choice of $C_0$),  the following three theorems, corresponding to these ranges, hold. 

\begin{theorem} [Small effective rank] \label{thm:small} For any constants $\epsilon, K, r$, there are constants $C_0, C_1, C_2 $ so that the following holds. 
If $r_{\mathrm{eff}} \leq  \frac{r\lambda_1}{\delta_1}$,
then with probability at least $1-\epsilon$,  
    $$   \left\| \tilde{u}_1 \tilde{u}_1^\top - u_1 u_1^\top \right\| \leq C_1 \cdot \min \bigg\{ \frac{ \lambda_1 }{\sqrt{n} \delta_1}, 1 \bigg\},$$
\noindent and  with probability at least $\frac{1}{16}$,
$$ C_2 \cdot \min \bigg\{ \frac{ \lambda_1 }{\sqrt{n} \delta_1}, 1 \bigg\} \leq  \left\| \tilde{u}_1 \tilde{u}_1^\top - u_1 u_1^\top \right\| .$$
\end{theorem}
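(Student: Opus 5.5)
We work with $E := \tilde M - M = M^{1/2}\big(\frac1n\sum_{i=1}^n Y_iY_i^\top - I_d\big)M^{1/2}$ and use the contour representation
$$\tilde u_1\tilde u_1^\top - u_1u_1^\top = \frac{1}{2\pi i}\oint_\Gamma \big[(z-\tilde M)^{-1}-(z-M)^{-1}\big]\,dz,$$
where $\Gamma$ is a circle centered at $\lambda_1$ of radius $\delta_1/2$.

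\textbf{Reduction to the nontrivial regime.} Since the error is always at most $1$, we may assume $\lambda_1/(\sqrt n\,\delta_1)\le c$ for a small constant $c$. Otherwise the upper bound is trivial, and the lower bound follows by monotonicity from the argument below, run with an effectively smaller $n$ or by direct rotation.

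\textbf{Upper bound.} The key estimate is directional. With probability $1-\epsilon$,
$$\|E u_1\|\lesssim \lambda_1/\sqrt n \quad\text{and}\quad \|E\|\lesssim \lambda_1\Big(\sqrt{r_{\mathrm{eff}}/n}+r_{\mathrm{eff}}/n\Big).$$
The first is a sub-Gaussian computation: $Eu_1=\sqrt{\lambda_1}\,M^{1/2}\frac1n\sum_i Y_i (y_{i}^\top\text{-component}) - \lambda_1u_1$, whose squared norm has expectation $O(\lambda_1\,\mathrm{Trace}(M)/n)=O(\lambda_1^2 r_{\mathrm{eff}}/n)$. The second is the Koltchinskii--Lounici bound.

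The Davis--Kahan bound gives $\|E\|/\delta_1$, which carries an extra factor $\sqrt{r_{\mathrm{eff}}}$. To remove it, we expand $(z-\tilde M)^{-1}=\sum_k (z-M)^{-1}[E(z-M)^{-1}]^k$ and follow the combinatorial expansion method of \cite{DKTranVu1}. The first-order term is
$$\sum_{j\ne1}\frac{u_ju_j^\top E u_1u_1^\top}{\lambda_1-\lambda_j}+\text{transpose},$$
with norm at most $\|P_{\perp}(\lambda_1-M)^{-1}_{\perp}Eu_1\|$. Splitting $j\le r$ (gap $\ge\delta_1$) from $j>r$ (gap $\ge\lambda_1/2$ by \eqref{cond2}) gives
$$\frac{1}{\delta_1}\Big\|\sum_{j\le r}u_j u_j^\top Eu_1\Big\| + \frac{2}{\lambda_1}\|Eu_1\|.$$
Each $u_j^\top E u_1$ with $j\le r$ is $O(\lambda_1/\sqrt n)$, since it is an average of products of two sub-Gaussians scaled by $\sqrt{\lambda_1\lambda_j}$. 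The first piece is therefore $O(\lambda_1/(\sqrt n\,\delta_1))$ because $r=O(1)$. The second piece is $O(\sqrt{r_{\mathrm{eff}}/n})$, and the hypothesis $r_{\mathrm{eff}}\le r\lambda_1/\delta_1$ gives $\sqrt{r_{\mathrm{eff}}}\lesssim\sqrt{\lambda_1/\delta_1}\le \lambda_1/\delta_1$. That last inequality uses $\lambda_1\ge\delta_1$, which holds since $\delta_1\le\lambda_1-\lambda_2\le\lambda_1$.

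Higher-order terms are bounded by the geometric decay of the expansion. Each term picks up factors like $\|E\|/\lambda_1$ on the bulk part and $|u_j^\top E u_k|/\delta_1$ on the spike block. These are small when $\lambda_1/(\sqrt n\delta_1)$ and $r_{\mathrm{eff}}/n\le r\lambda_1/(n\delta_1)$ are small, and both are at most $c$ in our regime.

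\textbf{Lower bound.} The error is at least of order $|u_1^\top(\tilde u_1\tilde u_1^\top-u_1u_1^\top)\,w|$ for a suitable unit vector $w\perp u_1$, giving $\|\cdot\|\gtrsim\|P_\perp \tilde u_1\|$. Take $j$ to be the index realizing $\delta_1$ (necessarily $j\le r$, since otherwise $\delta_1\ge\lambda_1/2$). The first-order term yields $u_j^\top\tilde u_1\approx u_j^\top E u_1/(\lambda_1-\lambda_j)$, and $u_j^\top E u_1=\sqrt{\lambda_1\lambda_j}\,\frac1n\sum_i y_{i1}y_{ij}$ in the rotated coordinates. Here the independent coordinates of $Y$ give mean $0$ and variance $\lambda_1\lambda_j/n$, with $\lambda_j\ge\lambda_1/2$ when $\delta_1\le\lambda_1/2$; the case $\delta_1>\lambda_1/2$ is easy. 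Paley--Zygmund with the fourth-moment bound $\E\xi^4\le4K^2$ then shows $|u_j^\top Eu_1|\ge c'\lambda_1/\sqrt n$ with probability at least about $1/8$.

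\textbf{Main obstacle.} The hard step is showing that the higher-order remainder is at most half of this first-order term on an event of probability at least $1-1/16$. Intersecting with the Paley--Zygmund event then leaves probability $\ge1/16$. Controlling this remainder with a constant rather than logarithmic loss is where the refined combinatorial profiles of \cite{DKTranVu1} are needed, together with choosing $C_0$ large via \eqref{cond3}.
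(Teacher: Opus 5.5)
Your strategy is the paper's: the contour representation, and the $P/Q$ split of $F_1$ into a spike part of size $\lesssim \sqrt r\,x/\delta_1$ with $x\lesssim\lambda_1/\sqrt n$ and a bulk part of size $\lesssim \|Eu_1\|/\lambda_1\lesssim\sqrt{r_{\mathrm{eff}}/n}$. (The bulk scale is not $\lambda_1/\sqrt n$, as your first display says; you later use the correct order.) It also uses the lemmas of \cite{DKTranVu1} for $\sum_{s\ge 2}F_s$, and anticoncentration of $u_2^\top E u_1$; Paley--Zygmund in place of the paper's CLT is fine. Deferring the remainder to \cite{DKTranVu1} is also what the paper does, and in this regime the remainder is milder than you suggest. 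Even the crude bound $y\le 2w^2/\lambda_1\lesssim \lambda_1 r_{\mathrm{eff}}/n$ makes $\sqrt r\,y/\delta_1$ a small multiple of $\lambda_1/(\sqrt n\,\delta_1)$, because $r_{\mathrm{eff}}\le r\lambda_1/\delta_1$.

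What is genuinely missing on the upper-bound side is a proof that $\Gamma$ encloses $\tilde\lambda_1$ and no other eigenvalue of $\tilde M$. This is not a Weyl-type triviality. For $r_{\mathrm{eff}}\approx\lambda_1/\delta_1$ and $n\approx C(\lambda_1/\delta_1)^2$ one has $\|E\|\approx\sqrt{\lambda_1\delta_1/C}$, which is $\gg\delta_1$ when $\lambda_1/\delta_1\gg C$. The paper (Section~\ref{subsec: dim argument}) uses the contour $\Gamma_1$ whose right edge passes through $\|M\|+1.1\|E\|$, so $\tilde\lambda_1$ is inside by Weyl. It then uses $\|\tilde\Pi_{\Gamma_1}-u_1u_1^\top\|<1$ to force the enclosed spectral projection to have rank one. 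With your circle of radius $\delta_1/2$, rank one alone would not exclude that the enclosed eigenvalue is $\tilde\lambda_2$ while $\tilde\lambda_1>\lambda_1+\delta_1/2$.

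On the lower-bound side, two steps fail as written.

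First, $u_2^\top E u_1=\frac{\sqrt{\lambda_1\lambda_2}}{n}\sum_i S^{(1,2)}_i$ has variance $\lambda_1\lambda_2\Var S^{(1,2)}/n$, not $\lambda_1\lambda_2/n$. The rotated coordinates $u_k^\top Y$ are independent only when $u_1,u_2$ are coordinate vectors, e.g.\ $\Lambda_r$ diagonal, so that $S^{(1,2)}=y_1y_2$. In general $\Var S^{(1,2)}$ can vanish: for Rademacher $Y$ and $u_{1,2}=(e_1\pm e_2)/\sqrt2$ one has $S^{(1,2)}\equiv 0$, so the first-order $u_2$-coefficient of $\tilde u_1$ is identically zero. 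Your $C_2$ must therefore depend on a lower bound for $\Var S^{(1,2)}$, exactly as in Remark~\ref{remark:explicit}.

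Second, the reduction ``the lower bound follows by monotonicity, run with an effectively smaller $n$ or by direct rotation'' is not an argument. No monotonicity in $n$ is available, and the expansion cannot be run when $\lambda_1/(\sqrt n\,\delta_1)>c$. Showing that the error is $\Omega(1)$ with probability at least $1/16$ in that range needs a separate idea. The paper's derivation in Appendix~\ref{subsection:routine} also obtains the lower bound only under \eqref{condition concrete}, i.e.\ for $n\gtrsim(\lambda_1/\delta_1)^2$, so this case cannot be imported from it either.
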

\begin{theorem} [Medium effective rank] \label{thm:medium} 
For any constants $\epsilon, K,r $, there are constants $C_0, C_1, C_2, C_3$ so that the following holds.  If $\frac{r\lambda_1}{\delta_1} < r_{\mathrm{eff}} \leq r \big(\frac{\lambda_1}{\delta_1}\big)^2 $ and $n \geq C_3 \cdot \frac{\lambda_1}{\delta_1} r_{\mathrm{eff}}$, then with probability at least $1-\epsilon$,
\[
\left\| \tilde{u}_1 \tilde{u}_1^\top - u_1 u_1^\top \right\| \leq C_1 \cdot \frac{ \lambda_1 }{\sqrt{n} \delta_1},
\]
and with probability at least $\frac{1}{16}$, 
   \[
  C_2\cdot \frac{\lambda_1}{\sqrt{n} \delta_1} \leq \left\| \tilde{u}_1 \tilde{u}_1^\top - u_1 u_1^\top \right\|.
   \]
\end{theorem}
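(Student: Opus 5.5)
We write $\tilde M = M + E$ with $E = \frac1n\sum_{i}(X_iX_i^\top - M) = M^{1/2}\big(\frac1n\sum_i Y_iY_i^\top - I_d\big)M^{1/2}$. We then represent the projection error through the contour integral
$$\tilde u_1\tilde u_1^\top - u_1u_1^\top = \frac{1}{2\pi i}\oint_\Gamma \big[(z-\tilde M)^{-1} - (z-M)^{-1}\big]\,dz .$$
Here $\Gamma$ is a circle centred at $\lambda_1$ of radius $\delta_1/2$. First we must show that $\Gamma$ encloses $\tilde\lambda_1$ and no other eigenvalue of $\tilde M$. By Weyl, it suffices that $\|E\|<\delta_1/2$ with probability $1-\epsilon$. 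This fails in general, since $\|E\|\approx \lambda_1\big(\sqrt{r_{\mathrm{eff}}/n}+r_{\mathrm{eff}}/n\big)$ can exceed $\delta_1$. So we split into two cases.

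\textbf{Case A: $n \le C\lambda_1^2/\delta_1^2$.} Here $\min\{\lambda_1/(\sqrt n\delta_1),1\}$ is of constant order, so the upper bound is trivial (the error is always at most $1$). For the medium regime, $n\ge C_3\frac{\lambda_1}{\delta_1}r_{\mathrm{eff}} \ge C_3 r\frac{\lambda_1^2}{\delta_1^2}\cdot\frac{r_{\mathrm{eff}}\delta_1}{r\lambda_1}$, and the upper bound needs separate handling.

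\textbf{Case B: the resolvent expansion.} Otherwise we expand $(z-\tilde M)^{-1}=\sum_{k\ge0}(z-M)^{-1}\big[E(z-M)^{-1}\big]^k$. The $k=1$ term gives the first-order part $F := \sum_{j\ne1}\frac{u_j^\top E u_1}{\lambda_1-\lambda_j}(u_ju_1^\top+u_1u_j^\top)$. The higher terms we bound by restricting to the spiked block. Since $M$ is block diagonal, $u_j^\top E u_1 = \sqrt{\lambda_1\lambda_j}\,\frac1n\sum_i \langle v_j,Y_i\rangle\langle v_1,Y_i\rangle$ for fixed orthonormal directions, so its second moment is of order $\lambda_1\lambda_j/n$ (using that the $y$'s are independent with bounded fourth moment). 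Hence
$$\E\|Fu_1\|^2 \asymp \frac{\lambda_1}{n}\sum_{j\ne1}\frac{\lambda_j}{(\lambda_1-\lambda_j)^2}.$$
The nearest spike contributes $\lambda_1^2/(n\delta_1^2)$. The bulk contributes at most $\frac{4}{n}\frac{\operatorname{Tr}M}{\lambda_1} = 4r_{\mathrm{eff}}/n$, because $\lambda_j\le\lambda_1/2$ there. Under the hypothesis $r_{\mathrm{eff}}\le r(\lambda_1/\delta_1)^2$, this is $O(\lambda_1^2/(n\delta_1^2))$. By Markov, $\|F\|\le C_1\lambda_1/(\sqrt n\delta_1)$ with probability $1-\epsilon/2$.

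The remainder $\sum_{k\ge2}$ must be shown to be of smaller order. We would follow the combinatorial expansion of \cite{DKTranVu1}: write $E$ in the eigenbasis of $M$, and use that entries between small eigenvalues of $M$ are damped by the resolvent denominators $|z-\lambda_j|\ge\lambda_1/4$ for $j>r$. The $k$-th term is then bounded by roughly $\big(\frac{\lambda_1}{\sqrt n\,\delta_1} + \frac{\|E_{\text{bulk}}\|}{\lambda_1}\big)^k$-type quantities. This uses $\|E\|\lesssim \lambda_1\sqrt{r_{\mathrm{eff}}/n}$ (a Koltchinskii–Lounici-type bound) together with $n\ge C_3\frac{\lambda_1}{\delta_1}r_{\mathrm{eff}}$. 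The latter makes $\frac{\|E\|^2}{\lambda_1\delta_1}\lesssim \frac{\lambda_1 r_{\mathrm{eff}}}{n\delta_1}\le C_3^{-1}$, which is the key quantity controlling second-order terms: a term like $(z-M)^{-1}E(z-M)^{-1}E(z-M)^{-1}$, integrated over a contour of length $\delta_1$, costs $\|E\|^2/\delta_1^2$ only when both resolvents are near $\lambda_1$. Splitting $E$ into its spike–spike, spike–bulk and bulk–bulk blocks shows that the dangerous pieces carry one small entry $u_1^\top E u_j$ of size $\lambda_1/\sqrt n$ times factors $\|E\|/\lambda_1$ or $\|E\|^2/(\lambda_1\delta_1)$. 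This yields a geometric series dominated by $\lambda_1/(\sqrt n\delta_1)$. Getting this bookkeeping right, especially when $\delta_1\ll\lambda_1$ so that $\|E\|$ may exceed $\delta_1$, is the main obstacle. When $\|E\|>\delta_1/2$ the contour argument must be replaced: we would first handle the $r\times r$ spiked block exactly (via a Schur complement reduction of the bulk) and then apply the contour only to the reduced $r\times r$ matrix, whose perturbation is of size $\lambda_1/\sqrt n\le\delta_1/C$.

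For the lower bound we use $\|\tilde u_1\tilde u_1^\top-u_1u_1^\top\|\ge |\langle\tilde u_1,u_2\rangle|$, where $u_2$ is the eigenvector achieving the gap. To first order this equals $\frac{|u_2^\top E u_1|}{\delta_1}$ minus the remainder, which is at most $c\lambda_1/(\sqrt n\delta_1)$ by the above with $C_3$ large. Now $u_2^\top E u_1=\sqrt{\lambda_1\lambda_2}\,\frac1n\sum_i y_{i,1}'y_{i,2}'$ is a normalized sum of iid mean-zero variables with variance $\ge1$ (using $\lambda_2\ge\lambda_1/2$ when $\delta_1\le\lambda_1/2$; the case $\delta_1$ comparable to $\lambda_1$ is analogous). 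A Paley–Zygmund argument using the bounded fourth moment then gives $|u_2^\top E u_1|\ge \frac12\lambda_1/\sqrt n$ with probability at least $1/8$. Intersecting with the event that the remainder is small gives probability at least $1/16$. In the small-$n$ case ($n\lesssim \lambda_1^2/\delta_1^2$), the same anti-concentration shows that $\tilde u_1$ rotates by a constant angle into $u_2$.
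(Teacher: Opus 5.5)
The first-order term and the lower-bound mechanism are right and follow the paper: project onto $u_2$, use anti-concentration of $u_2^\top E u_1$, subtract the remainder. The upper bound, however, has a genuine gap at the second-order spike--bulk--spike term. Put $Q':=\sum_{l\notin I_1}\frac{u_lu_l^\top}{\lambda_1-\lambda_l}$. The contour integral of $PEQEP$ contains $M_3=\sum_{j\in I_1\setminus\{1\}}\frac{u_ju_j^\top EQ'Eu_1u_1^\top}{\lambda_1-\lambda_j}+(\text{transpose})$. This piece has no spike--spike entry $u_1^\top Eu_j$, so your bookkeeping (``one small entry of size $\lambda_1/\sqrt n$ times factors $\|E\|/\lambda_1$ or $\|E\|^2/(\lambda_1\delta_1)$'') does not cover it.

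With norm bounds you only get $\|M_3\|\lesssim\|E\|^2/(\lambda_1\delta_1)\approx\lambda_1 r_{\mathrm{eff}}/(n\delta_1)$. Your sample condition makes this at most $C_3^{-1}$, but not $O(\lambda_1/(\sqrt n\delta_1))$. The ratio between the two is $r_{\mathrm{eff}}/\sqrt n$, which for $n\asymp C_3r_{\mathrm{eff}}\lambda_1/\delta_1$ and $r_{\mathrm{eff}}\asymp r(\lambda_1/\delta_1)^2$ is of order $\sqrt{\lambda_1/\delta_1}\to\infty$. So as written you obtain $\frac{\lambda_1}{\sqrt n\delta_1}+\frac{\lambda_1 r_{\mathrm{eff}}}{n\delta_1}$, which is the right rate only when $n\gtrsim r_{\mathrm{eff}}^2$ — exactly the kind of requirement the paper avoids.

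Your quantity $\|E\|^2/(\lambda_1\delta_1)\le C_3^{-1}$ is the analogue of the paper's condition $\sqrt r\,w<\sqrt{\lambda_1\delta_1}/C$. It makes long chains contract geometrically, but it does not make the leading $PEQEP$ term small. The missing ingredient is the skewness parameter $y$ of Definition~\ref{def: xyw} and its moment bound (second part of Lemma~\ref{lemma: xyModeil1}). Because $u_1,u_j,u_l$ are orthonormal, $\E\, u_j^\top EQ'Eu_1=O(K^2\lambda_1/n)$, and its standard deviation is $O_K\big(\frac{\lambda_1}{n}(\sqrt{r_{\mathrm{eff}}}+r_{\mathrm{eff}}/\sqrt n)\big)$ rather than $\lambda_1 r_{\mathrm{eff}}/n$. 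Hence $\sqrt r\,y/\delta_1$ is smaller than $\frac{\sqrt r\lambda_1}{\sqrt n\delta_1}$ by a factor $O_K(n^{-1/2}+\sqrt{r_{\mathrm{eff}}/n})$, and the same $y$ controls every chain $PEQEPE\cdots QEP$ in $F_s$. Your Schur-complement fallback has the same hole. The reduced $r\times r$ matrix is perturbed not only by the spike block of $E$ (size $\lambda_1/\sqrt n$) but also by $E_{12}(\lambda-\tilde M_{22})^{-1}E_{21}$. That term has norm $\asymp\lambda_1 r_{\mathrm{eff}}/n$, and its off-diagonal entries need exactly this cancellation.

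Second, isolating $\tilde\lambda_1$ inside the contour is not a side case. Since $\|E\|\asymp\lambda_1\sqrt{r_{\mathrm{eff}}/n}$, you have $\|E\|>\delta_1$ throughout $C_3r_{\mathrm{eff}}\lambda_1/\delta_1\le n\lesssim r_{\mathrm{eff}}(\lambda_1/\delta_1)^2$. Your Weyl-based Case B therefore covers only the easy end of the regime, and the main case rests on a Schur reduction you do not carry out. The paper keeps the contour and proves isolation by a dimension argument (Section~\ref{subsec: dim argument}). It uses the expansion bound of \cite[Theorem 2.4]{DKTranVu1}, which needs only $\|E\|\ll\lambda_1$, $rx\ll\delta_1$, $\sqrt r\,w\ll\sqrt{\lambda_1\delta_1}$ and $\|E\|/\lambda_1+\sqrt r(x+y)/\delta_1\ll 1$, and never $\|E\|\ll\delta_1$. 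This gives $\|\tilde\Pi_\Gamma-\Pi\|<1$ for the spectral projection $\tilde\Pi_\Gamma$ of $\tilde M$ inside $\Gamma$, hence $\mathrm{rank}\,\tilde\Pi_\Gamma=1$. Your lower bound inherits both gaps, since it needs the remainder to be at most $c\lambda_1/(\sqrt n\delta_1)$.

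Two smaller points. Your Case A is vacuous, because $r_{\mathrm{eff}}>r\lambda_1/\delta_1$ together with $n\ge C_3 r_{\mathrm{eff}}\lambda_1/\delta_1$ forces $n>C_3 r(\lambda_1/\delta_1)^2$. Also, $S^{(1,2)}=(u_1^\top Y)(u_2^\top Y)$ does not have variance $\ge 1$ in general. Its variance is $1+\sum_k(\E y_k^4-3)u_{1k}^2u_{2k}^2$, which vanishes for Rademacher $y_k$ and $u_{1,2}=(e_1\pm e_2)/\sqrt2$; then $u_2^\top Eu_1\equiv 0$. This is why the paper's $C_2$ is proportional to $\sqrt{\Var S^{(1,2)}}$ (Remark~\ref{remark:explicit}), with a lower bound available only when $\min_k\E y_k^4\ge 1+2c$ (Lemma~\ref{lem: S value}).
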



\begin{theorem} [Large effective rank] \label{thm:large} 
 For any constant $\epsilon, K,r $, there are constants $C_0, C_1, C_2, C_3$ so that the following holds. If $ r_{\mathrm{eff}} > r \big(\frac{\lambda_1}{\delta_1}\big)^2 $ and  $n \geq C_3 \cdot  r_{\mathrm{eff}}  \frac{\lambda_1}{\delta_1}   $, then with probability at least $1-\epsilon$, 
\[
\left\| \tilde{u}_1 \tilde{u}_1^\top - u_1 u_1^\top \right\| \leq C_1\cdot \sqrt{\frac{r_{\mathrm{eff}}}{n}},
\]
and with probability at least $\frac{1}{16}$,
\[
C_2 \cdot \sqrt{\frac{r_{\mathrm{eff}}}{n}}  \leq \left\| \tilde{u}_1 \tilde{u}_1^\top - u_1 u_1^\top \right\| .
\]
\end{theorem}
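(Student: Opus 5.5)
We prove Theorem \ref{thm:large} by writing $\tilde M = M + E$ with $E = \frac1n\sum_{i}(X_iX_i^\top - M)$ and expanding the spectral projection through the contour representation
\[
\tilde u_1\tilde u_1^\top - u_1u_1^\top = \frac{1}{2\pi i}\oint_\Gamma \big[(z-\tilde M)^{-1}-(z-M)^{-1}\big]\,dz ,
\]
with $\Gamma$ a circle around $\lambda_1$ of radius $\delta_1/2$. Iterating the resolvent identity gives the series $\sum_{k\ge1}\frac{1}{2\pi i}\oint_\Gamma (z-M)^{-1}[E(z-M)^{-1}]^k\,dz$. The first-order term equals $F_1:=\sum_{j\ne1}\frac{1}{\lambda_1-\lambda_j}(u_ju_j^\top E u_1u_1^\top + u_1u_1^\top E u_ju_j^\top)$, and its norm is $\|\sum_{j\neq1}\frac{u_j^\top E u_1}{\lambda_1-\lambda_j}u_j\|$. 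We aim to show that $F_1$ is the dominant component, of order $\sqrt{r_{\mathrm{eff}}/n}$, and that every higher-order term is smaller by a constant factor once $C_3$ is large.

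\textbf{Step 1: first-order term.} We use the model $X=M^{1/2}Y$ with the block form of $M$. Then $u_j^\top E u_1 = \frac{\sqrt{\lambda_1\lambda_j}}{n}\sum_i (u_j^\top Y_i)(u_1^\top Y_i)$ for $j\neq1$. A second-moment computation, using independent sub-Gaussian coordinates and bounded $K,r$, gives
\[
\E\|F_1\|^2 \asymp \frac{1}{n}\sum_{j\ne1}\frac{\lambda_1\lambda_j}{(\lambda_1-\lambda_j)^2}.
\]
Split the sum into spikes $2\le j\le r$ and non-spikes $j>r$. The spike part contributes at most $(r-1)\lambda_1^2/\delta_1^2$. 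Since $\lambda_j\le\lambda_1/2$ by \eqref{cond2}, the non-spike part is $\asymp \sum_{j>r}\lambda_j/\lambda_1$, which is $r_{\mathrm{eff}}-O(r)$. Under the hypothesis $r_{\mathrm{eff}} > r(\lambda_1/\delta_1)^2$ the non-spike part dominates, up to constants absorbed using \eqref{cond3}. Hence $\E\|F_1\|^2\asymp r_{\mathrm{eff}}/n$. Markov's inequality then gives the upper bound with probability $1-\epsilon/2$. For the lower bound on $\|F_1\|$ we apply Paley–Zygmund: we bound $\E\|F_1\|^4\le C(\E\|F_1\|^2)^2$ via a fourth-moment computation using $\E y^4\le 4K^2$. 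This yields $\|F_1\|\ge c\sqrt{r_{\mathrm{eff}}/n}$ with probability at least, say, $1/8$.

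\textbf{Step 2: higher-order terms.} This is the main obstacle. We must show that $\sum_{k\ge2}\|F_k\|\le \frac{c}{2}\sqrt{r_{\mathrm{eff}}/n}$ with probability $1-\epsilon/2$. A crude bound $\|F_k\|\le (2\|E\|/\delta_1)^k$ combined with $\|E\|\lesssim\lambda_1\sqrt{r_{\mathrm{eff}}/n}$ (Koltchinskii–Lounici) would require $n\gg r_{\mathrm{eff}}(\lambda_1/\delta_1)^2$, which is more than $n\ge C_3 r_{\mathrm{eff}}\lambda_1/\delta_1$ allows. So we would follow the combinatorial expansion of \cite{DKTranVu1}. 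We evaluate the contour integral of each product $(z-M)^{-1}E(z-M)^{-1}\cdots E(z-M)^{-1}$ by residues, separating the factors into the $u_1$-direction, the rest of the spike block, and the non-spike block. Factors through the non-spike block cost only $\|E\|/\lambda_1$, because the gap $\lambda_1-\lambda_j\ge\lambda_1/2$ there; only spike-block factors cost $1/\delta_1$. Then we bound the bilinear forms $u_a^\top E\,\Pi E\cdots E u_b$ using the randomness of $E$ (moment estimates for sub-Gaussian quadratic forms), rather than through $\|E\|$. For $k=2$ we expect a bound like $\frac{\lambda_1}{\delta_1}\cdot\frac{r_{\mathrm{eff}}}{n}$, which is at most $C_3^{-1/2}\sqrt{r_{\mathrm{eff}}/n}$ precisely when $n\ge C_3 r_{\mathrm{eff}}\lambda_1/\delta_1$. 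The remaining terms should decay geometrically, and we would organize them according to the profiles of \cite{DKTranVu1}.

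\textbf{Step 3: conclusion.} On the intersection of the events above, the upper bound is $\|F_1\|+\sum_{k\ge2}\|F_k\|\le C_1\sqrt{r_{\mathrm{eff}}/n}$. The lower bound is $\|F_1\|-\sum_{k\ge2}\|F_k\|\ge \frac{c}{2}\sqrt{r_{\mathrm{eff}}/n}$, with probability at least $1/8-\epsilon/2\ge1/16$ for small $\epsilon$.
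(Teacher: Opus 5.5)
Your architecture is the paper's: contour expansion, $F_1$ as the dominant term, and higher-order terms via the combinatorial expansion of \cite{DKTranVu1}. Step 1 is fine. Applying Paley--Zygmund directly to $\|F_1\|^2=\sum_{j\ne1}|u_j^\top Eu_1|^2/(\lambda_1-\lambda_j)^2$ works, because in the block model $\Var S^{(1,j)}=1$ for every non-spike $j$. The paper instead bounds $\|F_1\|^2$ below by $\|Eu_1\|^2/\lambda_1^2$ minus two small terms and applies Paley--Zygmund to $\|Eu_1\|^2$.

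The genuine gap is Step 2, which is where the theorem's content lies. Your anticipated second-order bound $\frac{\lambda_1}{\delta_1}\cdot\frac{r_{\mathrm{eff}}}{n}$ is \emph{not} at most $C_3^{-1/2}\sqrt{r_{\mathrm{eff}}/n}$ when $n\ge C_3r_{\mathrm{eff}}\lambda_1/\delta_1$. That inequality is equivalent to $n\ge C_3r_{\mathrm{eff}}(\lambda_1/\delta_1)^2$, which is the very Davis--Kahan-type requirement you wanted to avoid. At $n=C_3r_{\mathrm{eff}}\lambda_1/\delta_1$ your bound equals $1/C_3$, while the main term is $\sqrt{\delta_1/(C_3\lambda_1)}$, smaller by the unbounded factor $\sqrt{\lambda_1/(C_3\delta_1)}$. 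The correct target is $\sqrt{\lambda_1/\delta_1}\cdot\frac{r_{\mathrm{eff}}}{n}$, up to factors of $r$ and $K$.

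Your cost accounting charges $1/\delta_1$ per spike denominator and $1/\lambda_1$ per non-spike denominator, and bounds each $Eu_i$ by $w\lesssim\lambda_1\sqrt{r_{\mathrm{eff}}/n}$. It produces the bad size $\frac{\lambda_1}{\delta_1}\frac{r_{\mathrm{eff}}}{n}$ exactly on the cross term from $\oint PEQEP$,
\[
\frac{1}{\lambda_1-\lambda_j}\,u_ju_j^\top E\Big(\sum_{l>r}\frac{u_lu_l^\top}{\lambda_1-\lambda_l}\Big)Eu_1u_1^\top,\qquad 2\le j\le r,
\]
and its transpose. So ``using the randomness of $E$'' has to be carried out concretely for this term.

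The paper's resolution has two ingredients you have not identified.

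\textbf{The cross term.} It is controlled by the separate parameter $y$ of Definition~\ref{def: xyw}. A second-moment computation (Lemma~\ref{lemma: xyModeil1}) gives the following.
\begin{itemize}
\item Its mean reduces to a fourth-cumulant term of size $O(K^2\lambda_1/n)$, which is zero in the block model. The leading part is proportional to $u_j^\top u_1=0$.
\item Its fluctuation is $O\big(\frac{K\lambda_1}{n}\sqrt{r_{\mathrm{eff}}}+\frac{K^2\lambda_1 r_{\mathrm{eff}}}{n^{3/2}}\big)$.
\end{itemize}
This is a $\sqrt{r_{\mathrm{eff}}}$ saving over $\|Eu_j\|\,\|Eu_1\|$. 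It gives $\sqrt r\,y/\delta_1=o(\sqrt{r_{\mathrm{eff}}/n})$, because here $n\ge C_3 r(\lambda_1/\delta_1)^3$.

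\textbf{Everything else.} Define $1/C$ to exceed $\max\{\frac{\|E\|}{\lambda_1/2},\frac{rx}{\delta_1},\frac{\sqrt r\,w}{\sqrt{\lambda_1\delta_1/2}}\}$. After the residue computation, every other piece of $F_2$, and every $F_s$ with $s\ge3$, is bounded by a power of $1/C$ times $\frac{\|E\|}{\lambda_1}+\frac{\sqrt r\,x}{\delta_1}$. The alternating chains $PEQE\cdots EQEP$ additionally contribute one factor $\sqrt r\,y/\delta_1$ times $(2/C)^{s-2}$. The binding quantity is the geometric-mean parameter $\frac{\sqrt r\,w}{\sqrt{\lambda_1\delta_1}}\asymp\sqrt{r\lambda_1r_{\mathrm{eff}}/(\delta_1 n)}$. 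This is exactly what the hypothesis $n\ge C_3r_{\mathrm{eff}}\lambda_1/\delta_1$ makes small.

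\textbf{The contour.} You never justify that $\Gamma$ encloses $\tilde\lambda_1$ and no other $\tilde\lambda_j$, or that the Neumann series converges on $\Gamma$. In this regime $\|E\|\asymp\sqrt{\lambda_1\delta_1/C_3}$ can far exceed $\delta_1$, so neither Weyl's inequality nor $\|E(z-M)^{-1}\|<1$ is available. The paper proves enclosure separately in Section~\ref{subsec: dim argument}, using the same parameters $\|E\|,x,w,y$, and cites \cite{DKTranVu1} for convergence.
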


 In all three theorems, the lower bound and upper bound match up to the constants involved.  
 The dependence of the constants $C_i$ on the parameters 
 $K, r, \epsilon$ is polynomial, and can be made explicit;  see Remark \ref{remark:explicit}.

{\bf \noindent Necessary and sufficient conditions for consistency.}  
As immediate corollaries of the above theorems, we obtain the following results, addressing  Problem 2.

\begin{corollary} \label{consistent1}
  Assume that  $r_{\mathrm{eff}} \leq  \frac{r\lambda_1}{\delta_1}$. Then 
    $  \left\| \tilde{u}_1 \tilde{u}_1^\top - u_1 u_1^\top \right\| =o(1) $
 if and only if 
    $\frac{ \lambda_1 }{\sqrt{n} \delta_1} =o(1). $
\end{corollary}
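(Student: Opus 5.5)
Corollary \ref{consistent1} follows directly from Theorem \ref{thm:small}, applied with an arbitrarily small $\epsilon$. Throughout we work under the standing assumptions \eqref{cond1}, \eqref{cond2}, \eqref{cond3}. Consistency means that for every fixed $\eta>0$,
\[
\P\big(\|\tilde u_1\tilde u_1^\top-u_1u_1^\top\|>\eta\big)\to 0 .
\]
Set $t:=\frac{\lambda_1}{\sqrt n\,\delta_1}$, which depends on $d$ through $M$ and $n=n(d)$. Note that $\min\{t,1\}=o(1)$ if and only if $t=o(1)$.

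\textbf{Sufficiency.} Assume $t=o(1)$ and fix $\eta>0$ and $\epsilon>0$. Theorem \ref{thm:small} with this $\epsilon$ gives a constant $C_1=C_1(\epsilon,K,r)$ such that, with probability at least $1-\epsilon$,
\[
\|\tilde u_1\tilde u_1^\top-u_1u_1^\top\|\le C_1\min\{t,1\}.
\]
For $d$ large we have $C_1 t<\eta$. Hence
\[
\limsup_{d\to\infty}\P\big(\|\tilde u_1\tilde u_1^\top-u_1u_1^\top\|>\eta\big)\le\epsilon .
\]
Since $\epsilon$ is arbitrary, this limsup is $0$, which is consistency. The constants depend on $\epsilon$, but because $\epsilon$ is fixed before letting $d\to\infty$, this causes no problem.

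\textbf{Necessity.} We argue by contrapositive. Suppose $t\neq o(1)$. Then there are $c>0$ and a subsequence of dimensions along which $t\ge c$, and so $\min\{t,1\}\ge\min\{c,1\}$. Apply the lower bound of Theorem \ref{thm:small}, say with $\epsilon=1/2$, to fix the constants. Along this subsequence, with probability at least $1/16$,
\[
\|\tilde u_1\tilde u_1^\top-u_1u_1^\top\|\ge C_2\min\{c,1\}=:\eta_0>0 .
\]
So $\P\big(\|\tilde u_1\tilde u_1^\top-u_1u_1^\top\|>\eta_0/2\big)\ge 1/16$ infinitely often, and the estimator is not consistent.

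The only points needing care are bookkeeping. First, the constant $C_0$ in \eqref{cond3} may depend on $\epsilon$; taking a smaller $\epsilon$ in the sufficiency step may require a larger $C_0$. Second, the lower bound is stated with probability $1/16$ rather than $1-o(1)$, so the negation of consistency has to be phrased in probability, as done above.
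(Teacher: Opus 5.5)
Your argument is correct and matches the paper, which presents this corollary as an immediate consequence of Theorem \ref{thm:small}: the upper bound with arbitrarily small $\epsilon$ gives sufficiency, and the probability-$\frac{1}{16}$ lower bound gives necessity. The $C_0$ concern you flag does not arise, because in \eqref{cond3} $C_0$ depends only on $K$ and $r$ (Remark \ref{remark:explicit} gives $C_0=(CKr)^2$), so shrinking $\epsilon$ changes only $C_1$.
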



%
\begin{corollary} \label{consistent2}
    Assume that  $\frac{r\lambda_1}{\delta_1} < r_{\mathrm{eff}} \leq r \big(\frac{\lambda_1}{\delta_1}\big)^2 $ and $n \geq C_3 \cdot r_{\mathrm{eff}} \frac{\lambda_1}{\delta_1}$. Then
     $  \left\| \tilde{u}_1 \tilde{u}_1^\top - u_1 u_1^\top \right\| =o(1) $ 
     if and only if 
    $\frac{ \lambda_1 }{\sqrt{n} \delta_1} =o(1)$ . 
\end{corollary}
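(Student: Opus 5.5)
The plan is to derive Corollary \ref{consistent2} directly from Theorem \ref{thm:medium}, reading the statement in the asymptotic regime $d\to\infty$. Here $K,r$ are fixed, the hypotheses of the corollary hold for every $d$, and $C_3$ is the constant supplied by Theorem \ref{thm:medium}. That constant depends on $\epsilon$, so I will fix one value of $\epsilon$ for the lower bound and let $\epsilon$ vary for the upper bound. The constant $C_0$ in \eqref{cond3} is not an obstacle: in the medium range we have $r_{\mathrm{eff}} > r\lambda_1/\delta_1$, and $r_{\mathrm{eff}} \le r(\lambda_1/\delta_1)^2$. If $\lambda_1/\delta_1$ stayed bounded along a subsequence, this range would force $r_{\mathrm{eff}}$ to be bounded as well. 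I would either treat that case directly or note that it is excluded by the standing assumption \eqref{cond3}, which the paper imposes throughout this subsection. Either way, Theorem \ref{thm:medium} applies for all large $d$.

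\textbf{Sufficiency.} Assume $\frac{\lambda_1}{\sqrt n\,\delta_1}=o(1)$ and fix $\epsilon>0$ and $\eta>0$. Theorem \ref{thm:medium} gives, with probability at least $1-\epsilon$,
\[
\|\tilde u_1\tilde u_1^\top-u_1u_1^\top\|\le C_1(\epsilon)\frac{\lambda_1}{\sqrt n\,\delta_1}.
\]
For $d$ large the right-hand side is below $\eta$. Hence $\P(\|\tilde u_1\tilde u_1^\top-u_1u_1^\top\|>\eta)\le\epsilon$ eventually, and since $\epsilon$ was arbitrary, the error is $o(1)$ in probability. The one subtlety is that $C_3$ depends on $\epsilon$. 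I would handle it by reading the hypothesis "$n\ge C_3 r_{\mathrm{eff}}\lambda_1/\delta_1$" as holding with the $C_3$ attached to each $\epsilon$ considered. Alternatively one can pin $\epsilon$ to a fixed small value and interpret consistency as "with probability $1-\epsilon$" in the paper's sense of probability $1-o(1)$.

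\textbf{Necessity.} Suppose $\frac{\lambda_1}{\sqrt n\,\delta_1}\neq o(1)$. Then there are $c>0$ and a subsequence along which $\frac{\lambda_1}{\sqrt n\,\delta_1}\ge c$. By the lower bound of Theorem \ref{thm:medium}, along that subsequence
\[
\|\tilde u_1\tilde u_1^\top-u_1u_1^\top\|\ge C_2 c
\]
with probability at least $1/16$. This contradicts the error being $o(1)$ with probability $1-o(1)$.

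The only real obstacle is bookkeeping: the dependence of $C_3$ on $\epsilon$, and checking that \eqref{cond3} holds in the medium regime. The analytic content is entirely carried by Theorem \ref{thm:medium}.
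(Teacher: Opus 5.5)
Your argument is exactly the paper's: the corollary is stated there as an immediate consequence of Theorem \ref{thm:medium}, with the upper bound giving sufficiency and the probability-$\frac{1}{16}$ lower bound giving necessity. The $\epsilon$-dependence of $C_3$ that you flag is genuine (Remark \ref{remark:explicit} gives $C_3\propto 1/\epsilon$) and the paper leaves it implicit; your first reading, which for the sufficiency direction amounts to $n/(r_{\mathrm{eff}}\lambda_1/\delta_1)\to\infty$, is the one that rigorously yields the probability-$(1-o(1))$ conclusion.
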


\begin{corollary} \label{consistent3}
    Assume that $ r_{\mathrm{eff}} > r \big(\frac{\lambda_1}{\delta_1}\big)^2 $ and $n \geq  C_3 \cdot r_{\mathrm{eff}} \frac{\lambda_1}{\delta_1}$. Then
     $  \left\| \tilde{u}_1 \tilde{u}_1^\top - u_1 u_1^\top \right\| =o(1) $
 if and only if
    $ \sqrt{\frac{r_{\mathrm{eff}}}{n}} =o(1)$ .
\end{corollary}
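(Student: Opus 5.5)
The statement immediately above is Corollary \ref{consistent3}, and I plan to derive it directly from Theorem \ref{thm:large}, exactly as the paper says these corollaries are ``immediate''. Throughout, $r_{\mathrm{eff}} > r(\lambda_1/\delta_1)^2$ and $n \ge C_3 r_{\mathrm{eff}} \lambda_1/\delta_1$ hold, so both conclusions of Theorem \ref{thm:large} are available. I will read ``$=o(1)$'' for the random quantity as ``tends to zero in probability''. Equivalently: for every fixed $\eta>0$, $\P(\| \tilde u_1 \tilde u_1^\top - u_1 u_1^\top\| > \eta) \to 0$ as $d \to \infty$.

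\emph{Sufficiency.} Suppose $\sqrt{r_{\mathrm{eff}}/n} = o(1)$, and fix $\eta>0$ and $\epsilon>0$. Theorem \ref{thm:large} gives constants $C_1, C_3$ depending only on $\epsilon, K, r$ such that, with probability at least $1-\epsilon$,
\[
\| \tilde u_1 \tilde u_1^\top - u_1 u_1^\top\| \le C_1 \sqrt{r_{\mathrm{eff}}/n}.
\]
For $d$ large the right-hand side is below $\eta$. Hence $\limsup_d \P(\|\cdot\| > \eta) \le \epsilon$, and since $\epsilon$ is arbitrary, the error is $o(1)$. One bookkeeping point needs care: the constant $C_3$ in the hypothesis must be valid for each $\epsilon$ used. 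I handle this by fixing some $\epsilon_0$ and noting that the hypothesis $n \ge C_3 r_{\mathrm{eff}}\lambda_1/\delta_1$ is meant with the constant from the theorem. Alternatively, under consistency one has $n/r_{\mathrm{eff}} \to \infty$ anyway. This suggests first reducing to the case $n/r_{\mathrm{eff}} \to \infty$ in the necessity direction. I regard this as the only subtle step.

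\emph{Necessity.} Suppose the error is $o(1)$ but $\sqrt{r_{\mathrm{eff}}/n}$ does not tend to $0$. Then along a subsequence $\sqrt{r_{\mathrm{eff}}/n} \ge c > 0$. By the lower bound in Theorem \ref{thm:large}, along that subsequence
\[
\P\bigl(\| \tilde u_1 \tilde u_1^\top - u_1 u_1^\top\| \ge C_2 c\bigr) \ge 1/16.
\]
This contradicts convergence to zero in probability, taking $\eta = C_2 c/2$. Note also that the trivial bound $\|\cdot\| \le 1$ is consistent with this: the lower bound is nonvacuous only when $C_2 c \le 1$, and we may shrink $c$ if necessary.

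The main (mild) obstacle is making the uniformity of the constants precise. Here $C_2, C_3$ must be fixed once (say for $\epsilon = 1/2$) so that the hypothesis in the corollary refers to a definite constant. With that convention both directions are one-line consequences of the matching bounds.
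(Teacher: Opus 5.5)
Your route is the paper's: the corollary is presented there as an immediate consequence of Theorem \ref{thm:large}, with the upper bound giving sufficiency and the probability-$\frac{1}{16}$ lower bound giving necessity. Your necessity argument is correct as written. Along a subsequence on which $\sqrt{r_{\mathrm{eff}}/n}\ge c$, the lower-bound event has probability at least $\frac{1}{16}$, which is incompatible with convergence in probability.

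The step that does not go through as written is your resolution of the bookkeeping issue you flagged. In Theorem \ref{thm:large} the constant $C_3$ depends on $\epsilon$; Remark \ref{remark:explicit} gives $C_3\propto 1/\epsilon$. So a hypothesis $n\ge C_3 r_{\mathrm{eff}}\lambda_1/\delta_1$ with a single $C_3=C_3(\epsilon_0)$ delivers the upper bound only with probability $1-\epsilon_0$. Fixing $\epsilon_0$ and then letting $\epsilon$ be arbitrary is therefore inconsistent.

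Your fallback, that $n/r_{\mathrm{eff}}\to\infty$, gives $n\ge C_3(\epsilon)\, r_{\mathrm{eff}}\lambda_1/\delta_1$ eventually for every $\epsilon$ only when $\lambda_1/\delta_1$ stays bounded. Suppose instead $\lambda_1/\delta_1\to\infty$, which this regime allows. Then $\sqrt{r_{\mathrm{eff}}/n}\le (C_3\lambda_1/\delta_1)^{-1/2}\to 0$ holds automatically. But the paper's results only give smallness with probability $1-\epsilon_0$: in Theorem \ref{theo: upUpperM1/2 subGau} the admissible $t$ is capped in terms of the fixed $C_3$, so the failure term $2r^2/t^2$ does not vanish.

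You should commit to one of two readings. The first is the convention the paper implicitly uses: ``$=o(1)$'' means that for each fixed $\epsilon$ the error is $o(1)$ with probability at least $1-\epsilon$, with $C_3$ (and $C_1$) chosen depending on $\epsilon$. Under it both of your directions go through; in the necessity direction take $\epsilon<\frac{1}{16}$ so the two events intersect. The second, if you want genuine convergence in probability, is to assume $n/(r_{\mathrm{eff}}\lambda_1/\delta_1)\to\infty$, which lets you send $\epsilon\to 0$. This is an imprecision in how the corollary is formulated rather than a flaw in your strategy, but the fix your proposal currently gives does not close it.
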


As mentioned earlier, the lower bounds on the number of samples $n$ in these results do not 
involve the dimension $d$. For example, in Corollary \ref{consistent1}, we need 
$$\textstyle \frac{n} {(\frac{\lambda_1}{\delta_1})^2 } \rightarrow \infty,  $$ 
\noindent while in Corollary \ref{consistent3},
we need $n \geq C_3 \cdot \frac{\lambda_1}{\delta_1} r_{\mathrm{eff}}$
and 
$$\frac{n}{r_{\mathrm{eff}} } \rightarrow \infty.$$
The latest result we found that provides a necessary and sufficient condition for consistency is \cite[Theorem 8]{JW2}. In this result,  $n$ is required to be super-quadratic in $d$, namely, $\frac{n}{d^2} \rightarrow \infty$; see Section \ref{subsec: existPopu} for more discussion.

\subsection{Treatment of \texorpdfstring{$u_p$}{up} for general \texorpdfstring{$p$}{p}}
Let us now turn to the treatment 
of $\left\| \tilde{u}_p \tilde{u}_p^\top - u_p u_p^\top \right\| $ for a general $p$. We keep the model of $X$ the same as in the previous section. Instead of the gap $\delta_1 = \lambda_1 -\lambda_2$, we consider 
\(
\delta_{(p)}:=\min\{\delta_{p-1}, \delta_p\},
\) which is the gap between $\lambda_p$ and its nearest eigenvalue. 
Another new parameter is $\kappa_{p} :=\frac{\lambda_1}{\lambda_p}.$ In the previous case when $p=1$, $\kappa_1=1$ and was omitted. In what follows, we assume that $\kappa_p =O(1)$.

We can extend the above three theorems for a general index $p$, basically by replacing $\frac{\lambda_1}{\delta_1}$ by $\frac{\lambda_p}{\delta_{(p)}}$ in all statements. Instead of \eqref{cond3}, we assume that 
\begin{equation} \label{cond4} r_{\mathrm{eff}}+\frac{\lambda_p}{\delta_{(p)}} \geq C_0. \end{equation} 

\begin{theorem} [Small effective rank] \label{thm:smallp} For any constants $\epsilon, K, r, \kappa_p$, there are constants $C_0, C_1, C_2$ so that the following holds.  If $r_{\mathrm{eff}} \leq  \frac{r\lambda_p}{\delta_{(p)}}$,
then with probability at least $1-\epsilon$,  
    $$   \left\| \tilde{u}_p \tilde{u}_p^\top - u_p u_p^\top \right\| \leq C_1  \cdot \min \bigg\{ \frac{  \lambda_p }{\sqrt{n} \delta_{(p)}}, 1 \bigg\},$$
\noindent and  with probability at least $\frac{1}{16}$,
$$ C_2 \cdot \min \bigg\{ \frac{ \lambda_p }{\sqrt{n} \delta_{(p)}}, 1 \bigg\} \leq  \left\| \tilde{u}_p \tilde{u}_p^\top - u_p u_p^\top \right\| .$$
\end{theorem}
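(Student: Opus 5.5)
We split $\tilde M - M = E$ and expand the perturbation of the spectral projector $u_pu_p^\top$ through the contour integral
\[
\tilde u_p \tilde u_p^\top - u_p u_p^\top = \frac{1}{2\pi i}\oint_{\Gamma} \left[(z-\tilde M)^{-1} - (z-M)^{-1}\right] dz ,
\]
where $\Gamma$ is a circle centred at $\lambda_p$ of radius $\delta_{(p)}/2$. Since $\Gamma$ isolates $\lambda_p$ from $\lambda_{p-1}$ and $\lambda_{p+1}$, this replaces the role of $\delta_1$ in Theorem \ref{thm:small} by $\delta_{(p)}$.

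\textbf{Trivial regime and setup.} The error is always at most $1$, so we may assume $\lambda_p/(\sqrt n\,\delta_{(p)}) \le c$ for a small constant $c$; otherwise both bounds are immediate, the lower bound by the same argument run at the threshold. In the remaining regime we need $\tilde\lambda_{p\pm1}$ and $\tilde\lambda_p$ to stay on the correct sides of $\Gamma$, so that $\Gamma$ encloses exactly one eigenvalue of $\tilde M$.

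\textbf{Upper bound.} We expand $(z-\tilde M)^{-1} = \sum_{k\ge0}(z-M)^{-1}[E(z-M)^{-1}]^k$. The first-order term is
\[
\sum_{j\ne p}\frac{u_j^\top E u_p}{\lambda_p-\lambda_j}\,(u_ju_p^\top + u_pu_j^\top).
\]
Because $X=M^{1/2}Y$ and $M$ is block diagonal, $u_j^\top E u_p$ is $\sqrt{\lambda_j\lambda_p}$ times an average of $n$ products of independent sub-Gaussian coordinates, hence of order $\sqrt{\lambda_j\lambda_p/n}$. The first-order norm is therefore about
\[
\sqrt{\frac{\lambda_p}{n}}\Big(\sum_{j\ne p}\frac{\lambda_j}{(\lambda_p-\lambda_j)^2}\Big)^{1/2}.
\]
The spikes $j\le r$ contribute $O(\lambda_p/(\sqrt n\,\delta_{(p)}))$, using $r,\kappa_p=O(1)$. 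The bulk $j>r$ has $\lambda_p-\lambda_j \ge \lambda_p/(2\kappa_p)$ by \eqref{cond2}, so it contributes $O(\sqrt{r_{\mathrm{eff}}/n})$. Under the hypothesis $r_{\mathrm{eff}}\le r\lambda_p/\delta_{(p)}$ this is at most $\sqrt{r\lambda_p/(n\delta_{(p)})}$, which is dominated by $\lambda_p/(\sqrt n\,\delta_{(p)})$ since $\lambda_p\ge\delta_{(p)}$.

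The higher-order terms should be controlled by the combinatorial expansion of \cite{DKTranVu1}: write $E$ in the eigenbasis, split it into spike--spike, spike--bulk and bulk--bulk blocks, and bound each word in the expansion by moment estimates. The bulk--bulk block has norm $O(\lambda_{r+1}(\sqrt{r_{\mathrm{eff}}/n}+r_{\mathrm{eff}}/n))$, which keeps these terms geometrically small. This step is the main obstacle. In the small-$r_{\mathrm{eff}}$ regime we do not assume $n\gtrsim r_{\mathrm{eff}}\lambda_p/\delta_{(p)}$, so we must use that only the entries $u_j^\top E u_p$ for spike indices see the small gap. Terms passing through the bulk pay a factor $\lambda_p^{-1}$ rather than $\delta_{(p)}^{-1}$. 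The largest non-spike contributions then look like $r_{\mathrm{eff}}/n$ times gap ratios, and $r_{\mathrm{eff}}\le r\lambda_p/\delta_{(p)}$ is exactly what makes these dominated by the first-order spike term once $\lambda_p/(\sqrt n\,\delta_{(p)})\le c$.

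\textbf{Lower bound.} Choose $q\in\{p-1,p+1\}$ with $|\lambda_p-\lambda_q|=\delta_{(p)}$. Then
\[
\|\tilde u_p\tilde u_p^\top-u_pu_p^\top\| \ge |u_q^\top \tilde u_p| \approx \frac{|u_q^\top E u_p|}{\delta_{(p)}} - (\text{remainder}).
\]
Here $u_q^\top E u_p = \frac{\sqrt{\lambda_p\lambda_q}}{n}\sum_{i} y_{i,p}y_{i,q}$ is a normalized sum of iid mean-zero, variance-one terms, with bounded fourth moment since $K=O(1)$. By Paley--Zygmund (or a second-moment/fourth-moment argument) it exceeds $\tfrac12\sqrt{\lambda_p\lambda_q/n}$ with probability at least roughly $1/8$. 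Since $\lambda_q \ge \lambda_p/\kappa_p$ up to constants, this gives the main term $\gtrsim \lambda_p/(\sqrt n\,\delta_{(p)})$.

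The upper-bound analysis shows that the remainder is at most a small constant times this main term with probability $1-\epsilon$. Combining the two events, with $\epsilon$ small, yields the lower bound with probability at least $1/16$. In the saturated case, where $\lambda_p/(\sqrt n\,\delta_{(p)})$ is large, the same event shows that $\tilde u_p$ has a constant overlap with $u_q$, giving the bound $\min\{\cdot,1\}$.
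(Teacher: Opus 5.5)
Your architecture matches the paper's: a contour expansion in which the first-order term $M_1+M_2$ dominates, higher-order words controlled by the Tran--Vu combinatorial expansion, and a lower bound from the $\gamma_p$-coordinate of the first-order term. Your heuristic that $r_{\mathrm{eff}}\le r\lambda_p/\delta_{(p)}$ reduces every requirement to $\lambda_p/(\sqrt n\,\delta_{(p)})\le c$ is also right. It is why the hypothesis of Theorem \ref{theo: upUpperM1/2 subGau} becomes $n\gtrsim (r\lambda_p/\delta_{(p)})^2$ here, and it even lets the crude bound $y\le 2w^2/\lambda_p$ absorb the non-geometric $PEQEP$ term.

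\textbf{Spike/bulk split.} This is the first genuine gap. Condition \eqref{cond2} separates the bulk from $\lambda_1$, not from $\lambda_p$. It only gives $\lambda_p-\lambda_j\ge\lambda_p(1-\kappa_p/2)$ for $j>r$, which is vacuous once $\kappa_p\ge 2$. So your claim $\lambda_p-\lambda_j\ge\lambda_p/(2\kappa_p)$ fails for $p\ge 2$, and for $p>r$ the vector $u_p$ is itself a bulk direction. This is not cosmetic. Take $\kappa_p\ge2$, Gaussian $Y$, and $m$ bulk eigenvalues equal to $\lambda_p-\delta_{(p)}$ with $\delta_{(p)}\approx\lambda_p/m$. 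Then $r_{\mathrm{eff}}\le r\lambda_p/\delta_{(p)}$ still holds, but for large $n$ the first-order term is of order $\sqrt m\,\lambda_p/(\sqrt n\,\delta_{(p)})$, i.e.\ $\sqrt m$ times the claimed bound. The decomposition that works is the paper's: $I_p=\{i:|\lambda_i-\lambda_p|<\lambda_p/2\}$ versus its complement. There $|\lambda_p-\lambda_l|\ge\lambda_p/2$ holds by definition, provided $r$ is read as $|I_p|$; that is, you should argue through Theorem \ref{thm:small-dependentp}.

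\textbf{Isolating $\tilde\lambda_p$.} You never show that $\Gamma$ encloses $\tilde\lambda_p$ and no other $\tilde\lambda_j$. Weyl would need $\|E\|<\delta_{(p)}/2$, but here $\|E\|\asymp\lambda_1\sqrt{r_{\mathrm{eff}}/n}$ can exceed $\delta_{(p)}$ by a factor of order $\sqrt{\lambda_p/\delta_{(p)}}$. The paper uses the dimension argument of Lemma \ref{lem: dimargu}, which shows $\|\tilde\Pi_{\Gamma_k}-\Pi_k\|<1$ for $k=p-1,p$. The same issue affects the validity of the Neumann expansion along $\Gamma$, where $\|E(z-M)^{-1}\|$ need not be below $1$.

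\textbf{Lower bound.} Restricting to $|u_q^\top\tilde u_p|\ge|u_q^\top F_1u_p|-\sum_{s\ge2}\|F_s\|$ is a legitimate specialization of the paper's $\|M_1+M_2\|$ argument, and using Paley--Zygmund instead of the CLT is fine. Two problems remain.
\begin{itemize}
\item The summands are not $y_{i,p}y_{i,q}$. In fact $u_q^\top Eu_p=\frac{\sqrt{\lambda_p\lambda_q}}{n}\sum_i S_i^{(p,q)}$ with $S^{(p,q)}=(u_p^\top Y)(u_q^\top Y)$, which reduces to $y_py_q$ only when $u_p,u_q$ are coordinate vectors (e.g.\ $\Lambda_r$ diagonal). $\Var S^{(p,q)}$ is not bounded below in terms of $K$: it is $0$ for Rademacher coordinates with $u_p,u_q=(e_1\pm e_2)/\sqrt2$. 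So $C_2$ must depend on $\Var S^{(p,\gamma_p)}$, as in Remark \ref{remark:explicit}.
\item More seriously, the saturated case is not handled. Since $n$ is given, the argument cannot be ``run at the threshold''. Once $\lambda_p/(\sqrt n\,\delta_{(p)})$ exceeds your constant $c$, neither the isolation of $\tilde\lambda_p$ nor the approximation $u_q^\top\tilde u_p\approx u_q^\top Eu_p/(\lambda_p-\lambda_q)$ is available. Nothing then supports a constant overlap with $u_q$. The paper's derivation in Appendix \ref{subsection:routine} likewise yields the lower bound only under \eqref{condition concrete}, so you cannot borrow it here. This regime needs a separate, non-perturbative argument.
\end{itemize}
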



%
\begin{theorem} [Medium effective rank] \label{thm:mediump} For any constants $\epsilon, K,r $, there are constants $C_0, C_1, C_2, C_3$ so that the following holds.  If $\frac{r\lambda_p}{\delta_{(p)}} < r_{\mathrm{eff}} \leq \frac{r}{\kappa_p^2} \big(\frac{\lambda_p}{\delta_{(p)}}\big)^2 $, and $n \geq C_3 \cdot r_{\mathrm{eff}}  \frac{\lambda_p}{\delta_{(p)}} $, then with probability at least $1-\epsilon$,
$$   \left\| \tilde{u}_p \tilde{u}_p^\top - u_p u_p^\top \right\| \leq C_1 \cdot \frac{ \lambda_p }{\sqrt{n} \delta_{(p)}},$$
\noindent and  with probability at least $\frac{1}{16}$,
$$ C_2 \cdot \frac{ \lambda_p }{\sqrt{n} \delta_{(p)}} \leq  \left\| \tilde{u}_p \tilde{u}_p^\top - u_p u_p^\top \right\| .$$
\end{theorem}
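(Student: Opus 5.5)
We prove Theorem \ref{thm:mediump} by writing $\tilde M = M + E$ with $E := \tilde M - M = M^{1/2}\big(\frac1n\sum_{i} Y_iY_i^\top - I_d\big)M^{1/2}$, and representing the spectral projection through a contour integral. Let $\Gamma$ be a circle centered at $\lambda_p$ of radius $\delta_{(p)}/2$. Weyl's inequality together with the norm bound below shows that $\tilde\lambda_p$ lies inside $\Gamma$ and no other eigenvalue of $\tilde M$ does. Then
$$\tilde u_p\tilde u_p^\top - u_pu_p^\top = \frac{1}{2\pi i}\oint_\Gamma \big[(z-\tilde M)^{-1}-(z-M)^{-1}\big]\,dz = \sum_{k\ge1}\frac{1}{2\pi i}\oint_\Gamma (z-M)^{-1}\big[E(z-M)^{-1}\big]^k dz .$$
The linear term $k=1$ equals
$$F_1 := \sum_{j\neq p}\frac{u_j^\top E u_p}{\lambda_p-\lambda_j}\,(u_ju_p^\top + u_pu_j^\top).$$
Its norm is comparable to $\big(\sum_{j\ne p} \frac{(u_j^\top E u_p)^2}{(\lambda_p-\lambda_j)^2}\big)^{1/2}$. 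Since $u_j^\top E u_p = \sqrt{\lambda_j\lambda_p}\,\frac1n\sum_i (Y_i)_j(Y_i)_p$ in the eigenbasis (using the block form, and rotating within $\Lambda_r$ and $V_d$), we get
$$\E\|F_1\|^2 \asymp \frac{1}{n}\sum_{j\neq p}\frac{\lambda_j\lambda_p}{(\lambda_p-\lambda_j)^2}.$$

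We split this sum into three groups.
\begin{itemize}
\item The spiked neighbours of $p$, i.e.\ the index $j$ attaining $\delta_{(p)}$, contribute $\asymp \frac{\lambda_p^2}{n\delta_{(p)}^2}$, using $\kappa_p=O(1)$.
\item The other spikes contribute at most $r\cdot\frac{\lambda_p^2}{n\delta_{(p)}^2}$.
\item The bulk $j>r$ has $\lambda_p-\lambda_j\ge \lambda_p - \lambda_1/2 \gtrsim \lambda_p$ after using $\kappa_p$ and \eqref{cond2}, with the constant adjusted. It therefore contributes $\lesssim \frac{\sum_j\lambda_j}{n\lambda_p} \lesssim \kappa_p\frac{r_{\mathrm{eff}}}{n}$.
\end{itemize}
The hypothesis $r_{\mathrm{eff}}\le \frac{r}{\kappa_p^2}(\lambda_p/\delta_{(p)})^2$ makes the bulk term at most a constant times $\frac{\lambda_p^2}{n\delta_{(p)}^2}$. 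Hence Markov's inequality gives $\|F_1\|\le C\frac{\lambda_p}{\sqrt n\delta_{(p)}}$ with probability $1-\epsilon/2$.

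For the lower bound we use the single coordinate $j_0$ achieving $\delta_{(p)}$. The scalar $u_{j_0}^\top E u_p$ is an average of $n$ iid products of independent mean-zero unit-variance sub-Gaussians times $\sqrt{\lambda_{j_0}\lambda_p}$. Paley–Zygmund, with fourth moments bounded by $K$, gives $|u_{j_0}^\top Eu_p|\ge c\sqrt{\lambda_{j_0}\lambda_p/n}$ with probability at least $1/8$. Therefore $\|F_1\|\ge |u_{j_0}^\top F_1 u_p| \ge c'\frac{\lambda_p}{\sqrt n\delta_{(p)}}$. Intersecting with the event that the higher-order remainder is at most half of this yields probability $\ge 1/16$.

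The main obstacle is bounding the remainder $\sum_{k\ge2}$ by $o$ of, or a small constant times, $\frac{\lambda_p}{\sqrt n\delta_{(p)}}$. The crude Davis–Kahan approach would need $\|E\|/\delta_{(p)}$ small. Since $\|E\|\approx \lambda_1\sqrt{r_{\mathrm{eff}}/n}$, this requires $n\gg r_{\mathrm{eff}}(\lambda_p/\delta_{(p)})^2$, which is stronger than the assumed $n\ge C_3 r_{\mathrm{eff}}\lambda_p/\delta_{(p)}$. Instead, we follow the combinatorial expansion of \cite{DKTranVu1}. In the $k$-th term we expand each resolvent as $\sum_j \frac{u_ju_j^\top}{z-\lambda_j}$ and evaluate the contour integral by residues. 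Factors $\frac{1}{z-\lambda_j}$ with $j$ far from $\lambda_p$ then cost only $1/\lambda_p$, while only the near-$p$ indices cost $1/\delta_{(p)}$. We group the terms by the profile of which indices are "close". Each factor $E$ sandwiched between bulk projections has norm $\lesssim \lambda_1\sqrt{r_{\mathrm{eff}}/n}$ by matrix Bernstein/Koltchinskii–Lounici; this is available since $n\ge r_{\mathrm{eff}}$. Factors of the form $u_j^\top E u_p$ with $j\le r$ are of size $\lambda_1/\sqrt n$. Each additional order then gains a factor of roughly $\max\{\sqrt{r_{\mathrm{eff}}/n},\ \frac{\lambda_p}{\delta_{(p)}}\cdot\frac{1}{\sqrt n},\ \frac{\lambda_p}{\delta_{(p)}}\frac{r_{\mathrm{eff}}}{n}\}$. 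The last of these is at most $1/C_3$ by the sample assumption, so the geometric series converges. The $k=2$ term is $O\big(\frac{\lambda_p}{\sqrt n\delta_{(p)}}\cdot C_3^{-1/2}\big)$. Choosing $C_3$ large completes both bounds.
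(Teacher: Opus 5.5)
Your overall architecture (contour expansion, $F_1$ as the dominant term, near/far grouping as in \cite{DKTranVu1}) is the paper's. Your treatment of $F_1$ is also fine: it is the paper's analysis of $M_1+M_2$, with Paley--Zygmund on $u_{j_0}^\top E u_p$ in place of the CLT. But there are two genuine gaps, and both sit exactly where $n$ is only of order $C_3 r_{\mathrm{eff}}\lambda_p/\delta_{(p)}$.

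\textbf{Localization.} You claim that Weyl's inequality together with $\|E\|\lesssim \lambda_1\sqrt{r_{\mathrm{eff}}/n}$ puts $\tilde\lambda_p$, and no other $\tilde\lambda_j$, inside the circle of radius $\delta_{(p)}/2$. That requires $\|E\|<\delta_{(p)}/2$, i.e.\ $n\gtrsim \kappa_p^2 r_{\mathrm{eff}}(\lambda_p/\delta_{(p)})^2$. This is precisely the Davis--Kahan-type condition you reject later in your own proposal.

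Under $n\ge C_3 r_{\mathrm{eff}}\lambda_p/\delta_{(p)}$ you only get $\|E\|/\delta_{(p)}\lesssim \kappa_p\sqrt{\lambda_p/(C_3\delta_{(p)})}$. In the medium range $\lambda_p/\delta_{(p)}$ can be arbitrarily large, and $\|E\|$ is typically of this order, so Weyl's bound is useless here.

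The paper localizes by a dimension argument (Lemma~\ref{lem: dimargu}). For the contour $\Gamma_k$ around the top $k$ eigenvalues of $M$, the Tran--Vu eigenspace bound needs only the $\|E\|/\lambda_k$, $x$, $w$, $y$ conditions, none of which asks for $\|E\|<\delta$. It gives $\|\tilde\Pi_{\Gamma_k}-\Pi_k\|<1$, so exactly $k$ eigenvalues of $\tilde M$ lie inside $\Gamma_k$. Since $\tilde\lambda_1<\|M\|+1.1\|E\|$, these are $\tilde\lambda_1,\dots,\tilde\lambda_k$. The contour isolating $\tilde\lambda_p$ is then the region between $\Gamma_p$ and $\Gamma_{p-1}$.

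\textbf{The second-order term.} The residue of $\oint PEQEP$ contains
$\sum_{j\in I_p\setminus\{p\}}\frac{u_ju_j^\top EQ_0Eu_pu_p^\top}{\lambda_p-\lambda_j}$, where $Q_0=\sum_{l\notin I_p}\frac{u_lu_l^\top}{\lambda_p-\lambda_l}$. The factor sizes you list only give $|u_j^\top EQ_0Eu_p|\le \|Eu_j\|\,\|Q_0\|\,\|Eu_p\|\approx \lambda_1 r_{\mathrm{eff}}/n$.

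This contributes about $\kappa_p\frac{\lambda_p r_{\mathrm{eff}}}{n\delta_{(p)}}$, which is $\kappa_p r_{\mathrm{eff}}/\sqrt n$ times your target $\frac{\lambda_p}{\sqrt n\delta_{(p)}}$. At $r_{\mathrm{eff}}\asymp r(\lambda_p/\delta_{(p)})^2/\kappa_p^2$ and $n\asymp C_3 r_{\mathrm{eff}}\lambda_p/\delta_{(p)}$, this ratio is $\asymp\sqrt{r\lambda_p/(C_3\delta_{(p)})}$, which is unbounded. So the claim that the $k=2$ term is $O(C_3^{-1/2}\lambda_p/(\sqrt n\delta_{(p)}))$ does not follow from your per-order gain heuristic. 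The $P\to Q\to P$ excursion costs $\|Eu_j\|\|Eu_p\|/(\lambda_p\delta_{(p)})$, which is a small constant, not a small multiple of the main term.

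The missing idea is a cancellation. For $j\ne p$, orthogonality of $u_j,u_p,u_l$ kills the leading part of $\E\, u_j^\top EQ_0Eu_p$, leaving only a fourth-cumulant term of size at most $4K^2\sqrt{\lambda_j\lambda_p}/n$. A second-moment computation then bounds the fluctuation by roughly $\frac1n(\sum_{l\notin I_p}\lambda_l^2)^{1/2}\le \lambda_1\sqrt{r_{\mathrm{eff}}}/n$, a gain of $\sqrt{r_{\mathrm{eff}}}$ over the naive bound.

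This is the paper's parameter $y$ and the second part of Lemma~\ref{lemma: xyModeil1}. It yields $\sqrt r y/\delta_{(p)}\lesssim \frac{\sqrt r\lambda_p}{\sqrt n\delta_{(p)}}\big(\frac{K}{\sqrt n}+\kappa_p\sqrt{r_{\mathrm{eff}}/n}+\frac{K^2\kappa_p r_{\mathrm{eff}}}{n}\big)$. The same $y$ term controls the alternating chains $PEQEP\cdots EQEP$ at every order (Lemma~\ref{LemmaCase2M(a,b)bound}); the $w$-condition only makes the remaining transitions contract.

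\textbf{Minor point.} Rotating $Y$ inside the blocks does not preserve independence of non-Gaussian coordinates. So $u_{j_0}^\top Y$ and $u_p^\top Y$ are only uncorrelated, and $\Var S^{(p,j_0)}$ can even vanish (Rademacher coordinates with $u_p,u_{j_0}=(e_1\pm e_2)/\sqrt2$). Your lower-bound constant must therefore depend on $\Var S^{(p,\gamma_p)}$, as the paper's $C_2$ does.
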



\begin{theorem} [Large effective rank] \label{thm:largep} 
 For any constant $\epsilon, K,r $, there are constants $C_0, C_1, C_2, C_3$ so that the following holds. If $ r_{\mathrm{eff}} > \frac{r}{\kappa_p^2} \big(\frac{\lambda_p}{\delta_{(p)}}\big)^2 $, and  $n \geq C_3 \cdot  r_{\mathrm{eff}}  \frac{\lambda_p}{\delta_{(p)}}   $, then with probability at least $1-\epsilon$, 
\[
\left\| \tilde{u}_p \tilde{u}_p^\top - u_p u_p^\top \right\| \leq C_1  \cdot \sqrt{\frac{r_{\mathrm{eff}}}{n}},
\]
and with probability at least $\frac{1}{16}$,
\[
C_2 \cdot \sqrt{\frac{r_{\mathrm{eff}}}{n}}  \leq \left\| \tilde{u}_p \tilde{u}_p^\top - u_p u_p^\top \right\| .
\]
\end{theorem}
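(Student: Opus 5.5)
We write $\tilde M = M + E$ with $E = \frac1n\sum_i X_iX_i^\top - M = M^{1/2}\bigl(\frac1n\sum_i Y_iY_i^\top - I\bigr)M^{1/2}$. The error is expressed through the contour representation
\[
\tilde u_p\tilde u_p^\top - u_pu_p^\top = \frac{1}{2\pi i}\oint_{\Gamma}\bigl[(z-\tilde M)^{-1}-(z-M)^{-1}\bigr]\,dz ,
\]
where $\Gamma$ is a circle centered at $\lambda_p$ of radius $\delta_{(p)}/2$. For this contour to isolate $\tilde\lambda_p$ we need $\|E\|$ to be small compared with $\delta_{(p)}$ in the relevant directions. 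We first use sub-Gaussian covariance concentration (Koltchinskii--Lounici type): with probability $1-\epsilon/3$, $\|E\|\le C\lambda_1\bigl(\sqrt{r_{\mathrm{eff}}/n}+r_{\mathrm{eff}}/n\bigr)$. The hypothesis $n\ge C_3 r_{\mathrm{eff}}\lambda_p/\delta_{(p)}$ is meant to make this at most a small multiple of $\delta_{(p)}$ once the scalar part is handled; when this fails, only the trivial bound $1$ is claimed.

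Next we expand $(z-\tilde M)^{-1}=\sum_{k\ge0}(z-M)^{-1}\bigl[E(z-M)^{-1}\bigr]^k$ and integrate term by term. The first-order term is $\sum_{j\ne p}\frac{u_j^\top E u_p}{\lambda_p-\lambda_j}(u_ju_p^\top+u_pu_j^\top)$, and its norm is $\|\sum_{j\ne p}\frac{u_j^\top Eu_p}{\lambda_p-\lambda_j}u_j\|$. We have $u_j^\top E u_p=\sqrt{\lambda_j\lambda_p}\,(e_j^\top(\frac1n\sum Y_iY_i^\top - I)e_p)$ in the block basis; we may take $\Lambda_r$ diagonal after rotation and absorb the fixed rotation, using $r=O(1)$. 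Its second moment is then
\[
\frac{\lambda_p}{n}\sum_{j\ne p}\frac{\lambda_j}{(\lambda_p-\lambda_j)^2}\asymp \frac{\lambda_p}{n}\Bigl(\frac{\lambda_{p\pm1}}{\delta_{(p)}^2}+\frac{\mathrm{Tr}\,M}{\lambda_1^2}\Bigr),
\]
because the spike neighbours contribute $\lambda_p^2/(n\delta_{(p)}^2)$ and the bulk, with $\lambda_j\le\lambda_1/2$ and $\kappa_p=O(1)$, contributes $\asymp r_{\mathrm{eff}}/n$. The sum $\frac{\lambda_p^2}{n\delta_{(p)}^2}+\frac{r_{\mathrm{eff}}}{n}$ is exactly the quantity whose dominant term switches between the ranges, with $r_{\mathrm{eff}}$ compared to $(\lambda_p/\delta_{(p)})^2/\kappa_p^2$. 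In the large-effective-rank regime of this theorem it is $\Theta(r_{\mathrm{eff}}/n)$. The upper bound for this term follows from Chebyshev. The lower bound comes from Paley--Zygmund, applied after computing a fourth-moment bound via independence of the coordinates of the $Y_i$ with $K=O(1)$; this is what yields the constant $1/16$.

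The hard part will be controlling the higher-order terms $k\ge2$ and showing they are at most a small constant times $\sqrt{r_{\mathrm{eff}}/n}$. The crude bound $(\|E\|/\delta_{(p)})^k$ is too lossy, since $\|E\|$ can be $\lambda_1\sqrt{r_{\mathrm{eff}}/n}\gg\delta_{(p)}$. To handle this, we split $E$ using the projection $\Pi$ onto the spike block and the complementary projection $\Pi^\perp$. Near the contour, $(z-M)^{-1}$ restricted to the bulk has norm only $O(1/\lambda_1)$, so each bulk factor costs $\|E\|/\lambda_1\lesssim\sqrt{r_{\mathrm{eff}}/n}$. Only the factors passing through the spike directions pay $1/\delta_{(p)}$, and their entries $u_i^\top E u_j$ with $i,j\le r$ are of size $\lambda_1/\sqrt n$. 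Grouping the words in the expansion by how often they visit the spike block (the combinatorial profiles of \cite{DKTranVu1}), a word with $a$ spike-resolvent factors and $b$ bulk ones is bounded by roughly $(\lambda_1/(\sqrt n\,\delta_{(p)}))^a(\sqrt{r_{\mathrm{eff}}/n})^b\cdot\sqrt{r_{\mathrm{eff}}/n}$. This sums geometrically provided $\lambda_p/(\sqrt n\,\delta_{(p)})$ and $r_{\mathrm{eff}}/n$ are small. Both follow from $n\ge C_3r_{\mathrm{eff}}\lambda_p/\delta_{(p)}$ together with $r_{\mathrm{eff}}>(\lambda_p/\delta_{(p)})^2/\kappa_p^2$: the latter gives $(\lambda_p/\delta_{(p)})^2\le\kappa_p^2 r_{\mathrm{eff}}\le\kappa_p^2 n/C_3$. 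We expect the delicate point to be the mixed terms of the form $u_p^\top E\Pi^\perp(z-M)^{-1}\Pi^\perp E u_j$. These must be bounded by their expectation, which is about $\mathrm{Tr}(M)\lambda_p/n$ on the diagonal, together with their fluctuation, rather than by operator norms, using the trace-type estimates in the appendix. Choosing $C_3$ large then makes the tail less than $C_2/2$ times the main term, and this gives both bounds.
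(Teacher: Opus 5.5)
Your architecture is the paper's: contour integral, Neumann expansion, splitting $(z-M)^{-1}$ into the spike part $P$ and the bulk part $Q$, the profiles of \cite{DKTranVu1}, and moment bounds plus Paley--Zygmund for the linear term. Your treatment of the first-order term is sound. No rotation is needed there, and rotating the bulk block would in fact destroy the independence of the $y_k$; the identity $\E(u_j^\top E u_p)^2=\lambda_j\lambda_p\Var S^{(p,j)}/n$ holds for any orthonormal eigenbasis.

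The gap lies in what you let the hypothesis $n\ge C_3 r_{\mathrm{eff}}\lambda_p/\delta_{(p)}$ do. It does \emph{not} make $\|E\|$ a small multiple of $\delta_{(p)}$. At $n=C_3r_{\mathrm{eff}}\lambda_p/\delta_{(p)}$, $\|E\|/\delta_{(p)}$ can be of order $\kappa_p\sqrt{\lambda_p/(C_3\delta_{(p)})}$, which is unbounded; you concede this yourself later. Consequently:
(i) Weyl-type localization of $\tilde\lambda_p$ inside your circle of radius $\delta_{(p)}/2$ fails.
(ii) The pointwise expansion on $\Gamma$ cannot be justified by $\|E(z-M)^{-1}\|<1$.
(iii) The fallback ``only the trivial bound $1$ is claimed'' is unavailable, because this theorem asserts $C_1\sqrt{r_{\mathrm{eff}}/n}$, which tends to $0$ in the regime of interest.
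Isolation needs its own argument. The paper uses a dimension argument (Section~\ref{subsec: dim argument}): the eigenspace bound of \cite{DKTranVu1} gives $\|\tilde\Pi_{\Gamma_k}-\Pi_k\|<1$ for $k=p-1,p$, so the ranks agree. Convergence of $\sum_s F_s$ is taken from \cite[Section 8.4]{DKTranVu1}.

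The same misattribution breaks your bookkeeping for the tail. Spike--bulk transitions are governed by $w=\max_{i\in I_p}\|Eu_i\|\asymp\sqrt{\lambda_1\lambda_p r_{\mathrm{eff}}/n}$, not by $x$ or by $\|E\|/\lambda_1$. So a $PEQEP$ excursion costs up to $w^2/(\lambda_p\delta_{(p)})\asymp\kappa_p\lambda_p r_{\mathrm{eff}}/(n\delta_{(p)})$. Your word bound $(\lambda_1/(\sqrt n\,\delta_{(p)}))^a(r_{\mathrm{eff}}/n)^{(b+1)/2}$ misses this. Your summability conditions, which in this regime amount to $n\gg r_{\mathrm{eff}}$, are too weak. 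Already for $M=\mathrm{diag}(\lambda_p,0)$ and $E$ with off-diagonal entries $w$, the spectral radius of $E(z-M)^{-1}$ on $|z-\lambda_p|=\delta_{(p)}/2$ is about $w/\sqrt{\lambda_p\delta_{(p)}/2}$.

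The right control is $\max\{\|E\|/(\lambda_p/2),\,rx/\delta_{(p)},\,\sqrt r\,w/\sqrt{\lambda_p\delta_{(p)}/2}\}<1/C$. The third entry is exactly where $n\ge C_3r_{\mathrm{eff}}\lambda_p/\delta_{(p)}$ enters. Even then, the excursion becomes only a small \emph{constant}. The quantity $u_p^\top EQ(\lambda_p)Eu_p$ has mean of order $\lambda_1 r_{\mathrm{eff}}/n$, and divided by $\delta_{(p)}$ it can dwarf $\sqrt{r_{\mathrm{eff}}/n}$. So ``expectation plus fluctuation'' cannot handle it.

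What rescues the argument is structural. After taking residues, the diagonal excursion appears only through $Q'(\lambda_p)$, namely the term $-\sum_{l\notin I_p}(u_p^\top Eu_l)^2(\lambda_p-\lambda_l)^{-2}u_pu_p^\top$ of size $w^2/\lambda_p^2$, or multiplied by further factors of size $1/C$. By contrast, the $1/\delta_{(p)}$-weighted contributions of the words $PEQEP\cdots QEP$ carry an off-diagonal factor $u_i^\top EQ(\lambda_p)Eu_j$ with $i\neq j$ (the parameter $y$). By orthogonality, its mean reduces to a fourth-cumulant term of size $O(K^2\sqrt{\lambda_i\lambda_j}/n)$. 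This is the content of \eqref{splitPEQEP}, \eqref{est: F2PEQEP} and Lemma~\ref{LemmaCase2M(a,b)bound}. It reduces the tail to $\frac{21}{C}\big(\frac{\|E\|}{\lambda_p}+\frac{\sqrt r x}{\delta_{(p)}}\big)+\frac{3\sqrt r y}{\delta_{(p)}}$, and each piece is then compared with $\sqrt{r_{\mathrm{eff}}/n}$ via moment bounds.
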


In the next few sections, we present new results for more general settings.
While these statements will become a bit more technical, 
they all share the same structure as the ones above. 
We will have matching upper and lower bounds in the three 
ranges of the effective rank, under a similar condition on $n$.

\section{Results in more general settings}
\label{section: population}

 Recall that  $X=M^{1/2} Y$ with $Y=[ y_1 ,\,\,y_2, \,...\, , y_d]^\top$. To generalize the results in the last section, we
 first remove the condition that $M$ has block structure. Next, we allow the entries of $Y$ to be dependent. We will only require that the entries $\{y_i\}_{i=1}^{d}$ are 8-wise independent sub-Gaussian random variables with mean $0$, variance $1$, and $\|y_i\|_{\psi_2}^2 \leq K$. 
 
 The treatment of this general model leads to the following new parameters, which are critical to the bounds. We define
\begin{equation} \label{def: Sij}
    S^{(i,j)}:= (u_i^\top Y) \cdot (u_j^\top Y) \,\,\text{for $1\leq i,j \leq d$}. 
\end{equation}
A  routine direct computation shows that
\begin{equation} \label{iden: expect and variance Sij}
    \begin{split}
        \E  S^{(i,j)} &  =\begin{cases}
            0 & \,\,\, \text{if $i \neq j$}\\
            1 & \,\,\,\text{if $ i= j$}
        \end{cases},\,\, \Var S^{(i,j)} = \begin{cases}
    1+  \sum_{k=1}^d \E (y_k^4 -3) u_{ik}^2 u_{jk}^2 &\,\,\text{if $i \neq j$}\\
    2 + \sum_{k=1}^d \E (y_k^4 -3) u_{ik}^4 &\,\,\,\text{if $i=j$}
\end{cases}\,\,.
    \end{split}
\end{equation}

$\Var S^{(i,j)}$ is typically of order $\Theta (1)$.  The following lemma provides concrete estimates.  
\begin{lemma} \label{lem: S value} We have 

\begin{itemize} 

\item Let $c \ge  0$ be such that $\min_{1 \leq k \leq d} \E y_k^4 \geq 1+2c$. Then 
\[
c \leq \Var S^{(i,j)} \leq 4K^2.
\]
 \item If $Y$ has iid Gaussian $\mathcal{N}(0,1)$ coordinates, then $\Var S^{(i,j)}= \begin{cases}
     1 \,\,& \text{if $i \neq j$} \\
     2 \,\,& \text{if $i=j$}
 \end{cases}$.

\item If $\max_{1 \leq k \leq d}|u_{ik} u_{jk}|=o(1)$, then
\(
\Var S^{(i,j)} = \begin{cases}
     1-o(1) \,\,& \text{if $i \neq j$} \\
     2 - o(1) \,\,& \text{if $i=j$}
\end{cases}
\).

\end{itemize} 
\end{lemma}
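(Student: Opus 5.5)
Everything follows from the variance formula \eqref{iden: expect and variance Sij}, which I take as given; it uses only that the $y_k$ are (at least 4-wise) independent with mean $0$ and variance $1$, and that $u_i,u_j$ are orthonormal. Write $a_k=u_{ik}$ and $b_k=u_{jk}$, so that $\sum_k a_k^2=\sum_k b_k^2=1$ and $\sum_k a_kb_k=\mathbf{1}_{i=j}$. Put $\mu_k=\E y_k^4$. Then
\begin{equation*}
\Var S^{(i,j)} = (1+\mathbf{1}_{i=j}) + \sum_k (\mu_k-3)a_k^2b_k^2 .
\end{equation*}

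\textbf{Second and third bullets.} For Gaussian coordinates $\mu_k=3$, so the sum vanishes and the variance is $1$ or $2$. For the third bullet, let $\eta=\max_k|a_kb_k|=o(1)$. By Cauchy--Schwarz, $\sum_k|a_kb_k|\le 1$, hence $\sum_k a_k^2b_k^2\le \eta\sum_k|a_kb_k|\le\eta$. I also need $\mu_k$ bounded, which holds since the standing sub-Gaussian assumption gives $\mu_k\le 4K^2$. Then $\bigl|\sum_k(\mu_k-3)a_k^2b_k^2\bigr|\le (4K^2+3)\eta=o(1)$, giving $1\pm o(1)$ and $2\pm o(1)$. (The statement writes $1-o(1)$, which I read as $1+o(1)$ in absolute value.)

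\textbf{Upper bound in the first bullet.} I would argue directly rather than through the formula. By Cauchy--Schwarz, $\Var S\le \E S^2 \le \bigl(\E(u_i^\top Y)^4\,\E(u_j^\top Y)^4\bigr)^{1/2}$. For a unit vector $a$,
\begin{equation*}
\E(a^\top Y)^4=\sum_k a_k^4\mu_k+3\sum_{k\ne l}a_k^2a_l^2 \le \max_k\mu_k\sum_k a_k^4+3\Bigl(1-\sum_k a_k^4\Bigr)\le \max(\mu_{\max},3).
\end{equation*}
Since $K\ge1$ we have $4K^2\ge 4\ge3$, so this is at most $4K^2$.

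\textbf{Lower bound in the first bullet.} Here $\mu_k\ge 1+2c$, so $\mu_k-3\ge 2c-2$. In the case $i\neq j$,
\begin{equation*}
\Var S\ge 1+(2c-2)\sum_k a_k^2b_k^2 .
\end{equation*}
The key estimate is $\sum_k a_k^2b_k^2\le 1/2$ when $i\ne j$. Indeed, $\sum_k a_k^2b_k^2\le \max_k|a_kb_k|\sum_k|a_kb_k|$. The first factor satisfies $|a_kb_k|\le (a_k^2+b_k^2)/2\le 1/2$, and the second satisfies $\sum_k|a_kb_k|\le 1$. Hence, if $c\le1$, $\Var S\ge 1-(1-c)=c$. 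If $c>1$ the correction term is nonnegative and $\Var S\ge 1$. In that case the claim $\Var\ge c$ needs more: when $c>1$ I would instead bound $\sum_k a_k^2b_k^2$ from below, which fails in general (e.g.\ disjoint supports give variance exactly $1$). So I expect the intended reading to be $c\le 1$, or the bound $\min(c,1)$; I would note this.

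In the case $i=j$,
\begin{equation*}
\Var S\ge 2+(2c-2)\sum_k a_k^4\ge 2+(2c-2)=2c
\end{equation*}
when $c\le1$, using $\sum_k a_k^4\le 1$. Since $2c\ge c$, this suffices.

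The main obstacle is this lower bound, specifically the estimate $\sum_k u_{ik}^2u_{jk}^2\le1/2$ for $i\ne j$, together with the caveat about $c>1$.
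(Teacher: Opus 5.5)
Your proof is correct. Its skeleton matches the paper's: start from the variance formula \eqref{iden: expect and variance Sij}, and reduce the lower bound to $\sum_k u_{ik}^2u_{jk}^2\le 1/2$ for $i\ne j$. Two steps are argued differently.

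\textbf{Upper bound.} The paper simply substitutes $\E y_k^4\le 4K^2$ into the formula, giving $1+(4K^2-3)\sum_k u_{ik}^2u_{jk}^2\le 4K^2$. Your Cauchy--Schwarz route through $\E(a^\top Y)^4\le\max(\mu_{\max},3)$ is longer. On the other hand, it needs neither the formula nor orthogonality.

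\textbf{Key estimate.} The paper uses $\sum_k u_{ik}u_{jk}=0$: the positive and negative parts of $u_{ik}u_{jk}$ each have total mass $h\le 1/2$, so the sum of squares is at most $2h^2\le 1/2$. You instead bound $\max_k|u_{ik}u_{jk}|\le 1/2$ and multiply by $\sum_k|u_{ik}u_{jk}|\le 1$. The step you left unjustified, $u_{ik}^2+u_{jk}^2\le 1$, is exactly where $i\ne j$ enters your argument. It follows from Bessel's inequality for the orthonormal pair $u_i,u_j$; equivalently, the rows of the orthogonal matrix $U$ are unit vectors. You should state it, since it fails for $i=j$.

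\textbf{The caveat for $c>1$.} Your caveat is right, and it also applies to the paper's proof. The paper's step ``it is equivalent to show $\sum_k u_{ik}^2u_{jk}^2\le 1/2$'' divides by $1-c$, so it silently assumes $c\le 1$. Your disjoint-support example shows the claimed bound $\Var S^{(i,j)}\ge c$ genuinely fails beyond that range: take $M$ diagonal, so $u_i=e_1$ and $u_j=e_2$, with sub-Gaussian $y_k$ having $\E y_k^4>3$. The correct statement is a lower bound of $\min\{c,1\}$.

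\textbf{Coverage beyond the paper.} You also treat the case $i=j$ and the second and third bullets (the latter using $K=O(1)$). The paper's appendix leaves all of these implicit.
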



We also redefine the parameter $r$, which previously was the size of the block. In the general setting, we define the set of {\it important } indices
\[ \textstyle
I_p:= \big\{ i \in [d]: |\lambda_i -\lambda_p|<\frac{\lambda_p}{2} \big\}\,\,\text{and let $r:=|I_p|$.} 
\]
The constant $\frac{1}{2}$ is chosen for convenience and can be replaced by any constant $0 < c <1$. Intuitively, we want to identify the important eigenvalues that are close to $\lambda_p$ and could have an impact on the perturbation of 
$u_p$. Unlike in the previous section, here  $r$ is not necessarily greater than $p$. In fact, $r$ depends only on the distribution of eigenvalues around $\lambda_p$, not on the actual value of $p$. 

We now present extensions of Theorems~\ref{thm:smallp}, \ref{thm:mediump}, and~\ref{thm:largep}, which bound
\(
\|\tilde u_p\tilde u_p^\top-u_pu_p^\top\|
\)
.
Recall that  $\delta_p := \lambda_p - \lambda_{p+1}$ and $\delta_{(p)} : =  \min \{\delta_{p-1}, \delta_p \} $ for $p \ge 2$ and $\kappa_p := \frac{\lambda_1}{\lambda_p}$. For $p \ge 1$, we set 
\[
\gamma_p:= \begin{cases}
    p+1 &\,\,\text{if $\delta_{(p)}=\delta_{p}$} \\
    p-1 & \,\,\text{if $\delta_{(p)}=\delta_{p-1}$}
\end{cases}.
\]

As in the previous section, we assume
\[
K, r, \kappa_p = O(1),
\]
\noindent and also that \eqref{cond4} holds.

\begin{theorem} [Small effective rank] \label{thm:small-dependentp} For any constants $K,r, \kappa_p, \epsilon,$ there are constants $C_0, C_1, C_2, $ so that the following holds.  If $r_{\mathrm{eff}} \leq  \frac{r\lambda_p}{\delta_{(p)}}$,
then with probability at least $1-\epsilon$,  
    $$ \textstyle  \left\| \tilde{u}_p \tilde{u}_p^\top - u_p u_p^\top \right\| \leq C_1 \cdot \min \big\{ \frac{ \lambda_p }{\sqrt{n} \delta_{(p)}}, 1 \big\},$$
\noindent and with probability at least $\frac{1}{16}$,
$$\textstyle  C_2 \cdot \min \big\{ \frac{ \lambda_p }{\sqrt{n} \delta_{(p)}}, 1 \big\} \leq  \left\| \tilde{u}_p \tilde{u}_p^\top - u_p u_p^\top \right\| .$$
\end{theorem}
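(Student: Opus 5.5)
We write $E:=\tilde M-M$ and use the whitened representation
$$E = M^{1/2}\Big(\tfrac1n\sum_{k=1}^n (Y_kY_k^\top - I)\Big)M^{1/2}.$$
In the eigenbasis of $M$ its entries are
$$u_i^\top E u_j=\sqrt{\lambda_i\lambda_j}\,\tfrac1n\sum_{k}\big(S_k^{(i,j)}-\E S^{(i,j)}\big),$$
where $S_k^{(i,j)}$ is the quantity in \eqref{def: Sij} for the $k$-th sample. Since the $y$'s are only $8$-wise independent, every moment computation below stays at order $\le 8$, which is what \eqref{iden: expect and variance Sij} and Lemma~\ref{lem: S value} control. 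The error is represented by the contour integral
$$\tilde u_p\tilde u_p^\top-u_pu_p^\top=\frac1{2\pi i}\oint_\Gamma \big[(z-\tilde M)^{-1}-(z-M)^{-1}\big]\,dz,$$
with $\Gamma$ a circle around $\lambda_p$ of radius $\delta_{(p)}/2$. This is valid once $\|E\|$-type quantities are small enough that only $\tilde\lambda_p$ lies inside $\Gamma$. When $\frac{\lambda_p}{\sqrt n\delta_{(p)}}\ge c$ the upper bound $C_1$ is trivial (the norm is at most $1$), so we assume $n\ge C(\lambda_p/\delta_{(p)})^2$. In the small effective rank regime $r_{\mathrm{eff}}\le r\lambda_p/\delta_{(p)}$ this also gives $n\ge C' r_{\mathrm{eff}}\lambda_p/\delta_{(p)}$, which is the sample condition needed below.

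For the upper bound, we expand the resolvent as a Neumann series, $(z-\tilde M)^{-1}=\sum_{s\ge0}(z-M)^{-1}[E(z-M)^{-1}]^s$, and split $E$ according to the important indices $I_p$ and the rest. The first-order term equals
$$\sum_{j\ne p}\frac{u_j^\top E u_p}{\lambda_p-\lambda_j}\,(u_ju_p^\top+u_pu_j^\top).$$
Its norm is
$$\Big(\sum_{j\ne p}\frac{(u_j^\top Eu_p)^2}{(\lambda_p-\lambda_j)^2}\Big)^{1/2}.$$
By the second moments \eqref{iden: expect and variance Sij}, its expectation squared is of order
$$\frac1n\sum_{j\ne p}\frac{\lambda_p\lambda_j}{(\lambda_p-\lambda_j)^2}.$$
The $r=O(1)$ indices in $I_p$ contribute at most $\frac{r\lambda_p^2}{n\delta_{(p)}^2}$. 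The remaining indices, where $|\lambda_j-\lambda_p|\ge\lambda_p/2$, contribute at most $\frac{C\kappa_p}{n}\,r_{\mathrm{eff}}$, using $\sum_j\lambda_j=r_{\mathrm{eff}}\lambda_1$. In the small effective rank regime this second part is at most $O(\lambda_p/(n\delta_{(p)}))$, which is dominated by $\lambda_p^2/(n\delta_{(p)}^2)$. Markov's inequality then bounds the first-order term by $C_1\lambda_p/(\sqrt n\delta_{(p)})$ with probability $1-\epsilon/2$.

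The main obstacle is controlling the higher-order terms $s\ge2$. Here we follow the combinatorial expansion of \cite{DKTranVu1}. We expand each $(z-M)^{-1}=\sum_i u_iu_i^\top/(z-\lambda_i)$, integrate over $\Gamma$ term by term, and group the resulting products of entries $u_i^\top Eu_j$ by profiles recording how many indices fall in $I_p$ versus outside it. Each factor with both indices in $I_p$ costs $\lambda_p/(\sqrt n\delta_{(p)})$. Each excursion outside $I_p$ costs
$$\frac{\|\Pi_{I_p^c}E\|}{\lambda_p}\lesssim\sqrt{\frac{r_{\mathrm{eff}}\kappa_p}{n}}\le\sqrt{\frac{\lambda_p}{n\delta_{(p)}}},$$
where the norm bound is the standard covariance estimate via effective rank. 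Under $n\ge C(\lambda_p/\delta_{(p)})^2$ both ratios are small, so the series is geometric and its tail is $O(\lambda_p/(\sqrt n\delta_{(p)}))$. The same bounds show that the eigenvalues stay separated, so the contour representation is legitimate.

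For the lower bound, we project the error onto the direction $u_\gamma$ with $\gamma=\gamma_p$, the neighbouring eigenvalue at distance $\delta_{(p)}$. This gives
$$\|\tilde u_p\tilde u_p^\top-u_pu_p^\top\|\ge|u_\gamma^\top(\tilde u_p\tilde u_p^\top-u_pu_p^\top)u_p|,$$
which equals the first-order term $\frac{u_\gamma^\top Eu_p}{\lambda_p-\lambda_\gamma}$ plus the higher-order remainder. Now
$$\frac{u_\gamma^\top Eu_p}{\lambda_p-\lambda_\gamma}=\frac{\sqrt{\lambda_p\lambda_\gamma}}{\delta_{(p)}}\cdot\frac1n\sum_k S_k^{(\gamma,p)},$$
and the $S_k^{(\gamma,p)}$ are iid with mean $0$ and variance at least $c$ by Lemma~\ref{lem: S value}. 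Paley--Zygmund, using the fourth moment bound valid under $8$-wise independence, shows this term exceeds $c'\lambda_p/(\sqrt n\delta_{(p)})$ with probability at least $1/8$; here we use $\lambda_\gamma\asymp\lambda_p$. The remainder is smaller by a factor $\lambda_p/(\sqrt n\delta_{(p)})+\sqrt{\lambda_p/(n\delta_{(p)})}$, so on the intersection of events, which has probability at least $1/16$, the lower bound holds. If instead $\frac{\lambda_p}{\sqrt n\delta_{(p)}}\ge c$, we monotonically reduce to the threshold case: comparing with $n'=c^{-2}(\lambda_p/\delta_{(p)})^2$, or arguing directly that the eigenvalue interlacing fails, yields an error of order $1$.
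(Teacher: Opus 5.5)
For $n\ge C(\lambda_p/\delta_{(p)})^2$ your plan is the paper's argument in slightly different form. It uses the expansion $\sum_s F_s$ and the first-order term $M_1+M_2$, which you control through its second moment instead of through $x$ and $\|E\|$; that variant is correct and slightly cleaner. It uses the bounds of \cite{DKTranVu1} for $s\ge 2$. The lower bound comes from the single coefficient $u_{\gamma_p}^\top E u_p/\delta_{(p)}$, with Paley--Zygmund replacing the paper's CLT step.

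The genuine gap is your last sentence: the lower bound when $n<C(\lambda_p/\delta_{(p)})^2$. There the target $C_2\min\{\frac{\lambda_p}{\sqrt n\,\delta_{(p)}},1\}$ is of constant order, and neither suggested route is an argument.
\begin{itemize}
\item $\|\tilde u_p\tilde u_p^\top-u_pu_p^\top\|$ is not monotone in $n$ pathwise, since adding a sample can increase the angle. You have not established, and there is no standard result giving, stochastic monotonicity in $n$. So ``comparing with $n'$'' transfers nothing.
\item ``Interlacing fails'' gives no quantitative lower bound on the angle.
\end{itemize}
For such $n$ the remainder $\sum_{s\ge 2}F_s$ is no longer dominated by the first-order term. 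Once $r\lambda_p/(\sqrt n\,\delta_{(p)})$ is of order one, the smallness condition \eqref{def: C} behind the expansion fails altogether, so a non-perturbative argument is needed.

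The paper does not supply one either. Appendix~\ref{subsection:routine} obtains \eqref{lower concrete} only under \eqref{condition concrete}, which in the small-rank regime is exactly $n\ge C(\lambda_p/\delta_{(p)})^2$, and then calls the theorem an explicit combination. What both arguments actually prove is the lower bound for $n\ge C(\lambda_p/\delta_{(p)})^2$. Either state the lower bound in that form, or give a separate argument for smaller $n$.

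There are also three smaller omissions.
\begin{itemize}
\item The step $\lambda_{\gamma_p}\asymp\lambda_p$ (equivalently $\gamma_p\in I_p$) is where hypothesis \eqref{cond4} enters. In the small regime it forces $\lambda_p/\delta_{(p)}\ge C_0/(r+1)$. Without it the lower bound is false: for $M=\lambda_1u_1u_1^\top$ one has $\tilde u_1=\pm u_1$ exactly.
\item $\Var S^{(p,\gamma_p)}\ge c$ follows from Lemma~\ref{lem: S value} only under $\min_k\E y_k^4\ge 1+2c$, which is not a hypothesis of the theorem. The paper instead lets $C_2$ depend on $\Var S^{(p,\gamma_p)}$ (Remark~\ref{remark:explicit}).
\item With a circle of radius $\delta_{(p)}/2$, Weyl's inequality cannot keep the other eigenvalues of $\tilde M$ outside. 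When $r_{\mathrm{eff}}\asymp\lambda_p/\delta_{(p)}$ one can have $\|E\|\asymp\sqrt{\lambda_p\delta_{(p)}}\gg\delta_{(p)}$. You need the rank argument ($\|\tilde\Pi_\Gamma-u_pu_p^\top\|<1$ forces exactly one eigenvalue inside). You also need a count showing that exactly $p-1$ eigenvalues lie to its right. This is what Lemma~\ref{lem: dimargu} does with $\Gamma_p$ and $\Gamma_{p-1}$.
\end{itemize}
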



\begin{theorem} [Medium effective rank] \label{thm:medium-dependentp} For any constants $K,r, \kappa_p, \epsilon,$ there are constants $C_0, C_1, C_2, C_3 $ so that the following holds.   If $\frac{r\lambda_p}{\delta_{(p)}} < r_{\mathrm{eff}} \leq \frac{r}{\kappa_p^2}\big(\frac{\lambda_p}{\delta_{(p)}} \big)^2$ and $ n \geq C_3 \cdot r_{\mathrm{eff}} \frac{\lambda_p}{\delta_{(p)}}$ ,
then with probability  at least $1-\epsilon$,  
    $$ \textstyle  \left\| \tilde{u}_p \tilde{u}_p^\top - u_p u_p^\top \right\| \leq C_1\cdot  \frac{ \lambda_p }{\sqrt{n} \delta_{(p)}},$$
\noindent and with probability at least $\frac{1}{16}$,
$$\textstyle C_2 \cdot  \frac{ \lambda_p }{\sqrt{n} \delta_{(p)}} \leq  \left\| \tilde{u}_p \tilde{u}_p^\top - u_p u_p^\top \right\| .$$
\end{theorem}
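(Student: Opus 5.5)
We write $\tilde M = M + E$ with $E = M^{1/2}\bigl(\frac1n\sum_{k} Y_kY_k^\top - I_d\bigr)M^{1/2}$. The error is represented by a contour integral: let $\Gamma$ be a circle centred at $\lambda_p$ of radius $\delta_{(p)}/2$. Then
\[
\tilde u_p\tilde u_p^\top - u_pu_p^\top = \frac{1}{2\pi i}\oint_\Gamma \bigl[(z-\tilde M)^{-1}-(z-M)^{-1}\bigr]\,dz,
\]
provided $\tilde\lambda_p$ is the only eigenvalue of $\tilde M$ inside $\Gamma$. This proviso has to be established first. Weyl's inequality is not sufficient, because $\|E\|$ can be of order $\lambda_1\sqrt{r_{\mathrm{eff}}/n}$, which may exceed $\delta_{(p)}$. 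Instead we plan to bound the restricted quantities $\|u_i^\top E u_j\|$ for $i,j\in I_p$, each of size about $\sqrt{\lambda_i\lambda_j/n}$, together with $\|E\|\lesssim \lambda_1\bigl(\sqrt{r_{\mathrm{eff}}/n}+r_{\mathrm{eff}}/n\bigr)$. These bounds come from the effective-rank covariance estimates cited earlier (Koltchinskii--Lounici type). The hypothesis $n\ge C_3 r_{\mathrm{eff}}\lambda_p/\delta_{(p)}$ is what makes the "second-order" shift $\|E\|^2/\lambda_p\lesssim \lambda_1 r_{\mathrm{eff}}/n$ smaller than $\delta_{(p)}$. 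If the first-order shift $\sqrt{\lambda_p^2/n}$ exceeds $\delta_{(p)}$, the claimed bound is $\Omega(1)$ after adjusting constants; in the medium range we may therefore assume $\lambda_p/(\sqrt n\,\delta_{(p)})$ is small.

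Next we expand the resolvent as $(z-\tilde M)^{-1}=\sum_{s\ge0}(z-M)^{-1}\bigl[E(z-M)^{-1}\bigr]^s$ and split each factor $(z-M)^{-1}$ into the part supported on $I_p$ and the part on $I_p^c$. On $I_p^c$, $|z-\lambda_i|\ge\lambda_p/4$. This is the combinatorial expansion of \cite{DKTranVu1}. The linear term $s=1$ gives
\[
F_1=\sum_{j\ne p}\frac{u_j^\top E u_p}{\lambda_p-\lambda_j}\,\bigl(u_ju_p^\top+u_pu_j^\top\bigr).
\]
Its $j\in I_p$ part has norm about $\sqrt{\lambda_p\lambda_j/n}/|\lambda_p-\lambda_j|$, which is maximised at $j=\gamma_p$ and gives $\Theta\bigl(\lambda_p/(\sqrt n\delta_{(p)})\bigr)$ since $\kappa_p=O(1)$. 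The $I_p^c$ part has norm $\lesssim \lambda_p^{-1}\|P_{I_p^c}Eu_p\|\lesssim \sqrt{\lambda_p\,\mathrm{Tr}M/n}/\lambda_p\asymp\sqrt{\kappa_p r_{\mathrm{eff}}/n}$. Higher terms are controlled by counting: each extra $E$ brings a factor at most $\max\bigl\{\sqrt{\lambda_p^2/n}/\delta_{(p)},\ \|E\|/\lambda_p,\ \sqrt{\lambda_p^2/n}\cdot\|E\|/(\lambda_p\delta_{(p)})\bigr\}$. Under $n\ge C_3 r_{\mathrm{eff}}\lambda_p/\delta_{(p)}$ all of these are $\le 1/2$ with small constant, so the geometric sum is dominated by $C\bigl(\lambda_p/(\sqrt n\delta_{(p)})+\sqrt{r_{\mathrm{eff}}/n}\bigr)$. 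In the medium range $r_{\mathrm{eff}}\le r\kappa_p^{-2}(\lambda_p/\delta_{(p)})^2$ we have $\sqrt{r_{\mathrm{eff}}/n}\lesssim \lambda_p/(\sqrt n\delta_{(p)})$. This yields the upper bound with probability $1-\epsilon$, using Markov/Chebyshev on the relevant second moments. Only $8$-wise independence is needed, because the estimates use moments of order at most $8$ of quadratic forms in $Y$.

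For the lower bound we show that the single coordinate $u_{\gamma_p}^\top(\tilde u_p\tilde u_p^\top-u_pu_p^\top)u_p$ is large. Its main term is $\frac{u_{\gamma_p}^\top Eu_p}{\lambda_p-\lambda_{\gamma_p}}$, and
\[
u_{\gamma_p}^\top E u_p=\sqrt{\lambda_p\lambda_{\gamma_p}}\,\frac1n\sum_k S_k^{(\gamma_p,p)}.
\]
This is a sum of i.i.d. mean-zero terms with variance $\Var S^{(\gamma_p,p)}\ge c$ by Lemma~\ref{lem: S value}. A Paley--Zygmund argument then gives $|u_{\gamma_p}^\top Eu_p|\ge c'\sqrt{\lambda_p\lambda_{\gamma_p}/n}$ with probability $\ge 1/8$, using the fourth moment, which is bounded via $K$. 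The remaining terms contribute at most a small constant times $\lambda_p/(\sqrt n\delta_{(p)})$ with probability $\ge 1-1/16$, provided $C_3$ is large. Together these give probability $\ge 1/16$.

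The main obstacle is the higher-order bookkeeping. Terms with mixed $I_p$/$I_p^c$ words, such as $u_p^\top E P_{I_p^c}E u_{\gamma_p}/(\lambda_p\delta_{(p)})$, must be shown to be $o$ of the leading term using only $n\gtrsim r_{\mathrm{eff}}\lambda_p/\delta_{(p)}$ and not $n\gtrsim r_{\mathrm{eff}}(\lambda_p/\delta_{(p)})^2$. For this we will bound such bilinear forms through their second moments, which are about $\lambda_p^2\,\mathrm{Tr}M/n^{2}$ rather than the crude $\|E\|^2$, and we will follow the profile-grouping of \cite{DKTranVu1}.

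One caveat: the Paley--Zygmund step needs a variance lower bound $\Var S^{(\gamma_p,p)}\ge c>0$, i.e.\ a condition $\min_k \E y_k^4\ge 1+2c$ as in Lemma~\ref{lem: S value}. This is not among the stated hypotheses of the theorem and must be supplied separately, or argued away, for the lower bound.
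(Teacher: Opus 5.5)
Your plan follows the paper's route. The paper proves general upper and lower bound theorems using the same contour expansion and $P/Q$ splitting of \cite{DKTranVu1}. It handles exactly the obstacle you flag, namely the alternating words $PEQ\cdots QEP$, which crude per-$E$ factors cannot control under $n\gtrsim r_{\mathrm{eff}}\lambda_p/\delta_{(p)}$, through a second-moment bound on the skewness parameter $y$. It gets the lower bound from anti-concentration of $u_p^\top E u_{\gamma_p}$ inside $\|F_1\|$, minus the higher-order terms. It then specializes to the medium range, where $\kappa_p\sqrt{r_{\mathrm{eff}}/n}\le \sqrt{r}\,\lambda_p/(\sqrt{n}\,\delta_{(p)})$.

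Your caveat is accurate too. The paper's constant $C_2=\frac{1}{16}\min\{\sqrt{\Var S^{(p,\gamma_p)}},\sqrt{s_p/\kappa_p}\}$ depends on $\Var S^{(p,\gamma_p)}$, which is only explicitly assumed to be bounded below in the consistency section.
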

\begin{theorem} [Large effective rank] \label{thm:large-dependentp} For any constants $K,r, \kappa_p, \epsilon,$ there are constants $C_0, C_1, C_2, C_3 $ so that the following holds.   If $r_{\mathrm{eff}} > \frac{r}{\kappa_p^2}\big(\frac{\lambda_p}{\delta_{(p)}} \big)^2$ and $ n \geq C_3 \cdot r_{\mathrm{eff}} \frac{\lambda_p}{\delta_{(p)}}$ ,
then with probability at least $1-\epsilon$,  
    $$ \textstyle  \left\| \tilde{u}_p \tilde{u}_p^\top - u_p u_p^\top \right\| \leq C_1 \cdot   \sqrt{\frac{r_{\mathrm{eff}}}{n}},$$
\noindent and with probability at least $\frac{1}{16}$,
$$\textstyle C_2 \cdot \sqrt{\frac{r_{\mathrm{eff}}}{n}} \leq  \left\| \tilde{u}_p \tilde{u}_p^\top - u_p u_p^\top \right\| .$$
\end{theorem}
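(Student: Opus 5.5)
We prove Theorem \ref{thm:large-dependentp} by writing the sample covariance as a perturbation of $M$ and expanding the spectral projector around $\lambda_p$. Write $\tilde M = M + E$ with
$$E = M^{1/2}\Big(\tfrac1n\sum_{k=1}^n Y_kY_k^\top - I_d\Big)M^{1/2}.$$
Let $\Gamma$ be a circle around $\lambda_p$ of radius $\delta_{(p)}/2$. Then
$$\tilde u_p\tilde u_p^\top - u_pu_p^\top = \frac{1}{2\pi i}\oint_\Gamma \big[(z-\tilde M)^{-1}-(z-M)^{-1}\big]\,dz ,$$
and we expand $(z-\tilde M)^{-1}=\sum_{s\ge0}(z-M)^{-1}[E(z-M)^{-1}]^s$.

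\textbf{Step 1: reduction to a first-order term.} The first-order term is
$$F := \sum_{j\neq p}\frac{u_j^\top E u_p}{\lambda_p-\lambda_j}\,(u_ju_p^\top+u_pu_j^\top),$$
and the entries of $E$ in the eigenbasis are
$$u_i^\top E u_j = \sqrt{\lambda_i\lambda_j}\,\frac1n\sum_{k}\big(S_k^{(i,j)}-\E S^{(i,j)}\big).$$
Using the combinatorial-expansion bounds of the earlier sections (proof of the upper bound in Section \ref{subsec: proofupperM1/2}), we bound the higher-order terms by $O\big(\|E\|/\delta_{(p)}\big)\cdot \|F\|$ plus contributions from the important indices $I_p$. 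The key input is a bound on $\|E\|$ of order
$$\lambda_1\sqrt{r_{\mathrm{eff}}/n} + \lambda_1 r_{\mathrm{eff}}/n,$$
available under $8$-wise independence via a moment computation. This alone is too crude, since $\|E\|/\delta_{(p)}$ need not be small. The hypothesis $n\ge C_3 r_{\mathrm{eff}}\lambda_p/\delta_{(p)}$ is exactly what makes the relevant mixed quantities $\|u_p^\top E\|\cdot\|E\|/(\lambda_p\delta_{(p)})$ small, and we reuse the paper's grouping of expansion terms to get this.

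\textbf{Step 2: upper bound.} We estimate $\|F\|\asymp \big(\sum_{j\ne p}(u_j^\top E u_p)^2/(\lambda_p-\lambda_j)^2\big)^{1/2}$ by a second-moment computation, using Lemma \ref{lem: S value} for $\Var S^{(p,j)}=\Theta(1)$ and $\kappa_p=O(1)$. This gives
$$\E\|F\|^2\asymp \frac{\lambda_p}{n}\Big(\sum_{j\in I_p\setminus\{p\}}\frac{\lambda_j}{(\lambda_p-\lambda_j)^2}+\sum_{j\notin I_p}\frac{\lambda_j}{\lambda_p^2}\Big).$$
The first sum is at most $O\big(r\lambda_p^2/(n\delta_{(p)}^2)\big)$. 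The second sum is at most $O\big(\mathrm{Tr}M/(n\lambda_p)\big)=O\big(\kappa_p r_{\mathrm{eff}}/n\big)$. In the large regime, $r_{\mathrm{eff}}>\frac{r}{\kappa_p^2}(\lambda_p/\delta_{(p)})^2$, so the second sum dominates and $\|F\|=O(\sqrt{r_{\mathrm{eff}}/n})$ with probability $1-\epsilon$ by Chebyshev. Together with Step 1 this gives the upper bound.

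\textbf{Step 3: lower bound.} We test $\tilde u_p\tilde u_p^\top-u_pu_p^\top$ against the far-away directions. Let $Q$ be the projection onto $\mathrm{span}\{u_j:j\notin I_p\}$. Then
$$\|\tilde u_p\tilde u_p^\top-u_pu_p^\top\|\ge \|Q\tilde u_p\|,$$
and to first order $Q\tilde u_p \approx \sum_{j\notin I_p}\frac{u_j^\top Eu_p}{\lambda_p-\lambda_j}u_j$. Its squared norm has mean $\Theta\big(\sum_{j\notin I_p}\lambda_j\lambda_p/(n\lambda_p^2)\big)$. This is $\Theta(r_{\mathrm{eff}}/n)$ because $\sum_{j\in I_p}\lambda_j\le r\lambda_1$, and $r\lambda_1$ is negligible against $\mathrm{Tr}M$ when $r_{\mathrm{eff}}\ge C_0$. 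That condition holds here since $r_{\mathrm{eff}}$ exceeds $(\lambda_p/\delta_{(p)})^2/\kappa_p^2$ and \eqref{cond4} is in force.

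To get probability at least $\frac1{16}$, we apply Paley–Zygmund to $\|QF u_p\|^2$. This requires a fourth-moment bound $\E\|QFu_p\|^4\le C(\E\|QFu_p\|^2)^2$, which follows from $8$-wise independence and sub-Gaussianity. Finally, we subtract the higher-order error from Step 1, which is $o(\sqrt{r_{\mathrm{eff}}/n})$ once $C_3$ is large.

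The main obstacle is Step 1: showing that the higher-order terms are genuinely smaller than $\sqrt{r_{\mathrm{eff}}/n}$ under only $n\ge C_3 r_{\mathrm{eff}}\lambda_p/\delta_{(p)}$, rather than the naive $n\gg r_{\mathrm{eff}}(\lambda_p/\delta_{(p)})^2$. We would first try bounding the second-order contour term
$$\sum_{i,j}\frac{u_p^\top Eu_iu_i^\top Eu_j}{(\lambda_p-\lambda_i)(\lambda_p-\lambda_j)}$$
by separating the indices in $I_p$ from those outside. Outside $I_p$ the denominators are at least $\lambda_p/2$, giving size $\|E\|\sqrt{r_{\mathrm{eff}}/n}/\lambda_p$. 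Inside $I_p$ there are only $r=O(1)$ entries, each of size $\lambda_p/\sqrt n$, which gives a factor $\lambda_p/(\sqrt n\,\delta_{(p)})\le\sqrt{\lambda_p/(C_3 r_{\mathrm{eff}}\delta_{(p)})}\cdot$(small). The terms of order three and higher are then controlled geometrically.
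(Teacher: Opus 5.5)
Your first-order analysis is sound and is essentially the paper's treatment of $M_1+M_2$: the identity $\|F\|^2=\sum_{j\neq p}(u_j^\top Eu_p)^2/(\lambda_p-\lambda_j)^2$, Chebyshev for the upper bound, and Paley--Zygmund on the far directions for the lower bound. The gap is the control of the higher-order terms, which both Step 2 and the subtraction in Step 3 rely on.

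Your case split in the last paragraph (inner indices outside $I_p$, or entries $u_i^\top Eu_j$ with $i,j\in I_p$) misses the cross term coming from $\frac{1}{2\pi\textbf{i}}\int_\Gamma PEQEP\,dz$:
\[
\sum_{j\in I_p\setminus\{p\}}\frac{u_j\,u_j^\top E\,Q'\,Eu_p\,u_p^\top}{\lambda_p-\lambda_j}+(\text{transpose}),\qquad Q':=\sum_{l\notin I_p}\frac{u_lu_l^\top}{\lambda_p-\lambda_l}.
\]
This term has the small denominator $\lambda_p-\lambda_j\approx\delta_{(p)}$ outside and no $I_p$--$I_p$ entry of $E$ inside. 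Bounding it by norms, as your ``mixed quantity'' does, gives order $r_{\mathrm{eff}}\lambda_p/(n\delta_{(p)})$ up to powers of $\kappa_p$. The hypothesis $n\ge C_3r_{\mathrm{eff}}\lambda_p/\delta_{(p)}$ makes this $O(1/C_3)$. However, its ratio to the target $\sqrt{r_{\mathrm{eff}}/n}$ is $\sqrt{r_{\mathrm{eff}}/n}\,\lambda_p/\delta_{(p)}$, which can be as large as $\sqrt{\lambda_p/(C_3\delta_{(p)})}$. So the argument only closes when $n\gtrsim r_{\mathrm{eff}}(\lambda_p/\delta_{(p)})^2$, which is exactly the naive condition you set out to avoid. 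Since chains $PEQEP\cdots EQEP$ occur at every order, ``controlled geometrically'' inherits the same loss.

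The missing idea is the paper's skewness parameter $y=\max_{i\neq j\in I_p}|u_i^\top EQ'Eu_j|$ together with its second-moment estimate (second part of Lemma \ref{lemma: xyModeil1}). For $i\neq j$, the mean of $u_i^\top EQ'Eu_j$ cancels by orthogonality of $u_i,u_j,u_l$, up to a fourth-cumulant remainder $O(K^2\sqrt{\lambda_i\lambda_j}/n)$. The fluctuation involves $\sqrt{\sum_l\lambda_l^2}\le\lambda_1\sqrt{r_{\mathrm{eff}}}$ instead of $\sum_l\lambda_l=\lambda_1r_{\mathrm{eff}}$, apart from a term carrying an extra factor $n^{-1/2}$. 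This makes $\sqrt r\,y/\delta_{(p)}$ a small multiple of $\sqrt r\,\lambda_p/(\sqrt n\,\delta_{(p)})<\kappa_p\sqrt{r_{\mathrm{eff}}/n}$ in the large regime. Then \eqref{F4-HR} and \eqref{Firstlowerbound0} finish both directions. The diagonal part of $PEQEP$, with $u_p$ on both sides, is harmless: its norm is at most $4\|Eu_p\|^2/\lambda_p^2$.

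Two further steps are missing. First, because $\|E\|$ can far exceed $\delta_{(p)}$ (as you note), Weyl's inequality does not show that your circle of radius $\delta_{(p)}/2$ encloses $\tilde\lambda_p$ and no other $\tilde\lambda_j$. Nor can the Neumann series be bounded termwise by powers of $2\|E\|/\delta_{(p)}$. Without this, the contour integral is not $\tilde u_p\tilde u_p^\top-u_pu_p^\top$. The paper proves it separately (Lemma \ref{lem: dimargu}) by a dimension argument: showing $\|\tilde\Pi_{\Gamma_k}-\Pi_k\|<1$ for $k=p-1,p$ forces each $\Gamma_k$ to capture exactly $k$ eigenvalues of $\tilde M$. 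Second, every higher-order term uses $\|E\|\lesssim\lambda_1\sqrt{r_{\mathrm{eff}}/n}$. A fixed-order trace-moment computation under $8$-wise independence loses a positive power of $r_{\mathrm{eff}}$ here; the paper instead imports the dimension-free sub-Gaussian bound of Theorem \ref{theo: ESubgau}.
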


%
\begin{remark} \label{remark:explicit}
   We can make the dependence of the constants $C_0, C_1, C_2, C_3$ on $K,r, \epsilon, \kappa_p$ explicit, as follows: 
    \[ \textstyle
    C_0 = (CKr)^2, C_1= CK \kappa_p \sqrt{\frac{r^3}{\epsilon}}, C_2=\frac{1}{16}\min \big\{\sqrt{\Var S^{(p, \gamma_p)}}, \sqrt{\frac{s_p}{\kappa_p}} \big\}, \,\,\text{and}\,\,  C_3= \frac{CK^4 \kappa_p^5  r^6}{\epsilon},
    \]
    where  $C$ is a universal constant and 
\[ \textstyle
s_p:= \frac{\sum_{i=1}^d \lambda_i \Var S^{(p,i)} }{\sum_{i=1}^d \lambda_i} = \frac{\sum_{i=1}^d \lambda_i \Var S^{(p,i)} }{\mathrm{Trace}\,\,M}.
\]
By Lemma \ref{lem: S value}, it is clear that $s_p= O(1)$. The exponents of $K,\kappa_p,r$ in the explicit formulae are generous, but 
we do not try to optimize them. 
\end{remark}
%

 We now present the main technical results of our paper, 
 the Upper Bound Theorem and the Lower Bound Theorem. 
 In Section \ref{subsection:routine}, we will use Theorems \ref{theo: upUpperM1/2 subGau} and \ref{theo: uplowerboundM1/2 subGau}  to derive Theorems \ref{thm:small-dependentp}, \ref{thm:medium-dependentp}, and \ref{thm:large-dependentp}. 
 
 Notice that the 
     bounds in Theorems \ref{theo: upUpperM1/2 subGau} and \ref{theo: uplowerboundM1/2 subGau} below are explicit in terms of the parameters $r, \epsilon, K, \kappa_p$. This leads to the 
     explicit formulae of $C_0, C_1, C_2, C_3$ in 
     Remark \ref{remark:explicit}.

\begin{theorem}[Upper bound] \label{theo: upUpperM1/2 subGau}
    There is a universal constant $C > 0$ so that for any $t > 0$, if 
    \[
    n \geq C(K \kappa_p t)^2 \cdot \max \bigg\{ r_{\mathrm{eff}} \cdot \frac{r\lambda_p}{\delta_{(p)}}, \left(\frac{r \lambda_p}{\delta_{(p)}} \right)^2 \bigg\},
    \]
     then with probability at least $1-e^{-4r_{\mathrm{eff}}t^2} - \frac{2r^2}{t^2}$, 
    \[
    \left\| \tilde{u}_p \tilde{u}_p^\top - u_p u_p^\top \right\|     \leq 280 Kt\left( \kappa_p \sqrt{\frac{r_{\mathrm{sta} }  }{n}} +  \frac{ \sqrt{r} \lambda_p}{ \sqrt{n} \delta_{(p)} } \right).
    \]
\end{theorem}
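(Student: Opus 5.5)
We will write $\tilde M = M + E$ with $E = M^{1/2}\big(\frac1n\sum_{k=1}^n Y_kY_k^\top - I\big)M^{1/2}$, where $Y_k$ are the iid copies of $Y$. The plan is to express the error through the Riesz projection. Let $\Gamma$ be a circle centred at $\lambda_p$ of radius $\delta_{(p)}/2$. Then
\[
\tilde u_p\tilde u_p^\top - u_pu_p^\top = \frac{1}{2\pi i}\oint_\Gamma \big[(z-\tilde M)^{-1}-(z-M)^{-1}\big]\,dz .
\]
This identity is valid once $\tilde\lambda_p$ is the only eigenvalue of $\tilde M$ inside $\Gamma$. We will guarantee that via Weyl's inequality combined with a bound on $\|E\|$ restricted to the relevant directions; this is where the sample-size hypothesis $n \ge C(K\kappa_p t)^2(\cdots)$ enters.

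Next we expand the resolvent difference as a Neumann series,
\[
(z-\tilde M)^{-1}-(z-M)^{-1}=\sum_{s\ge1}(z-M)^{-1}\big[E(z-M)^{-1}\big]^s ,
\]
and split $(z-M)^{-1}=\sum_i (z-\lambda_i)^{-1}u_iu_i^\top$ into the important indices $i\in I_p$ and the rest. On $\Gamma$, every $i\notin I_p$ satisfies $|z-\lambda_i|\gtrsim\lambda_p$, while the indices in $I_p$ satisfy $|z-\lambda_i|\ge\delta_{(p)}/2$. After integrating, the first-order term becomes $\sum_{i\ne p}\frac{u_i^\top E u_p}{\lambda_p-\lambda_i}(u_iu_p^\top+u_pu_i^\top)$, whose norm is of order
\[
\Big(\sum_{i\ne p}\frac{(u_i^\top E u_p)^2}{(\lambda_p-\lambda_i)^2}\Big)^{1/2}.
\]
Now $u_i^\top E u_p=\sqrt{\lambda_i\lambda_p}\cdot\frac1n\sum_k\big(S_k^{(i,p)}-\E S^{(i,p)}\big)$, so its second moment is $\lambda_i\lambda_p\Var S^{(i,p)}/n\le 4K^2\lambda_i\lambda_p/n$. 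Summing these with Chebyshev gives the two contributions in the statement:
\begin{itemize}
\item the indices in $I_p$, at most $r$ of them with gaps $\ge\delta_{(p)}$, contribute $\sqrt r\,\lambda_p/(\sqrt n\,\delta_{(p)})$;
\item the remaining indices contribute $\sum_i\lambda_i\lambda_p/(n\lambda_p^2)\lesssim\kappa_p\, r_{\mathrm{eff}}/n$, using $\mathrm{Trace}\,M=r_{\mathrm{eff}}\lambda_1$. This yields the $\kappa_p\sqrt{r_{\mathrm{eff}}/n}$ term, which is $r_{\mathrm{sta}}$ in the statement.
\end{itemize}
The Chebyshev step accounts for the failure probability $2r^2/t^2$: it uses a union bound over the $O(r^2)$ pairwise quantities $u_i^\top E u_j$, $i,j\in I_p$, together with 8-wise independence to control fourth moments of $S^{(i,j)}$. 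The failure probability $e^{-4r_{\mathrm{eff}}t^2}$ comes from a sub-Gaussian matrix concentration bound, $\|E\|\lesssim K t\lambda_1\sqrt{r_{\mathrm{eff}}/n}$ when $n\gtrsim r_{\mathrm{eff}}$, in the style of Koltchinskii–Lounici.

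The main obstacle is controlling the higher-order terms $s\ge2$ without losing the sharp first-order rate. Naively bounding $\|E\|^s/\delta_{(p)}^s$ is too lossy, because $\|E\|$ may be much larger than $\delta_{(p)}$. Following the combinatorial expansion approach of \cite{DKTranVu1}, we will write each $s$-th order term as a sum over index sequences, each index either in $I_p$ or in its complement. Projected sandwiches $\Pi_{I_p}E\Pi_{I_p}$ are small, of size $\lambda_p\sqrt{r}/\sqrt n$ divided by a gap of $\delta_{(p)}$. Each passage through the complement costs $\|E\|/\lambda_p\lesssim\kappa_p\sqrt{r_{\mathrm{eff}}/n}$. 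Under the hypothesis on $n$, every factor is at most a small constant, of order $1/(rC)$, so the series is geometric and dominated by its first term. The constant $280$ is then collected from the geometric sums and the contour length $\pi\delta_{(p)}$.
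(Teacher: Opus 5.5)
Your architecture is the paper's: contour integral, Neumann series, splitting $(zI-M)^{-1}$ into $P$ over $I_p$ and $Q$ over the complement, the Tran--Vu combinatorial expansion, a Koltchinskii--Lounici bound on $\|E\|$, and Chebyshev with a union bound for $x=\max_{i,j\in I_p}|u_i^\top Eu_j|$. The problem is the inference meant to close the argument: that because every factor is a small constant, the terms with $s\ge 2$ are geometrically dominated by the first-order term. This fails already in $F_2$. Integrating its $PEQEP$ part over $\Gamma$ produces, for $j\in I_p\setminus\{p\}$, the terms
\[
\frac{u_ju_j^\top E\,Q_0\,Eu_pu_p^\top}{\lambda_p-\lambda_j}\ \ (\text{and transposes}),\qquad Q_0:=\sum_{l\notin I_p}\frac{u_lu_l^\top}{\lambda_p-\lambda_l},
\]
whose denominator is only $\delta_{(p)}$. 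With the quantities you control, the natural bound is $|u_j^\top EQ_0Eu_p|\le 2\|Eu_j\|\,\|Eu_p\|/\lambda_p$. Since $\|Eu_i\|^2$ is typically of order $\lambda_i\lambda_1 r_{\mathrm{eff}}/n$, this contributes about $\kappa_p r_{\mathrm{eff}}\lambda_p/(n\delta_{(p)})$. The hypothesis on $n$ makes this $O(1/C)$, a small constant as you say, but not $O$ of the target rate. For instance, take $K,r,\kappa_p=O(1)$, $b:=\lambda_p/\delta_{(p)}\to\infty$, $r_{\mathrm{eff}}=b^{3/2}$ and $n\asymp Cb^{5/2}$. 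The hypothesis holds and the target is of order $b^{-1/4}$, while this bound is of order $1/C$. The same obstruction recurs in every alternating chain $PEQEPEQ\cdots EQEP$ (the ``bad'' Type II terms).

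The missing ingredient is the paper's second skewness parameter $y=\max_{i\neq j\in I_p}|u_i^\top EQ_0Eu_j|$. It is carried as a separate term in the deterministic estimate $\|\tilde u_p\tilde u_p^\top-u_pu_p^\top\|\le 7\|E\|/\lambda_p+5\sqrt r\,x/\delta_{(p)}+5\sqrt r\,y/\delta_{(p)}$ and controlled by a dedicated second-moment computation. Because $u_i\perp u_j$, the mean of $u_i^\top EQ_0Eu_j$ collapses to a fourth-cumulant term of size $O(K^2\sqrt{\lambda_i\lambda_j}/n)$; no such cancellation occurs for $i=j$. Up to lower-order terms, its fluctuation is $O\big(\frac{K\sqrt{\lambda_i\lambda_j}}{n}(\sum_{l\notin I_p}\lambda_l^2/(\lambda_p-\lambda_l)^2)^{1/2}\big)$. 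Hence $\sqrt r\,y/\delta_{(p)}$ is only a small multiple of $Kt\sqrt r\,\lambda_p/(\sqrt n\,\delta_{(p)})$. This is where $8$-wise independence is genuinely used, and it accounts for the second $r^2/t^2$ in the failure probability; the first is for $x$.

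Separately, your enclosure step does not work as described. Weyl only gives $|\tilde\lambda_j-\lambda_j|\le\|E\|$. Under the hypothesis, $\|E\|\asymp\lambda_1\sqrt{r_{\mathrm{eff}}/n}$ can be of order $\sqrt{\lambda_p\delta_{(p)}/(Cr)}$, far larger than $\delta_{(p)}$ once $\lambda_p/\delta_{(p)}\gg Cr$. Restricting $E$ to the directions in $I_p$ does not by itself locate eigenvalues of the full matrix, because coupling through the complement moves them. The paper uses a dimension argument instead. For the contours $\Gamma_k$ enclosing the top $k$ eigenvalues, the perturbation bound gives $\|\tilde\Pi_{\Gamma_k}-\Pi_k\|<1$, so the Riesz projection has rank exactly $k$. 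Applying this with $k=p-1$ and $k=p$ isolates $\tilde\lambda_p$. A Schur-complement or continuity argument around your circle would also work, but you need one.
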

\begin{theorem}[Lower bound] \label{theo: uplowerboundM1/2 subGau}  For any $C' > 24$ and $t > 0$, if 
    \[
    n \geq (40C'K \kappa_p t)^2 \cdot \max \bigg\{ r_{\mathrm{eff}} \cdot \frac{r\lambda_p}{\delta_{(p)}}, \left(\frac{r \lambda_p}{\delta_{(p)}} \right)^2 \bigg\},
    \]
     then with probability at least $\frac{1}{8}- e^{-4r_{\mathrm{eff}}t^2} - \frac{2r^2}{t^2}$, 
    \[
    \left\| \tilde{u}_p \tilde{u}_p^\top - u_p u_p^\top \right\|     \geq \frac{1}{4} \left[ \sqrt{\Var S^{(p,\gamma_p)}} \cdot \frac{\lambda_p}{\sqrt{n} \delta_{(p)}} + \sqrt{\frac{s_p}{\kappa_p}} \cdot  \sqrt{\frac{r_{\mathrm{eff}}}{n}} \right] - \frac{4 Kt}{\kappa_p\sqrt{n}}  -\frac{900Kt}{C'}\left[ \kappa_p\sqrt{\frac{r_{\mathrm{sta} }  }{n}} +  \frac{ \sqrt{r} \lambda_p}{ \sqrt{n} \delta_{(p)} } \right].
    \]
     \end{theorem}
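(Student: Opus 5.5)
We will prove the Lower Bound Theorem by writing $\tilde u_p\tilde u_p^\top - u_pu_p^\top$ as an explicit first-order term plus a remainder, bounding the first-order term from below and the remainder from above. Set $E:=\tilde M - M$, so that $E = M^{1/2}\big(\frac1n\sum_{k=1}^n Y_kY_k^\top - I\big)M^{1/2}$. We use the contour representation
\[
\tilde u_p\tilde u_p^\top - u_pu_p^\top = \frac{1}{2\pi i}\oint_\Gamma \big[(z-\tilde M)^{-1}-(z-M)^{-1}\big]\,dz,
\]
where $\Gamma$ is a circle around $\lambda_p$ of radius $\delta_{(p)}/2$. On the event where Weyl's inequality puts only $\tilde\lambda_p$ inside $\Gamma$, we expand via $(z-\tilde M)^{-1}=\sum_{s\ge0}(z-M)^{-1}[E(z-M)^{-1}]^s$. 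The $s=1$ term evaluates by residues to
\[
F:=\sum_{i\neq p}\frac{u_i^\top E u_p}{\lambda_p-\lambda_i}\,(u_iu_p^\top+u_pu_i^\top).
\]
The terms with $s\ge2$ form the remainder $R$. The sample size hypothesis, $n \ge (40C'K\kappa_p t)^2\max\{\cdots\}$, is exactly what makes the series converge on $\Gamma$.

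\textbf{Remainder.} We take the bound on $\|R\|$ from the machinery behind the Upper Bound Theorem (Theorem \ref{theo: upUpperM1/2 subGau}). Its proof gives $\|R\|\le 280Kt\big(\kappa_p\sqrt{r_{\mathrm{sta}}/n}+\sqrt r\lambda_p/(\sqrt n\delta_{(p)})\big)\cdot\eta$, where $\eta$ is the ratio of the operator norm of the relevant pieces of $E$ to the contour radius. Under the stronger sample assumption this ratio is at most a multiple of $1/C'$, which produces the term $\frac{900Kt}{C'}[\cdots]$. The good event has probability $\ge 1-e^{-4r_{\mathrm{eff}}t^2}-2r^2/t^2$, coming from matrix Bernstein/Hanson–Wright estimates together with Chebyshev for the $r$ important directions.

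\textbf{First-order term.} Since $\|F\|\ge \|F u_p\| = \big(\sum_{i\ne p}\frac{(u_i^\top Eu_p)^2}{(\lambda_p-\lambda_i)^2}\big)^{1/2}$, and
\[
u_i^\top E u_p=\sqrt{\lambda_i\lambda_p}\,\tfrac1n\sum_k \big(S_k^{(i,p)}-\E S^{(i,p)}\big),
\]
we split the sum into two parts.

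(a) The single index $i=\gamma_p$ contributes $\frac{\sqrt{\lambda_{\gamma_p}\lambda_p}}{\delta_{(p)}}\,|\bar S^{(p,\gamma_p)}|$. By a second-moment/Paley–Zygmund argument applied to a sum of iid terms with variance $\Var S^{(p,\gamma_p)}/n$, this is at least $\frac12\sqrt{\Var S^{(p,\gamma_p)}}\,\frac{\lambda_p}{\sqrt n\delta_{(p)}}$ with constant probability. We use $\lambda_{\gamma_p}\ge\lambda_p/2$ or compare through $\kappa_p$.

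(b) The indices $i\notin I_p$ have $|\lambda_p-\lambda_i|\le \lambda_1+\lambda_p\le 2\kappa_p\lambda_p$. Their contribution squared is at least
\[
\frac{1}{4\kappa_p^2\lambda_p}\sum_{i\notin I_p}\lambda_i(\bar S^{(p,i)})^2 ,
\]
whose expectation is $\frac{1}{4\kappa_p^2\lambda_p n}\sum_{i\notin I_p}\lambda_i\Var S^{(p,i)}$. Adding back the $O(r)$ indices in $I_p$ costs at most $O(\lambda_1/n)$, which gives the $-\frac{4Kt}{\kappa_p\sqrt n}$ correction. The full sum $\sum_i\lambda_i\Var S^{(p,i)}=s_p\,\mathrm{Trace}\,M=s_p r_{\mathrm{eff}}\lambda_1$, so the expectation is of order $\frac{s_p r_{\mathrm{eff}}}{\kappa_p n}$. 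A fourth-moment bound, which is where 8-wise independence and the sub-Gaussian constant $K$ enter, and Paley–Zygmund then show that this quadratic form exceeds half its mean with probability bounded below. Combining (a) and (b) via $\sqrt{a^2+b^2}\ge(a+b)/\sqrt2$ gives the factor $\frac14$. The event for (a) and the event for (b) are each handled with probability $\ge 1/4$-type bounds, and a union/intersection argument leaves total probability $\frac18$ before subtracting the failure probability of the remainder event.

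\textbf{Main obstacle.} The hardest step is establishing the simultaneous anti-concentration of (a) and (b) with explicit probability $\frac18$ using only 8-wise independence. The fourth-moment computation of $\sum_i\lambda_i(\bar S^{(p,i)})^2$ involves products of up to eight coordinates of $Y$. We would first try to avoid needing a joint event altogether: lower-bound the single quantity $\|Fu_p\|^2$, which is a sum of the two pieces, by one Paley–Zygmund application with $\E\|Fu_p\|^2$ and $\E\|Fu_p\|^4\le C(\E\|Fu_p\|^2)^2$.
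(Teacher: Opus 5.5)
Your architecture matches the paper's. You use the contour expansion and the first-order term $F=M_1+M_2$ with $\|F\|=\|Fu_p\|$, take the remainder from the upper-bound machinery, and split $\|Fu_p\|^2$ into the $\gamma_p$ term and a bulk term controlled by second and fourth moments.

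The genuine gap is the step that validates the contour identity. You work on the event where Weyl's inequality leaves only $\tilde\lambda_p$ inside a circle of radius $\delta_{(p)}/2$. You also assert that the sample-size hypothesis makes $\sum_s (zI-M)^{-1}[E(zI-M)^{-1}]^s$ converge on $\Gamma$. Both need $\|E\|<\delta_{(p)}/2$, which the hypothesis does not give. At the threshold $n\asymp (C'K\kappa_p t)^2 r_{\mathrm{eff}}\, r\lambda_p/\delta_{(p)}$ it only yields $\|E\|\lesssim \sqrt{\lambda_p\delta_{(p)}/r}/C'$.

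In fact, on the very event you use in (b), $\|E\|\ge\|Eu_p\|\gtrsim\sqrt{s_p\lambda_p\lambda_1 r_{\mathrm{eff}}/n}$. Hence $\|E\|/\delta_{(p)}\gtrsim \frac{1}{C'Kt}\sqrt{s_p\lambda_p/(\kappa_p r\,\delta_{(p)})}$, which is unbounded as $\lambda_p/\delta_{(p)}\to\infty$ with the other parameters fixed. Likewise $\|E(zI-M)^{-1}\|\ge 2\|Eu_p\|/\delta_{(p)}$ on your circle. Weyl would need $n\gtrsim\kappa_p^2 r_{\mathrm{eff}}(\lambda_p/\delta_{(p)})^2$, whereas the theorem only assumes $n\gtrsim r_{\mathrm{eff}}\,r\lambda_p/\delta_{(p)}+(r\lambda_p/\delta_{(p)})^2$. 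Closing that difference is the point of the paper, so this is not an edge case.

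The missing ingredient is the dimension argument of Lemma \ref{lem: dimargu}. Apply the Tran--Vu eigenspace bound to contours $\Gamma_p,\Gamma_{p-1}$ enclosing the top $p$ and top $p-1$ eigenvalues of $M$. This needs $\|E\|$ small only relative to $\lambda_p$, and $x,w,y$ small on the scales $\delta_{(p)}/r$, $\sqrt{\lambda_p\delta_{(p)}/r}$, $\delta_{(p)}/\sqrt r$ (up to the $\sqrt p$ factors in Claim 1). It gives $\|\tilde\Pi_{\Gamma_k}-\Pi_k\|<1$, hence equal ranks, so the region between the two contours contains exactly $\tilde\lambda_p$. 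The identity $\tilde u_p\tilde u_p^\top-u_pu_p^\top=\sum_s F_s$ must then be justified through the Tran--Vu bounds on the integrated terms, not through norm convergence of the Neumann series. Each $F_s$ depends only on which eigenvalues of $M$ are enclosed.

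Two smaller points. First, your description of the remainder errs in the same direction. The small parameter in the Tran--Vu bounds is $\max\{\|E\|/(\lambda_p/2),\, rx/\delta_{(p)},\, \sqrt r\,w/\sqrt{\lambda_p\delta_{(p)}/2}\}$, not a norm of $E$ over the contour radius. Moreover, the $PEQEP$ part of $F_2$ contributes $2\sqrt r\,y/\delta_{(p)}$ with no small prefactor. It is controlled only by the separate second-moment bound on $y$ in Lemma \ref{lemma: xyModeil1}, whose extra factor of order $1/\sqrt n$ is absorbed via the sample-size hypothesis.

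Second, two events each of probability about $1/4$ need not intersect, so a union/intersection argument gives nothing. Your fallback is the right fix, and it is tidier than the paper's pairing of a CLT event for $u_p^\top Eu_{\gamma_p}$ with a separate Paley--Zygmund event for $\|Eu_p\|^2$. Write $\|Fu_p\|^2=\sum_{i\neq p}c_i\big(\frac1n\sum_{k}S^{(p,i)}_k\big)^2$ with $c_i=\lambda_i\lambda_p/(\lambda_p-\lambda_i)^2\ge 0$. The fourth-moment computation in the proof of Lemma \ref{lem: w bound} goes through verbatim for nonnegative weights, giving $\E\|Fu_p\|^4\le(3+CK^4/n)(\E\|Fu_p\|^2)^2$. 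Set $a:=\sqrt{\Var S^{(p,\gamma_p)}}\,\frac{\lambda_p}{\sqrt n\,\delta_{(p)}}$ and $b:=\sqrt{s_p/\kappa_p}\,\sqrt{r_{\mathrm{eff}}/n}$. Using $|\lambda_p-\lambda_i|\le\lambda_1$ for every $i\neq p$ (there is no need to discard $I_p$), you get $\E\|Fu_p\|^2\ge\frac12 a^2+b^2-O(K^2/(\kappa_p^2 n))$. Paley--Zygmund at level $1/4$ then gives $\|Fu_p\|\ge\frac14(a+b)-O(K/(\kappa_p\sqrt n))$ with probability close to $3/16>1/8$. Subtracting the failure probability of the good event for the remainder yields the stated bound.
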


      The constants $40,280, 8, 16, 900$ are ad hoc, and can
      be reduced by carefully optimizing the constants in the computations in Section \ref{sec: proof} and Appendix \ref{section: moments}.

\section{Perturbation of Eigenspaces}  \label{subsec: eigenspaces}

In this section, we extend previous results to the perturbation of leading eigenspaces.
Our goal is to analyze
$$\|\tilde{\Pi}_p - \Pi_p\|.$$

We keep the general setting as in the last section. After fixing the index $p$, 
 we let $r \geq p$ to be the largest integer such that $\lambda_r \geq \frac{\lambda_p}{2}$ (again, the constant $1/2$ is ad hoc and can be replaced by any fixed constant $0<c<1$).  As a result, the set of important indices is $I_{[p]}:=\{1,2,\dots, r\}$.

As in previous sections, we assume 
$K, r, \kappa_p:= \frac{\lambda_1}{\lambda_p } = O(1)$. Instead of \eqref{cond4}, we assume 
\begin{equation} \label{cond5} r_{\mathrm{eff}}+\frac{\lambda_p}{\delta_p} \geq C_0,\end{equation}
for a sufficiently large constant $C_0$ which may depend on $K$ and $r$. 


\begin{theorem} [Small effective rank] \label{thm:smallpi} For any constants $\epsilon, K, r, \kappa_p$, there are constants $C_0, C_1, C_2$ so that the following holds.  If $r_{\mathrm{eff}} \leq  \frac{r\lambda_p}{\delta_{p}}$,
then with probability at least $1-\epsilon$,  
    $$ \textstyle  \|\tilde{\Pi}_p - \Pi_p\| \leq C_1 \cdot \min \big\{ \frac{ \lambda_p }{\sqrt{n} \delta_{p}}, 1 \big\},$$
\noindent and  with probability at least $\frac{1}{16}$
$$ \textstyle C_2 \cdot  \min \big\{ \frac{ \lambda_p }{\sqrt{n} \delta_{p}}, 1 \big\} \leq  \|\tilde{\Pi}_p - \Pi_p\| .$$
\end{theorem}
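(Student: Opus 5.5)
The plan is to derive Theorem~\ref{thm:smallpi} from eigenspace analogues of the Upper and Lower Bound Theorems (Theorems~\ref{theo: upUpperM1/2 subGau} and~\ref{theo: uplowerboundM1/2 subGau}). The derivation will follow the same routine argument that turns those two theorems into Theorem~\ref{thm:small-dependentp}. The eigenspace analogues are obtained by running the same contour argument with a contour $\Gamma$ that encloses $\lambda_1,\dots,\lambda_p$ and separates them from $\lambda_{p+1},\dots,\lambda_d$. Concretely, take a rectangle with right edge beyond $2\lambda_1$ and left edge at $\lambda_p-\delta_p/2$. Then
\[
\tilde\Pi_p-\Pi_p=\frac{1}{2\pi i}\oint_\Gamma\big[(z-\tilde M)^{-1}-(z-M)^{-1}\big]\,dz ,
\]
where $E=\tilde M-M$, and we expand in powers of $(z-M)^{-1}E$. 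The role of $\delta_{(p)}$ is now played by $\delta_p$, since only the gap at the bottom of the cluster matters. The important index set is $I_{[p]}=\{1,\dots,r\}$, and $\kappa_p$ bounds the spread of the top eigenvalues.

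\textbf{Upper bound.} The goal is
\[
\|\tilde\Pi_p-\Pi_p\|\le CKt\Big(\kappa_p\sqrt{r_{\mathrm{eff}}/n}+\sqrt r\,\lambda_p/(\sqrt n\,\delta_p)\Big),
\]
with probability $1-e^{-4r_{\mathrm{eff}}t^2}-2r^2/t^2$, valid once $n\ge C(K\kappa_pt)^2\max\{r_{\mathrm{eff}}\,r\lambda_p/\delta_p,(r\lambda_p/\delta_p)^2\}$. We bound the first-order term in the expansion by its two pieces:
\begin{itemize}
\item the within-important-block piece $\sum_{i\le p<j\le r}\frac{u_i^\top Eu_j}{\lambda_i-\lambda_j}$, controlled by Chebyshev using the second moments $\Var S^{(i,j)}/n$;
\item the piece coupling $I_{[p]}$ to the bulk, controlled by sub-Gaussian concentration of $\|\Pi_p E(I-\Pi_r)\|$ at scale $\lambda_1\sqrt{r_{\mathrm{eff}}/n}$.
\end{itemize}
Higher-order terms are handled via the combinatorial expansion method. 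The sample-size condition makes the series geometric and dominated by the first term.

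In the small-effective-rank regime $r_{\mathrm{eff}}\le r\lambda_p/\delta_p$ we have $\sqrt{r_{\mathrm{eff}}/n}\le\sqrt r\,\lambda_p/(\sqrt n\,\delta_p)$ up to constants. Hence the bound is $O(\lambda_p/(\sqrt n\delta_p))$, and the trivial bound $\|\tilde\Pi_p-\Pi_p\|\le1$ gives the $\min\{\cdot,1\}$. The required condition on $n$ reduces to $n\ge C(\lambda_p/\delta_p)^2$. When this fails, $\lambda_p/(\sqrt n\delta_p)\gtrsim1$, so the upper bound is trivially $1\le C_1\min\{\cdot,1\}$. Choosing $t\asymp r/\sqrt\epsilon$ yields failure probability at most $\epsilon$, using \eqref{cond5} to make $e^{-4r_{\mathrm{eff}}t^2}$ small.

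\textbf{Lower bound.} We isolate the dominant first-order entry
\[
u_{p+1}^\top(\tilde\Pi_p-\Pi_p)u_p\approx\frac{u_{p+1}^\top Eu_p}{\delta_p}=\frac{\lambda_p^{1/2}\lambda_{p+1}^{1/2}}{\delta_p}\cdot\frac1n\sum_k S_k^{(p,p+1)},
\]
and use $\lambda_{p+1}\ge\lambda_p/2$ when $p+1\le r$. Via Paley--Zygmund/CLT-type anti-concentration for the normalized sum of iid mean-zero variables with variance $\Var S^{(p,p+1)}$ and bounded fourth moment, this is at least $c\,\lambda_p/(\sqrt n\delta_p)$ with probability $\ge1/8$. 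All remaining terms are bounded as in the upper bound, with the extra factor $1/C'$ coming from a larger sample-size assumption.

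The main obstacle is the regime where $n$ is not large, i.e.\ $n\lesssim C(\lambda_p/\delta_p)^2$, where we need $\|\tilde\Pi_p-\Pi_p\|\ge C_2$. The plan is monotonicity: the error with fewer samples is at least as large in distribution. I would instead argue directly. When $\lambda_p/(\sqrt n\,\delta_p)$ exceeds a large constant, the fluctuation $u_{p+1}^\top Eu_p$ is comparable to $\delta_p$ with constant probability. Then $\tilde\lambda_p,\tilde\lambda_{p+1}$ mix and $\tilde H_p$ rotates by a constant angle, which we check on the $2\times2$ block spanned by $u_p,u_{p+1}$ using the Upper Bound machinery to control leakage outside it.
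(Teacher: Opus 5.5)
For $n\ge C(\lambda_p/\delta_p)^2$, your argument is the paper's. Note that this is exactly what the sample-size hypothesis of Theorems \ref{theo: Piupper} and \ref{theo: PipM1/2lower} reduces to when $r_{\mathrm{eff}}\le r\lambda_p/\delta_p$. The shared steps are:
\begin{itemize}
\item the contour $\Gamma_p$ with first-order term $M_{1,p}+M_{2,p}$, and higher-order terms handled by the combinatorial expansion;
\item anti-concentration of $u_{p+1}^\top E u_p=\frac{\sqrt{\lambda_p\lambda_{p+1}}}{n}\sum_k S_k^{(p,p+1)}$, where isolating this single entry instead of bounding $\|M_{1,p}+M_{2,p}\|$ is harmless;
\item the conversion of Appendix \ref{subsection:routine} with $t\asymp r/\sqrt{\epsilon}$, together with the trivial bound $\|\tilde\Pi_p-\Pi_p\|\le 1$, which correctly disposes of the upper bound for smaller $n$.
\end{itemize}
As in Remark \ref{remark:explicit}, your $C_2$ necessarily depends on a lower bound for $\Var S^{(p,p+1)}$.

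The gap is the lower bound when $n\lesssim(\lambda_p/\delta_p)^2$. There you must show that $\|\tilde\Pi_p-\Pi_p\|$ is bounded below by a constant with probability $1/16$. The paper gives no argument for this regime either: Appendix \ref{subsection:routine} only combines the two theorems under \eqref{condition concrete}. So there is nothing to borrow, and your sketch fails at several points.
\begin{itemize}
\item Every ingredient of the ``Upper Bound machinery'' requires $r x_p/\delta_p<1/C$. This includes convergence of $\sum_s F_s$, the Type I--III bounds, and Lemma \ref{lem: dimargu}, which guarantees that $\Gamma_p$ encloses exactly $\tilde\lambda_1,\dots,\tilde\lambda_p$. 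That condition fails on the very event $|u_{p+1}^\top E u_p|\gtrsim\delta_p$ you want to exploit. So the machinery cannot be used to control leakage out of your $2\times 2$ block.
\item The $2\times2$ reduction ignores the other eigenvalues of $I_{[p]}$ lying within $O(\lambda_p/\sqrt n)$ of $\lambda_p$, and these mix as well.
\item When $n\lesssim r_{\mathrm{eff}}$ (allowed here, since $r_{\mathrm{eff}}\le r\lambda_p/\delta_p$), one can have $\|E\|\gtrsim\lambda_p$. Then there is no perturbative control of the bulk at all.
\item Your two regimes do not meet. The perturbative lower bound needs $\lambda_p/(\sqrt n\,\delta_p)$ below a small constant (namely $1/(40C'K\kappa_p t r)$). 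Your direct argument assumes it exceeds a large constant. The window in between is covered by neither.
\item Monotonicity of the error in $n$ is not an available fact, so it cannot fill the hole.
\end{itemize}
As written, your proof, like the paper's, establishes the lower bound only under $n\ge C(\lambda_p/\delta_p)^2$. The full statement needs a non-perturbative anti-concentration argument for small $n$.
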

\begin{theorem} [Medium effective rank] \label{thm:mediumpi} For any constants $\epsilon, K,r, \kappa_p $, there are constants $C_1, C_2, C_3$ so that the following holds.  If $\frac{r\lambda_p}{\delta_{p}} < r_{\mathrm{eff}} \leq \frac{r}{\kappa_p^2} \big(\frac{\lambda_p}{\delta_{p}}\big)^2 $, and $n \geq C_3 \cdot  r_{\mathrm{eff}}  \frac{\lambda_p}{\delta_{p}} $, then with probability at least $1-\epsilon$,
$$  \textstyle  \|\tilde{\Pi}_p - \Pi_p\|\leq C_1 \cdot  \frac{ \lambda_p }{\sqrt{n} \delta_{p}},$$
\noindent and  with probability at least $\frac{1}{16}$
$$ \textstyle C_2 \cdot  \frac{ \lambda_p }{\sqrt{n} \delta_{p}} \leq  \|\tilde{\Pi}_p - \Pi_p\| .$$
\end{theorem}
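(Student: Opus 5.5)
The plan is to derive Theorem \ref{thm:mediumpi} from eigenspace analogues of the Upper Bound Theorem \ref{theo: upUpperM1/2 subGau} and the Lower Bound Theorem \ref{theo: uplowerboundM1/2 subGau}, exactly as Theorems \ref{thm:small-dependentp}--\ref{thm:large-dependentp} are derived from those two results. Concretely, the first step is to establish, for $\|\tilde\Pi_p-\Pi_p\|$ with $I_{[p]}=\{1,\dots,r\}$ and gap $\delta_p$, the two bounds
\[
\|\tilde\Pi_p-\Pi_p\| \le C K t\Big(\kappa_p\sqrt{\tfrac{r_{\mathrm{eff}}}{n}}+\tfrac{\sqrt r\,\lambda_p}{\sqrt n\,\delta_p}\Big)
\]
with probability at least $1-e^{-4r_{\mathrm{eff}}t^2}-\frac{2r^2}{t^2}$, and
\[
\|\tilde\Pi_p-\Pi_p\|\ge \tfrac14\Big[\sqrt{\Var S^{(p,p+1)}}\tfrac{\lambda_p}{\sqrt n\,\delta_p}+\sqrt{\tfrac{s_p}{\kappa_p}}\sqrt{\tfrac{r_{\mathrm{eff}}}{n}}\Big]-\text{(error terms)},
\]
both valid under $n\ge C(K\kappa_p t)^2\max\{r_{\mathrm{eff}}\frac{r\lambda_p}{\delta_p},(\frac{r\lambda_p}{\delta_p})^2\}$. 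The proof mirrors the single-eigenvector case, but with a different contour. Write $\tilde M=M+E$ and $\tilde\Pi_p-\Pi_p=\frac{1}{2\pi i}\oint_\Gamma[(z-\tilde M)^{-1}-(z-M)^{-1}]dz$, where $\Gamma$ encloses $\lambda_1,\dots,\lambda_p$ and crosses the real axis at $\lambda_p-\delta_p/2$ and at a point beyond $\lambda_1$. Expanding in $E$ gives a first-order term $F=\sum_{i\le p<j}\frac{u_i^\top E u_j}{\lambda_i-\lambda_j}(u_iu_j^\top+u_ju_i^\top)$ plus higher-order terms. These higher-order terms are controlled by the combinatorial expansion of \cite{DKTranVu1}, with moment bounds on $u_i^\top E u_j$ (which is an average of $n$ iid copies of $\sqrt{\lambda_i\lambda_j}S^{(i,j)}$) and a bound on $\|E\|$ in terms of $r_{\mathrm{eff}}$.

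For the upper bound we split $F$ into two parts. The indices $j\in I_{[p]}$, of which there are at most $r$, have denominators at least $\delta_p$, and each contributes $O(Kt\lambda_p/(\sqrt n\,\delta_p))$ by Chebyshev; this is the source of the $r^2/t^2$ failure term. The indices $j>r$ have $\lambda_i-\lambda_j\ge\lambda_p/2$, and their contribution is bounded by $\frac{2}{\lambda_p}\|\Pi_pE(I-\Pi_r)\|\lesssim \kappa_p Kt\sqrt{r_{\mathrm{eff}}/n}$, using the sub-Gaussian covariance concentration with tail $e^{-4r_{\mathrm{eff}}t^2}$. The lower bound comes from testing $F$ against specific vectors. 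Applying $F$ to $u_{p+1}$ and projecting onto $u_p$ gives $\frac{u_p^\top E u_{p+1}}{\delta_p}$, which exceeds $\frac12\sqrt{\Var S^{(p,p+1)}}\frac{\lambda_p}{\sqrt n\,\delta_p}$ with constant probability; we would prove this via Paley--Zygmund using the fourth moment, which requires the 8-wise independence. Applying $F$ to $u_p$ instead gives $\|\sum_{j>p}\frac{u_j^\top Eu_p}{\lambda_p-\lambda_j}u_j\|$. Restricting to $j>r$, its second moment is of order $\frac{1}{n\lambda_p}\sum_j\lambda_j\Var S^{(p,j)}\approx s_p r_{\mathrm{eff}}/(n\kappa_p)$, and Paley--Zygmund again gives a constant-probability lower bound. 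Since $\|F\|$ dominates the larger of these two quantities, a union of the two events together with the higher-order error bound yields the displayed lower bound.

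Given these two bounds, the theorem itself is routine. In the medium range, $r_{\mathrm{eff}}\le\frac{r}{\kappa_p^2}(\lambda_p/\delta_p)^2$ gives $\kappa_p\sqrt{r_{\mathrm{eff}}/n}\le\sqrt r\,\lambda_p/(\sqrt n\,\delta_p)$, and $r_{\mathrm{eff}}>r\lambda_p/\delta_p$ makes $\max\{\cdot\}=r_{\mathrm{eff}}\,r\lambda_p/\delta_p$. Choosing $t\asymp r/\sqrt\epsilon$ then makes the sample condition read $n\ge C_3r_{\mathrm{eff}}\lambda_p/\delta_p$ and gives the upper bound with $C_1$. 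For the lower bound we take $t$ a large constant and $C'$ large so that the error terms are at most half of the main term. Here $r_{\mathrm{eff}}>r\lambda_p/\delta_p\ge r$ is large, and hypothesis \eqref{cond5} makes $e^{-4r_{\mathrm{eff}}t^2}$ small, so the success probability is at least $\frac1{16}$. The main obstacle is the higher-order terms in the contour expansion for the eigenspace. The contour now encloses $p$ eigenvalues, so terms mixing indices inside and outside $I_{[p]}$ must be shown to be of smaller order, at most $\frac{Kt}{C'}$ times the main term, under the stated sample size. The first attempt would be to reuse the combinatorial profiles of \cite[Section 8]{DKTranVu1}, treating $u_1,\dots,u_r$ as the important block.
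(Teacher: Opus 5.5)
Your architecture matches the paper's: a contour around $\lambda_1,\dots,\lambda_p$, the first-order term $M_{1,p}+M_{2,p}$, higher-order terms via the profiles of \cite{DKTranVu1}, and then the routine specialization of Appendix~\ref{subsection:routine}. Your first-order analysis and lower-bound mechanism are fine. Testing against $u_{p+1}$ isolates $u_p^\top E u_{p+1}/\delta_p$, Paley--Zygmund can replace the paper's CLT step, and in the medium range you need only this one event because that term dominates.

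The gap is in the step you flag as the main obstacle. The ingredients you list, moment bounds on $u_i^\top E u_j$ and a bound on $\|E\|$, do not control the higher-order terms at the stated sample size. The profile $PEQEP$ in $F_2$, and the bad profiles $PEQEPE\cdots EQEP$ of Lemma~\ref{LemmaCase2M(a,b)bound}, produce a term $\frac{\sqrt r\,y_p}{\delta_p}$. Here $y_p=\max|u_i^\top E R_k E u_j|$ over $i\neq j\le r$ and $k\le p$, with $R_k=\sum_{l>r}\frac{u_lu_l^\top}{\lambda_k-\lambda_l}$, and the small gap sits in the denominator. Using only $\|E\|\lesssim Kt\lambda_1\sqrt{r_{\mathrm{eff}}/n}$, you get $y_p\le 2\|E\|^2/\lambda_p$, hence
\[
\frac{\sqrt r\,y_p}{\delta_p}\lesssim K^2t^2\sqrt r\,\kappa_p^2\,\frac{r_{\mathrm{eff}}}{\sqrt n}\cdot\frac{\lambda_p}{\sqrt n\,\delta_p}.
\]
At $n=C_3r_{\mathrm{eff}}\lambda_p/\delta_p$ the factor $r_{\mathrm{eff}}/\sqrt n=\sqrt{r_{\mathrm{eff}}\delta_p/(C_3\lambda_p)}$ reaches order $\kappa_p^{-1}\sqrt{r\lambda_p/(C_3\delta_p)}$ at the top of the medium range. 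No constant $C_3$ makes this small. The term would then swamp both the upper bound $C_1\frac{\lambda_p}{\sqrt n\,\delta_p}$ and the main term of your lower bound. That route needs $n\gg r_{\mathrm{eff}}^2$, which is essentially the stronger condition of \cite{JW2} discussed in Section~\ref{subsec: existPopu}.

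The missing ingredient is the paper's separate moment computation for the bilinear form $u_i^\top E R_k E u_j$ (Lemma~\ref{lemma: xyModeil1}, second part; Appendix~\ref{section: moments}).
\begin{itemize}
\item Since $u_i\perp u_j$ and $u_i,u_j\perp u_l$ for $l>r$, the Gaussian part of the mean cancels. Only a fourth-cumulant term of size $O(K^2\sqrt{\lambda_i\lambda_j}/n)$ survives.
\item The standard deviation is $O\big(\frac{K\sqrt{\lambda_i\lambda_j}}{n}\sqrt{\sum_{l>r}\lambda_l^2/(\lambda_k-\lambda_l)^2}\big)$, plus a term that is lower order once $n\gtrsim r_{\mathrm{eff}}$.
\item Together these give $y_p$ of order $\frac{t\lambda_p}{n}(1+\kappa_p\sqrt{r_{\mathrm{eff}}})$ up to powers of $K$. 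This is a $\sqrt{r_{\mathrm{eff}}}$ gain over $\|E\|^2/\lambda_p$, and it makes $\frac{\sqrt r\,y_p}{\delta_p}$ a small multiple of $\frac{\lambda_p}{\sqrt n\,\delta_p}$ as soon as $n\gg r_{\mathrm{eff}}$.
\end{itemize}

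A second, related omission: you take for granted that $\Gamma$ encloses exactly $\tilde\lambda_1,\dots,\tilde\lambda_p$. Weyl's inequality would need $\|E\|<\delta_p/2$, that is $n\gtrsim\kappa_p^2 r_{\mathrm{eff}}(\lambda_p/\delta_p)^2$, which is again unavailable in the medium range. The paper instead uses the dimension argument of Section~\ref{subsec: dim argument}. The contour projection is within distance $<1$ of $\Pi_p$, so it has rank $p$, and $\tilde\lambda_1$ lies inside. That argument itself relies on the same sharp bound for $y_p$.
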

\begin{theorem} [Large effective rank] \label{thm:largepi} 
 For any constant $\epsilon, K,r, \kappa_p $, there are constants $ C_1, C_2, C_3$ so that the following holds. If $ r_{\mathrm{eff}} > \frac{r}{\kappa_p^2} \big(\frac{\lambda_p}{\delta_{p}}\big)^2 $, and  $n \geq C_3 \cdot   r_{\mathrm{eff}}  \frac{\lambda_p}{\delta_{p}}   $, then with probability at least $1-\epsilon$, 
\[ \textstyle
\|\tilde{\Pi}_p - \Pi_p\| \leq C_1 \cdot \sqrt{\frac{r_{\mathrm{eff}}}{n}},
\]
and with probability at least $\frac{1}{16}$,
\[ \textstyle
C_2 \cdot \sqrt{\frac{r_{\mathrm{eff}}}{n}}  \leq\|\tilde{\Pi}_p - \Pi_p\| .
\]
\end{theorem}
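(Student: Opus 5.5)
Theorem \ref{thm:largepi} will follow from an eigenspace analogue of the Upper Bound Theorem \ref{theo: upUpperM1/2 subGau} and the Lower Bound Theorem \ref{theo: uplowerboundM1/2 subGau}, with $\tilde u_p\tilde u_p^\top-u_pu_p^\top$ replaced by $\tilde\Pi_p-\Pi_p$, $\delta_{(p)}$ replaced by $\delta_p$, and the important index set $I_p$ replaced by $I_{[p]}=\{1,\dots,r\}$. The proof of the analogue follows Section \ref{sec: proof}. Write $\tilde M=M+E$ with $E=M^{1/2}(\frac1n\sum_i Y_iY_i^\top-I)M^{1/2}$. Take a contour $\Gamma$ enclosing $\lambda_1,\dots,\lambda_p$, crossing the real axis at the midpoint of $[\lambda_{p+1},\lambda_p]$ and at roughly $2\lambda_1$. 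Then
\[
\tilde\Pi_p-\Pi_p=\frac{1}{2\pi i}\oint_\Gamma\big[(z-\tilde M)^{-1}-(z-M)^{-1}\big]dz=\sum_{k\ge1}\frac{1}{2\pi i}\oint_\Gamma (z-M)^{-1}\big[E(z-M)^{-1}\big]^k dz .
\]
The first-order term equals $\sum_{i\le p<j}\frac{u_i^\top E u_j}{\lambda_i-\lambda_j}(u_iu_j^\top+u_ju_i^\top)$. Its norm is at least the sum of the two contributions below, up to a constant.
\begin{itemize}
\item The pair $(p,p+1)$ contributes $\frac{|u_p^\top Eu_{p+1}|}{\delta_p}$. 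Since $u_p^\top E u_{p+1}=\sqrt{\lambda_p\lambda_{p+1}}\cdot\frac1n\sum_i S_i^{(p,p+1)}$, this gives order $\frac{\lambda_p}{\sqrt n\delta_p}$.
\item The column block $\{u_1^\top E u_j\}_{j>r}$ (or $u_p$ in place of $u_1$), whose Euclidean norm is about $\lambda_p^{-1}\sqrt{\lambda_p\,\mathrm{Trace}\,M/n}$, i.e. of order $\sqrt{r_{\mathrm{eff}}/(\kappa_p n)}$ up to $s_p$.
\end{itemize}

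For the upper bound I will bound the first-order term by decomposing it into the block $I_{[p]}\times I_{[p]}$ and the block $I_{[p]}\times I_{[p]}^c$. For the first block, each entry is $O(\lambda_p/(\sqrt n\delta_p))$ with probability $1-O(r^2/t^2)$ by Chebyshev using the 8-wise independence; there are at most $r^2$ entries. For the second block, the denominators are at least $\lambda_p/2$, and the norm of $\Pi_r E(I-\Pi_r)$ is controlled by a sub-Gaussian concentration bound giving $\sqrt{\lambda_1\mathrm{Trace}\,M/n}$ with probability $1-e^{-4r_{\mathrm{eff}}t^2}$. The higher-order terms ($k\ge2$) are controlled by the combinatorial expansion of \cite{DKTranVu1}, exactly as for $u_p$. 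The only change is that the contour now surrounds $p$ eigenvalues; since $r=O(1)$, this costs only constant factors. The hypothesis $n\ge C_3 r_{\mathrm{eff}}\lambda_p/\delta_p$ makes the sum over $k\ge2$ at most $1/C'$ times the first-order bound, which is what the lower bound needs.

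Given these two analogue theorems, the derivation of Theorem \ref{thm:largepi} is routine, as in Section \ref{subsection:routine}. In the regime $r_{\mathrm{eff}}>\frac{r}{\kappa_p^2}(\lambda_p/\delta_p)^2$ we have $\frac{\sqrt r\lambda_p}{\sqrt n\delta_p}<\kappa_p\sqrt{r_{\mathrm{eff}}/n}$, so the upper bound collapses to $C_1\sqrt{r_{\mathrm{eff}}/n}$ once $t$ is chosen with $2r^2/t^2+e^{-4r_{\mathrm{eff}}t^2}\le\epsilon$. The hypothesis on $n$ then matches $C(K\kappa_pt)^2\max\{\cdot\}$, because $(r\lambda_p/\delta_p)^2\le r\kappa_p^2\, r_{\mathrm{eff}}\cdot r\lambda_p/\delta_p$ in this regime, absorbed into $C_3$. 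For the lower bound:
\begin{itemize}
\item Keep only the term $\frac14\sqrt{s_p/\kappa_p}\sqrt{r_{\mathrm{eff}}/n}$.
\item Choose $C'$ large so the $900Kt/C'$ error term is at most one eighth of it.
\item Note $4Kt/(\kappa_p\sqrt n)$ is negligible since $r_{\mathrm{eff}}\ge C_0$ is large. When $\lambda_p/\delta_p$ is bounded, $r_{\mathrm{eff}}\ge C_0$ holds by \eqref{cond5}, and in this regime $r_{\mathrm{eff}}$ dominates anyway.
\end{itemize}
Taking $t$ with $e^{-4r_{\mathrm{eff}}t^2}+2r^2/t^2\le1/16$ gives probability at least $1/16$.

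The main obstacle is the lower bound for the $I_{[p]}\times I_{[p]}^c$ block. This needs anti-concentration: I must show $\|\Pi_pE(I-\Pi_r)\|\gtrsim\sqrt{s_p\lambda_p\mathrm{Trace}\,M/n}$ with constant probability. I would use the second moment $\E\|u_p^\top E(I-\Pi_r)\|^2=\frac1n\sum_{j>r}\lambda_p\lambda_j\Var S^{(p,j)}$ together with a fourth-moment bound (available by 8-wise independence, Appendix \ref{section: moments}) and Paley–Zygmund, which gives probability $\ge1/8$ as in Theorem \ref{theo: uplowerboundM1/2 subGau}. I also need to check that this contribution is not cancelled by the $I_{[p]}\times I_{[p]}$ block; this holds because the two blocks act on orthogonal subspaces, so the norm of the sum is at least half the larger one.
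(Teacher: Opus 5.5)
Your proposal follows the paper's route: you use the eigenspace upper and lower bound theorems (Theorems~\ref{theo: Piupper} and~\ref{theo: PipM1/2lower}, built from the contour expansion with first-order term $M_{1,p}+M_{2,p}$, Paley--Zygmund on the $\|Eu_p\|$-type contribution, and the combinatorial expansion for the higher-order terms), then specialize them to the regime $r_{\mathrm{eff}}>\frac{r}{\kappa_p^2}(\lambda_p/\delta_p)^2$ as in Appendix~\ref{subsection:routine}, and your regime computations are correct. The one step you assert without justification is that $\Gamma$ encloses exactly $\tilde\lambda_1,\dots,\tilde\lambda_p$; Weyl's inequality cannot give this, since under $n\ge C_3 r_{\mathrm{eff}}\lambda_p/\delta_p$ one may have $\|E\|\gg\delta_p$, and the paper instead uses the dimension argument of Section~\ref{subsec: dim argument} (the expansion bound gives $\|\tilde\Pi_{\Gamma}-\Pi_p\|<1$, which forces the Riesz projection to have rank $p$, and the right edge of $\Gamma$ lies beyond $\tilde\lambda_1$).
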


Similar to the previous section,  we obtain these results by combining the following upper-and lower-bound theorems. 

 \begin{theorem} \label{theo: Piupper}
   %
   There is a universal constant $C > 0$ so that for any $t > 0$, if 
    \[
    n \geq C(K \kappa_p t)^2 \cdot \max \bigg\{ r_{\mathrm{eff}} \cdot \frac{r\lambda_p}{\delta_{p}}, \left(\frac{r \lambda_p}{\delta_{p}} \right)^2 \bigg\},
    \]
     then with probability at least $1-e^{-4r_{\mathrm{eff}}t^2} - \frac{2r^2}{t^2}$, 
    \[
    \Norm{\tilde{\Pi}_p  - \Pi_p }     \leq 280K t\left( \kappa_p\sqrt{\frac{r_{\mathrm{sta} }  }{n}} +  \frac{ \sqrt{r} \lambda_p}{ \sqrt{n} \delta_{p} } \right).
    \]
\end{theorem}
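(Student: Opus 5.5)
We bound $\|\tilde\Pi_p-\Pi_p\|$ through the contour representation. Write $\tilde M = M+E$ with $E=\tilde M - M$, and let $\Gamma$ be a contour in $\mathbb{C}$ enclosing $\lambda_1,\dots,\lambda_p$ and no other eigenvalue of $M$. We take $\Gamma$ to be the boundary of a rectangle whose left side crosses the real axis at $\lambda_p-\delta_p/2$ and whose right side lies at $\lambda_1+\lambda_p$ or so. The first step is to show that, on a high-probability event, $\Gamma$ also separates $\tilde\lambda_1,\dots,\tilde\lambda_p$ from the rest of the spectrum of $\tilde M$. Then
\[
\tilde\Pi_p-\Pi_p=\frac{1}{2\pi i}\oint_\Gamma \big[(z-\tilde M)^{-1}-(z-M)^{-1}\big]\,dz .
\]
We expand the resolvent as a Neumann series in $(z-M)^{-1}E$, which gives $\sum_{s\ge1}\frac{1}{2\pi i}\oint_\Gamma (z-M)^{-1}[E(z-M)^{-1}]^s\,dz$. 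The structure is the same as in the proof of Theorem \ref{theo: upUpperM1/2 subGau}: the only change is that the set of ``inside'' indices is now $\{1,\dots,p\}$ rather than $\{p\}$, and the relevant gap is $\delta_p$ rather than $\delta_{(p)}$. So the plan is to rerun that argument with these substitutions.

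Key steps, in order:
\begin{enumerate}
\item \textbf{Decompose $E$.} Write $E=M^{1/2}(\frac1n\sum_i Y_iY_i^\top - I)M^{1/2}$ and split it into blocks with respect to the important indices $I_{[p]}=\{1,\dots,r\}$ and their complement.
\item \textbf{Entry bounds on the important block.} Use Chebyshev together with the variance formula \eqref{iden: expect and variance Sij} to get $|u_i^\top E u_j|\le Kt\sqrt{\lambda_i\lambda_j/n}$ for all $i,j\le r$, simultaneously with probability $1-2r^2/t^2$. This only needs second moments of $S^{(i,j)}$, which 8-wise independence supplies with room to spare.
\item \textbf{Norm bound on the rest.} Use a matrix Bernstein / effective-rank concentration bound (Koltchinskii--Lounici type) to get $\|E\|\lesssim Kt\lambda_1\sqrt{r_{\mathrm{eff}}/n}$ with probability $1-e^{-4r_{\mathrm{eff}}t^2}$.
\item \textbf{Bound the first-order term.} For $s=1$, the contour integral equals $\sum_{i\le p<j}\frac{u_iu_i^\top E u_ju_j^\top}{\lambda_i-\lambda_j}+\text{transpose}$.
\begin{itemize}
\item For $j\le r$, each pair has gap at least $\delta_p$. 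By step 2 these entries contribute $\lesssim Kt\sqrt r\,\lambda_p/(\sqrt n\,\delta_p)$.
\item For $j>r$, the gap is $\ge\lambda_p/2$. Writing this part as $\Pi_pE(I-\Pi_r)$ times a bounded diagonal operator, its norm is controlled by $\|\Pi_pE\|/\lambda_p\lesssim Kt\kappa_p\sqrt{r_{\mathrm{eff}}/n}$. To get this, bound $\|\Pi_p M^{1/2}(\cdot)M^{1/2}\|$ using $\sqrt{\lambda_1\,\mathrm{Tr}M}$.
\end{itemize}
\item \textbf{Bound the higher-order terms.} For $s\ge2$, follow the combinatorial expansion of \cite{DKTranVu1}. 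Group the terms by the pattern of inside/important/outside indices visited by the products $u_a^\top E u_b$. Each pattern is bounded by the $s=1$ quantity times a geometric factor $\big(C Kt\kappa_p\max\{\sqrt{r_{\mathrm{eff}}\, r\lambda_p/(n\delta_p)},\, r\lambda_p/(\sqrt n\delta_p)\}\big)^{s-1}$.
\end{enumerate}

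The sample-size hypothesis $n\ge C(K\kappa_pt)^2\max\{\cdots\}$ is exactly what makes this factor at most $1/2$. The same smallness also guarantees that no eigenvalue of $\tilde M$ crosses $\Gamma$, by Weyl's inequality applied blockwise, which settles the first step. Summing the geometric series gives the constant $280$. The failure probabilities from steps 2 and 3 combine by a union bound to give $1-e^{-4r_{\mathrm{eff}}t^2}-2r^2/t^2$.

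The main obstacle is step 5, and specifically the terms where the walk enters the $r$ important indices near $\lambda_p$. There the resolvent factors are as large as $1/\delta_p$, and a naive bound $\|E\|^s/\delta_p^s$ would force $n\gtrsim r_{\mathrm{eff}}(\lambda_p/\delta_p)^2$ instead of the stated linear dependence. To overcome this, I would charge small-gap factors only to the entry bounds of step 2, each of which carries $\lambda_p/\sqrt n$. The operator-norm bound of step 3 is used only against the large gaps $\gtrsim\lambda_p$. This is precisely the mechanism of the single-vector case, and the changes needed are only notational: replace $\delta_{(p)}$ by $\delta_p$, and note that the gap in the multi-index inside set matters only across the cut at $p$.
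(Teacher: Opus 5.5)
\textbf{The proposal has a genuine gap in step 5.} Your architecture matches the paper's: contour representation, resolvent expansion, the split into $P$ (indices $1,\dots,r$) and $Q$ (indices $>r$), the combinatorial expansion of \cite{DKTranVu1}, an effective-rank bound on $\|E\|$, and Chebyshev for the entries $u_i^\top E u_j$ with $i,j\le r$. But with only the inputs of steps 2--3, the claim that every pattern is the first-order quantity times a geometric factor is false.

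The failing patterns are the alternating ones: $PEQEP$, $PEQEPEQEP$, and so on. Already at $s=2$, take $i\le p<j\le r$ and $l>r$. The residue at $\lambda_i$ produces
\[
\frac{(u_i^\top E u_l)(u_l^\top E u_j)}{(\lambda_i-\lambda_l)(\lambda_i-\lambda_j)}\,u_i u_j^\top .
\]
This carries a small gap $1/(\lambda_i-\lambda_j)$, as large as $1/\delta_p$. Yet both $E$-factors touch the outside block, so there is no important-block entry to which you can charge the small gap.

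From steps 2--3 the best you can do is $|u_i^\top E D_i E u_j|\le 2\|Eu_i\|\,\|Eu_j\|/\lambda_p$, with $D_i=\sum_{l>r}\frac{u_lu_l^\top}{\lambda_i-\lambda_l}$. Since $\|Eu_i\|^2\approx \lambda_i\,\mathrm{Trace}\,M/n$, this contributes order $\frac{r\,\lambda_1 r_{\mathrm{eff}}}{n\,\delta_p}$. Under the stated hypothesis that is only $O(1/C)$. It is dominated by the claimed bound only when $n\gtrsim \min\{r_{\mathrm{eff}}(\lambda_p/\delta_p)^2,\, r_{\mathrm{eff}}^2\}$, up to factors of $r,K,t,\kappa_p$. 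For example, take $\lambda_p/\delta_p=m$, $r_{\mathrm{eff}}=m^2$ and $n\asymp m^3$. The hypothesis holds and the claimed bound is of order $m^{-1/2}$, but your estimate of this single term does not decay in $m$.

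\textbf{What is missing} is the second-order skewness parameter
\[
y_p=\max_{\substack{1\le i\ne j\le r\\ 1\le k\le p}}\Big|u_i^\top E\Big(\sum_{l>r}\frac{u_lu_l^\top}{\lambda_k-\lambda_l}\Big)Eu_j\Big| .
\]
The paper's deterministic bound is $\|\tilde\Pi_p-\Pi_p\|\le\frac{7\|E\|}{\lambda_p}+\frac{5\sqrt r\,x_p}{\delta_p}+\frac{5\sqrt r\,y_p}{\delta_p}$. The $y_p$-term absorbs exactly the $PEQE\cdots QEP$ patterns, which are the exceptional case in Lemma \ref{LemmaCase2M(a,b)bound}.

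$y_p$ is then bounded separately, by the second-moment computation of Lemma \ref{lemma: xyModeil1}. Because $u_i\perp u_j$, the Gaussian part of the mean cancels, leaving a fourth-cumulant term of size $O(K^2\sqrt{\lambda_i\lambda_j}/n)$. The leading fluctuation scales with $\sqrt{\sum_{l>r}\lambda_l^2}$ rather than $\mathrm{Trace}\,M$, and the $\mathrm{Trace}\,M$ part carries an extra factor $n^{-1/2}$. This yields
\[
\frac{\sqrt r\,y_p}{\delta_p}\lesssim Kt\,\frac{\sqrt r\,\lambda_p}{\sqrt n\,\delta_p}\Big(\frac{K}{\sqrt n}+\kappa_p\sqrt{\frac{r_{\mathrm{eff}}}{n}}+\frac{K^2\kappa_p r_{\mathrm{eff}}}{n}\Big),
\]
which is genuinely lower order. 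That variance computation needs eighth moments of the coordinates of $Y$. This is where 8-wise independence is really used, not just for second moments of $S^{(i,j)}$. It also accounts for one of the two $r^2/t^2$ terms in the failure probability.

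\textbf{The separation step has a related problem.} Weyl's inequality needs $\|E\|<\delta_p/2$, i.e.\ $n\gtrsim\kappa_p^2 r_{\mathrm{eff}}(\lambda_p/\delta_p)^2$. The coupling block $\Pi_rE(I-\Pi_r)$ has norm of order $w=\max_{i\le r}\|Eu_i\|$, which can far exceed $\delta_p$; in the example above, $w/\delta_p\asymp\sqrt m$. So no first-order, blockwise Weyl argument suffices. You need either a Schur-complement argument, where the coupling enters only through $rw^2/\lambda_p\lesssim\delta_p/C^2$, or the paper's dimension argument. That argument runs the same expansion on a contour whose right edge passes through $\|M\|+1.1\|E\|$ and shows $\|\tilde\Pi_{\Gamma_p}-\Pi_p\|<1$. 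Hence exactly $p$ eigenvalues of $\tilde M$ lie inside, and this step again uses the bound on $y_p$.
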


Next, the lower bound is an analogue of Theorem \ref{theo: uplowerboundM1/2 subGau}. We replace $\delta_{(p)}$ with $\delta_p$, and $\gamma_p$ with $p+1$, since in this setting, only the gap $\lambda_p -\lambda_{p+1}$ matters. 
\begin{theorem} \label{theo: PipM1/2lower} 
For any $C'>24$ and any $t > 0$, if  \[
        n \geq  (40C'K \kappa_p t)^2  \cdot \max \bigg\{ r_{\mathrm{eff}} \cdot \frac{r\lambda_p}{\delta_{p}}, \left(\frac{r \lambda_p}{\delta_{p}} \right)^2 \bigg\},
        \]
        then with probability at least $\frac{1}{8}- e^{-4r_{\mathrm{eff}}t^2} - \frac{2pr^2}{t^2}$, 
        \[ 
          \Norm{\tilde{\Pi}_p  - \Pi_p } \geq \frac{1}{4} \left[ \sqrt{\Var S^{(p,p+1)}} \cdot \frac{\lambda_p}{\sqrt{n} \delta_{p}} + \max_{1 \leq i \leq p}\sqrt{\frac{s_i}{\kappa_i}} \cdot \sqrt{\frac{r_{\mathrm{eff}}}{n}} \right] - \frac{4Kt}{\kappa_p\sqrt{n}} - \frac{900Kt}{C'} \left[\kappa_p \sqrt{\frac{r_{\mathrm{eff}}}{n}} + \frac{\sqrt{r} \lambda_p}{\sqrt{n}\delta_p}\right] .
        \]

\end{theorem}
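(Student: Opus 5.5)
The plan is to follow the proof of Theorem \ref{theo: uplowerboundM1/2 subGau}: use a contour representation of $\tilde\Pi_p-\Pi_p$, isolate its linear (first-order) term, and show that this term has a norm of the claimed size with probability at least $1/8$, while all higher-order terms are smaller by a factor of order $1/C'$.

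\textbf{Step 1 (contour representation).} Write $\tilde M = M + E$, where $E = M^{1/2}\big(\frac1n\sum_k Y_kY_k^\top - I\big)M^{1/2}$. Let $\Gamma$ be a contour enclosing $\lambda_1,\dots,\lambda_p$ and crossing the real axis at $\lambda_p-\delta_p/2$. On the event of the upper bound theorem (probability $1-e^{-4r_{\mathrm{eff}}t^2}-2r^2/t^2$), the sample-size hypothesis gives $\|E\|$, and more precisely the relevant bilinear forms of $E$ on the important indices, small compared with $\delta_p$. Hence $\Gamma$ also encloses exactly $\tilde\lambda_1,\dots,\tilde\lambda_p$, and
\[
\tilde\Pi_p-\Pi_p=\frac{1}{2\pi i}\oint_\Gamma \sum_{s\ge1}(z-M)^{-1}\big[E(z-M)^{-1}\big]^s\,dz .
\]

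\textbf{Step 2 (first-order term and its size).} The $s=1$ term equals
\[
F:=\sum_{i\le p<j}\frac{u_i^\top E u_j}{\lambda_i-\lambda_j}\,(u_iu_j^\top+u_ju_i^\top).
\]
All terms with $s\ge2$ are controlled by the same combinatorial-expansion bounds used for Theorem \ref{theo: Piupper}. The only difference is that the leading constant is replaced by a factor of order $1/C'$, coming from the stronger sample requirement $(40C'K\kappa_pt)^2$. This gives the subtracted term $\frac{900Kt}{C'}[\cdots]$.

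Since $\|F\|\ge\max(|u_p^\top F u_{p+1}|,\ \|F u_i\|)$, two choices lower-bound $\|F\|$.

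\emph{Gap part.} Take the entry $(p,p+1)$:
\[
u_p^\top E u_{p+1}/\delta_p=\frac{\sqrt{\lambda_p\lambda_{p+1}}}{\delta_p}\cdot\frac1n\sum_k S_k^{(p,p+1)}.
\]
It is a normalized sum of $n$ iid mean-zero terms with variance $\Var S^{(p,p+1)}$, and $\lambda_{p+1}\ge\lambda_p/2$. An anti-concentration (Paley–Zygmund) argument, using the fourth moments available from 8-wise independence and sub-Gaussianity, shows $|\cdot|\ge \tfrac12\sqrt{\Var S^{(p,p+1)}}\,\lambda_p/(\sqrt n\,\delta_p)$ with constant probability.

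\emph{Trace part.} For each $i\le p$, take the column
\[
Fu_i\supseteq \sum_{j>r}\frac{u_j^\top E u_i}{\lambda_i-\lambda_j}\,u_j .
\]
For $j>r$ we have $\lambda_i-\lambda_j\asymp\lambda_i$. Its squared norm is about $\sum_{j>r}\lambda_j\lambda_i\cdot\frac1n\Var S^{(i,j)}/\lambda_i^2$. The $j\le r$ terms are absorbed, and this gives a quantity of order $s_i\,r_{\mathrm{eff}}/(\kappa_i n)$.

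The error $\frac{4Kt}{\kappa_p\sqrt n}$ accounts for the removed indices $j\le r$ in the trace. Because $\|F\|\ge\max$ of the two pieces, which is at least half their sum, both bounds hold simultaneously, at the cost of the factor $1/4$.

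\textbf{Step 3 (probability bookkeeping).} The lower bound on the squared column norm $\|Fu_i\|^2$ again comes from Paley–Zygmund, using a second-moment computation of the quadratic form $\sum_j\lambda_j(u_j^\top Y)^2(u_i^\top Y)^2$. This is where 8-wise independence enters. Union-bounding the $p$ events for $i\le p$ costs the factor $p$ in $2pr^2/t^2$. Together with the event of Step 1, the total probability is at least $\frac18-e^{-4r_{\mathrm{eff}}t^2}-\frac{2pr^2}{t^2}$.

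\textbf{Main obstacle.} The hardest step will be the anti-concentration lower bound for the column norm $\|Fu_i\|$. It needs control of fourth moments of a quadratic form with dependent $8$-wise independent coordinates, uniformly enough in the spectrum to produce exactly $s_i$. I would first try Paley–Zygmund with an explicit variance bound expressed in terms of $r_{\mathrm{eff}}$, as in the proof of Theorem \ref{theo: uplowerboundM1/2 subGau}.
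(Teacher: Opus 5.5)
Your outline matches the paper's. It uses the contour over $\Gamma_p$, the first-order term $F_1=M_{1,p}+M_{2,p}$, and higher-order terms paid for by the factor $1/C'$, then tests $F_1$ on the $(p,p+1)$ entry and on a column $F_1u_i$. Using Paley--Zygmund instead of the CLT for the entry is fine. However, both lower bounds in Step~2 fail as stated.

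In the gap part you use $\lambda_{p+1}\ge\lambda_p/2$, which holds only if $r\ge p+1$. Otherwise $u_p^\top Eu_{p+1}=\sqrt{\lambda_p\lambda_{p+1}}\,n^{-1}\sum_k S_k^{(p,p+1)}$ can be much smaller than $\sqrt{\Var S^{(p,p+1)}}\,\lambda_p/\sqrt n$; it is identically $0$ if $\lambda_{p+1}=0$.

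In the trace part, the discarded indices cannot be ``absorbed'' into $4Kt/(\kappa_p\sqrt n)$. The contribution of $j\le r$ to $\frac{s_ir_{\mathrm{eff}}}{\kappa_i n}=\frac{\lambda_i}{n\lambda_1^2}\sum_j\lambda_j\Var S^{(i,j)}$ can be of order $K^2r/(\kappa_i n)$. For $i=1$ it contains $\Var S^{(1,1)}/n$, which has no factor $\kappa_p^{-1}$ at all. The indices $j\le p$ are genuinely lost, because $u_j^\top Eu_i$ with $i,j\le p$ never enters $\tilde\Pi_p-\Pi_p$ at first order.

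The paper itself only says the argument ``proceeds as in'' Section~\ref{subsec: prooflowerM1/2Line1}, and the natural transcription of that section has the same defect. It keeps all $j>p$ and uses $(\lambda_i-\lambda_j)^2\le\lambda_1^2$. It then applies Paley--Zygmund to $\|Eu_i\|^2$, whose mean gives $s_i$ exactly, and subtracts the realized $|u_j^\top Eu_i|^2$, $j\le p$, via Lemma~\ref{lemma: xyModeil1} and a union bound. The subtracted amount can be as large as $4K^2t^2\lambda_i\sum_{j\le p}\lambda_j/(n\lambda_1^2)$, an error of order $Kt\sqrt{p/n}$ for $i=1$. For a single eigenvector this is invisible, because the subtracted terms all carry the factor $(\lambda_p/\lambda_1)^2$.

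In fact the step cannot be closed as stated. Take $M=\mathrm{diag}(\kappa,1,0,\dots,0)$, $p=2$, and $Y$ standard Gaussian ($K=8/3$). Every $X_k$ lies in $\mathrm{span}(e_1,e_2)$, so $F_1=0$ and $\tilde\Pi_2=\Pi_2$ almost surely. Yet $r=2$, $\delta_2=1$, $r_{\mathrm{eff}}=1+1/\kappa$, $\Var S^{(2,3)}=1$ and $s_1r_{\mathrm{eff}}=2+1/\kappa$. The claimed lower bound is therefore $\frac{1}{\sqrt n}\big[\frac14\big(1+\sqrt{2+1/\kappa}\big)-\frac{32t}{3\kappa}-\frac{2400t}{C'}\big(\kappa\sqrt{r_{\mathrm{eff}}}+\sqrt2\big)\big]$. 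Take $t=12$ (where $\frac18-\frac{2pr^2}{t^2}>0$), $\kappa=10^4$, $C'=10^{12}$, and $n$ as large as the hypothesis demands. Then this bound is about $0.59/\sqrt n$, while the true value is $0$.

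A correct version needs $r\ge p+1$ for the gap term, which is automatic when $\lambda_p\ge 2\delta_p$. It also needs an error of order $Kt\sqrt{p/n}$ in place of $4Kt/(\kappa_p\sqrt n)$. In the regime of Theorems~\ref{thm:smallpi}--\ref{thm:largepi}, with $C_0$ large, these changes should only cost constants.

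Separately, fix the probability bookkeeping. Paley--Zygmund events have only constant probability, so you cannot intersect the gap and trace events, nor union-bound them over $i\le p$. Instead, fix the maximizing $i$ in advance and use only the event attached to the larger of the two deterministic thresholds. That alone gives $\|F_1\|\ge\max\ge\frac12(\text{sum})$.
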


\section{Consistency Theorems} \label{sec: consistency}

In this section, we extend the consistency results for the leading eigenvector in Subsection \ref{section:simplest} (Corollaries \ref{consistent1},\ref{consistent2}, and \ref{consistent3}) to general eigenvectors $\tilde u_p$ and eigenspaces $\tilde \Pi_p$. 
We use the general setting in Section \ref{section: population}. To ease the presentation, we also assume
\[
K, r, \kappa_p = O(1).
\]

\subsection{Consistency in estimating eigenvectors} Using Theorems \ref{thm:small-dependentp},\ref{thm:medium-dependentp}, and \ref{thm:large-dependentp}, we derive the following results, providing the consistency conditions for $\tilde{u}_p$. We assume that \eqref{cond4} holds and  $\Var S^{(p,\gamma_p)}, s_p \geq c$ for some fixed constant $c > 0$.

\begin{theorem} [Small effective rank] \label{thm:small-consistency} 
Assume that $r_{\mathrm{eff}} \leq  \frac{r\lambda_p}{\delta_{(p)}}$.
Then,
\[
\left\| \tilde{u}_p \tilde{u}_p^\top - u_p u_p^\top \right\| = o(1)\,\,\,\text{if and only if}\,\, \frac{ \lambda_p }{\sqrt{n} \delta_{(p)}} = o ( 1).
\]
\end{theorem}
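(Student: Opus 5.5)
This follows from Theorem \ref{thm:small-dependentp}, once we fix what $o(1)$ means. We work with a sequence of models indexed by $d\to\infty$ with $K,r,\kappa_p=O(1)$. We say $\|\tilde{u}_p \tilde{u}_p^\top - u_p u_p^\top\| = o(1)$ if for every fixed $\eta>0$, $\P(\|\tilde{u}_p \tilde{u}_p^\top - u_p u_p^\top\|>\eta)\to 0$. The hypothesis \eqref{cond4} ensures that Theorem \ref{thm:small-dependentp} applies for $d$ large. Because $\Var S^{(p,\gamma_p)}$ and $s_p$ are at least $c$ and $\kappa_p=O(1)$, the lower-bound constant $C_2$ from Remark \ref{remark:explicit} is bounded below by a positive constant independent of $d$. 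The constant $C_1$ depends only on $K,r,\kappa_p,\epsilon$.

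\emph{Sufficiency.} Assume $\frac{\lambda_p}{\sqrt n\delta_{(p)}}=o(1)$ and fix $\eta,\epsilon>0$. Theorem \ref{thm:small-dependentp} gives, with probability at least $1-\epsilon$,
\[
\|\tilde{u}_p \tilde{u}_p^\top - u_p u_p^\top\|\le C_1(\epsilon)\min\Big\{\frac{\lambda_p}{\sqrt n\delta_{(p)}},1\Big\}.
\]
For $d$ large the right-hand side is below $\eta$, so $\limsup_d \P(\|\tilde{u}_p \tilde{u}_p^\top - u_p u_p^\top\|>\eta)\le\epsilon$. Since $\epsilon$ is arbitrary, the error is $o(1)$.

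\emph{Necessity.} We argue by contraposition. Suppose $\frac{\lambda_p}{\sqrt n\delta_{(p)}}$ is not $o(1)$. Then along a subsequence it stays at least some $a>0$, so $\min\{\frac{\lambda_p}{\sqrt n\delta_{(p)}},1\}\ge\min\{a,1\}$. The lower bound in Theorem \ref{thm:small-dependentp} then gives, along this subsequence,
\[
\P\big(\|\tilde{u}_p \tilde{u}_p^\top - u_p u_p^\top\|\ge C_2\min\{a,1\}\big)\ge \tfrac1{16}.
\]
Taking $\eta=C_2\min\{a,1\}/2>0$, the probability $\P(\|\tilde{u}_p \tilde{u}_p^\top - u_p u_p^\top\|>\eta)$ does not tend to $0$. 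So the error is not $o(1)$.

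The only delicate point is the uniformity of the constants. $C_0$ must be fixed independently of $d$ so that \eqref{cond4} is a genuine standing assumption. $C_2$ must stay bounded away from zero, and this is exactly where the assumption $\Var S^{(p,\gamma_p)},s_p\ge c$ is used, through the explicit formula in Remark \ref{remark:explicit}. Nothing beyond these checks is needed.
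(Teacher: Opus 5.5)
Your proposal is correct and follows the paper's route: the paper derives this theorem directly as a corollary of Theorem~\ref{thm:small-dependentp}, and the assumptions $\Var S^{(p,\gamma_p)}, s_p \ge c$ and $\kappa_p=O(1)$ are there precisely to keep the lower-bound constant $C_2$ of Remark~\ref{remark:explicit} bounded away from zero, as you note. Your write-up spells out the convergence-in-probability and subsequence bookkeeping that the paper leaves implicit.
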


\begin{theorem} [Medium effective rank] \label{thm:medium-consistency} Assume that $\frac{r\lambda_p}{\delta_{(p)}} < r_{\mathrm{eff}} \leq \frac{r}{\kappa_p^2}\big(\frac{\lambda_p}{\delta_{(p)}} \big)^2$.
For any constants $K,r, \kappa_p,$ there are constants $C_0, C_3 $ so that if $ n \geq C_3 \cdot r_{\mathrm{eff}} \frac{\lambda_p}{\delta_{(p)}}$, then the following holds:  %
\[
\left\| \tilde{u}_p \tilde{u}_p^\top - u_p u_p^\top \right\| = o(1)\,\,\,\text{if and only if}\,\, \frac{ \lambda_p }{\sqrt{n} \delta_{(p)}} = o (1).
\]
\end{theorem}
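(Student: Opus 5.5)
Theorem \ref{thm:medium-consistency} follows directly from Theorem \ref{thm:medium-dependentp}. The one point to handle is the constants: those in Theorem \ref{thm:medium-dependentp} depend on $\epsilon$, whereas the present statement is asymptotic. I would read "$\|\tilde u_p\tilde u_p^\top-u_pu_p^\top\| = o(1)$" as: for every fixed $\eta>0$, $\P(\|\tilde u_p\tilde u_p^\top-u_pu_p^\top\|>\eta)\to 0$. For the "only if" direction we only need that the error is not $o(1)$ with probability bounded away from zero. The threshold $C_3$ is chosen once, independently of $\epsilon$, for the lower bound. For the upper bound we use the explicit form $C_3=CK^4\kappa_p^5r^6/\epsilon$ from Remark \ref{remark:explicit}, or better, Theorem \ref{theo: upUpperM1/2 subGau} directly with a growing $t$.

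\textbf{Sufficiency.} Suppose $\frac{\lambda_p}{\sqrt n\delta_{(p)}}=o(1)$. In the medium range $r_{\mathrm{eff}}\le \frac{r}{\kappa_p^2}(\frac{\lambda_p}{\delta_{(p)}})^2$, so
$$\kappa_p\sqrt{r_{\mathrm{eff}}/n}\le \sqrt r\,\frac{\lambda_p}{\sqrt n\delta_{(p)}} .$$
Hence the right-hand side of Theorem \ref{theo: upUpperM1/2 subGau} is at most $560Kt\sqrt r\,\frac{\lambda_p}{\sqrt n\delta_{(p)}}$.

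Choose $t=t(d)\to\infty$ slowly, so that $t\cdot\frac{\lambda_p}{\sqrt n\delta_{(p)}}\to 0$ and the sample condition of Theorem \ref{theo: upUpperM1/2 subGau} still holds. This amounts to checking the two terms of the max:
\begin{itemize}
\item $n\ge C(K\kappa_pt)^2 r_{\mathrm{eff}}\frac{r\lambda_p}{\delta_{(p)}}$. In the medium range $r_{\mathrm{eff}}\ge \frac{r\lambda_p}{\delta_{(p)}}$, so this term is at most $C(K\kappa_p t)^2 r\, r_{\mathrm{eff}}^{2}$, which is awkward. Instead I would write it as $C(K\kappa_pt)^2 r^2\, r_{\mathrm{eff}}\cdot\frac{\lambda_p}{\delta_{(p)}}$. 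Together with $n\ge C_3 r_{\mathrm{eff}}\frac{\lambda_p}{\delta_{(p)}}$ this needs $t$ bounded, which fails.
\item The fix is to use $r_{\mathrm{eff}}\le r(\lambda_p/\delta_{(p)})^2$. Then $r_{\mathrm{eff}}\frac{\lambda_p}{\delta_{(p)}}\cdot t^2\le r t^2(\frac{\lambda_p}{\delta_{(p)}})^3$, and this is $o(n\cdot\frac{\lambda_p}{\delta_{(p)}})$ once $t^2(\lambda_p/\delta_{(p)})^2/n\to0$, so it is still not automatic.
\end{itemize}
So instead I keep $t$ fixed but large: $t=\sqrt{2r^2/\eta'}$, which makes the failure probability at most $\eta'+e^{-4r_{\mathrm{eff}}t^2}$. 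The sample condition then holds with $C_3$ depending on $\eta'$. This is the obstacle: $C_3$ may not depend on the target probability. To handle it I would split the argument into two parts:
\begin{itemize}
\item The factor $t^2$ only needs $n\ge C t^2(\cdot)$. Since $\frac{\lambda_p}{\sqrt n\delta_{(p)}}\to0$, we have $n/(\frac{\lambda_p}{\delta_{(p)}})^2\to\infty$, which covers the second term of the max for any fixed $t$ eventually.
\item The first term is covered by $n\ge C_3 r_{\mathrm{eff}}\frac{\lambda_p}{\delta_{(p)}}$ only up to the constant. So I would take the $C_3$ of the statement to be the one from Theorem \ref{thm:medium-dependentp} with a fixed small $\epsilon$ (say $\epsilon=1/100$). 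That yields error $\le C_1\frac{\lambda_p}{\sqrt n\delta_{(p)}}=o(1)$ with probability $\ge 0.99$. For arbitrary target probability I would re-run Theorem \ref{theo: upUpperM1/2 subGau} using the slack available whenever $n\gg r_{\mathrm{eff}}\lambda_p/\delta_{(p)}$.
\end{itemize}
If that slack is not available, I would accept consistency in the sense "with probability at least $1-\epsilon$ for the $\epsilon$ fixed by $C_3$", consistent with how the paper states its corollaries.

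\textbf{Necessity.} Suppose $\frac{\lambda_p}{\sqrt n\delta_{(p)}}\not\to0$, so along a subsequence it is $\ge c_0>0$. The lower bound of Theorem \ref{thm:medium-dependentp}, with $C_2\ge \frac1{16}\min\{\sqrt c,\sqrt{c/\kappa_p}\}>0$ guaranteed by the assumptions $\Var S^{(p,\gamma_p)},s_p\ge c$, gives
$$\|\tilde u_p\tilde u_p^\top-u_pu_p^\top\|\ge C_2c_0$$
with probability $\ge 1/16$. Hence the error is not $o(1)$. Finally, $C_0$ and $C_3$ are taken as the maxima of the constants required by the two invocations.
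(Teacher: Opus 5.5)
Your proposal is the paper's own argument: the paper presents this theorem as an immediate consequence of Theorem~\ref{thm:medium-dependentp}, with the upper bound $C_1\frac{\lambda_p}{\sqrt n\,\delta_{(p)}}$ giving sufficiency and the probability-$\frac{1}{16}$ lower bound giving necessity, where $C_2$ is bounded below by the assumption $\Var S^{(p,\gamma_p)},s_p\ge c$. The paper never addresses the $\epsilon$-dependence of $C_3$ that you flag, so its derivation supports exactly the fixed-$\epsilon$ reading you fall back on, and your exploratory attempts with a growing $t$ are dead ends that can simply be deleted.
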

\begin{theorem} [Large effective rank] \label{thm:large-consistency} Assume that $r_{\mathrm{eff}} > \frac{r}{\kappa_p^2}\big(\frac{\lambda_p}{\delta_{(p)}} \big)^2$.  For any constants $K,r, \kappa_p,$ there are constants $C_0, C_3 $ so that if $ n \geq C_3 \cdot r_{\mathrm{eff}} \frac{\lambda_p}{\delta_{(p)}}$, then the following holds:  %
\[
\left\| \tilde{u}_p \tilde{u}_p^\top - u_p u_p^\top \right\| = o(1)\,\,\,\text{if and only if}\,\, \sqrt{\frac{r_{\mathrm{eff}}}{n}}= o(1).
\]
\end{theorem}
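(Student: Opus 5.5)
The statement follows from Theorem~\ref{thm:large-dependentp}, together with the standing assumptions $\Var S^{(p,\gamma_p)}, s_p \ge c$. It only remains to make the $o(1)$ statements precise and to choose the constants. Fix $K, r, \kappa_p$. For each $\epsilon>0$, Theorem~\ref{thm:large-dependentp} supplies constants $C_0(\epsilon), C_1(\epsilon), C_3(\epsilon)$ and $C_2$; by Remark~\ref{remark:explicit}, $C_2=\frac{1}{16}\min\{\sqrt{\Var S^{(p,\gamma_p)}},\sqrt{s_p/\kappa_p}\}$ does not depend on $\epsilon$. It is bounded below by $\frac{1}{16}\sqrt{c/\kappa_p}>0$ because both variances are at least $c$ and $\kappa_p \ge 1$.

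The $\epsilon$-dependence of $C_0$ and $C_3$ is the main obstacle. The statement allows only fixed constants $C_0, C_3$, whereas a consistency statement ("with probability $1-o(1)$") needs $\epsilon\to 0$. I would handle this by going back to Theorem~\ref{theo: upUpperM1/2 subGau} directly rather than through Theorem~\ref{thm:large-dependentp}. In the large effective rank regime, $\frac{r\lambda_p}{\delta_{(p)}} \le \sqrt{r}\,\kappa_p\sqrt{r_{\mathrm{eff}}}$ and also $r\lambda_p/\delta_{(p)} \le C r_{\mathrm{eff}}$. Hence the sample-size requirement of Theorem~\ref{theo: upUpperM1/2 subGau} is of order $t^2 r_{\mathrm{eff}}\frac{\lambda_p}{\delta_{(p)}}$, and the error bound is $O(t\sqrt{r_{\mathrm{eff}}/n})$, since the term $\frac{\sqrt r\lambda_p}{\sqrt n\delta_{(p)}}$ is at most $\sqrt{r}\kappa_p\sqrt{r_{\mathrm{eff}}/n}$.

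\textbf{Sufficiency.} Assume $\sqrt{r_{\mathrm{eff}}/n}=o(1)$. Choose $t=t(d)\to\infty$ slowly, for instance $t=(n/r_{\mathrm{eff}})^{1/4}$ capped appropriately, so that
\begin{itemize}
\item[(i)] $t\sqrt{r_{\mathrm{eff}}/n}=o(1)$;
\item[(ii)] $n\ge C(K\kappa_p t)^2 r_{\mathrm{eff}}\cdot\frac{r\lambda_p}{\delta_{(p)}}$ and $n\ge C(K\kappa_p t)^2\big(\frac{r\lambda_p}{\delta_{(p)}}\big)^2$;
\item[(iii)] the failure probability $e^{-4r_{\mathrm{eff}}t^2}+2r^2/t^2$ tends to $0$.
\end{itemize}
Condition (ii) needs care. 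It holds if $n/(r_{\mathrm{eff}}\lambda_p/\delta_{(p)})\to\infty$. If instead this ratio stays bounded (say $n\ge C_3 r_{\mathrm{eff}}\lambda_p/\delta_{(p)}$ is only tight), I would argue as follows. Since $n/r_{\mathrm{eff}}\to\infty$ forces $n\gg r_{\mathrm{eff}}$, and in this regime $\lambda_p/\delta_{(p)}\lesssim\sqrt{r_{\mathrm{eff}}}$, the product $r_{\mathrm{eff}}\cdot\frac{\lambda_p}{\delta_{(p)}}$ is at most $r_{\mathrm{eff}}^{3/2}$. If $\lambda_p/\delta_{(p)}$ is bounded, the requirement reduces to $n\gg r_{\mathrm{eff}}$, which holds. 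Otherwise one uses subsequences: along any subsequence, either $t$ can be taken to grow, or one works with a fixed large $t$ for each target $\epsilon$ and a diagonal argument shows the error is $o(1)$ with probability $1-o(1)$. I expect this bookkeeping to be the delicate point. Concretely, I would show that for every fixed $\epsilon$, $\P(\|\cdot\|>\eta)\le\epsilon$ eventually for every $\eta>0$, using $C_3=C_3(\epsilon)$ absorbed via the hypothesis on $n$.

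\textbf{Necessity.} Suppose $\sqrt{r_{\mathrm{eff}}/n}$ is not $o(1)$. Then along a subsequence $\sqrt{r_{\mathrm{eff}}/n}\ge\eta>0$. The lower bound in Theorem~\ref{thm:large-dependentp} then gives $\|\tilde u_p\tilde u_p^\top-u_pu_p^\top\|\ge C_2\eta$ with probability at least $1/16$ along that subsequence. This contradicts consistency, which requires the error to be $o(1)$ with probability $1-o(1)$. The lower-bound constant requirement on $n$ is a single fixed $C_3$, since the lower bound was proved with a fixed probability $1/16$. This fixes the $C_0, C_3$ quoted in the statement, and completes the proof.
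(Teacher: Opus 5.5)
Your reduction matches the paper's: the statement is read off from Theorem~\ref{thm:large-dependentp}, or equivalently from Theorems~\ref{theo: upUpperM1/2 subGau} and~\ref{theo: uplowerboundM1/2 subGau} together with $\sqrt{r}\,\lambda_p/\delta_{(p)}<\kappa_p\sqrt{r_{\mathrm{eff}}}$ in this regime. The upper bound gives sufficiency, and the $\epsilon$-free lower bound with $C_2\ge\frac{1}{16}\sqrt{c/\kappa_p}$ gives necessity. Your necessity argument is fine. Note, however, that the paper implicitly works with $\epsilon$ fixed inside the constants ($C_3=CK^4\kappa_p^5r^6/\epsilon$ in Remark~\ref{remark:explicit}). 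So its sufficiency direction only says the error is $o(1)$ on an event of probability at least $1-\epsilon$.

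The gap is in your attempt to upgrade this to probability $1-o(1)$ with one fixed $C_3$. Take the case you flag, $n=C_3\,r_{\mathrm{eff}}\lambda_p/\delta_{(p)}$ with $\lambda_p/\delta_{(p)}\to\infty$, so that $r_{\mathrm{eff}}/n\to 0$ and the hypothesis holds. Then the sample-size condition of Theorem~\ref{theo: upUpperM1/2 subGau} forces $t\le\sqrt{C_3/(Cr)}/(K\kappa_p)$ along the whole sequence. Hence the guaranteed failure probability $e^{-4r_{\mathrm{eff}}t^2}+2r^2/t^2$ never tends to $0$.

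Passing to subsequences or diagonalizing over $\epsilon$ does not enlarge the admissible range of $t$. ``Absorbing $C_3(\epsilon)$ via the hypothesis on $n$'' is also impossible, since $C_3(\epsilon)\asymp 1/\epsilon$ while the statement fixes $C_3$.

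The binding constraint is the $w$-condition in \eqref{def: C}. Since $w$ is genuinely of order $\sqrt{\lambda_1\lambda_p r_{\mathrm{eff}}/n}$ (its second moment is $\frac{\lambda_p}{n}\sum_k\lambda_k\Var S^{(k,p)}$), the ratio $\sqrt{r}\,w/\sqrt{\lambda_p\delta_{(p)}/2}$ is of constant order $Kt\sqrt{r\kappa_p/C_3}$ at this boundary, however fast $r_{\mathrm{eff}}/n\to 0$. Lemma~\ref{lem: w bound} controls it only with polynomial tails.

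To close the argument you must do one of the following:
\begin{enumerate}
\item[(a)] Adopt the paper's fixed-$\epsilon$ reading, taking $\epsilon<1/16$ so that necessity still yields a contradiction.
\item[(b)] Strengthen the hypothesis to $n/(r_{\mathrm{eff}}\lambda_p/\delta_{(p)})\to\infty$. Under this, your slowly growing $t$ does satisfy (i)--(iii).
\item[(c)] Prove an ingredient not in the paper: $w\le C''\sqrt{\lambda_1\lambda_p r_{\mathrm{eff}}/n}$ with probability $1-o(1)$ for a fixed constant $C''$.
\end{enumerate}
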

 
\subsection{Consistency in estimating eigenspaces}

Similarly, we can use Theorems \ref{thm:smallpi}, \ref{thm:mediumpi} and \ref{thm:largepi} to derive the following consistency theorems for $\tilde{\Pi}_p$. We assume that \eqref{cond5} holds and $\Var S^{(p,p+1)}, s_p \geq c$ for some fixed constant $c > 0$. 
\begin{theorem} [Small effective rank] \label{thm:small-consistencyPi} 
Assume that $r_{\mathrm{eff}} \leq  \frac{r\lambda_p}{\delta_{p}}$.
Then,
\[
\left\| \tilde{\Pi}_p  - \Pi_p \right\| = o(1)\,\,\,\text{if and only if}\,\, \frac{ \lambda_p }{\sqrt{n} \delta_{p}} = o (1).
\]
\end{theorem}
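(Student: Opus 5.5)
The plan is to derive both directions directly from Theorem \ref{thm:smallpi}, as the analogous eigenvector statement (Theorem \ref{thm:small-consistency}) follows from Theorem \ref{thm:small-dependentp}. Throughout, $K, r, \kappa_p = O(1)$, so the constants $C_0, C_1, C_2$ of Theorem \ref{thm:smallpi} can be fixed once $\epsilon$ is chosen. Assumption \eqref{cond5} with this $C_0$ is in force. By Remark \ref{remark:explicit} and its analogue for $\tilde \Pi_p$, the lower-bound constant $C_2$ depends on $\Var S^{(p,p+1)}$ and $\max_{i \le p} s_i/\kappa_i$. The hypothesis $\Var S^{(p,p+1)}, s_p \ge c$ together with $\kappa_p = O(1)$ guarantees $C_2 \ge c' > 0$ for a constant $c'$ independent of $d$. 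This is the one place where the extra hypothesis is used.

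\emph{Sufficiency.} Suppose $\frac{\lambda_p}{\sqrt n \delta_p} = o(1)$. Fix an arbitrary $\epsilon > 0$. The upper bound in Theorem \ref{thm:smallpi} gives, with probability at least $1-\epsilon$,
\[
\|\tilde \Pi_p - \Pi_p\| \le C_1(\epsilon) \min\Big\{ \frac{\lambda_p}{\sqrt n \delta_p}, 1\Big\},
\]
and the right-hand side is $o(1)$. Since $\epsilon$ is arbitrary, a diagonal argument over $\epsilon \to 0$ slowly shows that for every fixed $\eta > 0$ we have $\P(\|\tilde \Pi_p - \Pi_p\| > \eta) \to 0$. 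This is the meaning of consistency (error $o(1)$ with probability $1-o(1)$). Note that no condition on $n$ is needed in the small effective rank range, which is why the statement carries none.

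\emph{Necessity.} Argue by contraposition: suppose $\frac{\lambda_p}{\sqrt n \delta_p}$ is not $o(1)$. Then along a subsequence it is at least some constant $a > 0$, so $\min\{\frac{\lambda_p}{\sqrt n \delta_p}, 1\} \ge \min\{a, 1\}$. The lower bound in Theorem \ref{thm:smallpi} gives, with probability at least $1/16$,
\[
\|\tilde \Pi_p - \Pi_p\| \ge c' \min\{a,1\},
\]
which is a positive constant. Hence the error is not $o(1)$ with probability $1 - o(1)$, contradicting consistency.

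The only delicate point is checking that the constant $C_2$ in Theorem \ref{thm:smallpi} does not degenerate. This relies on the explicit form inherited from Theorem \ref{theo: PipM1/2lower}. There, the subtracted error terms $\frac{4Kt}{\kappa_p \sqrt n}$ and $\frac{900Kt}{C'}[\cdots]$ are absorbed by choosing $C'$ large relative to $K, \kappa_p, r, t, c$. One must also handle the regime $\frac{\lambda_p}{\sqrt n\delta_p} \ge 1$, where the sample-size requirement of Theorem \ref{theo: PipM1/2lower} fails. In that regime, Theorem \ref{thm:smallpi} itself already provides the bound $C_2$ through the $\min\{\cdot,1\}$, and I take that statement as given.
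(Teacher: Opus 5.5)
Your proposal is correct and follows the paper's route: the consistency statement is read off directly from Theorem~\ref{thm:smallpi}. The upper bound gives sufficiency, and the probability-$\frac{1}{16}$ lower bound gives necessity; its constant stays bounded away from zero because $\Var S^{(p,p+1)}, s_p \ge c$ and $\kappa_p = O(1)$. One small correction to your side remark, which does not affect the argument since you take Theorem~\ref{thm:smallpi} as given: the term $\frac{4Kt}{\kappa_p\sqrt{n}}$ cannot be absorbed by enlarging $C'$. It is absorbed through assumption \eqref{cond5}, which in the small effective rank regime forces $\lambda_p/\delta_p$ to be large.
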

\begin{theorem} [Medium effective rank] \label{thm:medium-consistencyPi} Assume that $\frac{r\lambda_p}{\delta_{p}} < r_{\mathrm{eff}} \leq \frac{r}{\kappa_p^2}\big(\frac{\lambda_p}{\delta_{p}} \big)^2$. For any constants $K,r, \kappa_p,$ there are constants $C_0, C_3$ so that if  $ n \geq C_3 \cdot r_{\mathrm{eff}} \frac{\lambda_p}{\delta_{p}}$, then the following holds.
\[
\left\| \tilde{\Pi}_p  - \Pi_p \right\| = o(1)\,\,\,\text{if and only if}\,\, \frac{ \lambda_p }{\sqrt{n} \delta_{p}} = o (1).
\]
\end{theorem}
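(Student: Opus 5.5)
We derive the statement directly from Theorem \ref{thm:mediumpi}, in the same way Corollary \ref{consistent2} follows from Theorem \ref{thm:medium}. Since $K, r, \kappa_p = O(1)$ and we assume $\Var S^{(p,p+1)}, s_p \geq c$, the constants in Theorem \ref{thm:mediumpi} can be fixed once and for all. By Remark \ref{remark:explicit} and its analogue for eigenspaces, $C_2$ is bounded below by a positive constant depending only on $c$ and $\kappa_p$. Likewise $C_0$ and $C_3$ depend only on $K, r, \kappa_p$, plus $\epsilon$ in the case of $C_3$. We take $C_0$, and $C_3$ with, say, $\epsilon = 1/2$, to be the constants supplied by that theorem.

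\emph{Sufficiency.} Assume $\frac{\lambda_p}{\sqrt{n}\delta_p} = o(1)$ and let $\epsilon > 0$ be arbitrary. Because $C_3(\epsilon)$ grows only polynomially in $1/\epsilon$, we handle the sample-size hypothesis as follows. For fixed $\epsilon$ we need $n \geq C_3(\epsilon)\, r_{\mathrm{eff}} \frac{\lambda_p}{\delta_p}$. In the medium range we have $r_{\mathrm{eff}} \le \frac{r}{\kappa_p^2}(\lambda_p/\delta_p)^2$, so
\[ r_{\mathrm{eff}}\frac{\lambda_p}{\delta_p} \le r\,\frac{\lambda_p}{\delta_p}\cdot\frac{\lambda_p}{\delta_p}\cdot \frac{1}{\kappa_p^2}. \]
This alone does not reduce to $(\lambda_p/\delta_p)^2$; we have $r_{\mathrm{eff}}\,\lambda_p/\delta_p \le r(\lambda_p/\delta_p)^3$, which is not controlled by $n = \omega((\lambda_p/\delta_p)^2)$. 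For this reason the theorem's hypothesis is stated with a fixed $C_3$. We therefore avoid changing $\epsilon$ inside $C_3$. Instead we apply Theorem \ref{theo: Piupper} directly with $t$ a large constant. Its sample-size condition is $n \ge C(K\kappa_p t)^2 \max\{ r_{\mathrm{eff}}\frac{r\lambda_p}{\delta_p}, (\frac{r\lambda_p}{\delta_p})^2 \}$. Both terms are $O(r_{\mathrm{eff}}\lambda_p/\delta_p)$ in the medium range, because there $r_{\mathrm{eff}} > r\lambda_p/\delta_p$. Hence this condition is implied by $n \ge C_3 r_{\mathrm{eff}}\lambda_p/\delta_p$ once $C_3 \ge C K^2\kappa_p^2 t^2 r$. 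Fixing $t$ with $2r^2/t^2 \le \epsilon/2$ then gives, with probability $1-\epsilon$,
\[ \|\tilde\Pi_p - \Pi_p\| \le 280Kt\Big(\kappa_p\sqrt{r_{\mathrm{eff}}/n} + \sqrt r\,\frac{\lambda_p}{\sqrt n\delta_p}\Big). \]
In the medium range $\sqrt{r_{\mathrm{eff}}/n} \le \frac{\sqrt r}{\kappa_p}\frac{\lambda_p}{\sqrt n \delta_p}$, so the bound is $O_\epsilon(\lambda_p/(\sqrt n\delta_p)) = o(1)$. The factor $e^{-4r_{\mathrm{eff}}t^2}$ is small because $r_{\mathrm{eff}} \ge r\lambda_p/\delta_p \ge C_0$ by \eqref{cond5}.

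The main subtlety is making $t$, and thus the requirement on $C_3$, depend on $\epsilon$. We read "$=o(1)$" as convergence in probability. To get this with a single fixed $C_3$, we let $t = t_d \to \infty$ slowly, so that $t_d^2 \ll n\delta_p/(r_{\mathrm{eff}}\lambda_p)$ whenever that ratio diverges. In the borderline case where $n \asymp r_{\mathrm{eff}}\lambda_p/\delta_p$, we check that $\lambda_p/(\sqrt n \delta_p)\to 0$ still yields the bound via Theorem \ref{thm:mediumpi} applied at a fixed $\epsilon$ along subsequences. A standard subsequence argument then shows that for each fixed $\epsilon$ the failure probability is eventually below $\epsilon$.

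\emph{Necessity.} Suppose $\frac{\lambda_p}{\sqrt n\delta_p} \not\to 0$. Then along a subsequence it stays bounded below by some $\eta > 0$. The lower bound of Theorem \ref{thm:mediumpi} gives, with probability at least $1/16$, $\|\tilde\Pi_p - \Pi_p\| \ge C_2\eta$, with $C_2 > 0$ uniform. Hence $\|\tilde\Pi_p - \Pi_p\|$ is not $o(1)$ with probability $1-o(1)$. This completes the equivalence.
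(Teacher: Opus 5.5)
Your route is the paper's: the paper obtains this statement by reading it off Theorem \ref{thm:mediumpi}, i.e.\ Theorems \ref{theo: Piupper} and \ref{theo: PipM1/2lower}. Your necessity argument is fine, because the lower-bound half only uses the $\epsilon$-free sample condition of Theorem \ref{theo: PipM1/2lower} with fixed $t$ and $C'$. You are also right that sufficiency does not follow verbatim when $C_3$ is independent of $\epsilon$ and ``$=o(1)$'' means convergence in probability. Your $t_d\to\infty$ argument does handle subsequences along which $n\delta_p/(r_{\mathrm{eff}}\lambda_p)\to\infty$.

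The genuine gap is the borderline case. Suppose $n\delta_p/(r_{\mathrm{eff}}\lambda_p)\le R$ along a subsequence. The sample condition of Theorem \ref{theo: Piupper} then caps $t$ at about $\sqrt{R/(CK^2\kappa_p^2 r)}$, so its failure probability $2r^2/t^2$ stays bounded away from $0$. Theorem \ref{thm:mediumpi} at level $\epsilon$ needs $C_3(\epsilon)=CK^4\kappa_p^5r^6/\epsilon\le R$, so it is simply unavailable once $\epsilon$ is small. Passing to further subsequences cannot turn a failure probability bounded below by a constant into one tending to $0$. So your sentence ``we check that \ldots\ still yields the bound'' is exactly the missing step, not a routine verification.

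To close it, you must open up the proof rather than use the theorems as black boxes.
\begin{itemize}
\item In the borderline regime, $n\asymp r_{\mathrm{eff}}\lambda_p/\delta_p\le\frac{r}{\kappa_p^2}(\lambda_p/\delta_p)^3$ together with $n/(\lambda_p/\delta_p)^2\to\infty$ forces $\lambda_p/\delta_p\to\infty$. Hence $r_{\mathrm{eff}}>r\lambda_p/\delta_p\to\infty$.
\item The only condition without slack, in the expansion and in Claim 1 of Lemma \ref{lem: dimargu}, is $\sqrt r\,w_p/\sqrt{\lambda_p\delta_p/2}<1/C$. What is stuck at a constant is the Chebyshev failure probability $r/t^4$ of Lemma \ref{lem: w bound}.
\item Bound instead $w_p\le\|E\|$ and use \eqref{bound E sub-Gau} with a fixed $t$. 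This gives $\sqrt r\,w_p/\sqrt{\lambda_p\delta_p/2}\le 40\sqrt2\,Kt\kappa_p\sqrt{r/C_3}$ with failure probability $e^{-4r_{\mathrm{eff}}t^2}\to0$. A fixed $C_3\gtrsim C^2K^2\kappa_p^2 r$ then suffices.
\item Every other quantity has slack. First, $\|E\|/\lambda_p\lesssim\kappa_p\sqrt{r_{\mathrm{eff}}/n}\to0$. Second, $r x_p/\delta_p\lesssim K\kappa_p r t_1\,\lambda_p/(\sqrt n\,\delta_p)$. Third, by the analogue of \eqref{est: yspikedpopu}, $\sqrt r\,y_p/\delta_p\lesssim t_2\bigl(\frac{\lambda_p}{n\delta_p}+\frac{1}{C_3\sqrt n}+\frac{\lambda_p}{\sqrt n\,\delta_p}\sqrt{r_{\mathrm{eff}}/n}\bigr)$.
\item You can therefore let $t_1,t_2\to\infty$ slowly in Lemma \ref{lemma: xyModeil1}. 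Then conclude from $\|\tilde\Pi_p-\Pi_p\|\le 7\|E\|/\lambda_p+5\sqrt r\,(x_p+y_p)/\delta_p$.
\end{itemize}
If $r_{\mathrm{eff}}$ stays bounded along some subsequence, then $n\delta_p/(r_{\mathrm{eff}}\lambda_p)\to\infty$ there, and your $t_d\to\infty$ argument applies.

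Alternatively, state the result at a fixed confidence level $1-\epsilon$ with $C_3=C_3(\epsilon)$. That is all the paper's one-line derivation from Theorem \ref{thm:mediumpi} actually delivers.
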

\begin{theorem} [Large effective rank] \label{thm:large-consistencyPi} Assume that $r_{\mathrm{eff}} > \frac{r}{\kappa_p^2}\big(\frac{\lambda_p}{\delta_{p}} \big)^2$.  For any constants $K,r, \kappa_p,$ there are constants $C_0, C_3 $ so that if $ n \geq C_3 \cdot r_{\mathrm{eff}} \frac{\lambda_p}{\delta_{p}}$, then the following holds.  
\[
\left\| \tilde{\Pi}_p  - \Pi_p \right\|= o(1)\,\,\,\text{if and only if}\,\, \sqrt{\frac{r_{\mathrm{eff}}}{n}}= o(1).
\]
\end{theorem}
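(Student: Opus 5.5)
The plan is to read this off Theorem \ref{thm:largepi}, as the remark preceding the subsection indicates, with constants chosen from Remark \ref{remark:explicit} and Theorems \ref{theo: Piupper}, \ref{theo: PipM1/2lower}. Since $K,r,\kappa_p=O(1)$, all the constants $C_0,C_1,C_2,C_3$ produced there are $O(1)$. We fix $C_0,C_3$ as in Theorem \ref{thm:largepi} for a fixed small $\epsilon$. Throughout, consistency is read as "the error is $o(1)$ with probability $1-o(1)$".

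\emph{Sufficiency.} Assume $\sqrt{r_{\mathrm{eff}}/n}=o(1)$. I would apply Theorem \ref{theo: Piupper} with $t\to\infty$ slowly, for instance $t=\min\{(n/r_{\mathrm{eff}})^{1/4},\,(n\delta_p^2/\lambda_p^2)^{1/4}\}$. In the large regime we have $r_{\mathrm{eff}}>\frac{r}{\kappa_p^2}(\lambda_p/\delta_p)^2$, so $\frac{\lambda_p}{\sqrt n\delta_p}\le \kappa_p r^{-1/2}\sqrt{r_{\mathrm{eff}}/n}=o(1)$. Hence both terms in the bound of Theorem \ref{theo: Piupper} are $O(t)\cdot o(1)$, and with this choice of $t$ they are still $o(1)$.

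Next I check the sample-size hypothesis. Note that $\max\{r_{\mathrm{eff}}\frac{r\lambda_p}{\delta_p},(\frac{r\lambda_p}{\delta_p})^2\}\le C\, r_{\mathrm{eff}}\frac{\lambda_p}{\delta_p}$ in this regime, since $(\lambda_p/\delta_p)^2\lesssim r_{\mathrm{eff}}$. So I need $n\ge C t^2 r_{\mathrm{eff}}\lambda_p/\delta_p$. To get this, rewrite $n/(r_{\mathrm{eff}}\lambda_p/\delta_p)=\sqrt{n/r_{\mathrm{eff}}}\cdot\sqrt{n\delta_p^2/(r_{\mathrm{eff}}\lambda_p^2)}$. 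The first factor tends to infinity. The second factor is at least a constant, because $n\ge C_3 r_{\mathrm{eff}}\lambda_p/\delta_p$ together with $\lambda_p/\delta_p\lesssim\sqrt{r_{\mathrm{eff}}}$ gives $n\gtrsim (\lambda_p/\delta_p)^2$. So the ratio tends to infinity and $t$ can grow.

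This step is the main technical point: $t$ must grow while the hypothesis on $n$ holds and the failure probability $e^{-4r_{\mathrm{eff}}t^2}+2r^2/t^2$ tends to $0$. If this juggling fails, I would fall back on the fixed-$\epsilon$ version: for every $\epsilon$, Theorem \ref{thm:largepi} gives error $\le C_1(\epsilon)\sqrt{r_{\mathrm{eff}}/n}=o(1)$ with probability $\ge 1-\epsilon$. A diagonal argument over $\epsilon\to0$ then yields convergence in probability.

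\emph{Necessity.} Suppose $\sqrt{r_{\mathrm{eff}}/n}\not\to0$, so along a subsequence $\sqrt{r_{\mathrm{eff}}/n}\ge\eta>0$. Theorem \ref{thm:largepi} gives, with probability at least $1/16$, error $\ge C_2\sqrt{r_{\mathrm{eff}}/n}\ge C_2\eta$. Here $C_2=\frac1{16}\min\{\sqrt{\Var S^{(p,p+1)}},\max_i\sqrt{s_i/\kappa_i}\}$, which is bounded below by a positive constant because of the standing assumption $\Var S^{(p,p+1)},s_p\ge c$ and $\kappa_p=O(1)$. An error bounded away from $0$ with probability at least $1/16$ contradicts consistency. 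Since the upper and lower bounds match up to constants, no extra case analysis is needed.
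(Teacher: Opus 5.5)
Your overall route is the paper's: the theorem is read off Theorem~\ref{thm:largepi}, i.e.\ off Theorems~\ref{theo: Piupper} and~\ref{theo: PipM1/2lower}. Your necessity argument is fine: you use the lower bound with probability $1/16$, and $C_2$ is bounded below by the standing assumptions on $\Var S^{(p,p+1)}$ and $s_p$.

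The sufficiency argument, however, rests on a false step. Write $a=\lambda_p/\delta_p\ge 1$. You claim that $\sqrt{n\delta_p^2/(r_{\mathrm{eff}}\lambda_p^2)}=\sqrt{n/(r_{\mathrm{eff}}a^2)}$ is bounded below. But $n\gtrsim a^2$ controls $n/a^2$, not $n/(r_{\mathrm{eff}}a^2)$. From $n\ge C_3r_{\mathrm{eff}}a$ you only get $n/(r_{\mathrm{eff}}a^2)\ge C_3/a$.

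Concretely, let $a\to\infty$ inside the large regime and take $n=C_3r_{\mathrm{eff}}a$. Then $\sqrt{r_{\mathrm{eff}}/n}=(C_3a)^{-1/2}\to 0$ automatically, yet $n/(r_{\mathrm{eff}}a)=C_3$ stays bounded. The maximum in the hypothesis of Theorem~\ref{theo: Piupper} is at least $r_{\mathrm{eff}}\,ra$, so that hypothesis forces $t^2\le C_3/(CK^2\kappa_p^2r)$. Hence $t$ cannot grow, and the guaranteed probability $1-e^{-4r_{\mathrm{eff}}t^2}-2r^2/t^2$ stays bounded away from $1$.

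A smaller point: even when $n/(r_{\mathrm{eff}}a)\to\infty$, your specific $t$ can violate the hypothesis, e.g.\ for $r_{\mathrm{eff}}\asymp a^2$ and $n=r_{\mathrm{eff}}a\log a$. A choice like $t=\min\{(n/r_{\mathrm{eff}})^{1/4},\,c\,(n/(r_{\mathrm{eff}}a))^{1/2}\}$ with $c=c(K,\kappa_p,r)$ small is what works there.

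The fallback does not repair this. In Theorem~\ref{thm:largepi} the constant $C_3$ depends on $\epsilon$: Remark~\ref{remark:explicit} gives $C_3\propto K^4\kappa_p^5r^6/\epsilon$. You have fixed $C_3$ once and for all, so when $n\asymp r_{\mathrm{eff}}\lambda_p/\delta_p$ only failure probabilities $\epsilon\gtrsim 1/C_3$ are reachable. There is nothing to diagonalize over.

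With $C_3$ fixed, the available tools give only this: the error is $o(1)$ with probability at least $1-\epsilon_0$, where $\epsilon_0>0$ is a constant of order $1/C_3$ (for bounded $K,r,\kappa_p$). This is also all that the paper's one-line derivation actually delivers.

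To obtain ``$o(1)$ with probability $1-o(1)$'' you need one more input. Any of the following would do:
\begin{itemize}
\item assume $n\delta_p/(r_{\mathrm{eff}}\lambda_p)\to\infty$, in which case your growing-$t$ argument works with the corrected $t$;
\item let $C_3$ depend on a prescribed failure probability;
\item prove a sharper tail bound than Lemma~\ref{lem: w bound}.
\end{itemize}
The binding constraint is $\sqrt r\,w/\sqrt{\lambda_p\delta_p/2}<1/C$. Since $w^2$ is typically of order $\lambda_1\lambda_pr_{\mathrm{eff}}/n$, this constraint has only constant slack when $n\asymp r_{\mathrm{eff}}\lambda_p/\delta_p$. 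The second-moment argument behind Lemma~\ref{lem: w bound} yields only polynomial tails, which is not enough there.
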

  For the explicit dependence of $C_0$ and $C_3$ on $K,r, \kappa_p$, see Remark \ref{remark:explicit}. 

\section{Remarks and Open Questions} \label{subsec: existPopu}

 \subsection{Upper bounds} 
 As mentioned in the introduction, most 
 previous results using perturbation theory 
 provide only upper bounds, with no matching lower bound. Among these results, the most recent and closely related to ours is that of \cite{JW2}.
We refer to this paper for a discussion of earlier results.

In \cite{JW2},  given an index $p $ (independent of $n$), the authors defined the relative rank
$$\textbf{r}_{(p)} := \sum_{d \geq j \neq p \geq 1} \frac{\lambda_j}{|\lambda_j - \lambda_p|} + \frac{\lambda_p}{\delta_{(p)}}. $$ They  used this notion to obtain new bounds on $\| \tilde u_p \tilde u_p^\top -u_p u_p^\top \|$; see  \cite[Theorem 7]{JW2}.

In the common setting where both this theorem and ours apply, it turns out that the relative norm ${\bf r}_{(p)} $ 
and the quantity $\max \{ r_{\mathrm{eff}}, \frac{\lambda_p}{\delta_{(p)} } \}$ are comparable. In this situation, one can derive the upper bounds in Theorems \ref{thm:smallp}, \ref{thm:mediump}, and \ref{thm:largep} from \cite[Theorem 7]{JW2}. 
However, \cite[Theorem 7]{JW2} does not provide lower bounds. 

As far as the technical assumptions are concerned,  \cite[Theorem 7]{JW2} requires a stronger bound on the number of samples. 
To be precise, this theorem requires 
$$ \frac{n}{  r_{\mathrm{eff}}^2+ \big(\frac{\lambda_p}{\delta_{(p)}}\big)^2  } \rightarrow \infty, $$ 
while our theorems require 
$$ n \ge C \left[r_{\mathrm{eff}}\cdot \frac{\lambda_p}{\delta_{(p)}} + \big(\frac{\lambda_p}{\delta_{(p)}}\big)^2 \right], $$ for a sufficiently large constant  $C$.

Our requirement is always weaker, and the difference between the two bounds can be significant in the case when the effective rank 
$r_{\mathrm{eff}} $ is significantly larger than the ratio $\lambda_p / \delta_{(p)}$.  
On the other hand, the assumption on the vector $X$ in \cite{JW2} is more general.

\subsection{Consistency conditions}

In principle, an upper bound provides a sufficient condition for consistency. 

As far as necessary and sufficient conditions are concerned,  the latest result we can find is again from \cite{JW2}. Considering the leading eigenvector, the authors showed that 
$\tilde u_1$ is consistent 
if and only if $\frac{n} {\textbf{r}_{(1)}^2} \rightarrow \infty$. However, 
in order for this to apply, they also need to assume that
$n $ is super-quadratic 
in $d$, namely, $\frac{n}{d^2} \rightarrow \infty$; see \cite[Theorem 8] {JW2}. 

The authors pointed out that the 
extension to general $\tilde u_p$ is technical, as illustrated in \cite[Section 5.4]{JW2}. Our theorems, on the other hand, hold for all $p$ with the same proof. 

Both  \cite{JW2} and our paper use the contour integral formula to represent the error in question. However, the analysis of the integral is different, leading to different quantitative estimates and conditions.

\subsection{The value of \texorpdfstring{$p$}{p}}

In the introduction, we focus on estimators of the leading eigenvectors $u_p$, where $p$ is small. This is indeed the case of most interest in practice. On the other hand, our technical analysis does not require $p$ to be bounded. For instance, one can have $p$ tending to infinity with $d$  in all of our theorems.

\subsection{Extensions with general \texorpdfstring{$K$}{K}, \texorpdfstring{$r$}{r}, and \texorpdfstring{$\kappa_p$}{kp}}

In order to make the presentation less technical, in all our theorems, we assumed $K,r,\kappa_p=O(1)$.
However, we can also deal with the general case when these parameters tend to infinity with $d$. All one needs to do is to use the explicit formulae in the upper bound and lower bound theorems, which treat 
$K, r, \kappa_p$ as general unbounded parameters; see Remark~\ref{remark:explicit} and Appendix \ref{subsection:routine}. 
In this case, the consistency condition needs to be adjusted. We need to change the condition
\(
\frac{\lambda_p}{\sqrt{n}\,\delta_p}=o(1),
\,
\sqrt{\frac{r_{\mathrm{eff}}}{n}}=o(1),
\) to
\[
\frac{\lambda_p}{\sqrt{n}\,\delta_p}
=
o\!\left(\frac{1}{K\kappa_p r^{3/2}}\right),
\,
\sqrt{\frac{r_{\mathrm{eff}}}{n}}
=
o\!\left(\frac{1}{K\kappa_p r^{3/2}}\right),
\] respectively. We leave the routine derivation as an exercise. 

\subsection{Samples with missing and noisy entries}
With some additional works, we can apply the method in this paper to treat even more general models, where the observed samples $X_1, X_2, \dots, X_n$  have missing coordinates and are affected by noise.  Details will appear in a subsequent manuscript.

\subsection{Different models of \texorpdfstring{$X$}{X}} Technically, our method first establishes deterministic perturbation bounds for eigenvectors and eigenspaces; see Subsections~\ref{subsec: upfromabove}--\ref{subsec: uplower setting}. The order-of-magnitude and consistency theorems then follow from estimates of the skewness parameters (Definition~\ref{def: xyw}) and $\|E\|$. Therefore, with appropriate modifications to the analysis of these quantities, our approach naturally extends to other models of $X$,  such as the model
\(
X=UY+Z,
\)
where $Y$ and $Z$ are independent random vectors satisfying
\(
\E YY^\top +
\E ZZ^\top=\Lambda.
\)
The sub-Gaussian assumption on the coordinates of $Y$ can also be relaxed; for example it suffices to assume bounded fourth moments.

\subsection{An open question}

The case when we do not have matching upper and lower bounds is when the effective rank is at least medium, and the number 
$n$ of samples is relatively small.  We leave this case as an open question. Here is a concrete question for the estimation of $u_1$: 

{\bf \noindent Open question.} What is the order of magnitude of $\|\tilde{u}_1 \tilde{u}_1^\top - u_1 u_1^\top \|$ if  $\frac{r\lambda_1}{\delta_1} < r_{\mathrm{eff}} $
and   $n \le  r_{\mathrm{eff}}  \frac{\lambda_1}{\delta_1} $.

\section{Our general strategy} \label{sec: proof}

We use Theorem \ref{theo: upUpperM1/2 subGau} and Theorem \ref{theo: uplowerboundM1/2 subGau} to deduce Theorems \ref{thm:small-dependentp}, \ref{thm:medium-dependentp}, and \ref{thm:large-dependentp}. It is easy to use Theorems \ref{thm:small-dependentp}, \ref{thm:medium-dependentp}, and \ref{thm:large-dependentp} to deduce Theorems \ref{thm:smallp}, \ref{thm:mediump}, and \ref{thm:largep}.  The derivation of Theorems \ref{thm:small-dependentp}, \ref{thm:medium-dependentp}, and \ref{thm:large-dependentp} from 
Theorem \ref{theo: upUpperM1/2 subGau} and Theorem \ref{theo: uplowerboundM1/2 subGau} requires only a routine calculation, which we will perform in Subsection \ref{subsection:routine}.

The main task is to prove Theorem \ref{theo: upUpperM1/2 subGau} and Theorem \ref{theo: uplowerboundM1/2 subGau}. 
We will lay the groundwork for these proofs in the 
rest of this section. Section~\ref{subsec: proofupperM1/2} is devoted to the technical ingredients for the proof of Theorem~\ref{theo: upUpperM1/2 subGau} (upper bound), while Section~\ref{subsec: prooflowerM1/2Line1} develops the corresponding ingredients for the proof of Theorem~\ref{theo: uplowerboundM1/2 subGau} (lower bound).

Roughly speaking, the proof consists of three conceptual steps.
First, we follow the contour expansion (the first expansion) in \cite[Section 4]{DKTranVu1} to present $\tilde{u}_p \tilde{u}_p^\top - u_p u_p^\top$ as a sum of infinite terms (Subsection \ref{subsec: contour expansion}).
Next, we compute almost precisely the first two terms of the expansion, which contain the dominant contribution (Subsections \ref{subsec: F1}-\ref{subsec: F2}).
Finally, using the combinatorial expansion (the second expansion), we show that the sum of all remaining terms is comparable to the first two terms (Subsection \ref{subsubsec: Fs}).

The treatment for eigenspaces ($\tilde \Pi_p$) is similar, and will be discussed in Subsection \ref{subsec: PipTreatment}.

\subsection{A few important parameters}

We view the sample covariance matrix $\tilde{M}$ as a perturbation of $M$, defining $E= \tilde M -  M$. Clearly, $E$ is a mean-zero random matrix. Given the target index $p$, define the set of \emph{important indices}
\[ \textstyle
I_p:=\left\{i\in[d]: |\lambda_i-\lambda_p|\le \frac{\lambda_p}{2}\right\}
\, \,\text{with}\,\, r:=|I_p|.
\]
By definition, for every $i\in I_p$,
\begin{equation} \label{lambdaivs lambdap}
    \textstyle
\frac{\lambda_p}{2}\le \lambda_i \le \frac{3\lambda_p}{2}.
\end{equation}

To control the  interaction between $E$ and the eigenvectors of $M$, we define the following skewness quantities:

\begin{definition} \label{def: xyw} For each given $p$, define 
\begin{itemize}
\item $x:= \max_{i,j \in I_p} \norm{u_i^\top E u_j}.$
\item $y:= \textstyle \max_{\substack{ i\neq j \in I_p}} \norm{ u_i^\top E \left(\sum_{l \notin I_p  } \frac{u_l u_l^\top}{\lambda_p - \lambda_l} \right) E u_j}$.
\item $w:= \max_{i \in I_p} \|E u_i\|.$
\end{itemize}
\end{definition}
\subsection{Contour expansion of \texorpdfstring{$\tilde{u}_p \tilde{u}_p^\top - u_p u_p^\top$}{tildeup-up}} \label{subsec: contour expansion}  Let us recall Cauchy's integral theorem~\cite{CAbook, agarwal2011introduction}.
 \begin{theorem}[Cauchy's integral theorem] \label{theo: Cauchy}
Given a simple closed contour $\Gamma$ and a complex number $\lambda$, one has
\begin{equation}\label{Cauchy0} 
   \frac{1}{2 \pi {\bf i}} \int_{\Gamma} \frac{1}{(z-\lambda)}\,dz 
   = 
   \begin{cases}
      1, & \,\text{if}\,\lambda \text{ is inside } \Gamma, \\[4pt]
      0, & \,\text{if}\, \lambda \text{ is outside } \Gamma.
   \end{cases}  
\end{equation}
 Here and later, {\bf i} denotes $\sqrt {-1} $. 
\end{theorem}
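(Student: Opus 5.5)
The statement is the classical Cauchy integral formula for the simplest integrand $1/(z-\lambda)$, and the proof I have in mind goes through the winding number. Write $\Gamma$ as a piecewise $C^1$ simple closed curve $\gamma:[a,b]\to\mathbb{C}$, oriented positively (counterclockwise), which is the convention implicit in the statement. We also assume $\lambda\notin\Gamma$, since otherwise the integral is undefined. Set
\[
n(\Gamma,\lambda):=\frac{1}{2\pi \mathbf{i}}\int_\Gamma \frac{dz}{z-\lambda}.
\]
The plan has three steps: show $n(\Gamma,\lambda)$ is an integer; show it is locally constant in $\lambda$ off $\Gamma$; then compute it in the two regions given by the Jordan curve theorem.

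For integrality, define
\[
h(s)=\int_a^s \frac{\gamma'(\tau)}{\gamma(\tau)-\lambda}\,d\tau .
\]
Differentiating shows that $(\gamma(s)-\lambda)e^{-h(s)}$ has zero derivative, so it is constant. Evaluating at $s=a$ and $s=b$, where $\gamma(a)=\gamma(b)$, gives $e^{h(b)}=1$. Hence $h(b)\in 2\pi\mathbf{i}\mathbb{Z}$ and $n(\Gamma,\lambda)\in\mathbb{Z}$.

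Next, $n(\Gamma,\cdot)$ is continuous on $\mathbb{C}\setminus\Gamma$, because the integrand depends continuously on $\lambda$, uniformly on compact sets away from $\Gamma$. A continuous integer-valued function is constant on each connected component. By the Jordan curve theorem, $\mathbb{C}\setminus\Gamma$ has exactly two components, a bounded interior and an unbounded exterior.

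\emph{Exterior.} As $|\lambda|\to\infty$, the bound $|n(\Gamma,\lambda)|\le \mathrm{length}(\Gamma)/(2\pi\,\mathrm{dist}(\lambda,\Gamma))$ tends to $0$. So $n$ vanishes on the unbounded component, which is the outside case.

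\emph{Interior.} This is the main obstacle. For a general Jordan curve, showing that the winding number is exactly $1$ rather than some other integer needs a genuine argument. I would take one of two routes:

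\begin{itemize}
\item Cite the standard fact that a positively oriented Jordan curve has index $1$ about interior points; this is how the paper uses the result, via \cite{CAbook}.
\item Argue directly. Pick a small circle $|z-\lambda|=\rho$ inside $\Gamma$. On the region between $\Gamma$ and this circle, $1/(z-\lambda)$ is holomorphic, so the Cauchy--Goursat theorem (Green's theorem for piecewise smooth boundaries) lets us deform $\Gamma$ to the circle. On the circle, the parametrization $z=\lambda+\rho e^{\mathbf{i}\theta}$ gives $\int_0^{2\pi}\mathbf{i}\,d\theta/(2\pi\mathbf{i})=1$.
\end{itemize}

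In the paper's applications $\Gamma$ is a circle or rectangle around $\lambda_p$, and for such contours the deformation is elementary. If needed, I would restrict the statement to those contours.
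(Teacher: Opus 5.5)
Your argument is correct, but it differs from the paper, which does not prove this statement. The paper only recalls it from standard texts \cite{CAbook, agarwal2011introduction} and uses it as a black box to get the contour formula \eqref{contour-formula}. In doing so it leaves implicit that $\lambda\notin\Gamma$, that $\Gamma$ is piecewise $C^1$, and that $\Gamma$ is positively oriented.

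You make those hypotheses explicit and prove most of the statement yourself: integrality via $(\gamma(s)-\lambda)e^{-h(s)}$, local constancy by continuity, and the exterior case by the length/distance bound. This correctly isolates the interior case as the only topological input.

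One caveat concerns your second interior route. For an arbitrary Jordan curve, deforming $\Gamma$ to a small circle by Cauchy--Goursat needs Green's theorem on a general Jordan domain with a disk removed. That is a nontrivial theorem of about the same depth as the index-one fact. Moreover, for general curves positive orientation is often \emph{defined} by the index being $+1$ rather than $-1$. So this route does not really bypass citing the standard fact.

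This is harmless for the paper, since every contour it uses ($\Gamma$, $\Gamma_p$, $\Gamma_k$) is a rectangle. There your own setup finishes the interior case without Cauchy--Goursat. First, $\mathrm{Re}\,h(b)=\log|\gamma(b)-\lambda|-\log|\gamma(a)-\lambda|=0$. Second, $\mathrm{Im}\,h(b)$ is the total change of a continuous branch of $\arg(\gamma(s)-\lambda)$. For a convex curve traversed counterclockwise around an interior point, this argument increases monotonically by exactly $2\pi$, because each ray from $\lambda$ meets the curve once. Hence $h(b)=2\pi\mathbf{i}$.
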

Returning to our setting, let  $\Gamma $ be a contour containing only $\lambda_p$; i.e., all $\lambda_j, j \neq p$ are outside $\Gamma$. By Cauchy's theorem, one obtains the classical contour formula \cite{Book1, Kato1}:
 \begin{equation} \label{contour-formula} 
     \frac{1} {2 \pi {\bf i }} \int_{\Gamma}    (zI-M)^{ -1} dz = \sum_{\lambda_i\,inside\, \Gamma} u_i u_i^\top = u_p u_p^\top. \end{equation} 
Applying a  similar argument to $\tilde M$, we have 
 \begin{equation} \label{contour-formula1} 
  \frac{1}{2 \pi {\bf i}}  \int_{\Gamma}   (zI-\tilde M)^{-1} dz  = \sum_{ \tilde{\lambda}_i \,\,inside\,\,\Gamma }  \tilde u_i  \tilde u_i^\top.  \end{equation} 
  Thus, we obtain a contour representation for the perturbation 
 \begin{equation}  \label{contourrep}
\big(\sum_{ \tilde{\lambda}_i \,\,inside\,\,\Gamma }  \tilde u_i  \tilde u_i^\top \big) - u_p u_p^\top =  \frac{1} {2 \pi {\bf i} } \int_{\Gamma}   [(z-\tilde M)^{-1}- (z- M)^{-1} ]  dz. 
  \end{equation} 
To recover the perturbation
$\tilde u_p \tilde u_p^\top - u_p u_p^\top$,
the contour $\Gamma$ must enclose only $\tilde\lambda_p$.
As shown later in Section~\ref{subsec: dim argument}, this is ensured with probability at least $1-e^{-4r_{\mathrm{eff}}t^2} -\frac{2r^2}{t^2}$ by 
\begin{itemize}
       \item requiring
\(
\textstyle \max\!\left\{
\sqrt p\,\sqrt{\frac{r_{\mathrm{eff}}}{n}},
\;
r\frac{\lambda_p}{\sqrt n\,\delta_{(p)}},
\;
\sqrt r\,\sqrt{\frac{\lambda_p r_{\mathrm{eff}}}{n\delta_{(p)}}}
\right\}
<
\frac{1}{C\kappa_p K t},
\)
for some constant $C>0$,
\item setting $\Gamma$ to be a rectangle whose vertical edges bisect the intervals
$(\lambda_{p+1},\lambda_{p})$ and $(\lambda_p,\lambda_{p-1})$, and whose horizontal edges lie at distance $4\delta_{(p)}$ from the real axis.
\end{itemize}

%
Thus, under the assumption of Theorem \ref{theo: upUpperM1/2 subGau} and this construction of $\Gamma$, we have 
$$ \tilde u_p  \tilde u_p^\top - u_p u_p^\top =  \frac{1} {2 \pi {\bf i} } \int_{\Gamma}   [(z-\tilde M)^{-1}- (z- M)^{-1} ]  dz.$$
 
Using  the resolvent formula 
\(
 A^{-1} - (A+B)^{-1}= (A+B)^{-1} B A^{-1} 
\)
and the fact that $zI- M = (zI - \tilde M) +E$, we obtain 
\begin{equation*} (zI-\tilde M)^{-1} - (zI- M)^{-1} = (zI-M)^{-1}  E (zI-\tilde M)^{-1} .  \end{equation*} 
\noindent Applying this identity repeatedly, we obtain 
\begin{equation} \label{TaylorEx} (zI-\tilde M)^{-1} - (zI- M)^{-1} =\sum_{s=1}^{\infty}  (zI-M)^{-1} [ E(zI-M)^{-1} ]^s .
\end{equation} 
The series on the RHS is convergent under the assumption of Theorem \ref{theo: upUpperM1/2 subGau} via \cite[Section 8.4]{DKTranVu1}. Therefore, 
\begin{equation} \label{up Perturb Iden}
  \tilde{u}_p \tilde{u}_p^\top - u_p u_p^\top = \sum_{s=1}^{\infty} F_s,\,\,\text{where}\,F_s:=\frac{1}{2 \pi \textbf{i}} \int_{\Gamma} (zI-M)^{-1}[E (zI-M)^{-1}]^s dz.  
\end{equation}

\subsection{Bounding \texorpdfstring{$\|\tilde{u}_p \tilde{u}_p^\top -u_p u_p^\top\|$}{tildeup-up} from above} \label{subsec: upfromabove}
By \eqref{up Perturb Iden} and the triangle inequality, we have 
\[
\|\tilde{u}_p \tilde{u}_p^\top - u_p u_p^\top\| \leq \|F_1\|+\|F_2\|+\sum_{s > 2}\|F_s\|.
\]
Thus, the heart of the matter is to compute $\|F_s\|$. 
Technically, we  are going to  estimate $F_1 $ and $F_2$ almost precisely, and obtain sharp bounds, 
 $\| F _1\| \le h_1$ and $\| F_2  \| \le h_2 $.
 The sum $h:= h_1 +h_2$ (with some technical estimates later in Section \ref{subsec: proofupperM1/2}) will contain all the terms in Theorem \ref{theo: upUpperM1/2 subGau}.  We next  show that the remaining terms $F_s$ satisfy
 \begin{equation} \label{Fs} \| F_s \| = O \left( \frac{1}{(C/6)^{s-1}} h_1 + \frac{s+2}{(C/2)^{s-2}} h_2 \right).\end{equation}
Here, $C$ is a real number such that 
\begin{equation} \label{def: C}
     \max \left\lbrace \frac{ \|E\|}{\lambda_p/2}, \frac{r x}{\delta_{(p)}}, \frac{\sqrt{r} w}{\sqrt{\lambda_p \delta_{(p)}/2}} \right\rbrace < \frac{1}{C}.
\end{equation}

\subsubsection{Estimation of  $\|F_1\|$} \label{subsec: F1} The next idea is to split the resolvent $(zI-M)^{-1}=\sum_{i=1}^d \frac{u_i u_i^\top}{z -\lambda_i}$ into 
$$\textstyle P+Q,\,\text{where}\,\,P:= \sum_{i \in I_p} \frac{u_i u_i^\top}{z -\lambda_i}\, \text{and}\,Q:=\sum_{j \notin I_p} \frac{u_j u_j^\top}{z -\lambda_j}.$$
Here $P$ corresponds to the important eigenspace near $\lambda_p$, whereas $Q$ corresponds to the remaining spectrum. We refer the reader to \cite[Section~4.2]{DKTranVu1} for a deeper intuition behind the decomposition into $P$ and $Q$, as well as the derivation of the quantities $x$, $y$, and $w$.

For our estimate of $F_1$, we have
$$F_1:= \frac{1}{2 \pi \textbf{i}} \int_{\Gamma} (zI-M)^{-1}E (zI-M)^{-1} dz = \frac{1}{2 \pi \textbf{i}} \int_{\Gamma} PEP dz + \frac{1}{2 \pi \textbf{i}} \int_{\Gamma} (PEQ +QEP) dz  + \frac{1}{2 \pi \textbf{i}} \int_{\Gamma} QEQ dz.$$
Next, using the explicit computation of $F_1$ from
\cite[Section~7.2, pp.~32--33]{DKTranVu1}\footnote{
In \cite{DKTranVu1}, the formula is stated in terms of a subset
$S\subset[d]$ and the associated important set $N_{\bar\lambda}(S)$.
In our setting, these correspond to $\{p\}$ and $I_p$, respectively.
}, we obtain 
\begin{equation*}
    \begin{split}
   & \textstyle \frac{1}{2 \pi \textbf{i}} \int_{\Gamma} PEP dz =  \sum_{j\in I_p\setminus\{p\}}
\left(
u_p \frac{u_p^\top E u_j}{\lambda_p-\lambda_j} u_j^\top
+
u_j \frac{u_j^\top E u_p}{\lambda_p-\lambda_j} u_p^\top
\right), \frac{1}{2 \pi \textbf{i}} \int_{\Gamma} QEQ dz =0,  \\
&\textstyle \frac{1}{2 \pi \textbf{i}} \int_{\Gamma} PEQ dz = u_pu_p^\top E
\left(
\sum_{l\notin I_p}
\frac{u_lu_l^\top}{\lambda_p-\lambda_l}
\right), \frac{1}{2 \pi \textbf{i}} \int_{\Gamma} QEP dz =  \left(
\sum_{l\notin I_p}
\frac{u_lu_l^\top}{\lambda_p-\lambda_l}
\right)
E u_pu_p^\top.
    \end{split}
\end{equation*}
Thus, 
\[
F_1=M_1+M_2, \,\,\text{where}
\]
\begin{equation}\label{iden: F1M1M2}
\begin{split}
M_1
:={}&
\sum_{j\in I_p\setminus\{p\}}
\left(
u_p \frac{u_p^\top E u_j}{\lambda_p-\lambda_j} u_j^\top
+
u_j \frac{u_j^\top E u_p}{\lambda_p-\lambda_j} u_p^\top
\right),
\\
M_2
:={}&
u_pu_p^\top E
\left(
\sum_{l\notin I_p}
\frac{u_lu_l^\top}{\lambda_p-\lambda_l}
\right)
+
\left(
\sum_{l\notin I_p}
\frac{u_lu_l^\top}{\lambda_p-\lambda_l}
\right)
E u_pu_p^\top .
\end{split}
\end{equation}

By \cite[Section 7.2 - Estimates (50)(51)(52)]{DKTranVu1}, we further have 
\begin{equation} \label{est: tranvuM1M2F1}
    \begin{split}
        &\|M_1\| \leq \frac{2\sqrt{r} x}{\delta_{(p)}},\, \|M_2\| \leq \frac{2\|E\|}{\lambda_p/2}=\frac{4\|E\|}{\lambda_p}, \text{and hence}\,\|F_1\| \leq \frac{2\sqrt{r} x}{\delta_{(p)}} + \frac{4\|E\|}{\lambda_p}.
    \end{split}
\end{equation}
\subsubsection{Estimation of  $\|F_2\|$} \label{subsec: F2} Similarly, we also split $F_2$ into eight terms, i.e., 
\begin{equation*}
    \begin{split}
F_2 & = \textstyle\frac{1}{2 \pi \textbf{i}}\int_{\Gamma} (P+Q) E (P+Q) E (P+Q) dz  \\
&\textstyle = \frac{1}{2 \pi \textbf{i}}\int_{\Gamma} Q E Q E Q dz + \frac{1}{2 \pi \textbf{i}}\int_{\Gamma} Q E P E Q dz  +\frac{1}{2 \pi \textbf{i}}  \int_{\Gamma} P E Q E P dz + \frac{1}{2 \pi \textbf{i}} \int_{\Gamma} P E P E P dz   \\
&\textstyle + \frac{1}{2 \pi \textbf{i}}\int_{\Gamma} PE Q E Q  dz + \frac{1}{2 \pi \textbf{i}} \int_{\Gamma} PE P E Q  dz + \frac{1}{2 \pi \textbf{i}} \int_{\Gamma} QE Q E P  dz + \frac{1}{2 \pi \textbf{i}} \int_{\Gamma} QE P E P  dz.      \end{split}
\end{equation*}
Using \cite[Section 7.3 - Estimates (54)(56)(60)]{DKTranVu1} and the setting in \eqref{def: C}, we have 
\begin{equation} \label{est: F2notPEQEP}
    \begin{split}
        & \textstyle  \frac{1}{2 \pi \textbf{i}}\int_{\Gamma} Q E Q E Q dz = 0;  \|\frac{1}{2 \pi \textbf{i}} \int_{\Gamma} P E P E P dz\| \leq \frac{6}{C} \cdot \frac{\sqrt{r} x}{\delta_{(p)}}; \\
        & \textstyle \| \frac{1}{2 \pi \textbf{i}}\int_{\Gamma} Q E P E Q dz \|, \|\frac{1}{2 \pi \textbf{i}}\int_{\Gamma} PE Q E Q  dz \|, \|\frac{1}{2 \pi \textbf{i}} \int_{\Gamma} QE Q E P  dz \| \leq \frac{1}{C} \cdot \frac{\|E\|}{\lambda_p/2} = \frac{2}{C} \cdot \frac{\|E\|}{\lambda_p}; \\
        &\text{and}\,\, \textstyle \|\frac{1}{2 \pi \textbf{i}} \int_{\Gamma} PE P E Q  dz \|, \|\frac{1}{2 \pi \textbf{i}} \int_{\Gamma} QE P E P  dz\| \leq \frac{3}{C} \cdot \frac{\|E\|}{\lambda_p/2} = \frac{6}{C} \cdot \frac{\|E\|}{\lambda_p}.
    \end{split}
\end{equation}
The troublesome term is $\frac{1}{2\pi \textbf{i}}\int_{\Gamma} PEQEP dz$, which by \cite[Estimate (55)]{DKTranVu1}, equals
\begin{equation} \label{splitPEQEP}
    \begin{split}
& \sum_{ \substack{j \notin I_p}} \frac{-1}{(\lambda_p -\lambda_j)^2} u_{p} u_{p}^\top E u_j u_j^\top E u_p u_p^\top + M_3,\, \text{where} \\
& M_3:= \sum_{ \substack{ j \in I_p \setminus\{p\}} } \frac{ u_{j} u_{j}^\top E}{\lambda_p -\lambda_{j}} \left(\sum_{ l \notin I_p } \frac{  u_l u_l^\top }{\lambda_p -\lambda_l} \right) E u_p u_p^\top + u_p u_p^\top E \left(\sum_{ l \notin I_p } \frac{  u_l u_l^\top }{\lambda_p -\lambda_l} \right) \frac{E u_j u_j^\top}{\lambda_p -\lambda_j}.
\end{split}
\end{equation}
By \cite[Estimate (56)]{DKTranVu1}, we further have
\begin{equation} \label{est: F2PEQEP}
    \begin{split}
       &\textstyle \big|\sum_{ \substack{j \notin I_p}} \frac{-1}{(\lambda_p -\lambda_j)^2} u_{p} u_{p}^\top E u_j u_j^\top E u_p u_p^\top  \big| \leq \frac{1}{C} \cdot \frac{\|E\|}{\lambda_p/2} = \frac{2}{C} \cdot \frac{\|E\|}{\lambda_p}; \|M_3\| \leq \frac{2\sqrt{r} y}{\delta_{(p)}}, \\
       &\textstyle \text{and hence}\,\,\|\frac{1}{2\pi \textbf{i}}\int_{\Gamma} PEQEP dz\| \leq \frac{2}{C} \cdot \frac{\|E\|}{\lambda_p} + \frac{2\sqrt{r} y}{\delta_{(p)}}.
    \end{split}
\end{equation}

Combining \eqref{est: F2notPEQEP} and \eqref{est: F2PEQEP}, we finally obtain 
\begin{equation} \label{F2 est}
    \begin{split}
    \|F_2\| \leq \frac{2 \times 3+ 2 \times 6+2}{C} \cdot \frac{\|E\|}{\lambda_p} + \frac{6}{C} \cdot \frac{\sqrt{r} x}{\delta_{(p)}} + \frac{2\sqrt{r} y}{\delta_{(p)}} \leq \frac{20}{C} \cdot \left( \frac{\|E\|}{\lambda_p} + \frac{\sqrt{r} x}{\delta_{(p)}} \right) + \frac{2\sqrt{r} y}{\delta_{(p)}}.
    \end{split}
\end{equation}

\subsubsection{Estimation of  $ \| F_s\| $  for a general $s \geq 3$.} \label{subsubsec: Fs} We  expand $(P+Q)[ E(P+Q)]^s$ into the sum of $2^{s+1}$ operators, each of which is a product of  alternating $Q$-blocks and 
$P$-blocks 
$$(QEQE \dots QE)(PE PE \dots PE) \dots (PE PE \dots PE) ( QE  QE \dots Q), $$ where we allow the first and last blocks to be empty. 

We code each operator like this by the numbers of $Q$'s and the numbers of $P$'s in each blocks. If there are $(k+1)$  $Q$-blocks and $k$-$P$ blocks, for some integer $k$,  we let 
$\alpha_1, \dots, \alpha_{k+1} $ and $\beta_1, \dots, \beta_k$ be these numbers. These numbers satisfy the following conditions 
$$ \alpha_1, \alpha_{k+1} \geq 0,$$
$$ \alpha_i, \beta_j \geq 1,\,\, \forall 1<i<k+1,\,\, \forall  1\le j \le k, $$
$$\alpha_1+...+\alpha_{k+1}+\beta_1 +...+\beta_k=s+1.$$

In what follows, we set $\alpha =(\alpha_1,...,\alpha_{k+1}) \in \mathbb{Z}^{k+1} $ and $\beta=(\beta_1,...,\beta_k) \in \mathbb{Z}^{k}$, and use $M(\alpha; \beta)$ to denote the corresponding operator.

\noindent \textit{Example:} given $s=10, k=2, \alpha_1=3, \alpha_2=2, \alpha_3=1, \beta_1=2, \beta_2=3$, we have 
 $$\int_{\Gamma} M(3,2,1;2,3) dz = \int_{\Gamma} \underset{\substack{\alpha_1=3\\ \text{numbers of $Q$}}}{\underbrace{(QEQEQE)}} \underset{\substack{\beta_1=2\\ \text{numbers of $P$}}} {\underbrace{(PEPE)}}\,\,\, \underset{\substack{\alpha_2=2 \\ \text{numbers of } \, Q}}{\underbrace{(QEQE)}} \underset{\substack{\beta_2=3 \\ \text{numbers of}\, P}}{\underbrace{(PEPEPE)}} \,\,\, \underset{\substack{\alpha_3=1 \\ \text{numbers of}\, Q}}{\underbrace{Q}} dz.$$

We bound  $\Norm{\int_{\Gamma}M(\alpha, \beta) dz} $ for each pair $\alpha, \beta$ separately, and use the triangle inequality to add up the bounds. 
For $s$ and $k$ fixed, we split the collection of pairs $(\alpha;\beta)$  into the following types, according to the values of $\alpha_1$ and $\alpha_{k+1}$
\begin{itemize}
    \item Type I: $\alpha_1, \alpha_{k+1} > 0$.
    \item Type II: $\alpha_1, \alpha_{k+1} = 0$. 
    \item Type III: either $(\alpha_1 = 0, \alpha_{k+1} > 0)$ or $(\alpha_1 > 0, \alpha_{k+1} = 0)$. 
\end{itemize}
Let $s_1 :=  \sum_{i=1}^{k+1} \alpha_i$ and  $s_2 := \sum_{j=1}^{k} \beta_j $. In \cite[Lemmas 7.2-7.4]{DKTranVu1}, Tran and Vu proved the following lemmas, bounding  $\Norm{\int_{\Gamma}M(\alpha, \beta) dz}$ with respect to the above three types. We assume that the assumption of Theorem \ref{theo: upUpperM1/2 subGau} holds in all three lemmas, which is equivalent to  \(
\textstyle   \max \left\lbrace \frac{ \|E\|}{\lambda_p/2}, \frac{r x}{\delta_{(p)}}, \frac{\sqrt{r} w}{\sqrt{\lambda_p \delta_{(p)}/2}} \right\rbrace < \frac{1}{C}\) \footnote{In the original version, all lemmas were stated with constant $12, \bar\lambda, \delta_S$ instead of $C, \lambda_p/2, \delta_{(p)}$.}.
\begin{lemma} \label{LemmaCase1M(a,b)bound} For a pair $(\alpha, \beta)$ of Type I,   
\begin{equation} \label{Case1boundM(a,b)bound}
 \textstyle   \frac{1}{2 \pi} \Norm{\int_{\Gamma} M(\alpha;\beta) dz} \leq \left( \frac{ \|E\|}{\lambda_p/2} + \frac{x}{\lambda_p/2} \right) \frac{2^{s_2+s-1}}{C^{s-1}}.
\end{equation}
\end{lemma}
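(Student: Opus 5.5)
We bound the operator $M(\alpha;\beta)$ for Type I directly, using the spectral form of $P$ and $Q$ on the contour. The plan is to bound the $Q$-blocks by operator norms, integrate the scalar factors coming from the $P$-blocks by residue calculus, and use the fact that $\alpha_1,\alpha_{k+1}>0$ to gain a factor $\frac{1}{\lambda_p/2}$ at the outer $Q$-ends.

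\textbf{Step 1: pointwise and integrated bounds.} For $z\in\Gamma$, every $\lambda_l$ with $l\notin I_p$ satisfies $|z-\lambda_l|\ge \lambda_p/2-O(\delta_{(p)})$. This gives $\|Q\|\lesssim \frac{1}{\lambda_p/2}$, and so $\|(QE)^{a}\|\le (\frac{\|E\|}{\lambda_p/2})^{a}\cdot\frac{\lambda_p/2}{\|E\|}$-type bounds. We then write each $P$-block explicitly. Setting
\[
P E P\cdots E P=\sum_{i_1,\dots,i_b\in I_p}\frac{u_{i_1}(u_{i_1}^\top E u_{i_2})\cdots(u_{i_{b-1}}^\top E u_{i_b})u_{i_b}^\top}{(z-\lambda_{i_1})\cdots(z-\lambda_{i_b})},
\]
the interior factors $u_i^\top E u_j$ are bounded by $x$. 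The $r^{b-1}$ index choices combine with the $x$'s into factors $\frac{rx}{\delta_{(p)}}<\frac1C$, after accounting for the powers of $\frac{1}{|z-\lambda_i|}$.

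\textbf{Step 2: the contour integral.} The $Q$-factors are analytic inside $\Gamma$. The $P$-factors have poles only at $\lambda_p$ inside $\Gamma$; the other $\lambda_i$ with $i\in I_p$ lie outside. The integrand is therefore a sum of terms $\prod_m (z-\lambda_{i_m})^{-1}$ times an analytic matrix function. We bound the integral by the length of $\Gamma$ times the supremum on $\Gamma$, which is $O(\delta_{(p)})\cdot\delta_{(p)}^{-s_2}\cdot(\lambda_p/2)^{-s_1}$ in magnitude. The gap factor $\delta_{(p)}$ absorbs one $P$-denominator.

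Each $E$ adjacent to a $P$-block on the boundary between blocks is paired as $\|Eu_i\|\le w$. This produces $w^2/(\lambda_p\delta_{(p)}/2)<\frac{r^{-1}}{C^2}$ per $Q$–$P$ interface. This is where the hypothesis on $w$ in \eqref{def: C} enters.

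\textbf{Step 3: counting and the expected obstacle.} We collect the powers. There are $s$ copies of $E$ in total. Each one contributes a factor $\frac1C$ through one of the three quantities in \eqref{def: C}, except for one copy. That exceptional copy, at a $Q$-end (available because $\alpha_1>0$), is kept as $\frac{\|E\|}{\lambda_p/2}$, or as $\frac{x}{\lambda_p/2}$ when it sits between $P$'s. This yields the prefactor $\bigl(\frac{\|E\|}{\lambda_p/2}+\frac{x}{\lambda_p/2}\bigr)C^{-(s-1)}$. The combinatorial factor $2^{s_2+s-1}$ comes from two sources: expanding the $P$-blocks, where the number of choices of which pole the residue mass sits at grows like $2^{s_2}$, and the binomial-type bound for $\delta_{(p)}$ versus $|z-\lambda_i|$ on $\Gamma$.

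The main obstacle is Step 2. Naively bounding the integral by length times supremum loses factors $\delta_{(p)}/|z-\lambda_i|$ for the indices $i\ne p$ in $I_p$. To fix this, we would first compute the residue at $\lambda_p$ exactly via the partial-fraction identity, following \cite[Section 7]{DKTranVu1}, before taking norms.
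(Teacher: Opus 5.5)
As written, your argument does not establish the lemma, and the difficulty is not where you place it. Step 2 relies on $\Gamma$ having length $O(\delta_{(p)})$, which is false. The rectangle has width $(\delta_{p-1}+\delta_p)/2$, which is unbounded relative to $\delta_{(p)}=\min\{\delta_{p-1},\delta_p\}$ when the two neighbouring gaps are unbalanced. Conversely, the ``loss'' you name is not one: replacing $|z-\lambda_i|^{-1}$ by $2/\delta_{(p)}$ is an overestimate the bound can afford. For Type I there is also no need for the exact residue computation that you defer to \cite{DKTranVu1} without carrying it out; the paper gives no argument either and simply quotes \cite[Lemma 7.2]{DKTranVu1}.

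The repair of Step 2 is as follows. $M(\alpha;\beta)$ is rational in $z$ with poles only at the $\lambda_i$, so $\Gamma$ may be replaced by the circle $\Gamma'=\{|z-\lambda_p|=\delta'/2\}$, where $\delta':=\min\{\delta_{(p)},\lambda_p/2\}$. Write $P=UD(z)U^\top$ with $U=[u_i]_{i\in I_p}$. On $\Gamma'$ you have $\|D(z)\|\le 2/\delta'$, $\|Q(z)\|\le \frac{2}{\lambda_p/2}$, and $\mathrm{length}(\Gamma')/2\pi=\delta'/2$. If $\delta_{(p)}>\lambda_p/2$, then $I_p=\{p\}$, $r=1$, and every factor bound below follows from $x,w\le\|E\|$.

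The genuine gap is Step 3, which carries the entire content of the lemma and is only asserted. What must be checked is an exponent count. The supremum bound leaves $(\delta')^{-(s_2-1)}$, but only $s_2-k$ copies of $E$ lie strictly inside the $k$ $P$-runs, each controlled by $\|U^\top EU\|\le rx$. The surplus $k-1$ powers of $(\delta')^{-1}$ must each be absorbed by \emph{both} flanking $E$'s of one $P$-run (via $\|EU\|\le\sqrt r\,w$), together with a factor $(\lambda_p/2)^{-1}$ borrowed from a $Q$, through $\frac{rw^2}{(\lambda_p/2)\delta'}<C^{-2}$. That is one such factor per $P$-run, and only for $k-1$ of them, not one per $Q$--$P$ interface as in your Step 2. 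This is exactly where Type I enters: every $P$-run is flanked on both sides, so $2k$ flanking $E$'s are available. Bounding one of the two leftover flanking $E$'s by $\|E\|$ (your ``exceptional copy''), you get
\[
\frac{1}{2\pi}\Norm{\int_{\Gamma'} M(\alpha;\beta)\,dz}\le 2^{s}\left(\frac{rx}{\delta'}\right)^{s_2-k}\left(\frac{rw^2}{(\lambda_p/2)\delta'}\right)^{k-1}\frac{\sqrt{r}\,w}{\lambda_p/2}\left(\frac{\|E\|}{\lambda_p/2}\right)^{s_1-k}.
\]
In Type I one has $s_1-k\ge 1$. Combining \eqref{def: C} with $\frac{\sqrt r\,w}{\lambda_p/2}\le\frac1C\sqrt{\delta'/(\lambda_p/2)}\le\frac1C$ then yields $2^{s}\frac{\|E\|}{\lambda_p/2}C^{-(s-1)}$. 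Since $s_2\ge1$, this implies \eqref{Case1boundM(a,b)bound}.

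Your explanation of $2^{s_2+s-1}$ is also off. On this route the constant is $2^{s}$, coming from the pointwise bounds on $\Gamma'$. A factor $2^{s_2}\cdot 2^{s-1}$ is instead the term count of an exact residue expansion: $2^{s_2}$ choices of which $P$-slots carry $u_pu_p^\top/(z-\lambda_p)$, times $\binom{s-1}{q-1}\le 2^{s-1}$ Leibniz terms when $q$ slots do. Neither is a ``binomial-type bound for $\delta_{(p)}$ versus $|z-\lambda_i|$''.
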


\begin{lemma}\label{LemmaCase2M(a,b)bound}
  For a pair $(\alpha, \beta)$ of Type II, 
    \begin{itemize}
        \item If $M(\alpha; \beta) \neq PEQEPE ... QEP$, i.e.,  $(s_1,s_2) \neq (k-1,k)$, then 
        \begin{equation} \label{Cas2M(a,b)boundgood}
   \textstyle  \frac{1}{2 \pi} \Norm{ \int_{\Gamma} M(\alpha; \beta) dz}  \le  \left(\frac{ \|E\|}{\lambda_p/2} + \frac{\sqrt{r} x}{\delta_{(p)}} \right) \frac{2^{s_2+s-1}}{C^{s-1}}, 
 \end{equation}
 \item If $M(\alpha;\beta) = PEQEPEQE...EQEP $, i.e., $(s_1,s_2) = (k-1,k)$, then 
 \begin{equation} \label{Case2M(a,b)boundbad}
  \textstyle  \frac{1}{2 \pi} \Norm{ \int_{\Gamma} PEQEPEQE...EQEP dz}  \le \left( \frac{x}{\lambda_p/2} + \frac{ \|E\|}{\lambda_p/2} \right) \frac{2^{3s/2} }{C^{s-1}}  +   \frac{(s+2) \sqrt{r} y}{\delta_{(p)}} \left( \frac{2}{C} \right)^{s-2}. 
 \end{equation}
    \end{itemize}
     \end{lemma}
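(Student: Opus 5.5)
The statement to be proved is Lemma~\ref{LemmaCase2M(a,b)bound}, which bounds Type~II words of the form $PE\cdots EP$. I would prove it by writing $P$ out in the eigenbasis of $I_p$ and computing the contour integral explicitly through residues.

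\textbf{Setup.} Write $P=\sum_{i\in I_p}\frac{u_iu_i^\top}{z-\lambda_i}$. Each $Q$-block $QEQE\cdots Q$ with $\alpha_j$ copies of $Q$ is a holomorphic operator function inside $\Gamma$, since all $\lambda_l$ with $l\notin I_p$ lie at distance at least $\lambda_p/2$ from $\Gamma$. Expanding every $P$ gives
\[
\int_\Gamma M(\alpha;\beta)\,dz=\sum_{i_1,\dots,i_{s_2}\in I_p} u_{i_1}\Big(\int_\Gamma \frac{\prod(\text{scalar entries } u_{i_a}^\top E\,Q^{(\cdot)}\cdots E u_{i_{a+1}})}{\prod_a (z-\lambda_{i_a})}\,dz\Big)u_{i_{s_2}}^\top .
\]
Each scalar entry is either $u_i^\top E u_j$, which is bounded by $x$ (this happens when two $P$'s are adjacent), or a quadratic form $u_i^\top E Q E\cdots QE u_j$.

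\textbf{Bounding the two kinds of entries.} The quadratic forms with a single $Q$ are bounded by $y$ when $i\ne j$ (after Taylor-expanding $Q$ around $\lambda_p$). With $m\ge 2$ copies of $Q$, the form is at most $w^2\|E\|^{m-1}(\lambda_p/2)^{-m}$ in norm. The residue only picks up poles at $\lambda_p$, because only $\lambda_p$ lies inside $\Gamma$. Using the standard divided-difference bound on $\frac{1}{2\pi\mathbf{i}}\oint\prod_a(z-\lambda_{i_a})^{-1}g(z)\,dz$, each factor $\frac{1}{z-\lambda_i}$ costs at most $\delta_{(p)}^{-1}$, and derivatives of the $Q$-blocks cost further powers of $(\lambda_p/2)^{-1}$.

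\textbf{Combining in the generic case.} Summing over the $r^{s_2}$ index choices with the $\sqrt r$-type Cauchy--Schwarz bookkeeping from \eqref{est: tranvuM1M2F1}, I would pair each $x$ with a $\delta_{(p)}^{-1}$ and a factor of $r$. This yields $rx/\delta_{(p)}<1/C$ for every adjacent $PEP$. For every $Q$-block, two $w$'s together with the flanking $\delta_{(p)}^{-1}$ produce $r w^2/(\lambda_p\delta_{(p)}/2)<C^{-2}$, and each extra $E$ contributes a further $\|E\|/(\lambda_p/2)<1/C$. One leading factor must remain un-normalized: either $\|E\|/(\lambda_p/2)$ or $\sqrt r x/\delta_{(p)}$. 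Counting the exponent gives $C^{-(s-1)}$. The combinatorial factor $2^{s_2+s-1}$ absorbs both the number of residue terms (divided differences with repeated poles) and the binomial count of derivative placements. This proves \eqref{Cas2M(a,b)boundgood} whenever some $Q$-block has length at least $2$, or some $P$-block has length at least $2$. Note that $(s_1,s_2)\neq(k-1,k)$ forces one of these to hold.

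\textbf{The exceptional word.} For $PEQEP\cdots EQEP$ every $Q$-block is a single $Q$. Each quadratic form $u_i^\top EQEu_j$ must then be split as $Q(z)=Q(\lambda_p)+(z-\lambda_p)\tilde Q(z)$. The $Q(\lambda_p)$ parts give entries bounded by $y$ when $i\neq j$. They give a diagonal term bounded by $\|E\|^2/(\lambda_p/2)$ when $i=j$, and these diagonal terms feed the $(x+\|E\|)/(\lambda_p/2)$ part. The remainders $\tilde Q$ each lower the pole order and bring $\|E\|/(\lambda_p/2)$ factors. Choosing which of the $k-1$ blocks keeps the leading $y$ gives the linear factor $(s+2)$, and the other blocks give $(2/C)^{s-2}$. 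The $2^{3s/2}$ factor counts the splitting choices.

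\textbf{Main obstacle.} I expect the hardest step to be this exceptional case. The difficulty is isolating exactly one un-normalized $\sqrt r y/\delta_{(p)}$ without losing a factor $r^{k}$ in the index sum. I would handle it by performing the sum over $I_p$ as a matrix product in the $r$-dimensional subspace, rather than entrywise.
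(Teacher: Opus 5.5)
The paper contains no proof of this lemma. It is quoted from \cite[Lemmas 7.2--7.4]{DKTranVu1}, with $12,\bar\lambda,\delta_S$ renamed $C,\lambda_p/2,\delta_{(p)}$. Your proposal is therefore a self-contained reconstruction, not a variant of an in-paper argument, and the residue route you choose is sound.

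It is cleanest at the operator level. Write $P=\frac{u_pu_p^\top}{z-\lambda_p}+\sum_{h\ge0}(\lambda_p-z)^hR^{h+1}$ with $R=\sum_{j\in I_p\setminus\{p\}}\frac{u_ju_j^\top}{\lambda_p-\lambda_j}$, and $Q(z)=\sum_{h\ge0}(\lambda_p-z)^hQ(\lambda_p)^{h+1}$. Then extract the coefficient of $(z-\lambda_p)^{m-1}$ when $m$ of the $s_2$ slots carry $u_pu_p^\top$.
\begin{itemize}
\item Choosing those slots and distributing the $m-1$ derivatives over the other $s+1-m$ factors gives $\sum_m\binom{s_2}{m}\binom{s-1}{m-1}=\binom{s_2+s-1}{s}\le 2^{s_2+s-1}$ terms. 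This is exactly the stated constant, and it becomes $2^{3s/2}$ for the exceptional word, where $s_2=s/2+1$.
\item Every link costs $\|\Pi_{I_p}E\Pi_{I_p}\|\le rx$ or $\|\Pi_{I_p}E\|\,\|E\Pi_{I_p}\|\le rw^2$, which matches the normalizations in \eqref{def: C}. So no power $r^k$ is lost, and your ``main obstacle'' is not really one.
\item A small correction: the $\lambda_l$ with $l\notin I_p$ lie at distance $>\lambda_p/2$ from $\lambda_p$, not from $\Gamma$. That is all the Taylor coefficients need.
\end{itemize}

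Two steps of your sketch must be argued differently.

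\textbf{(a) Where the $\sqrt r$ comes from.} Pairing each $x$ with a factor $r$ only yields a retained factor $rx/\delta_{(p)}$. The $\sqrt r$ comes from an anchor. Every surviving term has at least one slot equal to $u_pu_p^\top$. A link touching that slot costs $\|u_p^\top E\Pi_{I_p}\|\le\sqrt r\,x$ or $\|u_p^\top E\|\le w$, instead of $rx$ or $\sqrt r\,w$. You must move this saved $\sqrt r$ onto the retained factor. The same saving is what produces $\sqrt r\,y$ rather than $ry$ in the exceptional case.

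\textbf{(b) The exceptional word.} Diagonal blocks cannot supply the $(x+\|E\|)/(\lambda_p/2)$ term. A block $u_j^\top EQ(\lambda_p)Eu_j$ with $j\neq p$ sits between two $R$-slots. Bounding it by $\|E\|^2/(\lambda_p/2)$ leaves a factor $\|E\|/\delta_{(p)}$, which is not small. Such blocks must be normalized by $w$, like all the others. The correct split is as follows.
\begin{itemize}
\item \emph{Frozen terms} (every $Q$ replaced by $Q(\lambda_p)$). A term whose slots are all $u_pu_p^\top$ is a constant times $(z-\lambda_p)^{-k}$ with $k\ge 2$, so it integrates to $0$. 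Hence every surviving frozen term has a $u_pu_p^\top$ slot joined through a single $EQ(\lambda_p)E$ to an $R$-slot. The bound $\|u_p^\top EQ(\lambda_p)E\,\Pi_{I_p\setminus\{p\}}\|\le\sqrt r\,y$ then gives the retained factor $\sqrt r\,y/\delta_{(p)}$, and the other $k-2$ links give $C^{-(s-2)}$.
\item \emph{Counting frozen terms.} There are $\binom{s}{s/2}\le 4\cdot 2^{s-2}$ of them, which fits inside $(s+2)(2/C)^{s-2}$. ``Choosing which block keeps $y$'' is a choice made per term; it produces no multiplicative factor.
\item \emph{All remaining terms} have some $Q$ of positive Taylor order. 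That trades one $\delta_{(p)}^{-1}$ for a $(\lambda_p/2)^{-1}$. Together with the anchor saving, one link then costs
$\frac{\sqrt r\,w^2}{(\lambda_p/2)^2}\le\frac{\|E\|}{\lambda_p/2}\cdot\frac{\sqrt r\,w}{\sqrt{\lambda_p\delta_{(p)}/2}}<\frac{\|E\|}{\lambda_p/2}\cdot\frac1C$.
This uses $\delta_{(p)}\le\lambda_p/2$; otherwise $I_p=\{p\}$ and there are no $R$-slots. This case gives the first term of \eqref{Case2M(a,b)boundbad}.
\end{itemize}
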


\begin{lemma} \label{LemmaCase3M(a,b)bound}
    For a pair $(\alpha, \beta)$ of Type III, 
    \begin{equation} \label{Case3M(a,b)bound}
\begin{split}
\textstyle \frac{1}{2 \pi} \Norm{\int_{\Gamma} M(\alpha;\beta) dz} \leq \left( \frac { \|E\|}{\lambda_p/2}  + \frac{ x}{\lambda_p/2} \right) \frac{2^{s_2+s-1}}{C^{s-1}}.
\end{split}
\end{equation} 
\end{lemma}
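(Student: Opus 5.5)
We prove Lemma~\ref{LemmaCase3M(a,b)bound} with the same residue-counting scheme used for Type I and Type II, adapted to the asymmetric structure. By symmetry (taking adjoints swaps $\alpha_1$ and $\alpha_{k+1}$ and reverses $\beta$, without changing the norm of the integral), it suffices to treat $\alpha_1=0$, $\alpha_{k+1}>0$. Then $M(\alpha;\beta)$ starts with a $P$-block and ends with the $Q$-block $QE\cdots EQ$.

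\textbf{Expansion and residues.} Write each $P=\sum_{i\in I_p}\frac{u_iu_i^\top}{z-\lambda_i}$ and expand, so that
\[
\int_\Gamma M(\alpha;\beta)\,dz=\sum_{i_1,\dots,i_{s_2}\in I_p}\Big(\text{product of the } u_{i}u_i^\top,\ E,\ Q\text{ factors}\Big)\cdot\int_\Gamma \frac{g(z)}{\prod_{t}(z-\lambda_{i_t})}\,dz ,
\]
where $g$ collects the $Q$-resolvents. The function $g$ is holomorphic inside $\Gamma$, since all $\lambda_l$ with $l\notin I_p$ lie outside $\Gamma$.

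\textbf{Bounding each scalar integral.}
\begin{itemize}
\item Only the indices equal to $p$ contribute poles inside $\Gamma$.
\item The remaining factors $\frac{1}{z-\lambda_i}$, $i\ne p$, are holomorphic inside $\Gamma$; on $\Gamma$ they have modulus at most $2/\delta_{(p)}$, by the choice of $\Gamma$ (vertical edges bisecting the gaps, horizontal edges at height $4\delta_{(p)}$).
\item The $Q$-factors have operator norm at most $\frac{1}{\lambda_p/2}$ on and inside $\Gamma$.
\end{itemize}
The scalar integral is then computed by residues at $\lambda_p$, or equivalently by deforming $\Gamma$ to a small circle around $\lambda_p$ and using the Cauchy estimates for derivatives. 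The key structural fact comes from the final nonempty $Q$-block. The $QEQ\cdots Q$ tail, with $\alpha_{k+1}$ factors $Q$, contributes $\|E\|^{\alpha_{k+1}-1}(\lambda_p/2)^{-\alpha_{k+1}}$ times the preceding $E$. This supplies at least one factor $\frac{\|E\|}{\lambda_p/2}$. Every $P$-to-$Q$ or $Q$-to-$P$ transition, and every internal $E$ between $P$-blocks, contributes $x$, $w$, or $\|E\|$.

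\textbf{Assembling the bound.} We group the $E$-factors as follows:
\begin{itemize}
\item $E$'s sandwiched between two $P$'s give $u_i^\top E u_j$, hence a factor $x$ each;
\item $E$'s adjacent to exactly one $P$ give $w$;
\item the rest give $\|E\|$.
\end{itemize}
Using the pairing $w^2\le \frac{\lambda_p\delta_{(p)}}{2}\cdot\frac{1}{rC^2}$, the combination $\frac{rx}{\delta_{(p)}}$, and $\frac{\|E\|}{\lambda_p/2}$, each $E$ beyond the first is charged to a factor $1/C$. The sums over the $r^{s_2}$ index choices are absorbed by the factors $r$ built into~\eqref{def: C}. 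The first factor is either $\frac{\|E\|}{\lambda_p/2}$ (from the $Q$-tail) or $\frac{x}{\lambda_p/2}$ in the degenerate case where the tail interacts with a $P$ index $\neq p$. Counting residues gives at most $2^{s_2}$ terms, and the Cauchy-estimate powers give $2^{s-1}$, which yields~\eqref{Case3M(a,b)bound}. The contour length $O(\delta_{(p)})$ cancels one factor $1/\delta_{(p)}$.

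\textbf{Main obstacle.} The hardest step is the accounting of powers of $\delta_{(p)}$ versus $\lambda_p$. The residue at $\lambda_p$ of order $m$ produces derivatives of the $Q$-part and of the other $P$-poles. We must check that every $\delta_{(p)}^{-1}$ is compensated by an $x$ or a $w^2/\lambda_p$, so that the final bound carries $\lambda_p/2$ rather than $\delta_{(p)}$ in the denominator. This holds precisely because one end is a $Q$-block. We will mirror the bookkeeping of \cite[Lemmas 7.2--7.4]{DKTranVu1}, with their constant $12$ replaced by $C$.
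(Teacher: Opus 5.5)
\paragraph{Verdict.} The paper does not prove this lemma itself: it quotes \cite[Lemmas 7.2--7.4]{DKTranVu1}, renaming $12,\bar\lambda,\delta_S$ to $C,\lambda_p/2,\delta_{(p)}$. Your residue framework is the right one, but as written the proposal is not yet a proof. The only substantive step, showing that every $\delta_{(p)}^{-1}$ is absorbed, is deferred as the ``main obstacle''. In addition, the two concrete mechanisms you offer for it do not work as stated:
\begin{itemize}
\item \emph{Contour length.} $\Gamma$ has horizontal sides of length $(\delta_{p-1}+\delta_p)/2$. This can be of order $\lambda_p$ when one neighbouring gap is large, so the length is not $O(\delta_{(p)})$. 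In a residue computation the length plays no role anyway. A length-times-supremum estimate would lose the unbounded factor $\mathrm{len}(\Gamma)/\delta_{(p)}$.
\item \emph{The $Q$-tail.} The tail does not by itself supply a factor $\frac{\|E\|}{\lambda_p/2}$. In $PEQ$ or $PEPEQ$ there is no $E$ between two $Q$'s, and the $E$ preceding the tail is one you have already charged as $w$.
\end{itemize}
A minor point: Cauchy estimates on a circle of radius $\delta_{(p)}/2$ give $2^{s}$ per pole pattern, not $2^{s-1}$. The stated constant comes from the exact Leibniz expansion.

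\paragraph{The missing count.} It is short.
\begin{itemize}
\item Write $P=\frac{u_pu_p^\top}{z-\lambda_p}+P'$. Choosing which $m\ge1$ of the $P$-slots carry the pole gives fewer than $2^{s_2}$ products $(z-\lambda_p)^{-m}G(z)$.
\item By Leibniz, $\frac{1}{(m-1)!}G^{(m-1)}(\lambda_p)$ is a sum of $\binom{s-1}{m-1}\le 2^{s-1}$ terms $\pm T_1ET_2\cdots ET_{s+1}$. Here $T_j\in\{u_pu_p^\top\}\cup\{A^a\}_{a\ge1}$ at $P$-slots and $T_j=B^b$ at $Q$-slots, where
\[
A=\sum_{i\in I_p\setminus\{p\}}\frac{u_iu_i^\top}{\lambda_p-\lambda_i},\qquad B=\sum_{l\notin I_p}\frac{u_lu_l^\top}{\lambda_p-\lambda_l},\qquad \|A\|\le\frac{1}{\delta_{(p)}},\quad \|B\|\le\frac{1}{\lambda_p/2}.
\]
\item The total exponents $e_P$ (of the $A$'s) and $e_Q$ (of the $B$'s) satisfy $e_P+e_Q=s$ and $e_P\le(s_2-m)+(m-1)=s_2-1$.
\item Every $P$-slot factor satisfies $T_j=\Pi_{I_p}T_j\Pi_{I_p}$, where $\Pi_{I_p}$ is the projection onto $\mathrm{span}\{u_i:i\in I_p\}$. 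So an $E$ between two $P$-slots costs $\|\Pi_{I_p}E\Pi_{I_p}\|\le rx$, an $E$ between a $P$- and a $Q$-slot costs $\|\Pi_{I_p}E\|\le\sqrt r\,w$, and every other $E$ costs $\|E\|$.
\item For Type III with $\alpha_1=0$ there are $s_2-k$, $2k-1$ and $s_1-k$ of these, respectively.
\end{itemize}
Keep one mixed factor as $\|E\|$, pair the remaining $2k-2$ factors $w$, and use \eqref{def: C} for everything else. The exponents then give
\[
\Norm{T_1E\cdots ET_{s+1}}\le\frac{\|E\|}{\lambda_p/2}\,C^{-(s-1)}\Big(\frac{\delta_{(p)}}{\lambda_p/2}\Big)^{s_2-1-e_P}\le\frac{\|E\|}{\lambda_p/2}\,C^{-(s-1)}.
\]
The last step uses $\delta_{(p)}\le\lambda_p/2$, which holds whenever $r\ge2$. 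If $r=1$, then $A=0$ and you can bound $x,w\le\|E\|$ directly.

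Summing over terms gives \eqref{Case3M(a,b)bound}, even without the $x$-term. The odd count $2k-1$ is exactly where ``one end is a $Q$-block'' enters. Type II has only $2k-2$ mixed $E$'s, so in $PEQE\cdots EQEP$ there is no spare factor to keep. That is why $y$ appears in \eqref{Case2M(a,b)boundbad}.
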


Back to estimate $\|F_s\|$. For any integer $ 1 \le s_2 \leq s$,  the expansion of $F_s$ contains  $\binom{s+1}{s_2}$ operators $M(\alpha;\beta)$, each of which has exactly $s_2$ $P$ operators. A simple consideration reveals that among these,  $\binom{s-1}{s_2}$ are of Type I,  $\binom{s-1}{s_2-2}$ are of Type II, and $2\binom{s-1}{s_2 -1}$ are of Type III. The only term in the expansion with $s_2=0$ vanishes by Cauchy's theorem.

 Putting \eqref{Case1boundM(a,b)bound}, \eqref{Cas2M(a,b)boundgood}, \eqref{Case2M(a,b)boundbad}, \eqref{Case3M(a,b)bound} together and summing over $s_2$,  we show that  $  \| F_s \|$ is at most 
 \begin{equation*}
 \begin{split}
  & \left[\sum_{s_2=1}^{s+1} \binom{s-1}{s_2}   \left( \frac{ \|E\|}{\lambda_p/2} + \frac{x}{\lambda_p/2} \right) \frac{2^{s_2+s-1}}{C^{s-1}}\right] +   \bigg[ \left( \frac{x}{\lambda_p/2} + \frac{ \|E\|}{\lambda_p/2} \right) \frac{2^{3s/2} }{C^{s-1}}+  \frac{(s+2) \sqrt{r} y}{\delta_{(p)}} \left( \frac{2 }{C} \right)^{s-2} \bigg] \\
    & + \left[ \sum_{s_2=1}^{s+1} \binom{s-1}{s_2-2}  \left(\frac{ \|E\|}{\lambda_p/2}  + \frac{\sqrt{r} x}{\delta_{(p)}} \right) \frac{2^{s_2+s-1}}{C^{s-1}} \right] + \left[ \sum_{s_2=1}^{s+1} 2\binom{s-1}{s_2 -1} \left( \frac { \|E\|}{\lambda_p/2}  + \frac{ x}{\lambda_p/2} \right) \times \frac{2^{s_2+s-1}}{C^{s-1}}  \right],
  \end{split}     
 \end{equation*}  
 where $\binom{s-1}{l} = 0$ for either $l > s-1$ or $l < 0$.
 Consider the first term 
 \begin{equation*}
     \begin{split}
         \sum_{s_2=1}^{s+1} \binom{s-1}{s_2}   \left( \frac{ \|E\|}{\lambda_p/2} + \frac{x}{\lambda_p/2} \right) \frac{2^{s_2+s-1}}{C^{s-1}} & = \sum_{s_2=1}^{s+1} \binom{s-1}{s_2} 2^{s_2}  \left( \frac{ \|E\|}{\lambda_p/2} + \frac{x}{\lambda_p/2} \right) \frac{2^{s-1}}{C^{s-1}} \\
         & = \left( \sum_{s_2=1}^{s+1} \binom{s-1}{s_2} 2^{s_2} 1^{s-1-s_2}  \right) \times \left( \frac{ \|E\|}{\lambda_p/2} + \frac{x}{\lambda_p/2} \right) \frac{2^{s-1}}{C^{s-1}}.
  \end{split}
 \end{equation*}

Moreover, 
\(
\sum_{s_2=1}^{s+1} \binom{s-1}{s_2} 2^{s_2} 1^{s-1-s_2}  \leq \sum_{s_2=0}^{s-1}  \binom{s-1}{s_2} 2^{s_2} 1^{s-1-s_2}=3^{s-1},
\)
The RHS is at most
\[
 3^{s-1}  \times \left( \frac{ \|E\|}{\lambda_p/2} + \frac{x}{\lambda_p/2} \right) \frac{2^{s-1}}{C^{s-1}}  = \left( \frac{ \|E\|}{\lambda_p/2} + \frac{x}{\lambda_p/2} \right) \frac{1}{(C/6)^{s-1}}. 
\]

Using a similar argument, we have 
$$\sum_{s_2=1}^{s+1} \binom{s-1}{s_2-2}  \left(\frac{ \|E\|}{\lambda_p/2} + \frac{\sqrt{r} x}{\delta_{(p)}} \right) \frac{2^{s_2+s-1}}{C^{s-1}} = 4  \left(\frac{ \|E\|}{\lambda_p/2} + \frac{\sqrt{r} x}{\delta_{(p)}} \right) \frac{1}{(C/6)^{s-1}},   $$
(here we pair up $2^{s_2-2}$ with $\binom{s-1}{s_2-2}$, the remaining factor is $4$),
and
$$\sum_{s_2=1}^{s+1} 2\binom{s-1}{s_2 -1} \left( \frac { \|E\|}{\lambda_p/2}  + \frac{ x}{\lambda_p/2} \right) \times \frac{2^{s_2+s-1}}{C^{s-1}} =  4 \left( \frac { \|E\|}{\lambda_p/2}  + \frac{ x}{\lambda_p/2} \right) \times \frac{1}{(C/6)^{s-1}},$$
(we pair up $2^{s_2-1}$ with $\binom{s-1}{s_2-1}$, the remaining factor is $2 \times 2 =4$). Finally, we have 
$$ \left[ \left( \frac{x}{\lambda_p/2} + \frac{ \|E\|}{\lambda_p/2} \right) \frac{2^{3s/2} }{C^{s-1}}+  \frac{(s+2) \sqrt{r} y}{\delta_{(p)}} \left( \frac{2 }{C} \right)^{s-2} \right] \le \left[ \frac{8}{C} \left( \frac{x}{\lambda_p/2} + \frac{ \|E\|}{\lambda_p/2} \right) \frac{1 }{(C/3)^{s-2}} + \frac{ \sqrt{r} y}{\delta_{(p)}} \frac{s+2}{(C/2)^{s-2}}  \right],$$
where we use the trivial estimate that $\frac{2^{3/2}}{C} \leq \frac{3}{C}=\frac{1}{C/3}$. It follows that 
\begin{equation} \label{boundFs} 
    \begin{split}
       \Norm{F_s}  & \leq  \textstyle \left( \frac{ \|E\|}{\lambda_p/2} + \frac{x}{\lambda_p/2} \right) \frac{1}{(C/6)^{s-1}} +  \left[ \frac{8}{C} \left( \frac{x}{\lambda_p/2} + \frac{ \|E\|}{\lambda_p/2} \right) \frac{1 }{(C/3)^{s-2}} + \frac{ \sqrt{r} y}{\delta_{(p)}} \frac{s+2}{(C/2)^{s-2}}  \right]  +  \\
    & + \textstyle 4  \left(\frac{ \|E\|}{\lambda_p/2} + \frac{\sqrt{r} x}{\delta_{(p)}} \right) \frac{1}{(C/6)^{s-1}} + 4 \left( \frac { \|E\|}{\lambda_p/2}  + \frac{ x}{\lambda_p/2} \right) \frac{1}{(C/6)^{s-1}}.
    \end{split}
\end{equation}

\subsubsection{Putting all $\|F_s\|$-estimates  together}
 Summing \eqref{boundFs} over $s$, for $C > 24$, we can bound  $ \sum_{s\geq 3} \|F_s\|$ by 
 \begin{equation}  \label{F2-HR} 
 \frac{36}{C^2} \cdot \left[ 13 \frac{ \|E\|}{\lambda_p/2} + 6 \frac{\sqrt{r} x}{\delta_{(p)}}    + 8 \frac{x}{\lambda_p/2}     \right] + \frac{2}{C} \cdot 6 \frac{\sqrt{r} y}{\delta_{(p)}} \leq \frac{504}{C^2} \cdot \left[ \frac{ \|E\|}{\lambda_p/2} + \frac{\sqrt{r} x}{\delta_{(p)}}     \right] + \frac{12}{C} \cdot \frac{\sqrt{r} y}{\delta_{(p)}}.     
 \end{equation} 
(Here we omit the routine calculation and do not try to optimize the constants.)

Combining \eqref{est: tranvuM1M2F1}, \eqref{F2 est}, and \eqref{F2-HR}, we obtain 
 \begin{equation} \label{F3-HR} 
 \begin{split}
   \textstyle\|\tilde{u}_p \tilde{u}_p^\top - u_p u_p^\top   \| & \le \|F_1\|+\|F_2\|+\sum_{s\geq 3} \|F_s\| \\
   & \textstyle \leq \frac{ 4   \| E\|}{ \lambda_p}   + \frac{   2\sqrt{r} x} { \delta_{(p)} } + \frac{2\sqrt{r} y}{\delta_{(p)}} + \frac{20}{C}  \left( \frac{\|E\|}{\lambda_p} + \frac{\sqrt{r} x}{\delta_{(p)}}+\frac{\sqrt{r} y}{\delta_{(p)}} \right) + \frac{504}{C^2}  \left[ \frac{ \|E\|}{\lambda_p/2} + \frac{\sqrt{r} x}{\delta_{(p)}}     \right] .    
 \end{split}
 \end{equation} 
  If we set $C \geq 24$, \eqref{F3-HR} implies 
  \begin{equation} \label{F4-HR} 
 \|\tilde{u}_p \tilde{u}_p^\top - u_p u_p^\top   \|\le  \frac{7\| E \| }{ \lambda_p} + \frac{ 5 \sqrt{r} x } {\delta_{(p)}} +  \frac{ 5\sqrt{ r} y }{ \delta_{(p)} }.
 \end{equation} 

The detailed analysis of the right-hand side, and hence the completion of the proof of Theorem~\ref{theo: upUpperM1/2 subGau}, is deferred to Section~\ref{subsec: proofupperM1/2}.

\subsection{Bounding \texorpdfstring{$\|\tilde{u}_p \tilde{u}_p^\top - u_p u_p^\top\|$}{tildeup-up} from below} \label{subsec: uplower setting} We again combine \eqref{up Perturb Iden} with the triangle inequality that  
\begin{equation} \label{up lower ine1}
    \|\tilde{u}_p \tilde{u}_p^\top - u_p u_p^\top   \| \geq \|F_1\| - \|F_2\| - \sum_{s\geq 3}\|F_s\|.
\end{equation}
For any $C >0$ such that  \(
\textstyle   \max \left\lbrace \frac{ \|E\|}{\lambda_p/2}, \frac{r x}{\delta_{(p)}}, \frac{\sqrt{r} w}{\sqrt{\lambda_p \delta_{(p)}/2}} \right\rbrace < \frac{1}{C}\), by \eqref{iden: F1M1M2},  \eqref{F2 est}, and \eqref{F2-HR}, the RHS is bounded from below by 
\[
\|M_1+M_2\| - \left[ \frac{20}{C} \cdot \left( \frac{\|E\|}{\lambda_p} + \frac{\sqrt{r} x}{\delta_{(p)}} \right) + \frac{2\sqrt{r} y}{\delta_{(p)}} \right] - \left[\frac{504}{C^2} \cdot \left( \frac{ \|E\|}{\lambda_p/2} + \frac{\sqrt{r} x}{\delta_{(p)}}     \right) + \frac{12}{C} \cdot \frac{\sqrt{r} y}{\delta_{(p)}} \right].
\]
For a large enough $C$, we have 
\begin{equation} \label{Firstlowerbound0}
   \|\tilde{u}_p \tilde{u}_p^\top - u_p u_p^\top   \| \geq  \|M_1+M_2\| - \left[ \frac{21}{C} \cdot \left( \frac{\|E\|}{\lambda_p} + \frac{\sqrt{r} x}{\delta_{(p)}} \right) + \frac{3\sqrt{r} y}{\delta_{(p)}} \right]. 
\end{equation}

The detailed analysis of the right-hand side, and hence the completion of the proof of Theorem \ref{theo: uplowerboundM1/2 subGau}, is deferred to Section~\ref{subsec: prooflowerM1/2Line1}.

\subsection{Estimating \texorpdfstring{$\|\tilde{\Pi}_p-\Pi_p\|$}{tildepip-pip} - proofs of Theorem \ref{theo: Piupper} and Theorem \ref{theo: PipM1/2lower} } \label{subsec: PipTreatment}
The proofs of Theorem \ref{theo: Piupper} and Theorem \ref{theo: PipM1/2lower} follow the same strategy presented in the previous subsections with the following technical adjustments.

First, the target set of indices is no longer $\{p\}$, but $[p]:=\{1,2,\dots, p\}$. Consequently, instead of $I_p, \delta_{(p)}$, we have the set of important indices $I_{[p]}:=\{i \in [d]: \lambda_i \geq  \frac{\lambda_p}{2}\}=\{1,2,\dots, r\}$ with $r=|I_{[p]}|$, and the gap $\delta_p=\lambda_p -\lambda_{p+1}$. The parameters $x,y,w$ and the contour $\Gamma$ become
\begin{itemize}
    \item $x_p:= \max_{1 \leq i,j\leq r} |u_i^\top E u_j|$.
    \item  $y_p:= \max_{\substack{1\leq i\neq j \leq r\\1\leq k \leq  p}}|u_i^\top E \sum_{l > r} \frac{u_l u_l^\top}{\lambda_k -\lambda_l} E u_j|.$
    \item $w_p:= \max_{1 \leq i \leq r} \|E u_i\|.$
    \item $\Gamma_p$ is a rectangle, whose left vertical edge bisects the interval $(\lambda_{p+1}, \lambda_{p})$, whose right vertical edge passes through the point $\|M\|+1.1\|E\|$, and whose horizontal edges lie at distance $4\delta_p$ from the real axis.
\end{itemize}
Therefore, the contour representation for the perturbation of eigenspaces is 
\[
\tilde{\Pi}_p - \Pi_p:= \sum_{s =1}^{\infty} F_s,\,\,\text{where}\,\,F_s:=\frac{1}{2\pi \textbf{i}} \int_{\Gamma_p}(zI -M)^{-1}[E (zI-M)^{-1}]^s dz.
\]

For the upper bound, arguing similarly to \eqref{F3-HR} and \eqref{F4-HR}, we have 
\[
\|\tilde{\Pi}_p - \Pi_p\| \leq \sum_{s=1}^{\infty}\|F_s\| \leq \frac{7\|E\|}{\lambda_p} + \frac{5\sqrt{r} x_p}{\delta_p} + \frac{5\sqrt{r}y_p}{\delta_p}.
\]
The detailed analysis of the RHS similarly proceeds as in Section \ref{subsec: proofupperM1/2}, yielding Theorem \ref{theo: Piupper}. 

For the lower bound, arguing similarly to \eqref{up lower ine1} and \eqref{Firstlowerbound0}, we obtain 
\begin{equation*}
    \begin{split}
     \|\tilde{\Pi}_p - \Pi_p\| & \geq \|F_1\| - \|F_2\| - \sum_{s\geq 3}\|F_s\| \\   
     & = \|M_{1,p}+M_{2,p}\| - \left[ \frac{21}{C} \cdot \left( \frac{\|E\|}{\lambda_p} + \frac{\sqrt{r} x_p}{\delta_{p}} \right) + \frac{3\sqrt{r} y_p}{\delta_{p}} \right],\,\,\text{where}
    \end{split}
\end{equation*}
\begin{equation*}
    \begin{split}
       M_{1,p}
:={}&
\sum_{1 \leq k \leq p} \sum_{r\geq j >p}
\left(
u_k \frac{u_k^\top E u_j}{\lambda_k-\lambda_j} u_j^\top
+
u_j \frac{u_j^\top E u_k}{\lambda_k-\lambda_j} u_k^\top
\right),
\\
M_{2,p}
:={}& \sum_{1 \leq k \leq p}
u_ku_k^\top E
\left(
\sum_{l >r}
\frac{u_lu_l^\top}{\lambda_k-\lambda_l}
\right)
+
\left(
\sum_{l > r}
\frac{u_lu_l^\top}{\lambda_k-\lambda_l}
\right)
E u_k u_k^\top.
    \end{split}
\end{equation*}
The detailed analysis of $M_{1,p}+ M_{2,p}, \|E\|, x_p, y_p$ proceeds as in Section \ref{subsec: prooflowerM1/2Line1}, yielding Theorem \ref{theo: PipM1/2lower}.

\section{Completion of the proof of the upper bound - Theorem \ref{theo: upUpperM1/2 subGau} } \label{subsec: proofupperM1/2}
In the previous section, we have shown that if 
\[
\textstyle   \max \left\lbrace \frac{ \|E\|}{\lambda_p/2}, \frac{r x}{\delta_{(p)}}, \frac{\sqrt{r} w}{\sqrt{\lambda_p \delta_{(p)}/2}} \right\rbrace < \frac{1}{C},\] 
 then 
 \[
  \|\tilde{u}_p \tilde{u}_p^\top - u_p u_p^\top   \|\le  \frac{7\| E \| }{ \lambda_p} + \frac{ 5 \sqrt{r} x } {\delta_{(p)}} +  \frac{ 5\sqrt{ r} y }{ \delta_{(p)} }.
 \]
Thus, to complete the proof of Theorem~\ref{theo: upUpperM1/2 subGau}, we estimate $\|E\|$ in Subsection~\ref{subsec: E est} and the skewness parameters $w$, $x$, and $y$ from Definition~\ref{def: xyw} in Subsections~\ref{subsec: w est} and \ref{subsec: xy est}. We then combine these estimates in Subsection~\ref{subsec: assembly upper} to establish Theorem~\ref{theo: upUpperM1/2 subGau}.
\subsection{Estimation of \texorpdfstring{$\|E\|$}{E}} \label{subsec: E est} We use the following well-known estimate for  $\|E\|$ when $Y$ is a sub-Gaussian random vector \cite{koltchinskii2017concentration, zhivotovskiy2024dimension}.

\begin{theorem} \label{theo: ESubgau} Under the above setting, for any $t >0$, with probability at least $1 -e^{-t}$, provided that $n \geq 4 r_{\mathrm{eff}}+t$, 
\begin{equation} 
    \textstyle \|E\| \leq 20 K \lambda_1  \sqrt{\frac{4r_{\mathrm{eff}}+t}{n}}.
\end{equation}
\end{theorem}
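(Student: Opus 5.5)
The plan is to reduce the bound to a dimension-free concentration inequality for sample covariance operators of sub-Gaussian vectors, namely the Koltchinskii--Lounici bound or the variational version of Zhivotovskiy. The work consists of two things. First, check that $X=M^{1/2}Y$ satisfies the hypothesis of that inequality with a parameter of order $K$. Second, simplify the resulting bound using $n\ge 4r_{\mathrm{eff}}+t$.

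\textbf{Step 1: $X$ is a sub-Gaussian vector.} We show that $X$ is sub-Gaussian with respect to $M$. For a unit vector $v$, write
\[
\langle X,v\rangle=\langle Y,M^{1/2}v\rangle=\sum_k a_k y_k, \qquad a=M^{1/2}v,\quad \|a\|^2=v^\top Mv .
\]
When the $y_k$ are independent and centered with $\|y_k\|_{\psi_2}^2\le K$, the standard Hoeffding-type bound for sums of independent sub-Gaussian variables gives
\[
\|\langle X,v\rangle\|_{\psi_2}^2\le C K\, v^\top M v .
\]
This is exactly the $L_2$--$\psi_2$ norm equivalence required by the cited theorems, with equivalence constant $O(\sqrt K)$. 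Since the quadratic terms come with constant $O(K)$, we get the factor $K$ in front of $\lambda_1=\|M\|$.

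\textbf{Step 2: the deviation bound.} We then apply the dimension-free bound: with probability at least $1-e^{-t}$,
\[
\|E\|\le C' K\|M\|\left(\sqrt{\tfrac{r_{\mathrm{eff}}}{n}}+\tfrac{r_{\mathrm{eff}}}{n}+\sqrt{\tfrac{t}{n}}+\tfrac{t}{n}\right).
\]
To make the argument self-contained, we would sketch this via the PAC-Bayesian route. We write
\[
\|E\|=\sup_{\|v\|=1}\left|\frac1n\sum_i \langle X_i,v\rangle^2-v^\top Mv\right|,
\]
and replace the point mass at $v$ by a Gaussian $\rho_v=\mathcal N(v,\beta^{-1}I)$. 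Its Kullback--Leibler divergence to the prior $\mathcal N(0,\beta^{-1}I)$ equals $\beta/2$. The averaged quadratic form picks up an additive bias $\beta^{-1}\,\mathrm{Tr}(\cdot)$, which is the same for the empirical and the population matrices up to the fluctuation of a trace term. The Donsker--Varadhan inequality, combined with the sub-exponential moment generating function of $\langle X,\theta\rangle^2$ from Step 1, then yields a uniform bound of the form
\[
\frac{\mathrm{Tr}\,M}{\beta}+\frac{\beta+t}{n}\,K\lambda_1
\]
plus the corresponding variance terms. Optimizing over $\beta$, roughly $\beta\asymp r_{\mathrm{eff}}$, produces the $\sqrt{r_{\mathrm{eff}}/n}$ term.

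\textbf{Main obstacle.} The delicate step is controlling the smoothing bias. The fluctuation of the trace part and the random normalization must not reintroduce $d$, and the truncation needed for the exponential moment of the squared sub-Gaussian variable must be handled carefully. This is where citing \cite{koltchinskii2017concentration, zhivotovskiy2024dimension} is preferable to redoing the argument.

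\textbf{Step 3: simplification.} Finally, since $n\ge 4r_{\mathrm{eff}}+t$, we have $(4r_{\mathrm{eff}}+t)/n\le 1$, so the linear terms are dominated by the square-root terms:
\[
\frac{r_{\mathrm{eff}}}{n}+\frac tn\le \sqrt{\frac{4r_{\mathrm{eff}}+t}{n}} .
\]
Also $\sqrt{r_{\mathrm{eff}}/n}+\sqrt{t/n}\le \sqrt2\,\sqrt{(4r_{\mathrm{eff}}+t)/n}$. Collecting the constants, with the explicit absolute constant coming from the cited theorem, gives the bound $20K\lambda_1\sqrt{(4r_{\mathrm{eff}}+t)/n}$ with probability at least $1-e^{-t}$.
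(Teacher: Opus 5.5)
Your route is the paper's route: the paper gives no argument for this statement and simply invokes the dimension-free bounds of \cite{koltchinskii2017concentration, zhivotovskiy2024dimension}. The display, with its constant $20$, the quantity $4r_{\mathrm{eff}}+t$ and the requirement $n\ge 4r_{\mathrm{eff}}+t$, is the explicit sub-Gaussian covariance bound of \cite{zhivotovskiy2024dimension} with $\kappa^2$ replaced by $K$. That bound holds for zero-mean vectors satisfying $\|\langle X,v\rangle\|_{\psi_2}^2\le \kappa^2\,\E\langle X,v\rangle^2$ for all $v$. So your PAC-Bayes sketch and Step 3 are not needed, and they could not produce the number $20$ anyway. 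A bound with an unspecified $C'$ followed by ``collecting the constants'' only yields $C''K\lambda_1\sqrt{(4r_{\mathrm{eff}}+t)/n}$ for some absolute $C''$.

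The step that does not hold up as written is Step 1, for two reasons.

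First, Hoeffding gives $\|\langle X,v\rangle\|_{\psi_2}^2\le CK\,v^\top Mv$ only with an absolute constant $C>1$, and $C$ cannot be taken to be $1$. For Rademacher coordinates $\|y_k\|_{\psi_2}^2=1/\ln 2$, while $m^{-1/2}\sum_{k\le m}y_k$ tends to a standard Gaussian, whose squared $\psi_2$-norm is $8/3$. Feeding your Step 1 into \cite{zhivotovskiy2024dimension} therefore gives $20CK$, not $20K$. The literal constant requires reading the hypothesis as the vector-level condition $\|\langle Y,a\rangle\|_{\psi_2}^2\le K\|a\|^2$. This is what the paper tacitly does when it speaks of ``$Y$ a sub-Gaussian random vector''.

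Second, your Hoeffding step uses full independence of the $y_k$, whereas the general setting of Section~\ref{section: population} assumes only 8-wise independence. Under 8-wise independence, linear forms of $Y$ need not be uniformly sub-Gaussian. For example, take $d=2^m-1$ and set $y_k=(-1)^{c_k}$, where $c$ is a uniformly random codeword of the dual of a binary BCH code of designed distance $9$. This gives 8-wise independent Rademacher coordinates with $\sum_k y_k=d$ with probability at least $(d+1)^{-4}$, so $\|d^{-1/2}\sum_k y_k\|_{\psi_2}^2\gtrsim d/\log d$. In that setting the vector-level $\psi_2$ bound has to be imposed as an assumption rather than derived.

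Neither issue matters downstream, since the paper only uses \eqref{bound E sub-Gau} up to an absolute constant. As written, though, your argument proves the statement only with $20$ replaced by an unspecified absolute constant, and only for independent coordinates.
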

Thus, with probability at least $1-e^{-4r_{\mathrm{eff}}t^2},$
\begin{equation}\label{bound E sub-Gau}
   \textstyle \|E\| \leq 40 K t \cdot \lambda_1 \sqrt{\frac{r_{\mathrm{eff}}}{n}}. 
\end{equation}

\subsection{Estimation of \texorpdfstring{$w$}{w}} \label{subsec: w est} We prove the following lemma. 
\begin{lemma} \label{lem: w bound} Under the above setting, there is a universal constant $C > 0$ so that for any $t > 0$, with probability at least $1 -t^{-4}$, 
$$\textstyle \|E u_i\| \leq  \frac{\lambda_i t}{\sqrt{n}} \cdot \sqrt{\sum_{k=1}^d \frac{\lambda_k}{\lambda_i} \cdot \Var S^{(k,i)}   } \cdot \sqrt{2+ \frac{CK^2}{n^{1/2}}}.$$
\end{lemma}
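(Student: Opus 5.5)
Since $X_j = M^{1/2} Y_j$ and $M u_i = \lambda_i u_i$, we have $X_j X_j^\top u_i = M^{1/2} Y_j Y_j^\top M^{1/2} u_i = \sqrt{\lambda_i}\, M^{1/2} Y_j (Y_j^\top u_i)$. Writing $M^{1/2} = \sum_k \sqrt{\lambda_k} u_k u_k^\top$, we get
\[
E u_i = \frac{1}{n}\sum_{j=1}^n \big(X_jX_j^\top - M\big)u_i = \frac{\sqrt{\lambda_i}}{n}\sum_{j=1}^n \sum_{k=1}^d \sqrt{\lambda_k}\,\big(S_j^{(k,i)} - \E S^{(k,i)}\big) u_k ,
\]
where $S_j^{(k,i)} := (u_k^\top Y_j)(u_i^\top Y_j)$ are iid copies of $S^{(k,i)}$ over $j$. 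By orthonormality of the $u_k$,
\[
\|E u_i\|^2 = \frac{\lambda_i}{n^2}\sum_{k=1}^d \lambda_k \Big(\sum_{j=1}^n \big(S_j^{(k,i)} - \E S^{(k,i)}\big)\Big)^2 .
\]
The plan is therefore a moment bound on this scalar quantity followed by Markov's inequality. The tail $t^{-4}$ indicates that we should control the fourth moment, i.e.\ $\E\|Eu_i\|^4$, or equivalently the second moment of $\|Eu_i\|^2$, and then apply Markov to $\|Eu_i\|^4$.

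\textbf{Step 1 (mean).} Independence across samples gives
\[
\E \|Eu_i\|^2 = \frac{\lambda_i}{n}\sum_k \lambda_k \Var S^{(k,i)} = \frac{\lambda_i^2}{n}\, \Sigma, \qquad \Sigma := \sum_k \frac{\lambda_k}{\lambda_i}\Var S^{(k,i)}.
\]
This is exactly the main factor in the claimed bound.

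\textbf{Step 2 (second moment of $\|Eu_i\|^2$).} Write $Z_k := \sum_j \bar S_j^{(k,i)}$, with $\bar S$ the centered variable. Then
\[
\E\|Eu_i\|^4 = \frac{\lambda_i^2}{n^4}\sum_{k,l}\lambda_k\lambda_l\, \E\big[Z_k^2 Z_l^2\big].
\]
Expanding $\E Z_k^2 Z_l^2$ over sample indices $j_1,\dots,j_4$, only pairings survive because each $\bar S_j$ has mean zero and the samples are independent. The terms with two distinct pairs contribute at most
\[
n^2\Big(\E \bar S^{(k,i)2}\,\E \bar S^{(l,i)2} + 2\big(\E \bar S^{(k,i)}\bar S^{(l,i)}\big)^2\Big) \le 3 n^2 \Var S^{(k,i)}\Var S^{(l,i)}
\]
by Cauchy--Schwarz. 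The diagonal terms $j_1=\dots=j_4$ contribute $n\,\E[\bar S^{(k,i)2}\bar S^{(l,i)2}]$. Summing the first type gives at most $3\big(\lambda_i^2\Sigma/n\big)^2$.

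For the diagonal type, we need $\E\big[\big(\sum_k \lambda_k \bar S^{(k,i)2}\big)^2\big] \lesssim K^2 \lambda_i^2 \Sigma^2$, or a bound of the form $CK^2(\sum_k\lambda_k)^2$ compared with $\Sigma$. Here $\sum_k \lambda_k S^{(k,i)2} = (u_i^\top Y)^2\, Y^\top M Y$, which is a product of a quadratic form in $Y$ and a square of a linear form. Its second moment involves eighth moments of the $y$'s. This is exactly where the $8$-wise independence and sub-Gaussianity enter: moment formulas only need products of at most eight coordinates. I would bound the whole contribution by $CK^2\lambda_i^2\Sigma^2 \cdot n$, using that $\Sigma \ge \Var S^{(i,i)}$ and that each $\Var S^{(k,i)} \ge$ a constant, or comparing $\operatorname{Tr} M$ directly. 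After dividing by $n^4$, this is $\frac{CK^2}{n}(\lambda_i^2\Sigma/n)^2$.

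\textbf{Step 3 (Markov).} Steps 1 and 2 give
\[
\E\|Eu_i\|^4 \le \Big(\frac{\lambda_i^2\Sigma}{n}\Big)^2\Big(3+\frac{CK^2}{n}\Big).
\]
Then
\[
\P\Big(\|Eu_i\| \ge \tfrac{\lambda_i t}{\sqrt n}\sqrt{\Sigma}\,\sqrt{a}\Big) \le \frac{3 + CK^2/n}{a^2 t^4}.
\]
Choosing $a = \sqrt{3+CK^2/n}$ yields probability $t^{-4}$ and a factor $\sqrt a \le \sqrt{2 + CK^2/n^{1/2}}$ up to adjusting constants; the precise form $2+CK^2n^{-1/2}$ can come from a slightly sharper pairing count.

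\textbf{Main obstacle.} The main obstacle is the diagonal eighth-moment term in Step 2. The difficulty is ensuring that it is dominated by $\Sigma^2$ with only a $K^2$ loss, without picking up a $\operatorname{Tr}(M)/\lambda_i$ vs.\ $\Sigma$ mismatch. I would first try to handle it via the Hanson--Wright-type moment bound for $Y^\top M Y$ combined with Hölder's inequality.
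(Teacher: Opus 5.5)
Your plan follows the paper's proof step for step. You expand $\|Eu_i\|^2=\frac{\lambda_i}{n^2}\sum_k\lambda_k Z_k^2$, compute the mean, split $\E\|Eu_i\|^4$ into the four index patterns (Cauchy--Schwarz on the three two-pair patterns gives the factor $3$), and apply Markov to $\|Eu_i\|^4$. No sharper pairing count is needed at the end, since $(3+x)^{1/4}\le(2+\sqrt{x})^{1/2}$.

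The step you flag, however, is a genuine gap. It is exactly the step the paper handles with ``arguing similarly'', where it asserts that the single-sample term is at most $\frac{CK^4}{n}\big[\sum_k\frac{\lambda_k}{n}\Var S^{(k,i)}\big]^2$. That assertion amounts to
\[
\E\Big(\sum_k\lambda_k\bar S_k^2\Big)^2\le CK^4\Big(\sum_k\lambda_k\Var S^{(k,i)}\Big)^2,\qquad \bar S_k:=S^{(k,i)}-\E S^{(k,i)},
\]
a hypercontractive comparison of fourth and second moments. Sub-Gaussianity does not provide this for non-multilinear quadratics. For a counterexample, take $M=\lambda_1e_1e_1^\top$, $i=1$, and $y$ symmetric with $y^2\in\{0,1,2\}$ with probabilities $\eta/2,\,1-\eta,\,\eta/2$. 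Then one can take $K=3$ for every $\eta$, $\Sigma=\Var(y^2)=\eta$, and $\|Eu_1\|=\frac{\lambda_1}{n}\big|\sum_j(y_j^2-1)\big|\ge\frac{\lambda_1}{n}$ with probability at least $n\eta(1-\eta)^{n-1}$. Choosing $t$ so that the claimed threshold equals $\frac{\lambda_1}{2n}$ gives $t^{-4}=16n^2\eta^2(2+CK^2/\sqrt n)^2$, which is smaller than $n\eta(1-\eta)^{n-1}$ once $\eta\ll 1/n$. So the statement, with a universal $C$ and arbitrary $M$, is false. Your proposed rescues are also unavailable: $\Sigma\ge\Var S^{(i,i)}$ does not help (here $\Var S^{(i,i)}=\eta$), and no lower bound on the variances is among the hypotheses.

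What does work is your ``compare with $\mathrm{Trace}\,M$'' idea. By $8$-wise independence, moments of $u^\top Y$ of order at most eight equal those for independent coordinates, so $\E(u^\top Y)^8\le (CK)^4$. Hence $\|\bar S_k\|_{L^4}\le CK$, and by Minkowski, $\big\|\sum_k\lambda_k\bar S_k^2\big\|_{L^2}\le CK^2\,\mathrm{Trace}\,M$. No Hanson--Wright is needed, which in any case presumes full independence. Writing $s_i:=\lambda_i\Sigma/\mathrm{Trace}\,M$, this gives
\[
\E\|Eu_i\|^4\le\Big(\frac{\lambda_i^2\Sigma}{n}\Big)^2\Big(3+\frac{CK^4}{n\,s_i^2}\Big).
\]
This is the statement with $CK^2/(s_i\sqrt n)$ in place of $CK^2/\sqrt n$. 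A universal constant therefore needs an extra hypothesis such as $s_i\ge c>0$, which follows, e.g., from $\min_k\E y_k^4\ge 1+2c$ via Lemma~\ref{lem: S value}.

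For the paper's use of the lemma in bounding $w$, only $\Var S^{(k,i)}\le 4K^2$ is invoked. There the unconditional bound $\E\|Eu_i\|^4\le CK^4\lambda_i^2(\mathrm{Trace}\,M)^2/n^2$ already yields $\|Eu_i\|\le CKt\sqrt{\lambda_i\lambda_1 r_{\mathrm{eff}}/n}$ with probability at least $1-t^{-4}$.
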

\begin{proof}[Proof of Lemma \ref{lem: w bound}]
 We use the moment method to prove this lemma. Indeed, we compute the first and second moments of $\frac{\|Eu_i\|^2}{\lambda_i}$. Recall the notations that for each $1\leq k,i \leq d$,  
 $$S^{(k,i)}:= (u_k^\top Y)(Y^\top u_i),$$
 and $\{S_{j}^{(k,i)}:= (u_k^\top Y_j)(Y_j^\top u_i) \}_{j=1}^n$ are $n$ iid samples of $S^{(k,i)}$. Since $\E S^{(k,i)}= 1 $ if $k =i$ and $\E S^{(k,i)}= 0 $ if $k \neq i$, we can expand and rewrite 
 \begin{equation} \label{iden: NormEui}
  \frac{\|E u_i\|^2}{\lambda_i}=
\sum_{k=1}^d \frac{\lambda_k}{n^2} \big[ \sum_{j=1}^n (S^{(k,i)}_j - \E S^{(k,i)}_j )  \big]^2. 
 \end{equation}
Note that $S^{(k,i)}_j - \E S^{(k,i)}_j$ has mean zero and variance $\Var S^{(k,i)}$. Moreover, for any $j \neq j'$, $S_{j}^{(k,i)}$ and $S_{j'}^{(l,i)}$ for any $1 \leq k,l \leq d$ are independent. Thus, directly computing the first moment, we obtain
\begin{equation} \label{iden: EuiNormEx}
    \E \left(\frac{\|Eu_i\|^2}{\lambda_i} \right)= \sum_{k=1}^d \frac{\lambda_k}{n^2} \sum_{j=1}^n \E [(S^{(k,i)}_j - \E S^{(k,i)}_j )^2]= \frac{1}{n} \sum_{k=1}^d \lambda_k \cdot \Var S^{(k,i)}.
\end{equation}

Denote $s_{jk}= S^{(k,i)}_j - \E S^{(k,i)}_j$. Then, the second moment can be rewritten and expanded as 
\begin{equation} \label{eq: lem3.2-1}
    \begin{split}
    \textstyle    \E \left[ \left(\frac{\|Eu_i\|^2}{\lambda_i} \right)^2 \right] & = \textstyle \E \left[\sum_{k,k'=1}^d\frac{\lambda_k \lambda_{k'}}{n^4} \left(\sum_{j=1}^n s_{jk} \right)^2 \left(\sum_{l=1}^n s_{lk'} \right)^2 \right] \\
        & = \textstyle \E \left[\sum_{k,k'=1}^d\frac{\lambda_k \lambda_{k'}}{n^4} \left(\sum_{j,j',l,l' \leq n} s_{jk} s_{j'k} s_{lk'} s_{l'k'}  \right) \right].
    \end{split}
\end{equation}
Since $\E s_{jk} =0$, the nontrivial terms in $\sum_{j,j',l,l' \leq n} s_{jk} s_{j'k} s_{lk'} s_{l'k'} $ belong to one of the four following cases:
$$(1)\, j=j' \neq l=l';\,(2)\, j=l \neq j'=l';\,(3)\, j=l'\neq j'=l;\,\text{and}\,(4)\, j=j'=l=l'.$$
Without loss of generality, we handle the sub-sums corresponding to the first and second cases. Other cases can be argued similarly. The first sub-sum is 
\begin{equation} \label{eq: lem3.2-2}
    \begin{split}
     \sum_{k,k'\leq d} \frac{\lambda_k \lambda_{k'}}{n^4} \cdot \sum_{j \neq l} \E (s_{jk}^2) \E(s_{lk'}^2) & \leq \sum_{k,k'\leq d} \frac{\lambda_k \lambda_{k'}}{n^4} \cdot \sum_{j,l \leq n} \E (s_{jk}^2) \E(s_{lk'}^2) \\
        &  =  \sum_{k,k'\leq d} \frac{\lambda_k \lambda_{k'}}{n^4} \big(\sum_{j\leq n} \E(s_{jk}^2) \big) \big(\sum_{l\leq n} \E(s_{lk'}^2) \big) \\
        &  = \bigg[ \sum_{k \leq d} \frac{\lambda_k}{n} \cdot \Var S^{(k,i)} \bigg]^2,
    \end{split}
\end{equation}
which is exactly the square of the first moment. 

By a similar expression, the second sub-sum is 
\begin{equation} \label{eq: lem3.2-3}
    \begin{split}
   \sum_{k,k' \leq d} \frac{\lambda_k \lambda_{k'}}{n^4}  \sum_{j \neq j' \leq n} \E (s_{jk} s_{jk'}) \E(s_{j'k} s_{j'k'}) &  =  \sum_{k,k' \leq d} \frac{\lambda_k \lambda_{k'}}{n^4}  \bigg[\sum_{j \leq n} \E (s_{jk} s_{jk'})  \bigg]^2 -  \sum_{k,k' \leq d} \frac{\lambda_k \lambda_{k'}}{n^4} \sum_{j \leq d} \E (s_{jk} s_{jk'})^2   \\
   & = \sum_{k,k' \leq d} \frac{\lambda_k \lambda_{k'}}{n^2} \cdot  [\E (s_{k} s_{k'})]  ^2 -  \sum_{k,k' \leq d} \frac{\lambda_k \lambda_{k'}}{n^3} \cdot [\E (s_{k} s_{k'})]^2 \\
   & \leq \sum_{k,k' \leq d} \frac{\lambda_k \lambda_{k'}}{n^2} \cdot  [\E (s_{k} s_{k'})]  ^2.
    \end{split}
\end{equation}
where $s_k:= (u_k^\top Y)(u_i^\top Y) - \E (u_k^\top Y)(u_i^\top Y) $. Moreover, by the Cauchy-Schwarz inequality, $[\E (s_{k} s_{k'})]^2  \leq \E(s_{k}^2) \cdot \E (s_{k'}^2),$ we obtain that the second sub-sum is at most 
$$\textstyle\sum_{k,k' \leq d} \frac{\lambda_k \lambda_{k'}}{n^2} \cdot \E(s_{k}^2) \cdot \E (s_{k'}^2) = \left( \sum_{k \leq d}\frac{\lambda_k}{n} \cdot \E (s_k^2) \right)^2 = \bigg[ \sum_{k \leq d} \frac{\lambda_k}{n} \cdot \Var S^{(k,i)}  \bigg]^2.$$
Arguing similarly, we also obtain that the third and the fourth sub-sums are also at most $\bigg[ \sum_{k \leq d} \frac{\lambda_k}{n} \cdot \Var S^{(k,i)}  \bigg]^2$ and $\frac{CK^4}{n} \cdot \bigg[ \sum_{k \leq d} \frac{\lambda_k}{n} \cdot \Var S^{(k,i)}  \bigg]^2$ for some universal constant $C$, respectively. Thus, the second moment of $\frac{\|Eu_i\|^2}{\lambda_i}$ is at most 
$$\textstyle \big(3+\frac{CK^4}{n} \big) \cdot \bigg[ \sum_{k \leq d} \frac{\lambda_k}{n} \cdot \Var S^{(k,i)}  \bigg]^2.$$

These estimates on the first and second moments imply that for any $t > 0$ with probability at least $1 - t^{-2},$
$$\textstyle\frac{\|Eu_i\|^2}{\lambda_i} \leq (3+ \frac{C' K^2}{\sqrt{n}}) \cdot \frac{t}{n}  \sum_{k=1}^d \lambda_k \cdot \Var S^{(k,i)}, \text{for some universal constant $C'$}.$$
This is equivalent to 
$$\textstyle \|E u_i\| \leq \lambda_i \sqrt{\frac{t}{n}} \cdot \sqrt{\sum_{k=1}^d \frac{\lambda_k}{\lambda_i} \cdot \Var S^{(k,i)}   } \cdot \sqrt{2+ \frac{C'K^2}{n^{1/2}}}.$$
Thus, with probability at least $1-t^{-4}$, 
$$\textstyle \|E u_i\| \leq \frac{ \lambda_i t}{\sqrt{n}} \cdot \sqrt{\sum_{k=1}^d \frac{\lambda_k}{\lambda_i} \cdot \Var S^{(k,i)}  } \cdot \sqrt{2+ \frac{C'K^2}{n^{1/2}}}.$$
We complete the proof.
\end{proof}
Back to our estimate of $w$, by Lemma \ref{lem: w bound} and the union bound, with probability at least $1 - \frac{r}{t^4}$, for some universal constant $C >0$
\begin{equation} \label{est: w spiked}
\begin{split}
    \textstyle    w &:= \max_{i \in I_p} \|E u_i\|\,\,(\text{Definition \ref{def: xyw}}) \\
    & \leq 2 t \frac{\lambda_p}{\sqrt{n}} \cdot \sqrt{\sum_{k=1}^d \frac{\lambda_k}{\lambda_p} \cdot \Var S^{(k,i)}   } \leq 4 K t \sqrt{\lambda_1 \lambda_p} \cdot \sqrt{\frac{r_{\mathrm{eff}}}{n}}.
\end{split}
\end{equation}
The last inequality follows from the fact that 
\(
\Var S^{(k,i)} \leq 4K^2 \,\,\text{for all $1 \leq k,i \leq d$ (Lemma \ref{lem: S value}}). 
\)

\subsection{Estimation of \texorpdfstring{$x$}{x} and \texorpdfstring{$y$}{y}} \label{subsec: xy est} We prove the following lemma to estimate $x$ and $y$.
\begin{lemma}\label{lemma: xyModeil1}  Under the above setting, we have:
\begin{itemize}
    \item For any $t_1 >0$ and $1 \leq i,j \leq d$, with probability at least $1 - t_1^{-2}$,
    $$   \textstyle   \norm{ u_i^\top E u_j} \leq 2 K \frac{\sqrt{\lambda_i \lambda_j} t_1}{\sqrt{n}}.$$
     \item There is a universal constant $C$ so that for any $t_2  > 0$, $ i<j \in I_p$, with probability at least $1 -t_2^{-2}$, 
$$   \textstyle  \norm{ u_i^\top E \left(\sum_{l \notin I_p} \frac{u_l u_l^\top}{\lambda_p - \lambda_l} \right) E u_j} \leq 3t_2\left[ \frac{ 4K^2\sqrt{\lambda_i \lambda_j}}{n}+  \frac{CK^2 \sqrt{\lambda_i \lambda_j}}{n^{3/2}} \left( \sum_{l \notin I_p } \frac{ \lambda_l}{ \lambda_p -\lambda_{l}} \right)+ \frac{K\sqrt{\lambda_i \lambda_j} }{n} \sqrt{\sum_{l \notin I_p } \frac{\lambda_l^2}{(\lambda_p -\lambda_l)^2}} \right].$$

\end{itemize}    
\end{lemma}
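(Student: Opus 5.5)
Both parts will be handled by the second-moment (Chebyshev) method, in the same spirit as Lemma~\ref{lem: w bound}. Throughout we use the representation $u_i^\top E u_j = \frac{\sqrt{\lambda_i\lambda_j}}{n}\sum_{m=1}^n \big(S^{(i,j)}_m - \E S^{(i,j)}_m\big)$. This follows from $X_m = M^{1/2}Y_m$ and $u_i^\top M^{1/2} = \sqrt{\lambda_i}\,u_i^\top$.

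\textbf{First bullet.} The summands are iid, mean zero, with variance $\Var S^{(i,j)}\le 4K^2$ by Lemma~\ref{lem: S value}. Hence $\E|u_i^\top E u_j|^2 = \frac{\lambda_i\lambda_j}{n}\Var S^{(i,j)} \le \frac{4K^2\lambda_i\lambda_j}{n}$. Chebyshev's inequality then gives $|u_i^\top E u_j|\le 2K t_1\sqrt{\lambda_i\lambda_j}/\sqrt n$ with probability at least $1-t_1^{-2}$. This part is routine.

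\textbf{Second bullet.} Write $G:=\sum_{l\notin I_p}\frac{u_lu_l^\top}{\lambda_p-\lambda_l}$ and $T:=u_i^\top EGEu_j=\sum_{l\notin I_p}\frac{(u_i^\top Eu_l)(u_l^\top Eu_j)}{\lambda_p-\lambda_l}$. Substituting the representation gives
\[
T=\frac{\sqrt{\lambda_i\lambda_j}}{n^2}\sum_{l\notin I_p}\frac{\lambda_l}{\lambda_p-\lambda_l}\sum_{m,m'=1}^n s^{(il)}_m s^{(lj)}_{m'},
\]
where $s^{(il)}_m:=S^{(i,l)}_m-\E S^{(i,l)}_m$. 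We split $T=T_{\mathrm{diag}}+T_{\mathrm{off}}$ according to $m=m'$ or $m\ne m'$.

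\textbf{Diagonal part.} The term $T_{\mathrm{diag}}$ has mean $\frac{\sqrt{\lambda_i\lambda_j}}{n}\sum_l \frac{\lambda_l}{\lambda_p-\lambda_l}\E[s^{(il)}s^{(lj)}]$. For $i\neq j$ one has $\E[S^{(i,l)}S^{(l,j)}]=\E[(u_i^\top Y)(u_l^\top Y)^2(u_j^\top Y)]$. Expanding in coordinates, 4-wise independence leaves only the fourth-cumulant contributions $\sum_k(\E y_k^4-3)u_{ik}u_{jk}u_{lk}^2$. Summed over $l$ with the weights, these contributions give at most $O(K^2)\cdot\frac{\sqrt{\lambda_i\lambda_j}}{n}$ (using $\sum_l u_{lk}^2=1$, $|\lambda_l/(\lambda_p-\lambda_l)|\le 2$ for $l\notin I_p$ or the block weights, and Cauchy–Schwarz in $k$). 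This yields the $\frac{4K^2\sqrt{\lambda_i\lambda_j}}{n}$ term. The fluctuation of $T_{\mathrm{diag}}$ around its mean is an average of $n$ iid terms. Bounding its variance by $\frac{\lambda_i\lambda_j}{n^3}\E\big(\sum_l\frac{\lambda_l}{\lambda_p-\lambda_l}s^{(il)}s^{(lj)}\big)^2$, and then using fourth moments of sub-Gaussian linear forms (8-wise independence suffices), gives the $\frac{CK^2\sqrt{\lambda_i\lambda_j}}{n^{3/2}}\sum_l\frac{\lambda_l}{\lambda_p-\lambda_l}$ term.

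\textbf{Off-diagonal part.} $T_{\mathrm{off}}$ is a degenerate U-statistic with mean zero. Its second moment is
\[
\frac{\lambda_i\lambda_j}{n^4}\sum_{m\ne m'}\sum_{l,l'}\frac{\lambda_l\lambda_{l'}}{(\lambda_p-\lambda_l)(\lambda_p-\lambda_{l'})}\Big(\E[s^{(il)}s^{(il')}]\,\E[s^{(lj)}s^{(l'j)}]+\text{cross pairing}\Big).
\]
The leading (Gaussian) part of $\E[s^{(il)}s^{(il')}]$ is $\mathbf 1_{l=l'}$, up to kurtosis corrections of size $K^2\sum_k u_{ik}^2u_{lk}u_{l'k}$. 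The diagonal $l=l'$ therefore gives $\frac{\lambda_i\lambda_j}{n^2}\cdot O(K^2)\sum_l\frac{\lambda_l^2}{(\lambda_p-\lambda_l)^2}$, which is the square of the third term. We must also control the off-diagonal $l\ne l'$ correction, and we expect this to be the main obstacle: a naive entrywise bound would lose a factor of $d$. We would handle it by writing the correction as $\sum_k(\E y_k^4-3)u_{ik}^2\big(\sum_l a_l u_{lk}\big)\big(\sum_{l'}a_{l'}u_{l'k}\big)$, where $a_l=\frac{\lambda_l}{\lambda_p-\lambda_l}$. Since the $u_l$ are orthonormal, $\sum_k\big(\sum_l a_l u_{lk}\big)^2=\sum_l a_l^2$, and $u_{ik}^2\le 1$, this is bounded by $O(K^2)\sum_l a_l^2$, which has the same order as the diagonal. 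The same reasoning covers the cross pairing.

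\textbf{Conclusion.} Chebyshev's inequality applied separately to the mean-shifted $T_{\mathrm{diag}}$ and to $T_{\mathrm{off}}$, each with failure probability $t_2^{-2}/2$ (the factor $3$ absorbs the constants), then gives the stated bound with probability at least $1-t_2^{-2}$.
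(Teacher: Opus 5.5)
Your route is the paper's own. The first bullet is exactly its computation $\E(u_i^\top E u_j)^2=\frac{\lambda_i\lambda_j}{n}\Var S^{(i,j)}\le \frac{4K^2\lambda_i\lambda_j}{n}$ followed by Chebyshev. Your split of $T$ into the parts $m=m'$ and $m\neq m'$ reproduces the paper's case analysis on the sample indices: its $M_1,M_4$ come from your diagonal part, and its $M_2,M_3$ from the two pairings in $\E T_{\mathrm{off}}^2$. The one step that is wrong as written is the $l\neq l'$ correction for the direct pairing. Set $\theta_k:=\E y_k^4-3$ and $a_l:=\frac{\lambda_l}{\lambda_p-\lambda_l}$. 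Then $\E[s^{(il)}s^{(il')}]=\mathbf{1}_{l=l'}+A_{ll'}$ and $\E[s^{(lj)}s^{(l'j)}]=\mathbf{1}_{l=l'}+B_{ll'}$, where $A_{ll'}=\sum_k\theta_k u_{ik}^2u_{lk}u_{l'k}$ and $B_{ll'}=\sum_k\theta_k u_{jk}^2u_{lk}u_{l'k}$. Both leading parts are $\mathbf{1}_{l=l'}$, so every $l\neq l'$ term is a product of two corrections. Adding back the products $A_{ll}B_{ll}$ gives
\[
\sum_{l,l'\notin I_p}a_la_{l'}A_{ll'}B_{ll'}=\sum_{k,k'}\theta_k\theta_{k'}\,u_{ik}^2u_{jk'}^2\Big(\sum_{l\notin I_p}a_lu_{lk}u_{lk'}\Big)^2 .
\]
Your expression $\sum_k\theta_k u_{ik}^2\big(\sum_l a_lu_{lk}\big)^2$ would only arise if the second factor were identically $1$. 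So the identity $\sum_k(\sum_l a_lu_{lk})^2=\sum_l a_l^2$ is not the relevant one here.

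The fix is easy. Cauchy--Schwarz with $\sum_l u_{lk}^2=1$ gives $\big|\sum_l a_lu_{lk}u_{lk'}\big|\le\max_l|a_l|$, so the sum above is at most $16K^4\max_l a_l^2$. This is of order $K^4$, not $K^2\sum_l a_l^2$. It must therefore be charged to the first term $\big(4K^2\sqrt{\lambda_i\lambda_j}/n\big)^2$ of the squared bound, not to the third. This is precisely how the paper bounds the second sub-sum of $M_2$. The cross pairing ($M_3$) is handled the same way, with $u_{ik}u_{jk}u_{ik'}u_{jk'}$ in place of $u_{ik}^2u_{jk'}^2$. With this reassignment, keeping only $1+A_{ll}+B_{ll}\le 8K^2$ on the diagonal $l=l'$, your argument goes through.

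Two smaller remarks on constants. First, the exact $4K^2$ uses $0\le a_l\le 1$, i.e.\ the paper's normalization that every $l\notin I_p$ has $\lambda_l<\lambda_p/2$. An eigenvalue $\lambda_l>3\lambda_p/2$ only gives $|a_l|<3$, not $2$. Second, your two separate Chebyshev steps at failure probability $t_2^{-2}/2$ each cost a factor $\sqrt2$. That gives $4K$ instead of $3K$ in front of $\sqrt{\sum_l a_l^2}$, unless you sharpen $1+A_{ll}+B_{ll}$ (e.g.\ via $\sum_k u_{ik}^2u_{lk}^2\le\frac12$). The paper instead gets the factor $3$ from a single Markov step on the uncentered $\E T^2=M_1+M_2+M_3+M_4$.
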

We use the moment method to prove this lemma. Technically, we will bound the first and second moments of $\norm{ u_i^\top E u_j} $ and $    \textstyle   \norm{ u_i^\top E \left(\sum_{l\notin I_p} \frac{u_l u_l^\top}{\lambda_p - \lambda_l} \right) E u_j}$. The moment computations are deferred to Appendix \ref{section: moments}.

Back to our estimate of $x$ and $y$, by Lemma \ref{lemma: xyModeil1}, and the union bound, with probability at least $1- \frac{r^2}{t_1^2}$, 
\begin{equation} \label{est: x spiked1}
\begin{split}
    \textstyle   x & := \max_{i,j \in I_p} \norm{u_i^\top E u_j}\,\,(\text{Definition \ref{def: xyw}}) \\
    &\leq 2K t_1 \cdot \frac{ \max_{i \in I_p} \lambda_i}{\sqrt{n}} \\
    & \leq 3 K \frac{\lambda_p t_1}{\sqrt{n}} \,\,(\text{by \eqref{lambdaivs lambdap}}). 
\end{split}
   \end{equation}
Also by the union bound, with probability at least $1 - \frac{r^2}{t_2^2},$
\begin{equation} \label{est: yspikedpopu}
    \begin{split}
  y & := \textstyle \max_{\substack{ i\neq j \in I_p}} \norm{ u_i^\top E \left(\sum_{l \notin I_p  } \frac{u_l u_l^\top}{\lambda_p - \lambda_l} \right) E u_j} \,\,(\text{Definition \ref{def: xyw}})\\
  & \leq    \textstyle    3t_2 \max_{i\neq j \in I_p} \sqrt{\lambda_i \lambda_j} \left[ \frac{ 4K^2}{n}+  \frac{CK^2 }{n^{3/2}} \left( \sum_{l \notin I_p } \frac{ \lambda_l}{ \lambda_p -\lambda_{l}} \right)+ \frac{K} {n} \sqrt{\sum_{l \notin I_p } \frac{\lambda_l^2}{(\lambda_p -\lambda_l)^2}} \right]  \\
  & \leq    \textstyle   9K \frac{\lambda_p t_2}{n} \left[ 2K+  \frac{CK^2 }{n^{1/2}} \left(  \frac{ \sum_{l \notin I_p } \lambda_l}{ \lambda_p} \right)+ \frac{\sqrt{\sum_{l \notin I_p } \lambda_l^2 }} {\lambda_p}  \right]  \\
  & (\text{since $\lambda_i \leq \frac{3}{2} \lambda_p$ for all $i \in I_p$ and $\lambda_p -\lambda_l \geq \frac{\lambda_p}{2}$ for all $l \notin I_p$}).
    \end{split}
\end{equation}

The estimates for $x$ and $y$ complete the list of quantities appearing in the upper bound of $\|\tilde{u}_p \tilde{u}_p^\top - u_p u_p^\top\|$.
\subsection{Putting things together} \label{subsec: assembly upper} We now assemble the estimates established in Sections \ref{subsec: E est} - \ref{subsec: xy est} to complete the proof of Theorem \ref{theo: upUpperM1/2 subGau}.
Combining \eqref{F4-HR}, Theorem \ref{theo: ESubgau}, \eqref{est: w spiked}, \eqref{est: x spiked1}, and \eqref{est: yspikedpopu}, we obtain that there is a universal constant $C > 0$ so that for any $t > 0$, 
$$\text{if}\,\,\,\textstyle   \textstyle K \kappa_p t \cdot \max\!\left\{
\sqrt p\,\sqrt{\frac{r_{\mathrm{eff}}}{n}},
\;
r\frac{\lambda_p}{\sqrt n\,\delta_{(p)}},
\;
\sqrt r\,\sqrt{\frac{\lambda_p r_{\mathrm{eff}}}{n\delta_{(p)}}}
\right\}
<
\frac{1}{C},\,\text{then}  $$
\begin{equation} \label{spikedbound before final}
\begin{split}
      \textstyle \Norm{\tilde{u}_p \tilde{u}_p^\top - u_p u_p^\top} & \textstyle \leq 280K \kappa_p t\cdot \sqrt{\frac{r_{\mathrm{sta}}}{n}} + 15Kt\frac{ \sqrt{r} \lambda_p}{\delta_{(p)} \sqrt{n}}  +45 Kt \frac{ \sqrt{r}\lambda_p }{n \delta_{(p)}} \left[ 2K+  \frac{CK^2 }{n^{1/2}} \left(  \frac{ \sum_{l \notin I_p } \lambda_l}{ \lambda_p} \right)+ \frac{\sqrt{\sum_{l \notin I_p } \lambda_l^2 }} {\lambda_p}  \right]  \\
      & = \textstyle 280K \kappa_p t\cdot \sqrt{\frac{r_{\mathrm{sta}}}{n}} + 15Kt\frac{ \sqrt{r} \lambda_p}{\delta_{(p)} \sqrt{n}}  +45 Kt \frac{ \sqrt{r}\lambda_p }{ \sqrt{n} \delta_{(p)}} \left[ \frac{2K}{\sqrt{n}}+  \frac{CK^2 }{n} \left(  \frac{ \sum_{l \notin I_p } \lambda_l}{ \lambda_p} \right)+ \frac{\sqrt{\sum_{l \notin I_p } \lambda_l^2 }} {\lambda_p \sqrt{n}}  \right]
\end{split}
 \end{equation}
 with probability at least $1 - e^{-4r_{\mathrm{eff}}t^2} - \frac{2r^2}{t^2}$.
 
 First, the condition can be rewritten exactly into the assumption of Theorem \ref{theo: upUpperM1/2 subGau}, which is   
$$\textstyle n \geq  C'K^2 \kappa_p^2 t^2 \cdot \max \big\{ \big(\frac{r \lambda_p}{\delta_{(p)}} \big)^2,  r_{\mathrm{eff}} \cdot \frac{r\lambda_p}{\delta_{(p)}} \big\},\,\text{ for some $C' >0$.}$$

Next, we simplify the RHS of \eqref{spikedbound before final}. Notice that 
$$\sum_{l\notin I_p} \lambda_l \leq \sum_{i=1}^d \lambda_i = \lambda_1 r_{\mathrm{sta}}, \sum_{l \notin I_p} \lambda_l^2 \leq \lambda_1 (\sum_{i=1}^d \lambda_i) = \lambda_1^2 r_{\mathrm{sta}}.$$
Thus, we further have 
\begin{equation} \label{ineq: extra term y are neg}
    \begin{split}
        & \frac{\sqrt{\sum_{l \notin I_p} \lambda_l^2 }}{\lambda_p \sqrt{n}} \leq  \frac{ \lambda_1 \sqrt{r_{\mathrm{eff}}}}{\lambda_p \sqrt{n}} = \kappa_p\sqrt{\frac{r_{\mathrm{eff}}}{n}} \leq \frac{1}{CKt}   , \,\text{and} \\
        & \frac{CK^2 (\sum_{l \notin I_p} \lambda_l)}{\lambda_p n} \leq \frac{ CK^2\cdot \lambda_1 r_{\mathrm{eff}}}{\lambda_p n} = \kappa_p \frac{CK^2 \cdot r_{\mathrm{eff}}}{n}   \leq \frac{1}{ C\kappa_p t^2}.
    \end{split}
\end{equation}
Therefore, for any $C>24$ and $n > 4K^2$, the third term on the RHS of \eqref{spikedbound before final} is less than $3 \times 45Kt \frac{ \sqrt{r}\lambda_p }{ \sqrt{n} \delta_{(p)}}$, simplifying the RHS of \eqref{spikedbound before final} into 
$$\textstyle 280Kt \left( \kappa_p \cdot \sqrt{\frac{r_{\mathrm{eff}}}{n}} +  \sqrt{r} \frac{ \lambda_p}{\delta_{(p)} \sqrt{n}}\right).$$
This proves Theorem \ref{theo: upUpperM1/2 subGau}.

\section{Completion of the proof of the lower bound - Theorem \ref{theo: uplowerboundM1/2 subGau}} \label{subsec: prooflowerM1/2Line1} We first recall some important estimates established in Section \ref{subsec: proofupperM1/2}.
\begin{summary} \label{summary: TM1M2M3} For any $C > 24$ and any $t > 0$, the following bounds hold.
\begin{itemize}
    
    \item With probability at least $1 - \frac{r^2}{t^2},$
    \(
     \frac{\sqrt{r} x}{\delta_{(p)}} \leq 3 K \sqrt{r} \frac{\lambda_p t}{\sqrt{n} \delta_{(p)}};
    \) see \eqref{est: x spiked1}.
    \item With probability at least $1 - e^{-4 r_{\mathrm{eff}}t^2},$
    \(
     \frac{\|E\|}{\lambda_p} \leq 40 K\kappa_p t \cdot \sqrt{\frac{r_{\mathrm{eff}}}{n}};
    \) see \eqref{bound E sub-Gau}.
    \item With probability at least $1 - \frac{r^2}{t^2},$
    \(
     \frac{\sqrt{r} y}{\delta_{(p)}} \leq 9 Kt \frac{ \sqrt{r}\lambda_p }{ \sqrt{n} \delta_{(p)}} \left[ \frac{2K}{\sqrt{n}}+  \frac{1}{C\kappa_p t^2}+ \frac{1}{CKt} \right];
    \) see \eqref{est: yspikedpopu} and \eqref{ineq: extra term y are neg}.
 
\end{itemize}

\end{summary}
The constants $3,9,40$ are ad hoc. One can optimize them by revisiting the computations in Section \ref{subsec: proofupperM1/2} and Section \ref{section: moments}.

Back to our proof of Theorem \ref{theo: uplowerboundM1/2 subGau}, by Subsection \ref{subsec: uplower setting} (Inequality \eqref{Firstlowerbound0}), we have shown that if 
\(
\textstyle   \max \left\lbrace \frac{ \|E\|}{\lambda_p/2}, \frac{r x}{\delta_{(p)}}, \frac{\sqrt{r} w}{\sqrt{\lambda_p \delta_{(p)}/2}} \right\rbrace < \frac{1}{C}, 
\)
then 
\[
 \|\tilde{u}_p \tilde{u}_p^\top - u_p u_p^\top   \| \geq  \|M_1+M_2\| - \left[ \frac{21}{C} \cdot \left( \frac{\|E\|}{\lambda_p} + \frac{\sqrt{r} x}{\delta_{(p)}} \right) + \frac{3\sqrt{r} y}{\delta_{(p)}} \right].
\]
Substituting the estimates from Summary \ref{summary: TM1M2M3} one by one yields that if
\[ \textstyle
n\geq (40CK\kappa_p t)^2 \cdot \max\bigg\{r_{\mathrm{eff}} \cdot \frac{\lambda_p}{\delta_{(p)}}, \left(\frac{\lambda_p}{\delta_{(p)}} \right)^2 \bigg\},
\]
then with probability at least $1 - e^{-4r_{\mathrm{eff}}t^2}-\frac{2r^2}{t^2}$,
\begin{equation*} \label{Firstlowerbound1}
\begin{split}
    \|\tilde{u}_p \tilde{u}_p^\top - u_p u_p^\top   \|
    & \textstyle \geq \|M_1+M_2\| - \frac{840Kt}{C} \left(\kappa_p\sqrt{\frac{r_{\mathrm{eff}}}{n}}+\frac{\sqrt{r} \lambda_p}{\sqrt{n}\delta_{(p)}} \right) - 27 Kt \frac{ \sqrt{r}\lambda_p }{ \sqrt{n} \delta_{(p)}} \left[ \frac{2K}{\sqrt{n}}+  \frac{1}{C\kappa_p t^2}+ \frac{1}{CKt} \right] \\
    & \textstyle
  \geq  \|M_1+M_2\| -  \frac{900Kt}{C} \left[   \frac{\sqrt{r}\lambda_p}{\sqrt{n} \delta_{(p)}} + \kappa_p \cdot \sqrt{\frac{r_{\mathrm{eff}}}{n}} \right].
\end{split}
  \end{equation*}
The last inequality follows from the setting that $C > 24$. 

Thus, to prove Theorem \ref{theo: uplowerboundM1/2 subGau}, the only remaining task is to obtain a lower bound for $\|M_1+M_2\|$. Indeed, it is sufficient to show that with probability at least $\frac{1}{4}-\frac{1}{2T^2} -\frac{2r^2}{t^2} - e^{-4r_{\mathrm{eff}}t^2},$
\[
\|M_1+M_2\| \geq \frac{1}{2T} \left[ \sqrt{\Var S^{(p,\gamma_p)}} \cdot \frac{\lambda_p}{\sqrt{n} \delta_{(p)}} + \sqrt{\frac{s_p}{\kappa_p}} \cdot \sqrt{\frac{r_{\mathrm{eff}}}{n}} \right] - \frac{4 tK/\kappa_p}{\sqrt{n}}.
\]
Here, for each $1\leq i \leq d$,  
 $S^{(p,i)}:= (u_p^\top Y)(Y^\top u_i).$
 The weighted average of the variances of $\{\Var S^{(p,i)}\}_{i=1}^d$ is 
 $s_p:=  \frac{\sum_{i=1}^d \lambda_i \cdot \Var S^{(p,i)}}{\sum_{i=1}^d \lambda_i}.$

Since
\(
M_1+M_2 = \sum_{j \neq p}
\left(
u_p \frac{u_p^\top E u_j}{\lambda_p-\lambda_j} u_j^\top
+
u_j \frac{u_j^\top E u_p}{\lambda_p-\lambda_j} u_p^\top
\right),
\)
see \eqref{iden: F1M1M2},
and $u_1, u_2, \dots, u_d$ are orthonormal vectors, we have
\begin{equation} \label{ide: normM1+M2}
    \|M_1+M_2\|= \sqrt{\sum_{j \neq p} \frac{|u_p^\top E u_j|^2}{(\lambda_p -\lambda_j)^2} }.
\end{equation}

The next idea is to use $\delta_{(p)}$ to split $\sum_{j \neq p} \frac{|u_p^\top E u_j|^2}{(\lambda_p -\lambda_j)^2}.$ Without loss of generality, assume that $\gamma_p=p+1$, or equivalently $\delta_{(p)}=\lambda_p -\lambda_{p+1}$. We write 
\[
\sum_{j \neq p} \frac{|u_p^\top E u_j|^2}{(\lambda_p -\lambda_j)^2}  = \frac{|u_p^\top E u_{p+1}|^2}{\delta_{(p)}^2}+ \sum_{j \notin \{p,p+1\}} \frac{|u_p^\top E u_j|^2}{(\lambda_p -\lambda_j)^2} \geq \frac{|u_p^\top E u_{p+1}|^2}{\delta_{(p)}^2}+ \frac{\sum_{j \notin \{p,p+1\}} |u_p^\top E u_j|^2 }{\lambda_1^2}.
\]
The RHS can be further rewritten as 
\begin{equation} \label{ineq: lowbound1}
    \begin{split}
       \frac{|u_p^\top E u_{p+1}|^2}{\delta_{(p)}^2}+ \frac{\|Eu_p\|^2 - |u_p^\top E u_{p+1}|^2- |u_p^\top E u_{p}|^2 }{\lambda_1^2} &\geq \frac{|u_p^\top E u_{p+1}|^2}{\delta_{(p)}^2}+ \frac{\|Eu_p\|^2 }{\lambda_1^2} - 12 \left(\frac{Kt}{\sqrt{n} \kappa_p} \right)^2.
    \end{split}
\end{equation}
The last inequality is obtained by applying Lemma \ref{lemma: xyModeil1} to $|u_p^\top E u_{p+1}|$ and $|u_p^\top E u_p|$. 

\noindent\textbf{Bounding $\frac{|u_p^\top E u_{p+1}|^2}{\delta_{(p)}^2}$ from below.} We  can rewrite  $ u_p^\top E u_{p+1} $ as $ u_p^\top \left[\frac{1}{n}(\sum_{i=1}^n X_iX_i^\top -M) \right] u_{p+1}$, which equals
\begin{equation} \label{epEuj Iden}
    \begin{split}
  &  \frac{1}{n} \bigg[\sum_{i=1}^n u_p^\top X_i X_i^\top u_{p+1}  \bigg] \,\,\,(\text{since}\,\, u_p^\top M u_{p+1} =0) \\
  & = \frac{1}{n} \sum_{i=1}^n \sqrt{\lambda_p \lambda_{p+1}} \left[(u_p^\top Y_i)(Y_i^\top u_{p+1})  \right] \,\,(\text{since}\,\,X_i=M^{1/2} Y_i) \\
  & = \frac{ \sqrt{\lambda_p \lambda_{p+1}}}{n} \cdot \sum_{i=1}^n S_i^{(p,p+1)}, \,\text{where}\, S_i^{(p,p+1)}:= (u_p^\top Y_i)(Y_i^\top u_{p+1}).
    \end{split}
\end{equation}
Notice that the random variables $S_i^{(p,p+1)}$ for $1 \leq i \leq n$ are iid samples of $S^{(p,p+1)}$. Thus, by the Central Limit Theorem, 
$$\textstyle \bigg|\P\left(a <\frac{\sum_{i=1}^n S_i^{(p,p+1)}}{\sqrt{n \Var S^{(p,p+1)} }} < b  \right) - \P \left(a < \mathcal{N}(0,1) < b \right)   \bigg| = O\left(\frac{1}{\sqrt{n}} \right).$$
Thus, with probability at least $1/2-\frac{1}{T}- O(\frac{1}{\sqrt{n}})$, 
$\textstyle \sum_{i=1}^n S_i^{(p,p+1)} \geq \frac{\sqrt{n \Var S^{(p,p+1)}}}{T },$
and hence
$$ \textstyle (u_p^\top E u_{p+1})^2 \geq \frac{\lambda_p \lambda_{p+1}}{n T^2} \cdot \Var S^{(p,p+1)} \geq \frac{\lambda_p^2}{2n T^2} \cdot \Var S^{(p,p+1)}.$$
The last inequality follows from our setting that $\delta_{(p)}=\lambda_p -\lambda_{p+1}$ and thus, $p+1 \in I_p$.

Therefore, with probability at least $1/2-\frac{1}{\sqrt{T}}- O(\frac{1}{\sqrt{n}})$,
\begin{equation} \label{ineq: lowbound term 1}
  \frac{|u_p^\top E u_{p+1}|^2}{\delta_{(p)}^2} \geq \frac{\Var S^{(p,p+1)}}{2T} \cdot \left(\frac{\lambda_p}{\sqrt{n} \delta_{(p)}} \right)^2.   
\end{equation}

\noindent\textbf{Bounding $\frac{\|E u_p\|^2}{\lambda_1^2}$ from below.}
 By \eqref{iden: NormEui} and \eqref{iden: EuiNormEx}, we have 
\(
 \frac{\|E u_p\|^2}{\lambda_p}= \sum_{i=1}^d \frac{\lambda_i}{n^2} \big[ \sum_{j=1}^n (S_{j}^{(p,i)} -\E S_j^{(p,i)}) \big]^2
\), and hence
\begin{equation} \label{ide: EupNormEx}
    \begin{split}
        \E \frac{\|E u_p\|^2}{\lambda_p} & =    \sum_{i=1}^d \frac{\lambda_i}{n} \cdot \Var S^{(p,i)}. 
    \end{split}
\end{equation}

On the other hand, using \eqref{eq: lem3.2-1}, \eqref{eq: lem3.2-2}, \eqref{eq: lem3.2-3}, and the line after  \eqref{eq: lem3.2-3}, we have 
\begin{equation}
  \textstyle  \E \left(\frac{\|E u_p\|^2}{\lambda_p}  \right)^2 \leq (3+ \frac{CK^4}{n})\cdot \big[\sum_{i=1}^d \frac{\lambda_i}{n} \cdot \Var S^{(p,i)} \big]^2 \leq 4 \big[\sum_{i=1}^d \frac{\lambda_i}{n} \cdot \Var S^{(p,i)} \big]^2.
\end{equation}
Therefore, by the Raymond Paley–Zygmund inequality, for any $T>1$, we have
\begin{equation*}
    \begin{split}
  \P \left(\frac{\|E u_p\|^2}{\lambda_p} \geq \frac{1}{T} \cdot  \sum_{i=1}^d \frac{\lambda_i}{n} \cdot \Var S^{(p,i)} \right) & =     \P \left(\frac{\|E u_p\|^2}{\lambda_p} \geq \frac{1}{T} \cdot    \E \frac{\|E u_p\|^2}{\lambda_p} \right)    \\
 & \geq \big(1-\frac{1}{T}\big)^2 \cdot \frac{ \big(\E \frac{\|E u_p\|^2}{\lambda_p} \big)^2  }{\E \left(\frac{\|E u_p\|^2}{\lambda_p}  \right)^2 }.
 \end{split}
\end{equation*}
Using the lower bound of the first moment and the upper bound of the second moment above, we further obtain
\[
\textstyle \P \left(\frac{\|E u_p\|^2}{\lambda_p} \geq \frac{1}{T} \cdot  \sum_{i=1}^d \frac{\lambda_i}{n} \cdot \Var S^{(p,i)} \right)\geq  \big(1-\frac{1}{T}\big)^2 \cdot \frac{\big[\sum_{i=1}^d \frac{\lambda_i}{n} \cdot \Var S^{(p,i)} \big]^2}{4\big[\sum_{i=1}^d \frac{\lambda_i}{n} \cdot \Var S^{(p,i)} \big]^2} = \big(\frac{1}{2}-\frac{1}{2T}\big)^2.
\]
Consequently, with probability at least $\frac{1}{4}-\frac{1}{2T}$, we have 
\begin{equation} \label{sp appear}
    \begin{split}
 \frac{\|Eu_p\|^2}{\lambda_1^2} & \geq \frac{1}{T} \cdot \frac{\lambda_p}{\lambda_1^2} \cdot   \sum_{i=1}^d \frac{\lambda_i}{n} \cdot \Var S^{(p,i)}  = \frac{s_p}{T} \cdot \frac{\lambda_p \cdot \sum_{i=1}^d \lambda_i}{\lambda_1^2 n} = \frac{s_p}{T \kappa_p} \cdot \frac{r_{\mathrm{eff}}}{n}.
    \end{split}
\end{equation}

Thus, combining \eqref{ide: normM1+M2}, \eqref{ineq: lowbound1}, \eqref{ineq: lowbound term 1}, and  \eqref{sp appear} (replacing $T$ in \eqref{sp appear} with $T^2$), we finally obtain that with probability at least $\frac{1}{4} - \frac{1}{2T^2} -\frac{1}{t^2}$, 
\[
\sum_{j \neq p} \frac{|u_p^\top E u_j|^2}{(\lambda_p -\lambda_j)^2} \geq \frac{1}{2T^2} \cdot \left[ \Var S^{(p,p+1)} \cdot \left(\frac{\lambda_p}{\sqrt{n} \delta_{(p)}} \right)^2+ \frac{s_p}{\kappa_p} \cdot \frac{r_{\mathrm{eff}}}{n}  \right] - 12 \left(\frac{Kt/\kappa_p}{\sqrt{n}} \right)^2.
\]
And hence, with probability at least $\frac{1}{4}-\frac{1}{2T^2}-\frac{2r^2}{t^2} - e^{-4r_{\mathrm{eff}}t^2}$,
\[
\|M_1+M_2\| \geq \frac{1}{2T} \left[ \sqrt{\Var S^{(p,p+1)}} \cdot \frac{\lambda_p}{\sqrt{n} \delta_{(p)}} + \sqrt{\frac{s_p}{\kappa_p}} \cdot \sqrt{\frac{r_{\mathrm{eff}}}{n}} \right] - \frac{4 tK/\kappa_p}{\sqrt{n}}.
\]
Simply choosing $T=2$, we complete the proof of the lower bound (Theorem \ref{theo: uplowerboundM1/2 subGau}).

\section{\texorpdfstring{$\Gamma$}{Gamma} encloses only \texorpdfstring{$\tilde{\lambda}_p$}{lambdap} : a dimension argument} \label{subsec: dim argument}

We recall some important notations. Consider the spectral decomposition of $M$, $ M = \sum_{i=1}^{d} \lambda_i u_i u_i^\top$, in which \( \lambda_1 \geq \lambda_2 \geq \dots \geq \lambda_d  \ge 0\) are the eigenvalues, with corresponding orthonormal eigenvectors \( u_i, 1 \leq i \leq d \).  The effective rank of $M$ is $r_{\mathrm{eff}}= \frac{\sum_{i=1}^d \lambda_i}{\lambda_1}$.
We define $\tilde{\lambda}_i, \tilde{u}_i$ with respect to $\tilde{M}$. 
For each $1 \leq j \leq d-1$, the $j^{th}$ eigenvalue gap is $\delta_j =\lambda_j -\lambda_{j+1}$. Denote $\delta_{(j)}:= \min\{\delta_{j-1}, \delta_j\}$. 
For each $1 \leq j \leq d$, let $\Pi_j$ ($\tilde{\Pi}_j$) be the orthogonal projection onto the space spanned by the $j$-leading eigenvectors of $M$ ($\tilde{M}$ resp.) 
Let $r_j$ be the largest natural number such that $\lambda_{r_j+1} < \frac{\lambda_j}{2}$ and $\kappa_j=\frac{\lambda_1}{\lambda_j}.$ 

In this section, we fix $p$ as the chosen rank parameter. Our goal is to prove the following claim from Section \ref{sec: proof}. 
\begin{lemma} \label{lem: dimargu}
Let $\Gamma$ be a rectangle whose vertical edges bisect the intervals
$(\lambda_{p+1},\lambda_{p})$ and $(\lambda_p,\lambda_{p-1})$, and whose horizontal edges lie at distance $4\delta_{(p)}$ from the real axis.      There is a constant $C > 0$ so that for any $t > 0$, if 
\[
\textstyle \max\!\left\{
\sqrt p\,\sqrt{\frac{r_{\mathrm{eff}}}{n}},
\;
r_p\frac{\lambda_p}{\sqrt n\,\delta_{(p)}},
\;
\sqrt{r_p} \,\sqrt{\frac{\lambda_p r_{\mathrm{eff}}}{n\delta_{(p)}}}
\right\}
<
\frac{1}{C\kappa_p K t},
\]
then with probability at least $1-e^{-4 r_{\mathrm{eff}}t^2} -\frac{2r^2}{t^2},$ $\Gamma$ encloses only  $\tilde{\lambda}_p$ (i.e., $\tilde{\lambda}_j$ is outside of $\Gamma$ for any $j \neq p$).
\end{lemma}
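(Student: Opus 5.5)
The plan is to reduce the statement to four real-eigenvalue inequalities and prove each one by a Courant–Fischer test-subspace argument, combined with a $2\times 2$ block (Schur-complement) estimate. Since $\tilde M$ is symmetric, its eigenvalues are real. The rectangle $\Gamma$ meets the real axis exactly at $a:=\frac{\lambda_{p+1}+\lambda_p}{2}$ and $b:=\frac{\lambda_p+\lambda_{p-1}}{2}$. So it suffices to show, on one good event,
\[
\tilde\lambda_{p+1}<a<\tilde\lambda_p<b<\tilde\lambda_{p-1}.
\]
Weyl's inequality alone would require $\|E\|<\delta_{(p)}/2$, which costs $n\gtrsim r_{\mathrm{eff}}(\lambda_1/\delta_{(p)})^2$; this is too strong. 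Instead I will use the finer skewness quantities $x$ and $w$ of Definition~\ref{def: xyw} together with $\|E\|$. The good event is the intersection of three estimates: \eqref{bound E sub-Gau} for $\|E\|$, \eqref{est: x spiked1} for $x$, and Lemma~\ref{lem: w bound}/\eqref{est: w spiked} for $w$ (used with $t^2$ in place of $t$, so each failure probability is at most $r/t^{4}\le r^2/t^2$). By the union bound this event has probability at least $1-e^{-4r_{\mathrm{eff}}t^2}-\frac{2r^2}{t^2}$. On it, the hypothesis of the lemma gives, after adjusting $C$, the three inequalities in \eqref{def: C}:
\[
\frac{\|E\|}{\lambda_p}\ll 1,\qquad \frac{rx}{\delta_{(p)}}\ll 1,\qquad \frac{rw^2}{\lambda_p\delta_{(p)}}\ll 1 .
\]
Here the third quantity $\sqrt r\sqrt{\lambda_p r_{\mathrm{eff}}/(n\delta_{(p)})}$ in the hypothesis controls $w$, and $\sqrt p\sqrt{r_{\mathrm{eff}}/n}$ absorbs the sum over the indices $i\le p$ that lie outside $I_p$.

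\emph{Upper side, $\tilde\lambda_{p+1}<a$.} By Courant–Fischer, $\tilde\lambda_{p+1}\le\max_{v\in W,\|v\|=1}v^\top\tilde Mv$ with $W=\mathrm{span}\{u_{p+1},\dots,u_d\}$. Write $v=v_P+v_Q$, where $v_P$ lies in $\mathrm{span}\{u_j:j\in I_p,j>p\}$ and $v_Q$ in $\mathrm{span}\{u_l:l\notin I_p,\ l>p\}$.
\begin{itemize}
\item The main part satisfies $v^\top Mv\le\lambda_{p+1}\|v_P\|^2+\frac{\lambda_p}{2}\|v_Q\|^2$.
\item For the $P$--$P$ term, expanding in at most $r$ coordinates gives $|v_P^\top Ev_P|\le r x\|v_P\|^2$.
\item For the $Q$--$Q$ term, $|v_Q^\top Ev_Q|\le\|E\|\,\|v_Q\|^2$.
\item For the cross term, $|v_P^\top Ev_Q|\le\sqrt r\,w\,\|v_P\|\|v_Q\|$.
\end{itemize}
Hence $a-v^\top\tilde Mv$ is bounded below by the quadratic form in $(\|v_P\|,\|v_Q\|)$ with matrix
\[
\begin{pmatrix}\frac{\delta_{p}}{2}-rx & -\sqrt r w\\ -\sqrt r w & \frac{\lambda_p}{4}-\|E\|\end{pmatrix}.
\]
This matrix is positive definite precisely when $(\delta_{(p)}/2-rx)(\lambda_p/4-\|E\|)>rw^2$, which follows from \eqref{def: C}. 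This step is exactly where the $w$-condition in the hypothesis is used.

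\emph{Lower side, $\tilde\lambda_p>a$.} Use the test space $H_p$ and split it into $I_p$-indices and indices $i<p$ with $i\notin I_p$, for which $\lambda_i\ge\frac32\lambda_p$. The form $v^\top(\tilde M-a)v$ is bounded below in the same $2\times 2$ way. On the $I_p$ block the diagonal entry is $\ge\delta_{(p)}/2-rx$. On the far block it is $\ge\lambda_i-a-\|E\|\gtrsim\lambda_p$. The off-diagonal entry is again controlled by $\sqrt r w$.

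\emph{The remaining two inequalities.} The inequalities $\tilde\lambda_p<b$ (test space the orthogonal complement of $H_{p-1}$) and $\tilde\lambda_{p-1}>b$ (test space $H_{p-1}$) are proved by the same argument with the roles of $p$ and $p-1$ exchanged, using $b-\lambda_p\ge\delta_{(p)}/2$.

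I expect the main obstacle to be the block bookkeeping on the far block of large eigenvalues $\lambda_i$, $i<p$. There $\|E\|$ must be compared with $\lambda_i-\lambda_p$ rather than with $\lambda_p$, and the cross terms $u_i^\top Eu_j$ between far indices and $I_p$ have to be bounded via $\|Eu_j\|\le w$ instead of via $x$. I will also need to check that every resulting requirement is implied by the three terms in the displayed hypothesis, with $\kappa_p$ accounting for the ratio $\lambda_1/\lambda_p$ that appears in \eqref{bound E sub-Gau} and \eqref{est: w spiked}.
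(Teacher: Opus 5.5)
Your argument is correct, but it follows a genuinely different route from the paper.

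The paper counts eigenvalues through projection ranks. It applies the Tran--Vu eigenspace bound \cite[Theorem 2.4]{DKTranVu1} to the contours $\Gamma_p$ and $\Gamma_{p-1}$, whose right edges pass through $\|M\|+1.1\|E\|$. This gives $\|\tilde\Pi_{\Gamma_k}-\Pi_k\|<1$, and equality of ranks then shows that $\Gamma_k$ encloses exactly $\tilde\lambda_1,\dots,\tilde\lambda_k$; $\Gamma$ is the difference region. That route needs the extra hypotheses of Claim~1, namely the $y_k$-term and the $\sqrt{k}$ factors, and hence a probabilistic bound on $y_p$ as well.

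Your Courant--Fischer argument places $\tilde\lambda_{p+1}$, $\tilde\lambda_p$ and $\tilde\lambda_{p-1}$ relative to $a$ and $b$ directly. Your $2\times 2$ test $(\delta_{(p)}/2-rx)(\lambda_p/4-\|E\|)>rw^2$ is the Schur-complement form of the same second-order effect ($w^2/\lambda_p$ against $\delta_{(p)}$) that the contour method captures through $w$.

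What your route buys:
\begin{itemize}
\item It is self-contained.
\item It uses only the events for $\|E\|$, $x$ and $w$. Their failure probabilities sum to $e^{-4r_{\mathrm{eff}}t^2}+r^2/t^2+r/t^4$, which fits the stated bound for $t\ge 1$ (for $t<1$ the claim is vacuous).
\item It never needs $y$ or the factor $\sqrt p$.
\end{itemize}
The paper's route, in turn, reuses machinery it has already built for the eigenspace theorems.

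The obstacle you anticipate does not actually arise. For $i<p$ with $i\notin I_p$ you have $\lambda_i>\frac32\lambda_p$, so $\lambda_i-a>\lambda_p/2$. Hence $\|E\|\ll\lambda_p$ already controls the far block, and the $\sqrt p$ in the hypothesis is simply unused.

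Two bookkeeping fixes are needed:
\begin{itemize}
\item On the upper side, bound the $Q$-block by $\min\{\lambda_{p+1},\lambda_p/2\}$ rather than by $\lambda_p/2$. If $p+1\notin I_p$, then $a-\lambda_p/2=\lambda_{p+1}/2$ may be near $0$. In that case, however, $v_P=0$ and $a-\lambda_{p+1}=\delta_p/2>\lambda_p/4$, which is enough.
\item For $\tilde\lambda_{p-1}>b$, the far-block entry is at least $\lambda_p/4$ once you split on whether $p-1\in I_p$.
\end{itemize}
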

To prove Lemma \ref{lem: dimargu}, we reduce the problem to two simpler claims. Indeed, the first claim shows that suitable deterministic bounds on the perturbation imply that the contour contains exactly the desired eigenvalues. The second claim verifies these deterministic bounds probabilistically in our covariance setting. Combining the two claims immediately yields Lemma \ref{lem: dimargu}.

Before stating and proving our claims, we need a few new notations. For a given index $k$, set
\begin{itemize}
    \item $x_k:=\max_{1\leq i,j \leq r_k}|u_i^\top E u_j|,$
    \item $ w_k:= \max_{1 \leq i \leq r_k} \|E u_i\|, $
    \item $ y_k:= \max_{\substack{1 \leq i< j \leq r_k \\1 \leq h \leq k}}|u_i^\top E \sum_{l >r} \frac{u_l u_l^\top}{\lambda_h - \lambda_l} E u_j|,$
    \item  $\Gamma_k$ is a rectangle, whose left vertical edge bisects the interval $(\lambda_{k+1}, \lambda_{k})$, whose right vertical edge passes the point $\|M\|+1.1\|E\|$, and whose horizontal edges lie at distance $4\delta_k$ from the real axis.
    \end{itemize}
In fact, the contour $\Gamma_k$ encloses exactly the first $k$ leading eigenvalues of $M$. Hence, the contour $\Gamma$, which isolates $\lambda_p$, is naturally obtained as the contour corresponding to the region between $\Gamma_p$ and $\Gamma_{p-1}$.

\noindent \textbf{Claim 1.}  For any $1 \leq k \leq d$, if 
\(
\max \left\{ \frac{\sqrt{k}\|E\|}{\lambda_k/2}, \frac{r_k x_k}{\delta_k}, \frac{\sqrt{r_k} w_k}{\sqrt{\lambda_k \delta_k/2}} \right\} < \frac{1}{12}\,\,\, \text{and}\, \frac{\|E\|}{\lambda_k/2} + \frac{\sqrt{r_k} x_k}{\delta_k}+ \frac{\sqrt{r_k} y_k}{\delta_k} < \frac{1}{12\sqrt{k}},
\)
then the contour $\Gamma_k$ contains exactly $k$ leading eigenvalues, $\tilde{\lambda}_1, \tilde{\lambda}_2, \cdots, \tilde{\lambda}_k$, of $\tilde{M}$.

Intuitively, Claim~1 states that if the eigenspace perturbation is sufficiently small, then each contour $\Gamma_k$ encloses exactly the intended perturbed eigenvalues.

In particular, applying Claim~1 with $k=p$, we see that if
\[ \textstyle
\max \left\{
\frac{\sqrt{p}\|E\|}{\lambda_p/2},
\frac{r_p x_p}{\delta_p},
\frac{\sqrt{r_p}\,w_p}{\sqrt{\lambda_p\delta_p/2}}
\right\}
<\frac{1}{12},\,\text{and}\,
\frac{\|E\|}{\lambda_p/2}
+\frac{\sqrt{r_p}x_p}{\delta_p}
+\frac{\sqrt{r_p}y_p}{\delta_p}
<
\frac{1}{12\sqrt p},
\]
then $\Gamma_p$ contains exactly the $p$ leading perturbed eigenvalues
\(
\tilde\lambda_1,\tilde\lambda_2,\ldots,\tilde\lambda_p.
\)
Similarly, applying Claim~1 with $k=p-1$, if
\[\textstyle
\max \left\{
\frac{\sqrt{p-1}\|E\|}{\lambda_{p-1}/2},
\frac{r_{p-1}x_{p-1}}{\delta_{p-1}},
\frac{\sqrt{r_{p-1}}\,w_{p-1}}{\sqrt{\lambda_{p-1}\delta_{p-1}/2}}
\right\}
<\frac{1}{12},
\, \text{and}\,
\frac{\|E\|}{\lambda_{p-1}/2}
+\frac{\sqrt{r_{p-1}}x_{p-1}}{\delta_{p-1}}
+\frac{\sqrt{r_{p-1}}y_{p-1}}{\delta_{p-1}}
<
\frac{1}{12\sqrt p},
\]
then $\Gamma_{p-1}$ contains exactly the first $p-1$ perturbed eigenvalues
\(
\tilde\lambda_1,\tilde\lambda_2,\ldots,\tilde\lambda_{p-1}.
\)

Since $\delta_{(p)}= \min\{\delta_p, \delta_{p-1}\}$ and 
\(
x_{p-1}\le x_p,\,\,
w_{p-1}\le w_p,\,\,
y_{p-1}\le y_p,\,\,
r_{p-1}\le r_p,\,\text{and $\lambda_p\le\lambda_{p-1}$,}
\)
 both sets of conditions are implied by the unified condition
\begin{equation}\label{condition for gamma} \textstyle
\max \left\{
\frac{\sqrt{p}\|E\|}{\lambda_p/2},
\frac{r_p x_p}{\delta_{(p)}},
\frac{\sqrt{r_p}\,w_p}{\sqrt{\lambda_p\delta_{(p)}/2}}
\right\}
<\frac{1}{12},
\,\text{and}\,
\frac{\|E\|}{\lambda_p/2}
+\frac{\sqrt{r_p}x_p}{\delta_{(p)}}
+\frac{\sqrt{r_p}y_p}{\delta_{(p)}}
<
\frac{1}{12\sqrt p}.
\end{equation}
Consequently, $\Gamma_p$ contains exactly the $p$ leading eigenvalues of $\tilde{M}$, while $\Gamma_{p-1}$ contains exactly the $p-1$ leading ones. It follows that the contour $\Gamma$ encloses only the eigenvalue $\tilde\lambda_p$.

Next, we verify that Condition \eqref{condition for gamma} holds with high probability in our covariance setting (Claim 2). 

\noindent \textbf{Claim 2.} For any $t> 0$, with probability at least $1-e^{-4 r_{\mathrm{eff}}t^2} -\frac{2r^2}{t^2}$, the followings hold. 
\begin{itemize}
    \item $\|E\| \leq 40 Kt \cdot \lambda_1 \sqrt{\frac{r_{\mathrm{eff}}}{n}}, w_p \leq 4 Kt \cdot \lambda_1 \sqrt{\frac{r_{\mathrm{eff}}}{n}}.$
    \item $x_p \leq 36Kt \cdot \frac{\lambda_1}{\sqrt{n}},\, y_p \leq  1600 K^2 \kappa_p \cdot \frac{r_{\mathrm{eff}}}{n} \lambda_1.$
\end{itemize}
Claim 2 is obtained directly from Theorem \ref{theo: ESubgau},  Lemma \ref{lem: w bound}, and Lemma \ref{lemma: xyModeil1}.  

Substituting the estimates in Claim 2, we can rewrite Condition \eqref{condition for gamma} into  %
\[
 \max \left\{ \sqrt{\frac{r_{\mathrm{eff}}}{n}}, \frac{r_p \lambda_p}{\sqrt{n} \delta_{(p)}}, \sqrt{\frac{r_p \cdot \lambda_p r_{\mathrm{eff}}}{n \delta_{(p)}}}  \right\} < \frac{1}{C\kappa_p K t},
\]
for some constant $C > 0$. This is the assumption of Lemma \ref{lem: dimargu} (Theorem \ref{theo: upUpperM1/2 subGau}) as desired.

Therefore, Lemma \ref{lem: dimargu} reduces entirely to the deterministic statement of Claim 1.

\subsection{Proof of Claim 1}
We need to show that if
\(
\max \left\{ \frac{\sqrt{k}\|E\|}{\lambda_k/2}, \frac{r_k x_k}{\delta_k}, \frac{\sqrt{r_k} w_k}{\sqrt{\lambda_k \delta_k/2}} \right\} < \frac{1}{12}\,\,\, \text{and}\, \frac{\sqrt{k}\|E\|}{\lambda_k/2} + \frac{\sqrt{r_k} x_k}{\delta_k}+ \frac{\sqrt{r_k} y_k}{\delta_k} < \frac{1}{12\sqrt{k}},
\)
then the contour $\Gamma_k$ contains exactly $k$ leading eigenvalues of $\tilde{M}$. 

First, we use Tran-Vu result \cite[Theorem 2.4]{DKTranVu1}, which guarantees that if 
\(
\max \left\{ \frac{\sqrt{k}\|E\|}{\lambda_k/2}, \frac{r_k x_k}{\delta_k}, \frac{\sqrt{r_k} w_k}{\sqrt{\lambda_k \delta_k/2}} \right\} < \frac{1}{12},
\)
then
\[
\|\tilde{\Pi}_{\Gamma_k} -\Pi_k\| \leq 12\sqrt{k} \left(\frac{\|E\|}{\lambda_k/2} + \frac{\sqrt{r_k} x_k}{\delta_k}+ \frac{\sqrt{r_k} y_k}{\delta_k} \right),
\]
where 
\[
\tilde{\Pi}_{\Gamma_k}:=\sum_{\tilde{\lambda}_i \,\,\text{inside}\,\Gamma_k } \tilde{u}_i \tilde{u}_i^\top.
\]
Moreover, the second condition of Claim 1 further implies 
$$\textstyle \|\tilde{\Pi}_{\Gamma_k} -\Pi_k\| \leq 12\sqrt{k} \left(\frac{\|E\|}{\lambda_k/2} + \frac{\sqrt{r_k} x_k}{\delta_k}+ \frac{\sqrt{r_k} y_k}{\delta_k} \right)< 1.$$
Since $\tilde{\Pi}_{\Gamma_k}, \Pi_k$ are projections, this inequality implies 
$$\dim \tilde{\Pi}_{\Gamma_k} = \dim \Pi_k = k;\,\,\text{see e.g., \cite[Lemma 4.1]{TranVUeigenvalue}.}$$
On the other hand, because $\Gamma_p$ passes the point $\|M\|+1.1\|E\|$ and the Weyl's inequality gives $|\tilde{\lambda}_1- \lambda_1| \leq \|E\|$, $\tilde{\lambda}_1$ must stay inside $\Gamma_k$. Consequently, $\Gamma_k$ must contain exactly the $k$ leading eigenvalues of $\tilde{M}$.
This proves Claim 1, and hence completes our argument. 
\bibliographystyle{amsrefs}
\bibliography{RefO}

\appendix

\section{Derivation of Theorems \ref{thm:small-dependentp}, \ref{thm:medium-dependentp}, and \ref{thm:large-dependentp} from Theorem \ref{theo: upUpperM1/2 subGau} and Theorem \ref{theo: uplowerboundM1/2 subGau}}  \label{subsection:routine}

First, in Theorem \ref{theo: uplowerboundM1/2 subGau}, we choose $t=6r$ and $C'=C_1 Kr$ for 
$\textstyle C_1 \geq 48 \times 900 \cdot \max \big\{ \sqrt{r/\Var S^{(p,\gamma_p)}}, \sqrt{\kappa_p^{3}/4s_p} \big\}.$ We directly set $C_1=C'' \kappa_p^{3/2} \sqrt{r}$, or equivalently choose $C'=C'' K (\kappa_p r)^{3/2}$ for some sufficiently large constant $C''$.  Thus, if 
$$\textstyle n \geq C K^4 (\kappa_p r)^5 \cdot \max \big\{ r_{\mathrm{eff}} \cdot \frac{r\lambda_p}{\delta_{(p)}}, \left(\frac{r \lambda_p}{\delta_{(p)}} \right)^2 \big\},$$
 then with probability at least $\frac{1}{16},$
\begin{equation*}
    \begin{split}
\textstyle \left\| \tilde{u}_p \tilde{u}_p^\top - u_p u_p^\top \right\|      &  \textstyle  \geq \frac{1}{4} \left[ \sqrt{\Var S^{(p,\gamma_p)}} \cdot \frac{\lambda_p}{\sqrt{n} \delta_{(p)}} + \sqrt{\frac{s_p}{\kappa_p}} \cdot  \sqrt{\frac{r_{\mathrm{eff}}}{n}} \right] - \frac{24 Kr/\kappa_p}{\sqrt{n}}  -\frac{5400Kr}{C'}\left( \kappa_p\sqrt{\frac{r_{\mathrm{sta} }  }{n}} +  \frac{ \sqrt{r} \lambda_p}{ \sqrt{n} \delta_{(p)} } \right) \\
& (\text{replacing $t= 6r$}) \\
 & \textstyle \geq \frac{1}{8} \left[ \sqrt{\Var S^{(p,\gamma_p)}} \cdot \frac{\lambda_p}{\sqrt{n} \delta_{(p)}} + \sqrt{\frac{s_p}{\kappa_p}} \cdot  \sqrt{\frac{r_{\mathrm{eff}}}{n}} \right] - \frac{24 Kr/\kappa_p}{\sqrt{n}}\\
 & (\text{replacing $C'=C'' K(\kappa_p r)^{3/2}$ for some sufficiently large universal constant $C''$}).
    \end{split}
\end{equation*}
If we further assume that 
\[ \textstyle
\sqrt{r_{\mathrm{eff}}}+ \frac{\lambda_p}{\delta_{(p)}} \geq 16 \times 24 \frac{Kr}{\kappa_p} \cdot \max \{\sqrt{1/\Var S^{(p,\gamma_p)}}, \sqrt{\kappa_p/s_p}\},
\]
then $\frac{1}{16} \left[ \sqrt{\Var S^{(p,\gamma_p)}} \cdot \frac{\lambda_p}{\sqrt{n} \delta_{(p)}} + \sqrt{\frac{s_p}{\kappa_p}} \cdot  \sqrt{\frac{r_{\mathrm{eff}}}{n}} \right] \geq \frac{24 Kr/\kappa_p}{\sqrt{n}} $, and hence,
\begin{equation} \label{lower concrete}
   \left\| \tilde{u}_p \tilde{u}_p^\top - u_p u_p^\top \right\|        \geq \frac{1}{16} \left[ \sqrt{\Var S^{(p,\gamma_p)}} \cdot \frac{\lambda_p}{\sqrt{n} \delta_{(p)}} + \sqrt{\frac{s_p}{\kappa_p}} \cdot  \sqrt{\frac{r_{\mathrm{eff}}}{n}} \right]. 
\end{equation}

On the other hand, applying Theorem \ref{theo: upUpperM1/2 subGau} for $t = \frac{2r}{\sqrt{\ep}}$, we obtain that if 
\[
\textstyle n \geq C \frac{(K\kappa_p r)^2}{\epsilon} \cdot \max \big\{ r_{\mathrm{eff}} \cdot \frac{r\lambda_p}{\delta_{(p)}}, \left(\frac{r \lambda_p}{\delta_{(p)}} \right)^2 \big\},
\]
then with probability at least $1 - \epsilon$, 
\begin{equation} \label{upper concrete}
     \left\| \tilde{u}_p \tilde{u}_p^\top - u_p u_p^\top \right\|        \leq 560 \frac{Kr}{\sqrt{\ep}} \left( \kappa_p \sqrt{\frac{r_{\mathrm{sta} }  }{n}} +  \frac{ \sqrt{r} \lambda_p}{ \sqrt{n} \delta_{(p)} } \right).
\end{equation}
Therefore, to ensure both upper bound \eqref{upper concrete} and lower bound \eqref{lower concrete} on $\|\tilde{u}_p \tilde{u}_p^\top - u_p u_p^\top\|$ at the same time, we simply assume 
\begin{equation} \label{condition concrete}
   \textstyle n \geq C \frac{K^4 (\kappa_p r)^5}{\epsilon} \cdot  \max \big\{ r_{\mathrm{eff}} \cdot \frac{r\lambda_p}{\delta_{(p)}}, \left(\frac{r \lambda_p}{\delta_{(p)}} \right)^2 \big\}.
\end{equation}
Theorems \ref{thm:small-dependentp}, \ref{thm:medium-dependentp}, and \ref{thm:large-dependentp} are simply the explicit combinations of  \eqref{lower concrete}, \eqref{upper concrete}, and \eqref{condition concrete}. For example, in the \textit{Medium effective rank} case, where $\frac{r\lambda_p}{\delta_{(p)}} < r_{\mathrm{eff}} \leq \frac{r}{\kappa_p^2}\big(\frac{\lambda_p}{\delta_{(p)}} \big)^2$, Condition \eqref{condition concrete} becomes 
$$ \textstyle n \geq C \frac{K^4 \kappa_p^5 r^6}{\epsilon} \cdot r_{\mathrm{eff}} \frac{\lambda_p}{\delta_p},$$
while \eqref{lower concrete} and \eqref{upper concrete} yield
\[ \textstyle
\frac{1}{16} \left[ \sqrt{\Var S^{(p,\gamma_p)}} \cdot \frac{\lambda_p}{\sqrt{n} \delta_{(p)}} + \sqrt{\frac{s_p}{\kappa_p}} \cdot \sqrt{\frac{r_{\mathrm{eff}}}{n}} \right]\leq  \left\| \tilde{u}_p \tilde{u}_p^\top - u_p u_p^\top \right\|        \leq 560 \frac{Kr}{\sqrt{\ep}} \left( \kappa_p \sqrt{\frac{r_{\mathrm{sta} }  }{n}} +  \frac{ \sqrt{r} \lambda_p}{ \sqrt{n} \delta_{(p)} } \right).
\]
This implies 
\[
 \frac{1}{16}\sqrt{\Var S^{(p,\gamma_p)}} \cdot \frac{\lambda_p}{\sqrt{n} \delta_{(p)}}  \leq \left\| \tilde{u}_p \tilde{u}_p^\top - u_p u_p^\top \right\|        \leq CK \sqrt{\frac{r^3}{\epsilon}} \cdot \frac{\lambda_p}{\sqrt{n}\delta_{(p)}}\,\,\,\text{as desired.}
\]

\section{Proof of Lemma \ref{lemma: xyModeil1} - Computation of moments}
\label{section: moments} 

In this section, we prove Lemma \ref{lemma: xyModeil1} by the moment method. 
First, let us recall our set of notations.  Let $X = [ \xi_1, \,\, \xi_2, \,\,...,\,\, \xi_d]^\top$ be a random vector with  covariance matrix $M = (m_{kl})_{1 \leq k, l \leq d},$ where $m_{kl} = m_{lk} = \Cov(\xi_k, \xi_l).$ For each $1 \leq i \leq n$, the $i^{th}$ sample of $X=[\xi_1, \xi_2, \cdots, \xi_d]$ is 
$$X_i:= [ \xi_{i1}, \,\xi_{i2},\, \cdots, \xi_{id}]^\top,\,\, \text{and hence}\,\,\, X_iX_i^\top=(\xi_{ik} \xi_{il})_{1 \leq k,l \leq d},\,\, \E \xi_{ik} \xi_{il} = m_{kl}.$$
\noindent By the definition of noise, we have 
$$E=\tilde{M} - M = \sum_{i=1}^n \frac{1}{n} \left(X_i X_i^\top - \E X_i X_i^\top \right).$$
Denote $E = (e_{lk})_{1 \leq l,k \leq d}$.  Then,
\begin{equation} \label{for: noise}
    e_{kl} = \frac{1}{n}\sum_{i=1}^{n} (\xi_{ik} \xi_{il} - m_{kl}).
\end{equation}
Our goal is to estimate 
 $$   \textstyle   x:= \max_{i,j \in I_p} \norm{u_i^\top E u_j} \,\,\text{and}\,\,y:= \max_{\substack{ i < j \in I_p }} \norm{ u_i^\top E \left(\sum_{l \notin I_p} \frac{u_l u_l^\top}{\lambda_p - \lambda_l} \right) E u_j}.$$

To reduce the complexity of indices, without loss of generality, we can assume that $1 \in I_p$. Thus, $I_p=\{1,2,\dots, r\}$. To bound $x$, we estimate the first and second moments of $\norm{u_1^\top E u_2}$ and of $\norm{u_1^\top E u_1}$. To handle $y$, we bound the first and second moments of  $   \textstyle  \norm{ u_1^\top E \left(\sum_{l>r} \frac{u_l u_l^\top}{\lambda_p - \lambda_l} \right) E u_2}$. 
\subsection{General Model} \label{proof: lemmaModel1} Without imposing additional structure on $X$, the moments of the skew-parameters $(x,y)$ are computed as follows. 
\subsubsection{Bounding $x$}
By \eqref{for: noise}, we have 
\begin{equation} \label{for: u1Eu2}
\begin{split}
u_1^\top E u_2 & =    \textstyle   \sum_{1 \leq k,l \leq d} u_{1k} u_{2l} e_{kl} = \frac{1}{n}\sum_{1 \leq k,l \leq d} \sum_{i \leq n} u_{1k} u_{2l} (\xi_{ik} \xi_{il} - m_{kl}).
\end{split}
\end{equation}
Therefore, $\E u_1^\top E u_2  = 0$. 

Next, we expand $\E (u_1^\top E u_2 )^2 $ as 
\begin{equation*} 
\begin{split}
 &  \frac{1}{n^2} \sum_{\substack{1 \leq k,l \leq d \\ 1 \leq k',l' \leq d}} \sum_{1 \leq i,i' \leq n} u_{1k} u_{2l} u_{1k'} u_{2l'} \E\left[(\xi_{ik} \xi_{il} - m_{kl}) (\xi_{i'k'} \xi_{i'l'} - m_{k'l'}) \right] \\
& = \frac{1}{n^2} \sum_{1 \leq i \leq n} \sum_{\substack{1 \leq k,l \leq d \\ 1 \leq k',l'\leq d}} u_{1k} u_{2l} u_{1k'} u_{2l'} \E\left[ (\xi_{ik} \xi_{il} - m_{kl}) (\xi_{ik'} \xi_{il'} - m_{k'l'}) \right] \\
& = \frac{1}{n^2} \sum_{1 \leq i\leq n} \sum_{\substack{1 \leq k,l \leq d \\ 1\leq k',l' \leq d}} u_{1k} u_{2l} u_{1k'} u_{2l'}  \left[ \E \left( \xi_{ik} \xi_{il} \xi_{ik'} \xi_{il'} \right) - m_{kl} m_{k'l'} \right].
\end{split} 
\end{equation*}
Since $X_1,X_2, \dots, X_n$ are iid samples of $X$, we finally obtain
\begin{equation} \label{for: secondmomnetu1Eu2}
    \E (u_1^\top E u_2 )^2 = \frac{1}{n} \sum_{\substack{1 \leq k,l \leq d \\ 1 \leq k',l' \leq d}} u_{1k} u_{2l} u_{1k'} u_{2l'}  \left( \E \left( \xi_{k} \xi_{l} \xi_{k'} \xi_{l'} \right) - m_{kl} m_{k'l'} \right).
\end{equation}

\subsubsection{Bounding $y$} Similarly, we have 
\begin{equation*}
\begin{split}
   \E u_1^\top E \sum_{d\geq l >r} \frac{u_l u_l^\top}{\lambda_p-\lambda_l} E u_2 & =  \E \left( \sum_{d \geq k,j,m,i \geq 1} \sum_{d \geq l > r} \frac{u_{1k} e_{kj} u_{lj} u_{lm} e_{mi} u_{2i}}{\lambda_p - \lambda_l} \right) \\
& = \sum_{d \geq k,j,m,i \geq 1} \sum_{d \geq l > r} u_{1k} \frac{u_{lj} u_{lm}}{\lambda_p - \lambda_l} u_{2i} \E (e_{kj} e_{mi}).
\end{split}
\end{equation*}
By \eqref{for: noise}, the RHS is rewritten as 
$$ \frac{1}{n^2} \sum_{d \geq k,j,m,i \geq 1} \sum_{d \geq l > r} u_{1k} \frac{u_{lj} u_{lm}}{\lambda_p -\lambda_l} u_{2i} \E  \left( \sum_{I=1}^n \xi_{Ik} \xi_{Ij} -m_{kj} \right) \left( \sum_{J=1}^n \xi_{Jm} \xi_{Ji} - m_{mi} \right).$$
\noindent Note that 
$$\E (\xi_{Ik} \xi_{Ij} -m_{kj})(\xi_{Jm} \xi_{Ji} - m_{mi} ) = 0 \,\, \text{for any}\,\, I \neq J,$$
$$\E (\xi_{Ik} \xi_{Ij} -m_{kj})(\xi_{Im} \xi_{Ii} - m_{mi} ) = \E (\xi_{I'k} \xi_{I'j} -m_{kj})(\xi_{I'm} \xi_{I'i} - m_{mi} ) \,\,\text{for any}\,\, 1\leq I, I' \leq n.$$
Thus, the above sum simplifies to 
$$\frac{1}{n} \sum_{d\geq k,j,m,i \geq 1} \sum_{d \geq l > r} u_{1k} \frac{u_{lj} u_{lm}}{\lambda_p -\lambda_l} u_{2i} \E  \left(  \xi_{k} \xi_{j} -m_{kj} \right) \left(  \xi_{m} \xi_{i} - m_{mi} \right).$$
Distributing $u_{1k}, u_{lj}, u_{lm}, u_{2i}$ in the positions of $\xi_k, \xi_j, \xi_m, \xi_i$ respectively, we obtain the more useful (see later) sum.
\begin{equation} \label{for: firstmomentu1EEu2}
\begin{split}
&\textstyle \sum_{l=r+1}^d \E \frac{1}{n(\lambda_p - \lambda_l)} \cdot \\
&\left[ (\sum_{k=1}^d u_{1k} \xi_k)(\sum_{j=1}^d \xi_j u_{lj}) - \E (\sum_{k=1}^d u_{1k} \xi_k)(\sum_{j=1}^d \xi_j u_{lj})  \right] \left[ (\sum_{i=1}^d u_{2i} \xi_i)(\sum_{m=1}^d \xi_m u_{lm}) - \E (\sum_{i=1}^d u_{2i} \xi_i)(\sum_{m=1}^d \xi_m u_{lm})  \right].
\end{split}
\end{equation}
\subsection{Spiked Population Model} \label{subsec: xypspikedpopulation}
Recall the setting of the spiked population model that
\begin{equation} \label{for: XModel1}
\begin{split}
X & = M^{1/2} Y = \sum_{i=1}^d \sqrt{\lambda_i} u_i u_i^\top Y = \sum_{i=1}^d \sqrt{\lambda_i} (u_i^\top Y) u_i.
\end{split}
\end{equation}
Since $ X=[\xi_1, \,\xi_2,\, \cdots, \xi_d]^\top$,  for each $1 \leq k \leq d$, we further have 
\begin{equation*}
\begin{split}
\xi_k &=  \sum_{i=1}^d \sqrt{\lambda_i} (u_i^\top Y) u_{ik}  = \sum_{i=1}^d \sqrt{\lambda_i} u_{ik} \sum_{j=1}^d u_{ij} y_j = \sum_{j=1}^d y_j \left( \sum_{i=1}^d u_{ik} u_{ij} \sqrt{\lambda_i} \right).
\end{split}
\end{equation*}
Set $ s_{jk}:=\sum_{i=1}^d u_{ik} u_{ij} \sqrt{\lambda_i}.$ The above identity is shortly written as 
\begin{equation}\label{for: xik}
    \xi_k = \sum_{j=1}^d y_j s_{jk}.
\end{equation}
Therefore, 
\begin{equation} \label{for: mkl}
    m_{kl} = \E (\xi_k \xi_l) = \sum_{d \geq j,j' \geq 1} (\E y_j y_{j'}) s_{jk} s_{j'l} = \sum_{\alpha=1}^d s_{\alpha k} s_{\alpha l}.
\end{equation}
The last equality follows from the fact that the entries $y_1,y_2, \dots, y_d$ are 8-wise independent random variables with zero mean and variance one. 
\subsubsection{Bounding $x$ and proving the first part of Lemma \ref{lemma: xyModeil1}} Using \eqref{for: xik}, we have
$$ \E (\xi_k \xi_l \xi_{k'} \xi_{l'}) = \sum_{d \geq j_1,j_2,j_3,j_4 \geq 1} \E (y_{j_1} y_{j_2} y_{j_3} y_{j_4})s_{j_1 k} s_{j_2 l} s_{j_3 k'} s_{j_4 l'}.$$
Note that 
$$\E (y_{j_1} y_{j_2} y_{j_3} y_{j_4}) = \begin{cases}
    1 \,\,\text{if either}\,\, (j_1= j_2 = \alpha \neq \beta= j_3=j_4)\,\text{or}\,(j_1=j_3 \neq j_2=j_4)\,\text{or}\,(j_1=j_4 \neq j_2=j_3),\\
    \E y^4 \,\,\text{if}\,\,j_1=j_2 =j_3=j_4= \alpha, \\
    0 \,\,\,\text{otherwise}
\end{cases}. $$
Thus, $\E (\xi_k \xi_l \xi_{k'} \xi_{l'}) $ equals
\begin{equation*}
\begin{split}
 & \sum_{d\geq \alpha \neq \beta \geq 1} \left(s_{\alpha k} s_{\alpha l} s_{\beta k'} s_{\beta l'}+s_{\alpha k} s_{\beta l} s_{\alpha k'} s_{\beta l'}+s_{\alpha k} s_{\beta l} s_{\beta k'} s_{\alpha l'} \right)+ \E y^4 \sum_{\alpha=1}^d s_{\alpha k} s_{\alpha l} s_{\alpha k'} s_{\alpha l'} \\
 & = \sum_{d\geq \alpha, \beta \geq 1} \left(s_{\alpha k} s_{\alpha l} s_{\beta k'} s_{\beta l'}+s_{\alpha k} s_{\beta l} s_{\alpha k'} s_{\beta l'}+s_{\alpha k} s_{\beta l} s_{\beta k'} s_{\alpha l'} \right)+ (\E y^4 -3) \sum_{\alpha=1}^d s_{\alpha k} s_{\alpha l} s_{\alpha k'} s_{\alpha l'}.
\end{split}
\end{equation*}
Moreover, by \eqref{for: mkl}, we have
$$   \textstyle   m_{kl} m_{k'l'} = \left(\sum_{d\geq \alpha \geq 1} s_{\alpha k} s_{\alpha l} \right)  \cdot \left(\sum_{d\geq \beta \geq 1} s_{\beta k'} s_{\beta l'} \right) =   \sum_{d\geq \alpha, \beta \geq 1} s_{\alpha k} s_{\alpha l} s_{\beta k'} s_{\beta l'}.$$
These identities imply
\begin{equation} \label{for: xiklk'l'}
 \E (\xi_k \xi_l \xi_{k'} \xi_{l'}) - m_{kl} m_{k'l'}= \sum_{d\geq\alpha, \beta \geq 1} \left(s_{\alpha k} s_{\beta l} s_{\alpha k'} s_{\beta l'}+s_{\alpha k} s_{\beta l} s_{\beta k'} s_{\alpha l'} \right)+ (\E y^4 -3) \sum_{\alpha=1}^d s_{\alpha k} s_{\alpha l} s_{\alpha k'} s_{\alpha l'}.
\end{equation}
Combining \eqref{for: secondmomnetu1Eu2} and \eqref{for: xiklk'l'}, we rewrite $\E (u_1^\top E u_2)^2 $ as
\begin{equation*}
    \begin{split}
            &  \frac{1}{n}\sum_{d \geq k,k',l,l' \geq 1} u_{1k} u_{1k'} u_{2l} u_{2l'} \bigg[\sum_{\alpha, \beta =1}^d \left(s_{\alpha k} s_{\beta l} s_{\alpha k'} s_{\beta l'}+s_{\alpha k} s_{\beta l} s_{\beta k'} s_{\alpha l'} \right)+ (\E y^4 -3) \sum_{\alpha=1}^d s_{\alpha k} s_{\alpha l} s_{\alpha k'} s_{\alpha l'}  \bigg] \\
        & =\frac{1}{n} ( M_1 + M_2 + M_3), \,\,\text{where}\\
        M_1 &=\sum_{d \geq k,k',l,l' \geq 1} u_{1k} u_{1k'} u_{2l} u_{2l'} \sum_{\alpha, \beta=1}^d s_{\alpha k} s_{\beta l} s_{\alpha k'} s_{\beta l'} ,\\
        M_2 &=\sum_{d \geq k,k',l,l' \geq 1} u_{1k} u_{1k'} u_{2l} u_{2l'} \sum_{\alpha, \beta=1}^d \left(s_{\alpha k} s_{\beta l} s_{\beta k'} s_{\alpha l'} \right) ,\\
        M_3& = \sum_{d \geq k,k',l,l' \geq 1} u_{1k} u_{1k'} u_{2l} u_{2l'} (\E y^4 -3) \sum_{\alpha=1}^d s_{\alpha k} s_{\alpha l} s_{\alpha k'} s_{\alpha l'}.
    \end{split}
\end{equation*}
We compute $M_1$ as follows. 
\begin{equation}
\begin{split}
M_1&=\sum_{d \geq k,l,k',l' \geq 1} \sum_{\alpha, \beta=1}^d (u_{1k} s_{\alpha k})(u_{1k'} s_{\alpha k'}) (u_{2l} s_{\beta l}) (u_{2l'} s_{\beta l'})\\& = \sum_{\alpha, \beta=1}^d (\sum_{k=1}^d u_{1k} s_{\alpha k})(\sum_{k'=1}^d u_{1k'} s_{\alpha k'}) (\sum_{l=1}^d u_{2l} s_{\beta l}) (\sum_{l'=1}^d u_{2l'} s_{\beta l'}).
\end{split}
\end{equation}
By definition of $s_{\alpha k}$ and the fact that $\{ u_i\}_{i=1}^d$ is an orthonormal system, we have
\begin{equation} \label{id: u1ksak}
    \sum_{k=1}^d u_{1k} s_{\alpha k} = \sum_{k=1}^d \sum_{i=1}^{d} u_{1k} u_{ik} u_{i \alpha} \sqrt{\lambda_i}= \sum_{i=1}^d \sqrt{\lambda_i} u_{i \alpha} \cdot \bigg(\sum_{k=1}^d u_{1k} u_{ik} \bigg) = \sqrt{\lambda_1} u_{1\alpha}.
\end{equation}
Therefore, 
\begin{equation} \label{M1Model1}
    \begin{split}
        M_1 & =    \textstyle   \sum_{\alpha, \beta=1}^d \lambda_1 \lambda_2 u_{1\alpha}^2 u_{2 \beta}^2  = \lambda_1 \lambda_2 \left( \sum_{\alpha=1}^d u_{1\alpha}^2 \right) \left( \sum_{\beta=1}^d u_{2\beta}^2 \right) = \lambda_1 \lambda_2. 
    \end{split}
\end{equation}
By a similar computation, we also obtain 
\begin{equation} \label{M2M3Model1}
    \begin{split}
  M_2 &=    \textstyle   \sum_{d \geq k,k',l,l' \geq 1} u_{1k} u_{1k'} u_{2l} u_{2l'} \sum_{\alpha, \beta=1}^d \left(s_{\alpha k} s_{\beta l} s_{\beta k'} s_{\alpha l'} \right) \\
  & =     \textstyle   \lambda_1 \lambda_2 \sum_{\alpha, \beta =1}^d u_{1\alpha} u_{1\beta} u_{2\alpha} u_{2\beta} = \lambda_1 \lambda_2 \left( \sum_{\alpha=1}^d u_{1\alpha} u_{2\alpha} \right) \left( \sum_{\beta=1}^d u_{1\beta} u_{2\beta} \right) =0, \\
  M_3 & =    \textstyle   \sum_{d \geq k,k',l,l' \geq 1} u_{1k} u_{1k'} u_{2l} u_{2l'} (\E y^4 -3) \sum_{\alpha=1}^d s_{\alpha k} s_{\alpha l} s_{\alpha k'} s_{\alpha l'} \\
  & =    \textstyle    \lambda_1 \lambda_2 \sum_{\alpha=1}^d (\E y_{\alpha}^4 -3) u_{1\alpha}^2 u_{2\alpha}^2.
    \end{split}
\end{equation}
Together \eqref{M1Model1} and \eqref{M2M3Model1} imply
\begin{equation*}
   \textstyle   \E (u_1^\top E u_2)^2  = \frac{\lambda_1 \lambda_2}{n} \left(1+ \sum_{\alpha} (\E y_{\alpha}^4 -3)  u_{1 \alpha}^2 u_{2 \alpha}^2 \right) = \frac{\lambda_1 \lambda_2}{n} \cdot \Var S^{(1,2)}.
\end{equation*}
%
Since $\max_{1 \leq \alpha\leq d} \|y_{\alpha}\|_{\psi_2}^2 \leq K$ (i.e., $\E y_{\alpha}^4 \leq 4K^2$) for some $K \geq 1$, then 
\[
1+ \sum_{\alpha} (\E y_{\alpha}^4 -3)  u_{1 \alpha}^2 u_{2 \alpha}^2 \leq \max\{4K^2,1\} =4K^2.
\]
%
%
Therefore, by Markov inequality and the fact that $\E u_1^\top E u_2 =0$, with probability at least $1 - t^{-2}$,
\[ \textstyle
\norm{u_1^\top E u_2} \leq 2K \frac{\sqrt{\lambda_1 \lambda_2} t}{\sqrt{n}}.
\]

By a similar computation, we also have
$$   \textstyle   \E (u_1^\top E u_1)^2= \frac{\lambda_1^2}{n} \left(2+  \sum_{\alpha} (\E y_{\alpha}^4 -3) u_{1 \alpha}^4  \right).$$
We have the constant $2$ here because $M_2 = \lambda_1^2$ (not $0$) in $\E (u_1^\top E u_1)^2$.

\noindent Thus, with probability at least $1 -t^{-2}$, we also have $\norm{ u_1^\top E u_1} \leq 2K\frac{\lambda_1 t}{\sqrt{n}}$. These inequalities prove the first part of Lemma \ref{lemma: xyModeil1}.
\subsubsection{Bounding $y$ and proving the second part of Lemma \ref{lemma: xyModeil1}} By \eqref{for: firstmomentu1EEu2}, we (recall) have that $\E u_1^\top E \sum_{d\geq l >r} \frac{u_l u_l^\top}{\lambda_p - \lambda_l} E u_2 $ equals
\begin{equation*} 
\begin{split}
&\sum_{l=r+1}^d \frac{1}{n(\lambda_p - \lambda_l)} \cdot \\
&\E \bigg[ (\sum_{k=1}^d u_{1k} \xi_k)(\sum_{j=1}^d \xi_j u_{lj}) - \E (\sum_{k=1}^d u_{1k} \xi_k)(\sum_{j=1}^d \xi_j u_{lj})  \bigg] \bigg[ (\sum_{i=1}^d u_{2i} \xi_i)(\sum_{m=1}^d \xi_m u_{lm}) - \E (\sum_{i=1}^d u_{2i} \xi_i)(\sum_{m=1}^d \xi_m u_{lm})  \bigg].
\end{split}
\end{equation*}
Similar to the previous subsection, we have
$$ \textstyle \sum_{k=1}^d u_{1k} \xi_{k} = \sum_{k=1}^d \sum_{j=1}^d u_{1k} y_j s_{jk} =\sum_{j=1}^d y_j \cdot \sum_{k=1}^d u_{1k} s_{jk} = \sqrt{\lambda_1} \sum_{j=1}^d y_j u_{1j},$$
where the last equality follows from \eqref{id: u1ksak}. Thus,  $\E u_1^\top E \sum_{d\geq l >r} \frac{u_l u_l^\top}{\lambda_p - \lambda_l} E u_2 $ equals
\begin{equation*}
\begin{split}
& \textstyle \frac{1}{n} \sum_{l =r+1}^d \frac{\lambda_l \sqrt{\lambda_1 \lambda_2}}{\lambda_p - \lambda_l} \E \left( (\sum_{j_1=1}^d y_{j_1} u_{1 j_1})(\sum_{j_2=1}^d y_{j_2} u_{l j_2}) - \E (\sum_{j_1=1}^d y_{j_1} u_{1 j_1})(\sum_{j_2=1}^d y_{j_2} u_{l j_2}) \right)\\
& \textstyle  \cdot \left( (\sum_{j_3=1}^d y_{j_3} u_{2 j_3})(\sum_{j_4=1}^d y_{j_4} u_{l j_4}) - \E (\sum_{j_3=1}^d y_{j_3} u_{2 j_3})(\sum_{j_4=1}^d y_{j_4} u_{l j_4}) \right).
\end{split}
\end{equation*}
We expand this enormous expectation into 
$$ \sum_{d \geq j_1,j_2,j_3,j_4 \geq 1} \E \left[ \left(y_{j_1} y_{j_2} u_{1j_1}u_{lj_2} - \E y_{j_1} y_{j_2} u_{1j_1}u_{lj_2}   \right) \left(y_{j_3} y_{j_4} u_{2j_3}u_{lj_4} - \E y_{j_3} y_{j_4} u_{2j_3}u_{lj_4} \right) \right].$$
This term is nontrivial when either $(j_1=j_2 \neq j_3=j_4)$ or $(j_1=j_3 \neq j_2= j_4)$ or $(j_1 =j_4 \neq j_2=j_3)$ or $(j_1=j_2=j_3=j_4)$. The expectation simplifies to 
\begin{equation*}
    \begin{split}
    & \sum_{d \geq \alpha \neq \beta \geq 1} \E \left[ \left(y_{\alpha} y_{\alpha} u_{1\alpha}u_{l\alpha} - \E y_{\alpha} y_{\alpha} u_{1\alpha}u_{l\alpha}   \right) \left(y_{\beta} y_{\beta} u_{2\beta}u_{l\beta} - \E y_{\beta} y_{\beta} u_{2\beta}u_{l\beta} \right) \right] \\
  & + \sum_{d\geq \alpha \neq \beta \geq 1} \E \left[ \left(y_{\alpha} y_{\beta} u_{1\alpha}u_{l\beta} - \E y_{\alpha} y_{\beta} u_{1\alpha}u_{l\beta}   \right) \left(y_{\alpha} y_{\beta} u_{2\alpha}u_{l\beta} - \E y_{\alpha} y_{\beta} u_{2\alpha}u_{l\beta} \right) \right]   \\
  & + \sum_{d\geq \alpha \neq \beta \geq 1} \E \left[ \left(y_{\alpha} y_{\beta} u_{1\alpha}u_{l\beta} - \E y_{\alpha} y_{\beta} u_{1\alpha}u_{l\beta}   \right) \left(y_{\beta} y_{\alpha} u_{2\beta}u_{l\alpha} - \E y_{\beta} y_{\alpha} u_{2\beta}u_{l\alpha} \right) \right] \\
  & + \sum_{\alpha=1}^d \E \left[ \left(y_{\alpha} y_{\alpha} u_{1\alpha}u_{l\alpha} - \E y_{\alpha} y_{\alpha} u_{1\alpha}u_{l\alpha}   \right) \left(y_{\alpha} y_{\alpha} u_{2\alpha}u_{l\alpha} - \E y_{\alpha} y_{\alpha} u_{2\alpha}u_{l\alpha} \right) \right] 
   \end{split}
\end{equation*}
Since $\E y_\alpha y_\beta =0, \E y_\alpha^2 = \E y_\beta^2 =1 $, this sum simplifies to
\begin{equation*}
    \begin{split}
  &\textstyle   \sum_{d \geq \alpha \neq \beta \geq 1} u_{1 \alpha} u_{l \beta} u_{2 \alpha} u_{l \beta} + \sum_{d \geq \alpha \neq \beta \geq 1} u_{1 \alpha} u_{l \beta} u_{2 \beta} u_{l \alpha} + \sum_{\alpha=1}^d (\E y^4 -1) u_{1\alpha} u_{2\alpha} u_{l \alpha}^2 \\
  & = \textstyle  \sum_{d \geq \alpha,\beta \geq 1} u_{1 \alpha} u_{l \beta} u_{2 \alpha} u_{l \beta} + \sum_{d \geq \alpha, \beta \geq 1} u_{1 \alpha} u_{l \beta} u_{2 \beta} u_{l \alpha} + \sum_{\alpha=1}^d (\E y^4 -3) u_{1\alpha} u_{2\alpha} u_{l \alpha}^2 \\
  & =\textstyle  \left( \sum_{\alpha=1}^d u_{1\alpha}u_{2\alpha} \right) \left( \sum_{\beta=1}^d u_{l \beta}^2 \right)+ \left(\sum_{\alpha=1}^d u_{1\alpha} u_{l \alpha} \right) \left(\sum_{\beta=1}^d u_{2\beta} u_{l\beta} \right) + \sum_{\alpha=1}^d (\E y^4 -3) u_{1\alpha} u_{2\alpha} u_{l \alpha}^2 \\
  & = \textstyle  \sum_{\alpha=1}^d  (\E y_{\alpha}^4 -3) u_{1\alpha} u_{2\alpha} u_{l \alpha}^2.
\end{split}
\end{equation*}   
The last equality follows the fact that $u_1, u_2, u_l$ are pairwise orthogonal. Therefore, 
\begin{equation} \label{for: firstmomentu1EEu2Model1}
\begin{split}
\textstyle \E u_1^\top E \sum_{d\geq l >r} \frac{u_l u_l^\top}{\lambda_p - \lambda_l} E u_2 = \frac{\sqrt{\lambda_1 \lambda_2}}{n} \sum_{l =r+1}^d \frac{\lambda_l}{\lambda_p - \lambda_l}  \left( \sum_{\alpha} (\E y_{\alpha}^4 -3) u_{1\alpha} u_{2\alpha} u_{l \alpha}^2 \right).
\end{split}
\end{equation}
Moreover, by the triangle inequality, we have
\begin{equation*}
    \begin{split}
  \textstyle
 \sum_{l =r+1}^d \frac{\lambda_l}{\lambda_p - \lambda_l}  \left( \sum_{\alpha} (\E y_{\alpha}^4 -3) u_{1\alpha} u_{2\alpha} u_{l \alpha}^2 \right) & \leq \big| \sum_{l =r+1}^d \frac{\lambda_l}{\lambda_p - \lambda_l}  \left( \sum_{\alpha} (\E y_{\alpha}^4 -3) u_{1\alpha} u_{2\alpha} u_{l \alpha}^2 \right) \big|   \\
 & \leq \sum_{l=r+1}^d \frac{\lambda_l}{\lambda_p -\lambda_l} \sum_{\alpha} |\E y_\alpha^4 -3| \cdot |u_{1\alpha}| \cdot |u_{2\alpha}| \cdot u_{l \alpha}^2.
    \end{split}
\end{equation*}
Note that for any $l > r$, $\frac{\lambda_l}{\lambda_p -\lambda_l} \leq \frac{\lambda_l}{\lambda_p/2} \leq 1,$ and for any $1 \leq \alpha \leq d$, $|\E y_\alpha^4 -3| \leq 4K^2$, we further obtain 
\begin{equation*}
    \begin{split}
 \sum_{l=r+1}^d \frac{\lambda_l}{\lambda_p -\lambda_l} \sum_{\alpha} |\E y_\alpha^4 -3| \cdot |u_{1\alpha}| \cdot |u_{2\alpha}| \cdot u_{l \alpha}^2 & \leq 4K^2 \cdot \sum_{l=r+1}^d \sum_{\alpha=1}^d    |u_{1\alpha}| \cdot |u_{2\alpha}| \cdot u_{l \alpha}^2 \\
 & =4K^2 \sum_{\alpha=1}^d |u_{1\alpha}| \cdot |u_{2\alpha}| \cdot \sum_{l=r+1}^d u_{l\alpha}^2 \\
 & \leq 4K^2 \sum_{\alpha=1}^d |u_{1\alpha}| \cdot |u_{2\alpha}|.
    \end{split}
\end{equation*}
The last inequality follows the fact that $\{u_1, u_2, \dots, u_d\}$ is an orthonormal basis. Furthermore, by the Cauchy-Schwarz inequality, 
\[
4K^2 \sum_{\alpha=1}^d |u_{1\alpha}| \cdot |u_{2\alpha}| \leq 4K^2 ( \sum_{\alpha} u_{1\alpha}^2)^{1/2} ( \sum_{\alpha} u_{2\alpha}^2)^{1/2} = 4K^2.
\]
Together, these estimates and \eqref{for: firstmomentu1EEu2Model1} lead to 
\begin{equation} \label{expectation: u1EEu2}
    \textstyle \E u_1^\top E \sum_{d\geq l >r} \frac{u_l u_l^\top}{\lambda_p - \lambda_l} E u_2 \leq 4K^2 \cdot\frac{\sqrt{\lambda_1 \lambda_2}}{n}
\end{equation}

Next, we compute the second moment of $u_1^\top E \sum_{l=r+1}^d \frac{u_l u_l^\top}{\lambda_p - \lambda_l} E u_2$, which is 
$$ \frac{1}{n^4} \sum_{\substack{d \geq k,j,m,i \geq 1\\ d \geq k',j',m',i' \geq 1}} \sum_{\substack{l,l' =r+1}}^d u_{1k} u_{1k'} u_{2i} u_{2i'} \frac{u_{lj} u_{lm} u_{l'j'} u_{l'm'}}{(\lambda_p - \lambda_{l})(\lambda_p - \lambda_{l'})} \E (\sum_{I_1=1}^n \cdots)(\sum_{I_2=1}^n \cdots)(\sum_{I_3=1}^n \cdots)(\sum_{I_4=1}^n \cdots),$$
$$\text{where}\,\, \sum_{I_1=1}^n \cdots = \sum_{I_1}^n \xi_{I_1 k} \xi_{I_1 l} - \E \xi_{I_1 k} \xi_{I_1 l}  \,,\,\,\, \sum_{I_2=1}^n \cdots = \sum_{I_1}^n \xi_{I_2 m} \xi_{I_2 i} - \E \xi_{I_2 m} \xi_{I_2 i}  ,$$
$$\sum_{I_3=1}^n \cdots = \sum_{I_3=1}^n \xi_{I_3 k'} \xi_{I_3 l'} - \E \xi_{I_3 k'} \xi_{I_3 l'} \,,\,\,\,\text{and}\,\, \sum_{I_4=1}^n \cdots = \sum_{I_4}=1^n \xi_{I_4 m'} \xi_{I_4 i'} - \E \xi_{I_4 m'} \xi_{I_4 i'}.$$
Notice that when we expand $\E (\sum_{I_1} \cdots)(\sum_{I_2} \cdots)(\sum_{I_3} \cdots)(\sum_{I_4} \cdots)$, the nontrivial terms correspond to the cases that either $(I_1=I_2 \neq I_3 = I_4)$ or $(I_1=I_3 \neq I_2 =I_4)$ or $(I_1=I_4 \neq I_2 =I_3)$ or $(I_1=I_2 =I_3=I_4)$. The numbers of choices for the tuple $(I_1, I_2, I_3, I_4)$ are $(n-1)n$,  $(n-1)n$, $(n-1)n$, and $n$ respectively. Moreover, in each case, the sum is unchanged across the different choices of $(I_1,I_2,I_3, I_4)$. Therefore,  
$$ \textstyle \E \left( u_1^\top E \sum_{l=r+1}^d \frac{u_l u_l^\top}{\lambda_p - \lambda_l} E u_2 \right)^2 = M_1+M_2 + M_3 + M_4,\,\,\text{in which}$$
\begin{equation*}
    \begin{split}
 & M_1:= \frac{n-1}{n^3}  \sum_{\substack{d\geq k,j,m,i \geq 1\\d  \geq k',j',m',i' \geq 1}} \sum_{\substack{d\geq l,l' >r}} u_{1k} u_{1k'} u_{2i} u_{2i'} \frac{u_{lj} u_{lm} u_{l'j'} u_{l'm'}}{(\lambda_p - \lambda_{l})(\lambda_p - \lambda_{l'})} \cdot \\
 & \left( \E (\xi_k \xi_j -m_{kj})(\xi_{m} \xi_{i} -m_{mi}) \right) \left( \E (\xi_{k'} \xi_{j'} -m_{k'l'})(\xi_{m'} \xi_{i'} -m_{m'i'}) \right), \\
 & M_2:= \frac{n-1}{n^3}  \sum_{\substack{d\geq k,j,m,i \geq 1\\d  \geq k',j',m',i' \geq 1}} \sum_{\substack{d\geq l,l' >r}} u_{1k} u_{1k'} u_{2i} u_{2i'} \frac{u_{lj} u_{lm} u_{l'j'} u_{l'm'}}{(\lambda_p - \lambda_{l})(\lambda_p - \lambda_{l'})} \cdot \\
 & \left( \E (\xi_k \xi_j -m_{kj})(\xi_{k'} \xi_{j'} -m_{k'j'}) \right) \left( \E (\xi_{m} \xi_{i} -m_{mi})(\xi_{m'} \xi_{i'} -m_{m'i'}) \right)\\
 &M_3:=   \frac{n-1}{n^3}  \sum_{\substack{d\geq k,j,m,i \geq 1\\d  \geq k',j',m',i' \geq 1}} \sum_{\substack{d\geq l,l' >r}} u_{1k} u_{1k'} u_{2i} u_{2i'} \frac{u_{lj} u_{lm} u_{l'j'} u_{l'm'}}{(\lambda_p - \lambda_{l})(\lambda_p - \lambda_{l'})} \cdot \\
  & \left( \E (\xi_k \xi_j -m_{kj})(\xi_{m'} \xi_{i'} -m_{m'i'}) \right) \left( \E (\xi_{k'} \xi_{l'} -m_{k'l'})(\xi_{m} \xi_{i} -m_{mi}) \right), 
   \end{split}
\end{equation*}
$M_4:=   \sum_{\substack{d \geq k,j,m,i \geq 1\\d \geq k',j',m',i' \geq 1 \\ d\geq l,l' >r}} \frac{u_{1k} u_{1k'} u_{2i} u_{2i'} u_{lj} u_{lm} u_{l'j'} u_{l'm'}\E (\xi_k \xi_j -m_{kj})(\xi_{m} \xi_{i} -m_{mi})(\xi_{k'} \xi_{j'} -m_{k'j'})(\xi_{m'} \xi_{i'} -m_{m'i'})}{n^3(\lambda_p - \lambda_{l})(\lambda_p - \lambda_{l'}) } . $

Similar to our computation for the first moment above, we obtain 
\begin{equation*} 
\begin{split}
& M_2 = \frac{(n-1)\lambda_1 \lambda_2}{n^3} \sum_{\substack{d \geq l,l'>r}} \frac{\lambda_l \lambda_{l'}}{(\lambda_p - \lambda_{l'})(\lambda_p - \lambda_l)} \cdot \\
&\E \bigg[ (\sum_{j_1=1}^d y_{j_1} u_{1 j_1})(\sum_{j_2=1}^d y_{j_2} u_{l j_2}) - \E (\sum_{j_1=1}^d\cdots)(\sum_{j_2=1}^d \cdots) \bigg] \bigg[ (\sum_{j_3=1}^d y_{j_3} u_{1 j_3})(\sum_{j_4=1}^d y_{j_4} u_{l' j_4}) - \E (\sum_{j_3=1}^d \cdots)(\sum_{j_4=1}^d \cdots)  \bigg]   \cdot \\
&  \E \bigg[ (\sum_{j_1=1}^d y_{j_1} u_{2 j_1})(\sum_{j_2=1}^d y_{j_2} u_{l j_2}) - \E (\sum_{j_1=1}^d \cdots)(\sum_{j_2=1}^d \cdots)  \bigg] \bigg[ (\sum_{j_3=1}^d y_{j_3} u_{2 j_3})(\sum_{j_4=1}^d y_{j_4} u_{l' j_4}) - \E (\sum_{j_3=1}^d \cdots)(\sum_{j_4=1}^d \cdots) \bigg],   
\end{split}
\end{equation*}
which simplifies to
\begin{equation*} 
\begin{split}
& \textstyle  \frac{(n-1)\lambda_1 \lambda_2}{n^3} \sum_{\substack{d \geq l,l'>r}} \frac{\lambda_l \lambda_{l'}}{(\lambda_p - \lambda_{l'})(\lambda_p - \lambda_l)} \cdot \\
& \textstyle \left( \sum_{\beta=1}^d u_{l \beta} u_{l' \beta}+ \sum_{\alpha=1}^d (\E y_\alpha^4 - 3)  u_{1\alpha}^2 u_{l\alpha} u_{l' \alpha}  \right) \left( \sum_{\beta=1}^d u_{l \beta} u_{l' \beta}+  \sum_{\alpha=1}^d (\E y_\alpha^4 - 3) u_{2\alpha}^2 u_{l\alpha} u_{l' \alpha}  \right) \\
& \leq \textstyle  \frac{\lambda_1 \lambda_2}{n^2} \sum_{l=r+1}^d \frac{\lambda_l^2}{(\lambda_p - \lambda_{l})^2} \left( 1+  \sum_{\alpha=1}^d (\E y_{\alpha}^4 - 3) u_{1\alpha}^2 u_{l\alpha}^2  \right) \left( 1+ \sum_{\alpha=1}^d (\E y_{\alpha}^4 - 3) u_{2\alpha}^2 u_{l\alpha}^2  \right)+ \\
&\textstyle  \frac{ \lambda_1 \lambda_2}{n^2} \sum_{d\geq l \neq l' >r} \frac{\lambda_l \lambda_{l'}}{(\lambda_p - \lambda_l)(\lambda_p - \lambda_{l'})} \left[\sum_{\alpha=1}^d (\E y_{\alpha}^4 -3) u_{1\alpha}^2 u_{l\alpha} u_{l' \alpha}\right] \left[\sum_{\alpha=1}^d (\E y_{\alpha}^4 -3) u_{2\alpha}^2 u_{l\alpha} u_{l' \alpha} \right].
\end{split}
\end{equation*}
The RHS is grouped into 
\begin{equation*}
    \begin{split}
     & \textstyle  \frac{\lambda_1 \lambda_2}{n^2} \sum_{l=r+1}^d \frac{\lambda_l^2}{(\lambda_p - \lambda_{l})^2} \left( 1+  \sum_{\alpha=1}^d (\E y_{\alpha}^4 - 3) u_{1\alpha}^2 u_{l\alpha}^2 +  \sum_{\alpha=1}^d (\E y_{\alpha}^4 - 3) u_{2\alpha}^2 u_{l\alpha}^2  \right)+ \\
&\textstyle  \frac{ \lambda_1 \lambda_2}{n^2} \sum_{d\geq l, l' >r} \frac{\lambda_l \lambda_{l'}}{(\lambda_p - \lambda_l)(\lambda_p - \lambda_{l'})} \left[\sum_{\alpha=1}^d (\E y_{\alpha}^4 -3) u_{1\alpha}^2 u_{l\alpha} u_{l' \alpha}\right] \left[\sum_{\alpha=1}^d (\E y_{\alpha}^4 -3) u_{2\alpha}^2 u_{l\alpha} u_{l' \alpha} \right].   
    \end{split}
\end{equation*}
Since $ 1+  \sum_{\alpha=1}^d (\E y_{\alpha}^4 - 3) u_{1\alpha}^2 u_{l\alpha}^2 +  \sum_{\alpha=1}^d (\E y_{\alpha}^4 - 3) u_{2\alpha}^2 u_{l\alpha}^2 \leq 8K^2$, the first sub-sum is at most 
\[ \textstyle
8K^2 \cdot\frac{\lambda_1 \lambda_2}{n^2} \sum_{l=r+1}^d \frac{\lambda_l^2}{(\lambda_p - \lambda_{l})^2}.
\]
Next, we rewrite the second sub-sum as 
\begin{equation*}
    \begin{split}
& \frac{\lambda_1 \lambda_2}{n^2} \cdot \sum_{\alpha, \beta} (\E y_\alpha^4-3)(\E y_{\beta}^4 -3)  u_{1\alpha}^2 u_{2\beta}^2 \cdot \sum_{l,l'=r+1}^d \frac{\lambda_l \lambda_l'}{(\lambda_p- \lambda_l)(\lambda_p- \lambda_{l'})} \cdot u_{l \alpha} u_{l\beta} u_{l' \alpha} u_{l'\beta} \\
& = \frac{\lambda_1 \lambda_2}{n^2} \cdot \sum_{\alpha, \beta} (\E y_\alpha^4-3)(\E y_{\beta}^4 -3)  u_{1\alpha}^2 u_{2\beta}^2 \cdot \left(\sum_{l=r+1}^d \frac{\lambda_l}{\lambda_p-\lambda_l} u_{l\alpha} u_{l\beta}  \right)^2.
    \end{split}
\end{equation*}
By the Cauchy-Schwarz inequality and the facts that $\sum_{l=1}^d u_{l\alpha}^2= \sum_{l=1}^d u_{l\beta}^2 =1, $ we have
\[
\left(\sum_{l=r+1}^d \frac{\lambda_l}{\lambda_p-\lambda_l} u_{l\alpha} u_{l\beta}  \right)^2 \leq \left(\sum_{l=r+1}^d \frac{\lambda_l^2}{(\lambda_p-\lambda_l)^2} u_{l\alpha}^2  \right)^2 \left( \sum_{l=r+1}^d u_{l\beta}^2 \right)^2 \leq 1.
\]
Therefore, the second sub-sum is at most 
\begin{equation*}
    \begin{split}
 \frac{\lambda_1 \lambda_2}{n^2} \cdot \sum_{\alpha, \beta} |\E y_\alpha^4-3| \cdot |\E y_{\beta}^4 -3| \cdot  u_{1\alpha}^2 u_{2\beta}^2 & \textstyle \leq     \frac{\lambda_1 \lambda_2}{n^2} \cdot (4K^2)^2 \cdot \sum_{\alpha, \beta}  u_{1\alpha}^2 u_{2\beta}^2 =  (4K^2 \frac{\sqrt{\lambda_1 \lambda_2}}{n} )^2.  
    \end{split}
\end{equation*}
These estimates on the first and second sub-sums lead to
\begin{equation} \label{M2_1}
    \textstyle
M_2 \leq (4K^2 \frac{\sqrt{\lambda_1 \lambda_2}}{n} )^2+ 8K^2 \cdot\frac{\lambda_1 \lambda_2}{n^2} \sum_{l=r+1}^d \frac{\lambda_l^2}{(\lambda_p - \lambda_{l})^2}.
\end{equation}

Arguing similarly, we also bound $M_1, M_3$ as 
\begin{equation} \label{M2_M1}
\begin{split}
    M_1 & = \textstyle  \frac{(n-1)\lambda_1 \lambda_2}{n^3} \sum_{\substack{d \geq l>r\\d\geq l'>r}} \frac{\lambda_l \lambda_{l'}}{(\lambda_p - \lambda_l')(\lambda_p - \lambda_l)}  \left[ \sum_{\alpha=1}^d (\E y_{\alpha}^4 -3)u_{1\alpha} u_{2\alpha} u_{l \alpha}^2 \right] \left[ \sum_{\alpha=1}^d (\E y_{\alpha}^4 -3)u_{1\alpha} u_{2\alpha} u_{l' \alpha}^2 \right] \\
    & \leq \textstyle  \frac{ \lambda_1 \lambda_2}{n^2} \sum_{\alpha, \beta} (\E y_\alpha^4-3)(\E y_{\beta}^4-3) u_{1\alpha} u_{2\alpha} u_{1\beta}u_{2\beta} \cdot  \left(\sum_{l=r+1}^{d} \frac{\lambda_l}{\lambda_p - \lambda_{l}} u_{l\alpha}^2 \right) \left(\sum_{l=r+1}^{d} \frac{\lambda_l}{\lambda_p - \lambda_{l}} u_{l\beta}^2 \right) \\
    & \leq (4K^2)^2 \frac{\lambda_1 \lambda_2}{n^2} (\sum_{\alpha} |u_{1\alpha}| \cdot |u_{2\alpha}|)^2 \leq (4K^2 \frac{\sqrt{\lambda_1 \lambda_2}}{n} )^2.
\end{split}
\end{equation}
\begin{equation} \label{M2_2}
\begin{split}
   M_3 & =  \textstyle  \frac{(n-1)\lambda_1 \lambda_2}{n^3} \sum_{\substack{d \geq l>r\\d\geq l'>r}} \frac{\lambda_l \lambda_{l'}}{(\lambda_p - \lambda_{l'})(\lambda_p - \lambda_l)}  \left[  \sum_{\alpha=1}^d (\E y_{\alpha}^4 -3) u_{1\alpha} u_{2\alpha} u_{l \alpha}u_{l' \alpha} \right]^2 \\
&  \leq \textstyle  \frac{ \lambda_1 \lambda_2}{n^2} \sum_{\alpha, \beta} (\E y_\alpha^4-3)(\E y_{\beta}^4-3) u_{1\alpha} u_{2\alpha} u_{1\beta}u_{2\beta} \cdot  \left(\sum_{l=r+1}^{d} \frac{\lambda_l}{\lambda_p - \lambda_{l}} u_{l\alpha} u_{l \beta} \right)^2 \\
& \leq \textstyle  \frac{ \lambda_1 \lambda_2}{n^2} \sum_{\alpha, \beta} |\E y_\alpha^4-3| \cdot |\E y_{\beta}^4-3| \cdot |u_{1\alpha} u_{2\alpha}| |u_{1\beta}u_{2\beta}| \leq (4K^2 \frac{\sqrt{\lambda_1 \lambda_2}}{n} )^2.
\end{split}
   \end{equation}

Finally, we estimate $M_4$ as follows.  
\begin{equation} \label{M2_3}
\begin{split}
M_4 & = \textstyle \frac{\lambda_1 \lambda_2}{n^3} \sum_{d \geq l,l' > r} \frac{\lambda_l \lambda_{l'}}{(\lambda_p- \lambda_l)(\lambda_p - \lambda_{l'})} \E (\sum_{j_1} y_{j_1} u_{1 j_1})
^2(\sum_{j_2} y_{j_2} u_{l j_2})^2 (\sum_{j_3} y_{j_3} u_{2 j_3})^2(\sum_{j_4} y_{j_4} u_{l' j_4})^2   \\
& \leq \textstyle  C^2 K^4 \cdot \frac{\lambda_1 \lambda_2}{n^3} \left( \sum_{l=r+1}^d \frac{\lambda_l}{\lambda_p - \lambda_l} \right)^2, \,\,\, \text{for some universal constant} \,\,\, C^2.
\end{split}
\end{equation}
Together \eqref{expectation: u1EEu2}, (\ref{M2_1}), \eqref{M2_M1}, (\ref{M2_2}), and (\ref{M2_3}) imply that with probability at least $1 - t^{-2}$, 
$$   \textstyle  \norm{ u_1^\top E \left(\sum_{l > r} \frac{u_l u_l^\top}{\lambda_p - \lambda_l} \right) E u_2} \leq 3t\left[ \frac{ 4K^2\sqrt{\lambda_1 \lambda_2}}{n}+  \frac{CK^2 \sqrt{\lambda_1 \lambda_2}}{n^{3/2}} \left( \sum_{l > r } \frac{ \lambda_l}{ \lambda_p -\lambda_{l}} \right)+ \frac{K\sqrt{\lambda_1 \lambda_2} }{n} \sqrt{\sum_{l > r} \frac{\lambda_l^2}{(\lambda_p -\lambda_l)^2}} \right].$$
This proves the second part of Lemma \ref{lemma: xyModeil1}.

\section{Proof of Lemma \ref{lem: S value}}
Let $\{u_1, u_2, \cdots, u_d\}$ be an orthonormal system and $Y=[y_1, y_2, \dots, y_d]^T$ be a random vector, whose entries $\{y_k\}_{k=1}^d$ are $8$-wise independent sub-Gaussian random variables with mean $0$, variance $1$, and $\|y_k\|^2_{\psi_2} \leq K$ for some $K \geq 1$. For $1 \leq i,j \leq d$, define 
\[
S^{(i,j)}:= (u_i^\top Y) \cdot (u_j^\top Y).
\]
Consequently, 
$$\Var S^{(i,j)} = \begin{cases}
    1+  \sum_{k=1}^d \E (y_k^4 -3) u_{ik}^2 u_{jk}^2 &\,\,\text{if $i \neq j$}\\
    2 + \sum_{k=1}^d \E (y_k^4 -3) u_{ik}^4 &\,\,\,\text{if $i=j$}
\end{cases}.$$
We show that 
\begin{itemize}
    \item $\Var S^{(i,j)} \leq 4K^2$. 
    \item If additionally $ \min_{1 \leq k \leq d} \E y_k^4 \geq 1+ 2c$ for some $c > 0$, then 
    \[ 
    \Var S^{(i,j)} \geq c.
    \]
\end{itemize}
Without loss of generality, we prove the above claims for $i \neq j$. 

For the upper bound, since $\|y_k\|^2_{\psi_2} \leq K$, $\E y_k^4 \leq 4K^2$. And hence, 
\[
\Var S^{(i,j)} \leq 1 + \sum_{k=1}^d (4K^2 - 3) u_{ik}^2 u_{jk}^2 = 1 + (4K^2-3) \sum_{k=1}^d u_{ik}^2 u_{jk}^2 \leq  1+4K^2 -3 \leq 4K^2.
\]

For the lower bound, since $ \min_{1 \leq k \leq d} \E y_k^4 \geq 1+ 2c$, we have 
\[
\Var S^{(i,j)} \geq 1 - 2(1-c) \sum_{k=1}^d u_{ik}^2 u_{jk}^2.
\]
We will show that the RHS is at least $c$. It is equivalent to show that
\begin{equation} \label{uij1/2}
 \sum_{k=1}^d u_{ik}^2 u_{jk}^2 \leq \frac{1}{2}.   
\end{equation}

Notice that $\sum_{k} u_{ik} u_{jk} =0$, we have 
\[
\sum_{ u_{ik} u_{jk} > 0} u_{ik} u_{jk} = \sum_{ u_{ik} u_{jk} < 0} - u_{ik} u_{jk} = h. 
\]
And hence, $2h = \sum_{k=1}^d |u_{ik} u_{jk}| \leq \sqrt{ \big(\sum_{k=1}^d u_{ik}^2 \big) \big( \sum_{k=1}^d  u_{jk}^2 \big)} =1$. The inequality here follows from the Cauchy-Schwarz inequality.  Equivalently, 
\[
h \leq \frac{1}{2}.
\]
Therefore, 
\begin{equation}
    \begin{split}
     \sum_{k=1}^d u_{ik}^2 u_{jk}^2 & = \sum_{ u_{ik} u_{jk} > 0} (u_{ik} u_{jk})^2 + \sum_{ u_{ik} u_{jk} < 0} (u_{ik} u_{jk})^2 \\
     & \leq (\sum_{ u_{ik} u_{jk} > 0} u_{ik} u_{jk})^2 + (\sum_{ u_{ik} u_{jk} < 0} u_{ik} u_{jk})^2  = 2h^2 \leq \frac{1}{2}. 
    \end{split}
\end{equation}
This proves \eqref{uij1/2}, and completes our proof. 
\end{document}